\documentclass{amsart}
\usepackage{amssymb,mathtools,comment,hyperref,mathabx}
\usepackage[shortlabels]{enumitem}
\usepackage[alphabetic,initials,nobysame]{amsrefs}
\usepackage{tikz}
\usepackage{color,faktor,cancel}

\theoremstyle{plain}
\newtheorem{theorem}{Theorem}
\newtheorem{lemma}[theorem]{Lemma}

\newtheorem{prop}[theorem]{Proposition}
\newtheorem{corollary}[theorem]{Corollary}

\newtheorem{problem}[theorem]{Problem}

\theoremstyle{definition}
\newtheorem{definition}[theorem]{Definition}

\theoremstyle{remark}
\newtheorem{remark}[theorem]{Remark}
\newtheorem{example}[theorem]{Example}

\numberwithin{equation}{section}
\numberwithin{theorem}{section}
\numberwithin{conjecture}{section}
\numberwithin{figure}{section}

\theoremstyle{definition}
\newtheorem{thmx}{Theorem}


\newcommand{\x}{\scalebox{1.2}{$\chi$} } 

\newcommand{\br}{\overline}
\newcommand{\R}{\mathbb R}
\newcommand{\C}{\mathbb C}
\newcommand{\D}{\mathbb D}
\newcommand{\Z}{\mathbb Z}
\newcommand{\N}{\mathbb N}
\newcommand{\Q}{\mathbb Q}

\DeclareMathOperator{\dist}{\textup{\text{dist}}}
\DeclareMathOperator{\diam}{\textup{\text{diam}}}

\DeclareMathOperator{\inter}{\textup{\text{int}}}
\DeclareMathOperator{\Int}{\textup{\text{Int}}}
\DeclareMathOperator{\Ext}{\textup{\text{Ext}}}

\DeclareMathOperator{\length}{\textup{\text{length}}}

\DeclareMathOperator{\area}{\textup{Area}}
\DeclareMathOperator{\loc}{\textup{loc}}

\DeclareMathOperator{\Deg}{deg}


\makeatletter
\DeclareFontFamily{U}{tipa}{}
\DeclareFontShape{U}{tipa}{m}{n}{<->tipa10}{}
\newcommand{\arc@char}{{\usefont{U}{tipa}{m}{n}\symbol{62}}}%

\newcommand{\arc}[1]{\mathpalette\arc@arc{#1}}

\newcommand{\arc@arc}[2]{%
  \sbox0{$\m@th#1#2$}%
  \vbox{
    \hbox{\resizebox{\wd0}{\height}{\arc@char}}
    \nointerlineskip
    \box0
  }%
}
\makeatother

\def\note#1
{\marginpar
{\nt $\leftarrow$
\par
\hfuzz=20pt \hbadness=9000 \hyphenpenalty=-100 \exhyphenpenalty=-100
\pretolerance=-1 \tolerance=9999 \doublehyphendemerits=-100000
\finalhyphendemerits=-100000 \baselineskip=6pt
#1}\hfuzz=1pt}


\makeindex

\begin{document}

\title[David extensions, welding, mating, and removability]{David extension of circle homeomorphisms, welding, mating, and removability}

\author[M. Lyubich]{Mikhail Lyubich}
\address{Institute for Mathematical Sciences, Stony Brook University, Stony Brook, NY 11794, USA.}
\email{mlyubich@stonybrook.edu}
\thanks{M.~Lyubich was supported by NSF grants DMS-1600519 and 1901357, and by a Fellowship from the 
Hagler Institute for Advanced Study.}

\author[S. Merenkov]{Sergei Merenkov}
\address{Department of Mathematics, City College of New York and CUNY Graduate Center, New York, NY 10031, USA}
\email{smerenkov@ccny.cuny.edu}
\thanks{S.~Merenkov was supported by NSF grants DMS-1800180 and 2247364.}

\author[S. Mukherjee]{Sabyasachi Mukherjee}
\address{School of Mathematics, Tata Institute of Fundamental Research, 1 Homi Bhabha Road, Mumbai 400005, India}
\email{sabya@math.tifr.res.in}
\thanks{S.~Mukherjee was supported by the Department of Atomic Energy, Government of India, under project no.12-R\&D-TFR-5.01-0500, an endowment of the Infosys Foundation, and SERB research project grants SRG/2020/000018 and MTR/2022/000248.}

\author[D. Ntalampekos]{Dimitrios Ntalampekos}
\address{Department of Mathematics, Aristotle University of Thessaloniki, Thessaloniki, 54152, Greece}
\email{dntalam@math.auth.gr}
\thanks{D.~Ntalampekos was supported by NSF grants DMS-2000096 and 2246485, and by the Simons Foundation.}

\date{\today}

\subjclass[2020]{30C50, 30C62, 30C75, 30D05, 30J10, 37F10, 37F31, 37F32, 51F15}

\begin{abstract}
  We provide a David extension result for circle homeomorphisms conjugating two dynamical systems
  such that parabolic periodic points go to parabolic periodic points,
  but hyperbolic points can go to parabolics as well.
  We use this result, in particular,
  to prove the existence of a new class of welding homeomorphisms,
 to establish an explicit dynamical connection between critically fixed anti-rational maps and kissing reflection groups,
to show conformal removability of the Julia sets of geometrically finite polynomials and of the limit sets of necklace reflection groups,
to produce matings of anti-polynomials and necklace reflection groups,
and to give a new proof of the existence of Suffridge polynomials (extremal points in certain spaces of univalent maps). 
\end{abstract}

\maketitle

\setcounter{tocdepth}{2}
\tableofcontents

\section{Introduction}\label{S:Intro}
The nicest and best understood classes of dynamical systems are the ones with uniform hyperbolicity. In rational dynamics, it means that a map admits a conformal metric in a neighborhood of its Julia set with respect to which it is uniformly expanding. The next simplest class of rational maps are \emph{parabolic} maps, which have parabolic cycles but no critical points on their Julia sets.  Due to the existence of parabolic cycles, such maps are not uniformly hyperbolic, but still enjoy a weaker property of expansiveness on their Julia sets. In the parallel field of Kleinian groups, the concept of hyperbolicity translates into compactness of a certain manifold (or orbifold) with boundary (the ``convex core''),
and Kleinian groups satisfying this condition are called \emph{convex co-compact}. The analogue of parabolic rational maps in this world are given by \emph{geometrically finite} Kleinian groups. Such groups may also have parabolic fixed points. Also, recently a new area of Schwarz reflection dynamics has been developed which combines features of both rational maps and Kleinian groups in a single dynamical plane, and the notion of geometric finiteness has a natural meaning in this setting as well (see \cites{LLMM18a,LLMM19}).

In general, many topological, dynamical, and analytic properties of hyperbolic dynamical systems are shared by their parabolic counterparts. However, in the absence of uniform expansion, one often needs to develop more elaborate techniques to study parabolic systems. The current paper grew out of concrete problems in conformal dynamics that necessitated direct passage from hyperbolic conformal dynamical systems to parabolic ones in a sufficiently regular manner.
Such a surgery procedure first appeared in the work of Ha{\"i}ssinsky in the context of complex polynomials \cites{Hai98,Hai00, BF14}. It was shown there that hyperbolic polynomial Julia sets can be turned into parabolic Julia sets via \emph{David} homeomorphisms, which are generalizations of quasiconformal homeomorphisms. In the present work, we carry this philosophy further by developing new analytic tools, namely,
a David Extension Theorem for dynamical circle homeomorphisms
(based upon results of J.~Chen, Z.~Chen and He \cite{CCH96}, and Zakeri \cite{Zak08})
and a surgery technique using the David Integrability Theorem,
that gives rise to a unified approach to turn hyperbolic (anti-)rational maps to parabolic (an\-ti-)rational maps, Kleinian reflection groups, and \emph{Schwarz reflection maps} that are matings of anti-polynomials and reflection groups.

At the technical heart of the paper lies a theorem that guarantees that a general class of dynamically defined circle homeomorphisms can be extended as David homeomorphisms of the unit disk. More precisely, let $f$ and $g$ be two expansive covering maps of the unit circle that have the same orientation, have equivalent Markov partitions, are analytic on the defining intervals of the corresponding Markov partitions, and admit piecewise conformal extensions satisfying certain natural conditions; see conditions \eqref{condition:uv} and~\eqref{condition:holomorphic} below. Let $h$ be an orientation-preserving homeomorphism of the circle that conjugates $f$ to $g$. Then, our main result, Theorem~\ref{theorem:extension_generalization}, states that $h$ has a David extension to the unit disk,
provided it takes parabolic periodic points of $f$ to parabolic periodic points of $g$,
while hyperbolic (i.e., repelling) periodic points of $f$ can go  to either hyperbolic or to parabolic periodic points of $g$.
The following result is a special case of Theorem~\ref{theorem:extension_generalization}.

\begin{thmx}[Blaschke product--circle reflections]\label{david_intro_version}
Consider ordered points $a_{0}=a_{d+1}, a_1,\dots,a_d$ on the circle $\mathbb S^1$, where $d\geq 2$. For $k\in \{0,\dots,d\}$ let $f|_{\arc{(a_k,a_{k+1})}}$ be the reflection along the circle $C_k$ that is orthogonal to the unit circle at the points $a_k$ and $a_{k+1}$. Moreover, let $B$ be an anti-holomorphic Blaschke product of degree $d$ with an attracting fixed point in $\D$. Then there exists a homeomorphism $h\colon \mathbb S^1\to \mathbb S^1$ that conjugates $B|_{\mathbb S^1}$ to $f$, and has a David extension in $\D$.
\end{thmx}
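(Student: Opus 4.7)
The plan is to verify the hypotheses of Theorem~\ref{theorem:extension_generalization} and apply it to the pair $(B|_{\mathbb S^1}, f)$. Both are orientation-reversing expansive covering maps of $\mathbb S^1$ of degree $d$: $B$ is anti-holomorphic with an attracting fixed point in $\D$, so $B|_{\mathbb S^1}$ is uniformly expanding, while each branch of $f$ is the restriction of the anti-conformal Möbius reflection $\sigma_k$ along $C_k$. Since each $C_k$ is orthogonal to $\mathbb S^1$ at its endpoints, a direct computation gives $\sigma_k'(a_k)=\sigma_k'(a_{k+1})=-1$; hence $f$ is $C^1$ at each $a_k$ with multiplier $-1$, making every $a_k$ a parabolic fixed point. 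All other periodic points of $f$ lie in the interior of a Markov arc and are hyperbolic, while $B$ has no parabolic periodic points at all. The hypothesis ``parabolic maps to parabolic'' is therefore vacuous for this pair, and ``hyperbolic may go to hyperbolic or parabolic'' is automatic.

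To build the conjugacy I first identify compatible Markov partitions. For $f$, the endpoints $a_0,\dots,a_d$ split $\mathbb S^1$ into $d+1$ arcs, each sent bijectively by $f$ onto its complement. For $B|_{\mathbb S^1}$, a standard count (applying the argument principle to the holomorphic Blaschke product $z\,\overline{B(z)}$ of degree $d+1$ on $\mathbb S^1$) produces exactly $d+1$ fixed points $b_0,\dots,b_d$ on $\mathbb S^1$; because $B$ is orientation-reversing with these as consecutive fixed endpoints, each arc $\arc{(b_k,b_{k+1})}$ is mapped by $B$ bijectively onto its complement. The two Markov structures are combinatorially identical, and the common associated subshift of finite type on $d+1$ symbols (with transition forbidden only between a symbol and itself) admits a standard symbolic-dynamics coding that produces an orientation-preserving topological conjugacy $h\colon\mathbb S^1\to\mathbb S^1$ with $h(b_k)=a_k$ for all $k$.

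What remains is to verify the local regularity conditions \eqref{condition:uv} and \eqref{condition:holomorphic} of Theorem~\ref{theorem:extension_generalization}. For $B$ these are immediate since $B$ is anti-holomorphic in a full neighborhood of $\mathbb S^1$. For $f$, each reflection $\sigma_k$ is globally anti-conformal on $\widehat{\mathbb C}$, so it extends anti-holomorphically across each arc $\arc{(a_k,a_{k+1})}$ to both sides of $\mathbb S^1$, yielding the required piecewise conformal extension in a neighborhood of every closed Markov arc. The main delicate point will be the interface at the parabolic fixed points $a_k$, where the two adjacent branches $\sigma_{k-1}$ and $\sigma_k$ share the one-sided derivative $-1$ but generically have distinct higher jets, so $f$ is $C^1$ but not real-analytic at $a_k$; absorbing exactly this loss of regularity is the raison d'\^etre of the piecewise conformal extension framework in Theorem~\ref{theorem:extension_generalization}. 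Once the two conditions are checked, that theorem directly yields the desired David extension of $h$ to $\D$.
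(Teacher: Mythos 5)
You propose to apply Theorem~\ref{theorem:extension_generalization} directly to the pair $(B|_{\mathbb{S}^1},f)$ with the fixed-point Markov partitions, invoking alternative~\ref{HP} (all $d+1$ boundary fixed points of $B$ on $\mathbb{S}^1$ are hyperbolic, all $a_k$ are symmetrically parabolic). The combinatorial construction of the conjugacy $h$ is fine, but the step you call ``immediate'' --- that $B$ satisfies conditions~\eqref{condition:uv} and~\eqref{condition:holomorphic} because it is anti-holomorphic near $\mathbb{S}^1$ --- is where the gap lies. Condition~\eqref{condition:holomorphic} is indeed automatic: the holomorphic extension of $B|_{\mathbb{S}^1}$ is a rational map (the reciprocal of a holomorphic Blaschke product), regular on $\mathbb{S}^1$. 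But condition~\eqref{condition:uv} is a quantitative nesting requirement $\bigcup_j U_j\subset V_k$ on conformal charts, and it does not follow from analyticity alone. A hyperbolic anti-Blaschke product need not satisfy $|B'|>1$ everywhere on $\mathbb{S}^1$; only some iterate $B^{\circ m}$ is expanding, which is precisely why Lemma~\ref{lemma:blaschke_qs} passes to an iterate. Where $|B'|<1$, the image of a uniformly thin collar of a Markov arc under the holomorphic extension is \emph{thinner} than the collar it is supposed to contain, so constant-width choices of $U_j,V_k$ fail; one would have to taper the collars via an expanding conformal metric or a cocycle/subordination argument, and you supply none of this.

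The paper's proof of Theorem~\ref{theorem:reflections} avoids the issue entirely by routing through the model map $\bar z^d$, for which~\eqref{condition:uv} is genuinely trivial (Example~\ref{example:power_map}: sectors map to sectors, and the nesting is immediate). Lemma~\ref{lemma:blaschke_qs} provides a quasisymmetric conjugacy $h_1$ from $B|_{\mathbb{S}^1}$ to $\bar z^d$ with a quasiconformal extension to $\D$; composing with the David extension $h_2$ of the conjugacy between $\bar z^d$ and $f$ and invoking Proposition~\ref{prop:david_qc_invariance}(ii) shows $h_2\circ h_1$ is David. You should fold in this quasiconformal reduction, or else construct the charts for a general $B$ explicitly. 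A secondary omission: the verification of~\eqref{condition:uv} and of expansivity for $f$ (Examples~\ref{example:reflection_uv},~\ref{example:expansive_reflection}) assumes $\length(\arc{(a_k,a_{k+1})})<\pi$ for all $k$; if one arc is longer, one must first conjugate by a M\"obius automorphism of $\D$ to restore this and then conjugate back using Proposition~\ref{prop:david_qc_invariance}(i) and~(ii), and your proposal does not address that case.
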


A particular case of the above theorem, where $B(z)=\overline{z}^2$ and the points $a_0,a_1,a_2$ are the third roots of unity, was proved in a recent work~\cite[Lemma~9.4]{LLMM19}. The proof given in \cite{LLMM19} relies upon arithmetic properties of the conjugacy $h$. In contrast, the proof of Theorem~\ref{david_intro_version} is purely dynamical, and can be applied to a broad range of situations. The piecewise analytic (rather than globally analytic) nature of the map $f$ and the matching of hyperbolic periodic points of $B$ to parabolic points of $f$ pose the main complications in the proof that cannot rely on the standard quasiconformal machinery.

A combination of Theorem~\ref{david_intro_version} together with a surgery technique devised in Section~\ref{david_surgery_sec} permit us to construct conformal dynamical systems by replacing the attracting
dynamics of an anti-rational map on an invariant Fatou component by piecewise defined circular reflections. Implementing this strategy, we are able to convert anti-rational maps to Kleinian reflection groups and to Schwarz reflection maps in a dynamically natural way. A precursor to the birth of this general machinery is a concrete construction carried out in \cite{LLMM19}, where the Julia set of a cubic critically fixed anti-rational map (an anti-rational map with all critical points fixed) was shown to be homeomorphic to the classical Apollonian gasket (the limit set of a Kleinian reflection group) and to the limit set of the Schwarz reflection map associated with a deltoid and an inscribed circle. More generally, a dynamical correspondence between Apollonian-like gaskets (associated to triangulations of the Riemann sphere) and a class of critically fixed anti-rational maps was set up in the same paper. A further generalization appeared in \cite{LLM20}, where a bijective correspondence between kissing reflection groups (groups generated by reflections in the circles of finite circle packings) with connected limit sets and critically fixed anti-rational maps was established using Thurston's topological characterization of rational functions. Using the surgery technique designed in the current paper, one can transform each critically fixed anti-rational map into a kissing reflection group such that the Julia set of the former is homeomorphic to the limit set of the latter under a global David homeomorphism. The next theorem presents a direct construction of a map from the class of critically fixed anti-rational maps to the class of kissing reflection groups.

\begin{thmx}[From anti-rational maps to kissing groups]\label{bijection_via_david__intro_version}
Let $R$ be a critically fixed anti-rational map. Then, there exists a kissing reflection group $\Gamma$ such that the Julia set $\mathcal{J}(R)$ is homeomorphic to the limit set $\Lambda(\Gamma)$ via a dynamically natural David homeomorphism of $\widehat{\C}$.
\end{thmx}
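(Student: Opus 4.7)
The plan is to derive Theorem~\ref{bijection_via_david__intro_version} from Theorem~\ref{david_intro_version} by a David surgery on the super-attracting basins of $R$. First I would invoke the combinatorial correspondence of \cite{LLM20} to produce the kissing reflection group $\Gamma$ associated with $R$, together with its Nielsen map $N_\Gamma$: this is an anti-holomorphic map, defined piecewise by reflections in the circles of the packing of $\Gamma$, which serves as the rational-like dynamical incarnation of the group and whose ``Julia set'' equals $\Lambda(\Gamma)$. The bijection of \cite{LLM20} matches the invariant super-attracting Fatou components $U_0,\dots,U_n$ of $R$ with the primary disks $D_0,\dots,D_n$ of the packing, with the local degree $d_i$ of $R$ on $U_i$ equal to the number of circles tangent to $\partial D_i$, and it also yields a canonical topological conjugacy between $R|_{\mathcal J(R)}$ and $N_\Gamma|_{\Lambda(\Gamma)}$.

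Next, I would work inside each pair $(U_i,D_i)$. The B\"ottcher coordinate $\phi_i\colon U_i\to\D$ conjugates $R|_{U_i}$ to $\overline{z}^{d_i}$, while a Riemann map $\psi_i\colon D_i\to\D$ conjugates $N_\Gamma|_{D_i}$ to exactly the piecewise circular reflection map $f$ of Theorem~\ref{david_intro_version}. Applying Theorem~\ref{david_intro_version} produces a circle homeomorphism $h_i\colon \mathbb S^1\to\mathbb S^1$ that conjugates $\overline{z}^{d_i}$ to $f$ and admits a David extension to $\overline{\D}$. Transporting back via $\phi_i$ and $\psi_i$, this gives a David homeomorphism $\Phi_i\colon \overline{U_i}\to\overline{D_i}$ that intertwines $R$ and $N_\Gamma$ on $\partial U_i$.

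Then I would spread the conjugacy over the entire Fatou set by dynamics: each Fatou component $V$ of $R$ eventually maps univalently onto some $U_i$ under iteration, and the same holds for the corresponding Fatou component of $N_\Gamma$; lifting $\Phi_i$ through matched iterates of $R$ and $N_\Gamma$, with coherent choices along preimage trees, extends $\Phi$ to every Fatou component. Finally $\Phi$ is completed on $\mathcal J(R)$ by the canonical topological conjugacy provided by \cite{LLM20}, and its continuity across $\mathcal J(R)$ follows from the expansiveness of $R$ and $N_\Gamma$ on their respective Julia/limit sets, combined with the shrinking of Fatou component diameters under iteration.

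The main obstacle will be proving that the resulting global homeomorphism $\Phi\colon\widehat{\C}\to\widehat{\C}$ is David. Inside each $U_i$ the David bound comes directly from Theorem~\ref{david_intro_version}; on non-invariant Fatou components the bound must be transferred by univalent iterates of $R$ and $N_\Gamma$, which calls for bounded-distortion arguments to propagate the David constants uniformly across an exponentially growing number of preimage components. The most delicate contribution comes from neighborhoods of $\mathcal J(R)$, especially near preimages of points where repelling periodic points of $R$ are matched to parabolic periodic points of $N_\Gamma$: the local dilatation is genuinely non-quasiconformal there, and one must verify that the superlevel sets $\{z:|\mu_\Phi(z)|>1-\varepsilon\}$ obey the exponential area bound characteristic of the David class, by combining the pointwise estimates coming from Theorem~\ref{david_intro_version} with the geometric control provided by the polynomial-type dynamics of $R$ and $N_\Gamma$ near those cusps.
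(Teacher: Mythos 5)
Your plan is in the right spirit and correctly identifies the key ingredients (Theorem~\ref{david_intro_version}, surgery on the invariant Fatou components, spreading by dynamics, the John-domain geometry of Fatou components). But the architecture of your argument has a genuine gap, and it differs from the paper's in a way that matters.

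You propose to build the conjugacy $\Phi$ directly, piece by piece, from the dynamical plane of $R$ onto the dynamical plane of $N_\Gamma$, then certify afterward that $\Phi$ is David. The difficulty is not merely the exponential area bound on the level sets of $\mu_\Phi$, which you flag, but the prior question of why the piecewise-assembled $\Phi$ lies in $W^{1,1}_{\loc}(\widehat{\C})$ at all. The David class is a Sobolev regularity class, and a map pieced together from David extensions on individual Fatou components, glued continuously along a fractal Julia set, is not automatically $W^{1,1}_{\loc}$: you would need an independent removability argument for $\mathcal{J}(R)$, and such arguments (Jones, Jones--Smirnov) require John-type geometry on both sides, which fails near the parabolic cusps on the group side. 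Worse, the David Integrability Theorem does not help you verify regularity of a map you built by hand; it produces a new solution from a coefficient, and invoking the factorization Theorem~\ref{theorem:stoilow} to compare that solution with your $\Phi$ already presupposes that $\Phi$ is a David embedding, so the argument would be circular.

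The paper's Lemma~\ref{david_surgery_lemma} resolves this by inverting the logic: one never constructs a pointwise conjugacy onto a prescribed target. Instead, one modifies $R$ on its Fatou components to obtain a topological model $\widetilde R$, computes the invariant Beltrami coefficient $\mu$ of that model, proves $\mu$ is a David coefficient (using Koebe distortion for univalent first-landing maps, the uniform John-domain bound $(\diam U')^2\lesssim\sigma(U')$, and Proposition~\ref{prop:david_qc_invariance}~(iii)--(iv)), and then applies the David Integrability Theorem to manufacture both the David homeomorphism $\Psi$ and, simultaneously, the target anti-analytic dynamics $F=\Psi\circ\widetilde R\circ\Psi^{-1}$. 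This $F$ is a priori only some piecewise Schwarz reflection, not the Nielsen map of the given $\Gamma$. The proof of Theorem~\ref{bijection_via_david_thm} then has a second, essentially separate stage: a degree count shows the quadrature domains of $F$ must be round disks, so $F$ is the Nielsen map of some kissing group $\Gamma'$ with the same contact graph as $\Gamma$, and a quasiconformal rigidity lemma for kissing groups produces a QC conjugacy $\Phi:\Lambda(\Gamma)\to\Lambda(\Gamma')$; the desired David homeomorphism is then $\Phi^{-1}\circ\Psi$ by Proposition~\ref{prop:david_qc_invariance}~(i). Your plan skips this two-stage structure and so never explains how the map you build gets its Sobolev regularity, nor why its target is literally the given group rather than a combinatorial twin of it.
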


\noindent (See Theorem~\ref{bijection_via_david_thm} for a precise version of Theorem~\ref{bijection_via_david__intro_version}.)
\vspace{2mm}

As another interesting application of the main extension Theorem~\ref{theorem:extension_generalization},
we produce a new class of {\em welding} homeomorphisms. A welding homeomorphism is a homeomorphism of the circle that arises as the composition of a conformal map from the unit disk onto the interior region of a Jordan curve
with a conformal map from the exterior of this Jordan curve onto the exterior of the unit disk.
Previously known examples of welding homeomorphisms include quasisymmetric homeomorphisms \cites{LV60,Pfl60}, circle maps that extend to David homeomorphisms of the sphere \cite{Dav88}, weakenings of quasisymmetric homeomorphisms that are sufficiently regular \cites{Ham91}, and some quite ``wild'' homeomorphisms \cite{Bis07}. We prove here that a circle homeomorphism that conjugates two expansive piecewise analytic maps
(under conditions \eqref{condition:uv} and \eqref{condition:holomorphic}),
each of whose periodic points is either \emph{symmetrically} parabolic or hyperbolic, is a welding homeomorphism (see Sections \ref{section:markov_partitions} and \ref{welding_sec} for the definitions). These homeomorphisms are composed of David homeomorphisms and their inverse; see Theorem~\ref{theorem:welding}.
\vspace{2mm}

Let us now elaborate on the background and history of the various problems that motivated the development of the David Extension Theorem~\ref{theorem:extension_generalization}. Along the way, we will also give informal statements of some of the main results.

Conformal removability (see Subsection~\ref{david_removability_subsec}) of certain Julia sets was known from the works of  P.~Jones \cite{Jon91} and P.~Jones--S.~Smirnov~\cite{JS00}. In~\cite{Jon91} it is shown, in particular, that the boundaries of John domains are conformally removable. As a consequence, Jones obtained that the Julia sets of subhyperbolic polynomials are conformally removable. In~\cite{JS00}, the authors extended these removability results to the boundaries of H\"older domains, and, in particular to the Julia sets of Collet--Eckmann polynomials. However, these techniques do not yield removability of polynomial Julia sets with parabolic periodic points as these Julia sets have cusps. 

A Kleinian reflection group is called a \emph{necklace} group if it is generated by reflections in the circles of a circle packing with $2$-connected, outerplanar contact graph (see Remark~\ref{outerplanar_2_conn_rem} for the definitions and Figure~\ref{necklace_group_fig} for an example). Similarly to Julia sets of parabolic polynomials, limit sets of necklace groups have intricate geometric structure, see, e.g., Figure~\ref{necklace_group_fig}; indeed, limit sets of necklace groups also have cusps. Nevertheless, we are able to use our main result, Theorem~\ref{theorem:extension_generalization}, to show that connected Julia sets of geometrically finite polynomials as well as limit sets of necklace groups are conformally removable.

\begin{thmx}[Removability of limit and Julia sets]\label{removability_intro_version}
\noindent\begin{itemize}
\item Limit sets of necklace reflection groups are conformally removable.

\item Connected Julia sets of geometrically finite polynomials are conformally removable.
\end{itemize}
\end{thmx}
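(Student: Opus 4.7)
\emph{Proof plan.}
The plan is to prove both items by the same scheme: use the main extension theorem to produce a David conjugacy from a classical hyperbolic model, whose Julia or limit set is already known to be removable, and then transfer removability through that conjugacy.

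\textbf{Step 1: constructing the hyperbolic model and the David conjugacy.}
Let $P$ be a geometrically finite polynomial with connected Julia set. I would apply the surgery procedure of Section~\ref{david_surgery_sec} inside each parabolic basin of $P$: conjugate the parabolic boundary dynamics of $P$ to the hyperbolic boundary dynamics of an appropriate Blaschke product via a circle homeomorphism that sends parabolic periodic points to hyperbolic ones, and invoke Theorem~\ref{theorem:extension_generalization} to extend the conjugacy as a David homeomorphism of the basin. Gluing the pieces produces a hyperbolic polynomial $Q$ of the same degree and combinatorial type, together with a global David homeomorphism $\Phi\colon\widehat{\C}\to\widehat{\C}$ with $\Phi(\J(Q))=\J(P)$ that conjugates $Q$ to $P$ on neighborhoods of the Julia sets. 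For a necklace reflection group $\Gamma$, I would run the analogous surgery, pairing the parabolic boundary dynamics of $\Gamma$ on the circles supporting its cusps with the hyperbolic boundary dynamics of a convex-cocompact (Schottky-type) reflection model $\Gamma_{0}$, and producing a David homeomorphism $\Phi$ with $\Phi(\Lambda(\Gamma_{0}))=\Lambda(\Gamma)$.

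\textbf{Step 2: removability of the hyperbolic model.}
Since $Q$ is hyperbolic with connected Julia set, $\J(Q)$ is the boundary of a John domain (Carleson--Jones--Yoccoz), hence conformally removable by Jones~\cite{Jon91}. Analogously, $\Lambda(\Gamma_{0})$ is conformally removable, since the components of its complement are uniform John domains thanks to the convex-cocompactness of~$\Gamma_{0}$.

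\textbf{Step 3: transferring removability through $\Phi$.}
Let $g$ be a homeomorphism of $\widehat{\C}$ that is conformal on $\widehat{\C}\setminus \J(P)$, and set $G\coloneqq g\circ\Phi$. Then $G$ is a homeomorphism of $\widehat{\C}$, and on $\widehat{\C}\setminus \J(Q)$ it is the composition of the David map $\Phi$ with the conformal map $g$, hence David. Since $\J(Q)$ has zero Lebesgue measure, a regularity argument shows that $G$ is in fact a global David homeomorphism of $\widehat{\C}$ whose Beltrami coefficient agrees almost everywhere with that of $\Phi$. The uniqueness part of the David integrability theorem then forces $G=M\circ\Phi$ for some M\"obius transformation $M$, whence $g=M$. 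The same argument, with $\Gamma_{0}$ in place of $Q$, handles the Kleinian case.

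\textbf{Main obstacle.}
Step~3 is the delicate step. Unlike for quasiconformal maps, the inverse of a David homeomorphism only has weak Sobolev regularity, so it is not automatic that $g\circ\Phi$ is a David homeomorphism on all of $\widehat{\C}$, nor that uniqueness of solutions to the Beltrami equation holds in the relevant David class. The argument has to exploit the specific geometry produced by the surgery, namely, that the Beltrami coefficient of $\Phi$ is concentrated on the Fatou/discontinuity components where $g$ is automatically conformal. This is what allows one to establish the $W^{1,1}_{\mathrm{loc}}$ regularity of $g\circ\Phi$ across $\J(Q)$ and to invoke the uniqueness statement in the David integrability theorem in this setting.
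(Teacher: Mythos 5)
Your high-level scheme — produce a David conjugacy from a hyperbolic model and transfer removability — is indeed the paper's strategy, but Step~1 as you describe it cannot be carried out, and the diagnosis in your "main obstacle" paragraph points at the right spot but proposes the wrong fix.

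\textbf{Directionality in Step 1.} Theorem~\ref{theorem:extension_generalization} only allows parabolic periodic points of $f$ to map to parabolic points of $g$, while hyperbolic points of $f$ may go to hyperbolic \emph{or} parabolic points of $g$. There is no alternative that permits parabolic~$\to$~hyperbolic; that direction is genuinely false, because the inverse of a David map need not be David (see the explicit example of \cite[p.~123]{Zak04} cited in the paper). Consequently, you cannot "conjugate the parabolic boundary dynamics of $P$ to the hyperbolic boundary dynamics of an appropriate Blaschke product via a circle homeomorphism that sends parabolic periodic points to hyperbolic ones" and invoke the extension theorem. Likewise, Lemma~\ref{david_surgery_lemma} is set up to perform surgery \emph{on} a subhyperbolic map, replacing attracting dynamics by Nielsen/parabolic dynamics, not the reverse. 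The correct order of operations — and the one the paper uses — is to first \emph{realize} a subhyperbolic model $Q$ purely topologically (for the polynomial case via Kiwi's $\R$eal laminations, Corollary~\ref{geom_finite_to_postcrit_finite}; for the group case via the critically-fixed-anti-polynomial/necklace-group correspondence, Theorem~\ref{bijection_via_david_thm} and Corollary~\ref{bijection_via_david_cor}), and only then run David surgery \emph{from} $Q$ to produce the David homeomorphism $\Phi$ with $\Phi(\J(Q))=\J(P)$ (or $\Lambda(\Gamma)$). The surgery cannot be initiated from the geometrically finite object itself.

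\textbf{The Kleinian model.} The proposed "convex-cocompact (Schottky-type) reflection model $\Gamma_0$" does not exist with the correct limit set topology. A necklace group is generated by reflections in tangent circles, and tangencies produce parabolic fixed points; removing the tangencies to get a Schottky configuration disconnects the limit set into a Cantor set, which is no longer homeomorphic to $\Lambda(\Gamma)$. There simply is no convex-cocompact Kleinian reflection group whose limit set models $\Lambda(\Gamma)$. The paper resolves this by leaving the Kleinian world altogether: the hyperbolic model for a necklace group is a critically fixed anti-polynomial $P$ (Corollary~\ref{bijection_via_david_cor}), and the John domain used for removability is the basin of infinity of $P$, not a component of a Kleinian domain of discontinuity.

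\textbf{The transfer step.} Your caution about Step~3 is warranted, but the resolution is not to "exploit the specific geometry produced by the surgery." It is the general principle that boundaries of John domains are removable for $W^{1,1}$ functions (Jones--Smirnov, Theorem~\ref{theorem:john_union_removable}), combined with Lemma~\ref{lemma:david_removable} and Theorem~\ref{theorem:w11_removable}: if $E$ is $W^{1,1}$-removable and $\Phi$ is David, then $\Phi(E)$ is (locally) conformally removable. In particular, mere measure-zero of $\J(Q)$ is not enough — one needs the Sobolev removability to propagate the $W^{1,1}_{\loc}$ regularity of $g\circ\Phi$ across $\J(Q)$; the exponential integrability of the distortion then follows automatically once $\J(Q)$ has measure zero. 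This transfer principle is entirely general and does not depend on where the Beltrami coefficient of $\Phi$ is concentrated.

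\textbf{A point you omit.} For the polynomial case the paper must also show that the map $\widetilde{Q}$ produced by the surgery on $Q$ is M\"obius conjugate to (the rigid model $\widehat{P}$ of) $P$. That rigidity step itself invokes the removability of $\J(\widetilde{Q})$, so the argument bootstraps: one first establishes removability of $\J(\widetilde{Q})=\Phi(\J(Q))$, then uses it to conclude $\widehat{P}$ is M\"obius conjugate to $\widetilde{Q}$, and hence $\J(P)$ is removable too. Your Step~3 does not account for this identification.
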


Moreover, we produce an example of a conformally removable Julia set, called the {\em pine tree}, which is a Jordan curve with both inward and outward cusps; see Figure \ref{figure:two_sided_cusps} and Example~\ref{pine_example}.
Examples of this kind seem to be rare as the techniques of \cite{Jon91} and \cite{JS00} do not apply directly to such sets. If, instead, a Jordan curve has only inward cusps (in a precise sense), then the region bounded by the curve is a John domain so \cite{Jon91} would imply the conformal removability of the curve. 

\begin{figure}
\centering
\includegraphics[scale=0.5]{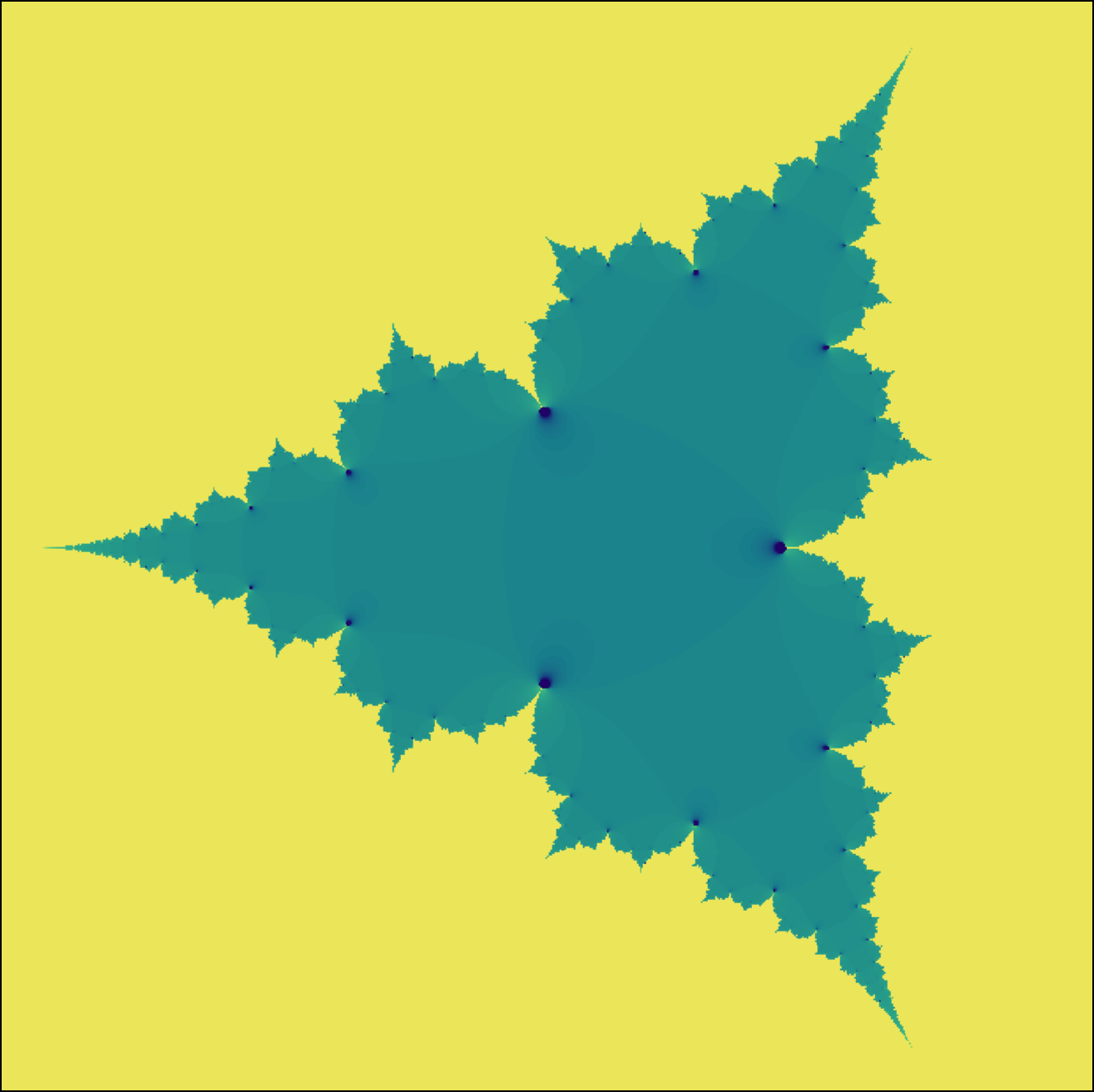}
\caption{The pine tree Julia set. The Julia set of $R(z)=\frac{4z^3+8-3(1-\sqrt{3})}{(1-\sqrt{3})z^3+8+4\sqrt{3}}$ is a conformally removable curve with both inward and outward cusps.}\label{figure:two_sided_cusps}
\end{figure}

The proof of Theorem~\ref{removability_intro_version} is given in Section~\ref{conf_rem_julia_limit_sec}, more precisely in Theorems~\ref{limit_removable_thm} and~\ref{T:ConfRemove}, respectively. In fact, the proofs of the two removability statements have a common philosophy. We topologically realize the limit set of a necklace group or the Julia set of a geometrically finite polynomial as a (sub-)hyperbolic polynomial Julia set (which are known to be $W^{1,1}$-removable), and then construct a global David homeomorphism that carries this (sub-)hyperbolic polynomial Julia set to the limit
or Julia set in question. This, combined with Theorem~\ref{theorem:w11_removable}, which states that the image of a
$W^{1,1}$-removable compact set under a global David homeomorphism is conformally removable, leads to the desired conclusion. In fact, we are unaware of an intrinsic proof of Theorem~\ref{removability_intro_version}. It is also worth mentioning that these removability results contrast with recent works by the fourth author~\cite{Nta19, Nta20b}, where he shows that the standard Sierpi\'nski gasket as well as all planar Sierpi\'nski carpets are not removable for conformal maps.
The classical Apollonian gasket (which is the limit set of a Kleinian reflection group) contains a homeomorphic copy of the Sierpi\'nski gasket. This suggests that the Apollonian gasket is non-removable as well, but it still remains an open question.

We now turn our attention to the next application of our David extension and surgery theory. The concept of mating of two conformal dynamical systems goes back to
{\em Klein's Combination Theorem},
and was brought to a new level in the seminal work of Bers on the \emph{simultaneous uniformization} of two Riemann surfaces,
which allows one to mate two Fuchsian groups to obtain a quasi-Fuchsian group \cite{Ber60}. A significantly more difficult theorem of W.~Thurston, called the
{\em Double Limit Theorem},
provides a sufficient condition to `mate' two projective measured laminations (or, two groups on the boundary of the corresponding Teichm{\"u}ller space) to obtain a Kleinian group \cite{Thu98,Ota01}. Later on, Douady and Hubbard introduced the notion of mating of two polynomials to produce a rational map, which can be seen as a philosophical parallel to the combination theorems in the world of rational dynamics \cite{Dou83}. 
Each of the aforementioned mating constructions attempts to combine two similar conformal dynamical systems to produce a richer conformal dynamical system. Often it is not hard to mate two conformal dynamical systems topologically, but uniformizing the topological mating (i.e., endowing it with a complex structure) lies at the core of the problem.

In~\cite{BP94} (see also~\cite[\S 7.8]{BF14}), S.~Bullett and C.~Penrose used iterated algebraic correspondences
to introduce a notion of mating of the modular group ${\rm PSL}(2,\Z)$ and certain quadratic polynomials. Also, in~\cite{BH00} S.~Bullett and W.~Harvey used quasiconformal surgery to construct a holomorphic correspondence between any quadratic polynomial and certain representations of the free product $C_2*C_3$ of cyclic groups (of orders 2 and 3) in ${\rm PSL}(2,\C)$.

Recently, novel perspectives and techniques were introduced to bind together the actions of two different types of conformal dynamical systems; namely, an anti-rational map and a Kleinian reflection group, in a single conformal dynamical system \cite{LLMM18a,LLMM18b,LLMM19,LMM20}. In particular, various examples of matings of Nielsen maps of Kleinian reflection groups and anti-rational maps were discovered, and these matings were realized as Schwarz reflection maps associated with suitable quadrature domains (see Section~\ref{mating_sec} for precise definitions). In this paper we introduce a unified framework for these `hybrid' matings, prove a general theorem that ensures the existence of matings for a large class of Kleinian reflection groups and anti-polynomials, and illustrate the mating phenomena with a few explicit examples. The following theorem can be thought of as an analogue of the Thurston Realization Theorem for matings of anti-polynomials and necklace groups.

\begin{figure}[ht!]
\includegraphics[width=0.6\linewidth]{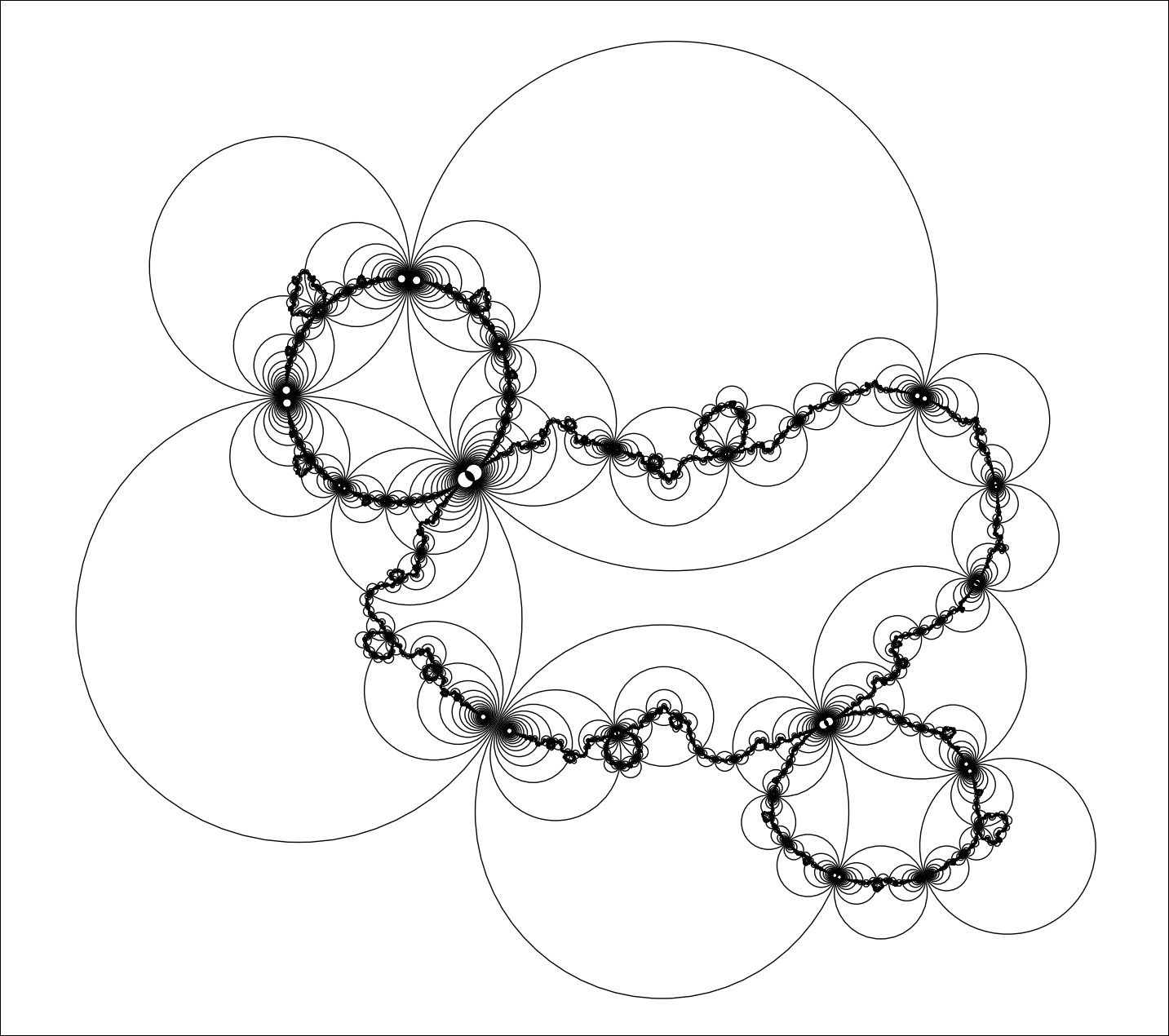}
\caption{The limit set of a necklace group on the boundary of the Bers slice of an ideal $9$-gon reflection group.}
\label{necklace_group_fig}
\end{figure}

\begin{thmx}[Mating reflection groups with anti-polynomials]\label{mating_intro_version}
A postcritically finite, hyperbolic anti-polynomial of degree $d$ and a necklace group of rank $d+1$ are conformally mateable if and only if they are topologically mateable, and a conformal mating (when it exists) is a piecewise Schwarz reflection map.
\end{thmx}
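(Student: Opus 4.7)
The plan is to prove the non-trivial implication: topological mateability forces conformal mateability. The reverse implication is essentially tautological, since a conformal mating descends by forgetting the complex structure to a topological mating of the two systems.

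Fix a postcritically finite hyperbolic anti-polynomial $p$ of degree $d$ and a necklace group $\Gamma$ of rank $d+1$, and assume they admit a topological mating. Because $p$ is PCF hyperbolic, every critical orbit is super-attracting and periodic, so the basin of infinity $A_\infty(p)$ is a simply connected completely invariant Fatou component on which $p$ is conformally conjugate (via the Böttcher coordinate at $\infty$) to $\overline{z}^d$; in particular the boundary action on $\partial A_\infty(p)$ pulls back to an anti-holomorphic Blaschke product of degree $d$ on $\mathbb{S}^{1}$. On the other hand, the Nielsen map $\mathcal{N}_\Gamma$ associated with the necklace group is, on the boundary of the "outer" disk of the circle packing, a piecewise reflection in the $d+1$ generating circles, hence precisely fits the piecewise-analytic framework of conditions \eqref{condition:uv} and \eqref{condition:holomorphic}, with parabolic periodic points located at the tangency points of consecutive generating circles and hyperbolic periodic behavior elsewhere.

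The first step is to construct a circle homeomorphism $h\colon \mathbb{S}^{1}\to \mathbb{S}^{1}$ conjugating the anti-Blaschke boundary action of $p$ to the Nielsen map $\mathcal{N}_\Gamma$, with the additional feature that the hyperbolic periodic points of the Blaschke model which correspond, under the topological mating, to tangency points of the packing are mapped to parabolic periodic points of $\mathcal{N}_\Gamma$. The existence and uniqueness (up to the natural symmetries) of $h$ comes from the equivalence of Markov partitions: the generating circles of $\Gamma$ cut $\mathbb{S}^{1}$ into a Markov partition for $\mathcal{N}_\Gamma$, while the preimages of a suitable external ray configuration do the same on the $p$-side, and the topological mating hypothesis matches up the two partitions combinatorially. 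The second step is to apply Theorem~\ref{theorem:extension_generalization} (for which Theorem~\ref{david_intro_version} is the prototype) to extend $h$ across $\mathbb{D}$ as a David homeomorphism $H$, with hyperbolic-to-parabolic matching handled by the theorem's hypotheses.

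The third step is the David surgery of Section~\ref{david_surgery_sec}. I would start with $p$ acting on $\widehat{\mathbb{C}}$, excise the dynamics on $A_\infty(p)$, and glue in the dynamics of $\mathcal{N}_\Gamma$ on the complement of the packed disks using $H$ (and its $p$-equivariant iterates) as the gluing homeomorphism; the resulting object is a topological, piecewise anti-holomorphic dynamical system on the topological mating sphere, carrying a David Beltrami coefficient that is invariant under the glued map on the side modified by $H$ and vanishes on the untouched side. The David Integrability Theorem produces a global David homeomorphism straightening this Beltrami coefficient; conjugating the glued system by it yields a genuinely piecewise conformal map --- a Schwarz reflection map whose two pieces realize $p$ on the filled Julia side and the reflection group $\Gamma$ on the limit-set side --- which by construction is the desired conformal mating. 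Conformal removability of $\mathcal{J}(p)$, supplied by Theorem~\ref{removability_intro_version}, is what guarantees that the two halves fit together into a well-defined conformal dynamical system across the common boundary.

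The main obstacle is the first step: extracting from the purely topological mating hypothesis the precise combinatorial dictionary that determines which hyperbolic periodic points of the anti-Blaschke model must be sent to the parabolic (tangency) points of $\mathcal{N}_\Gamma$, and then verifying that this prescription produces a circle homeomorphism (as opposed to merely a semi-conjugacy collapsing some arcs). Once this bookkeeping is in place, both the David extension of Theorem~\ref{theorem:extension_generalization} and the David surgery of Section~\ref{david_surgery_sec} apply essentially off the shelf to finish the proof.
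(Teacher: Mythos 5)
Your proposal has a genuine gap: the surgery you describe can only produce the conformal mating of $p$ with the regular ideal polygon group $\pmb{\Gamma}_{d+1}$, not with a general necklace group $\Gamma$. In Step 1 you want a circle homeomorphism $h$ conjugating the B\"ottcher model $\overline{z}^d|_{\mathbb{S}^1}$ to ``the Nielsen map of $\Gamma$,'' but for $\Gamma$ a general necklace group the Nielsen map acts on $\Lambda(\Gamma)$, which is not a circle: the map $\phi_\Gamma\colon \mathbb{S}^1\to\Lambda(\Gamma)$ from Proposition~\ref{group_lamination_prop} is a \emph{semi}-conjugacy that collapses the lamination arcs of $\Gamma$. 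So the object you seek in Step~1 does not exist as a homeomorphism; you correctly sense this danger in your ``main obstacle'' paragraph but conclude too quickly that it is a bookkeeping issue. It isn't: the lamination of $\Gamma$ is exactly what makes $\mathcal{K}(\Gamma)$ not a disk, so you cannot glue it into the Jordan domain $A_\infty(p)$. Indeed, if you ran Lemma~\ref{david_surgery_lemma} on $A_\infty(p)$ you would obtain the Schwarz reflection realizing the mating of $p$ with $\pmb{\Gamma}_{d+1}$ (for which no topological mating hypothesis is needed, consistent with the fact that your argument never actually uses that hypothesis in a substantive way) --- not with $\Gamma$.

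The paper avoids this obstacle by a two-stage construction (Proposition~\ref{mating_prop}). First, the critically fixed anti-polynomial $P_\Gamma$ is introduced --- a genuine anti-polynomial, realized via Thurston theory (\cite{LMM20}, \cite{LLM20}), whose Julia dynamics is topologically conjugate to $\rho_\Gamma|_{\Lambda(\Gamma)}$ --- and $P$ is conformally mated with $P_\Gamma$ to produce an anti-rational map $R$ on the sphere. This step carries the lamination of $\Gamma$ and requires the topological mating hypothesis (equivalently, the absence of Thurston obstructions). Only then is Lemma~\ref{david_surgery_lemma} applied, to the \emph{bounded} invariant Fatou components $U_i$ of $R$ corresponding to the fixed critical points of $P_\Gamma$: these are Jordan John domains, each boundary is a topological circle on which $R$ is conjugate to $\overline{z}^{d_i}$, so the circle-conjugacy machinery of Theorem~\ref{theorem:extension_generalization} applies there, exactly as you envision. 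Finally, a further quasiconformal deformation (absent from your outline) adjusts the glued-in $\pmb{\rho}_{d_i}$ to the actual $\rho_\Gamma|_{\mathcal{U}_i}$ acting on the corresponding component of $\inter{\mathcal{K}(\Gamma)}$. So your Steps 2--5 are structurally correct and match the paper's surgery, but they must be applied in the dynamical plane of $R$ rather than of $p$, and the construction of $R$ from the topological mating hypothesis is the missing key ingredient.
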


All previously known examples of conformal matings of
  postcritically finite anti-polynomials with necklace groups, namely,
  matings of quadratic postcritically finite anti-polynomials with the
  ideal triangle reflection group \cite{LLMM18a,LLMM18b}, and matings
  of $\overline{z}^d$ with all necklace groups of rank $d+1$
  \cite{LMM20}, are easily recovered from
  Theorem~\ref{mating_intro_version} (see
  Subsection~\ref{known_examples_subsec}). To illustrate the power of
  this theorem, we construct first examples of conformal matings of
  anti-polynomials and necklace groups, neither of which has a Jordan
  curve Julia/limit set (see Subsections~\ref{example_1_subsec} and~\ref{example_2_subsec}).

Since the topological mating of a necklace group $\Gamma$ and a postcritically finite anti-polynomial $P$ is not defined on the whole $2$-sphere, one needs to build a novel realization theory to prove Theorem~\ref{mating_intro_version}. As mentioned before, the principal difficulty in this theory arises from the mismatch between the quantitative behavior of the Julia dynamics of a hyperbolic anti-polynomial and the limit set dynamics of the Nielsen map of a reflection group. Indeed, the former has only hyperbolic fixed points on its Julia set, while the latter has parabolic fixed points on its limit set, which renders purely quasiconformal tools inapplicable
to this setting. This problem can be tackled in two steps. One can use
{\em W.~Thurston's topological characterization theorem}
to construct an anti-rational map $R$ that is a conformal mating of $P$ and another anti-polynomial $P_\Gamma$ such that the Julia dynamics of $P_\Gamma$ is topologically conjugate to the limit set dynamics of the Nielsen map of $\Gamma$. The existence of such an anti-polynomial $P_\Gamma$ follows from
\cite{LMM20} or \cite{LLM20} (making use of Poirier's realization of Hubbard trees by anti-polynomials \cite{Poi10,Poi13}), while conformal mateability of $P$ and $P_\Gamma$ follows from a general mateability criterion proved in \cite{LLM20}.
Subsequently, we apply Theorem~\ref{david_intro_version} and our David surgery tools to conformally glue the  reflection group dynamics into suitable invariant Fatou components of $R$. This produces the desired conformal mating of $\Gamma$ and $P$, which turns out to be the Schwarz reflection map associated with finitely many disjoint quadrature domains. The proof of Theorem~\ref{mating_intro_version} is carried out in Section~\ref{mating_sec}; we refer the reader to Theorem~\ref{poly_group_mating_thm} for a precise statement.
The mating construction is illustrated with various explicit examples in Section~\ref{mating_examples_sec}; see Figures~\ref{mating_1_fig},~\ref{ellipse_disk_dyn_fig},~\ref{mating_2_fig}, and~\ref{mating_3_fig}.

While the Extension Theorem~\ref{david_intro_version} is sufficient for the purpose of mating anti-polynomials with necklace reflection groups, the general extension Theorem~\ref{theorem:extension_generalization} yields a mating result for partially defined conformal dynamical systems on the closed unit disk. More precisely, we show that any two expansive piecewise analytic covering maps of the unit circle admitting conformal extensions satisfying conditions \eqref{condition:uv} and \eqref{condition:holomorphic} can be mated conformally, as long as each of their periodic points is {symmetrically} parabolic or hyperbolic.
(We allow ourselves to match parabolic and hyperbolic points in an arbitrary way).
This is stated as Theorem \ref{theorem:mating_general}.

The final application of our hyperbolic-parabolic surgery theory concerns certain extremal problems in spaces of univalent functions. It is well-known that extremal points of the classically studied space $\Sigma$ of (suitably normalized) schlicht functions on the exterior disk $\D^*:=\widehat{\C}\setminus\overline{\D}$ are the so-called \emph{full mappings}; i.e., $f$ is an extremal point of $\Sigma$ if and only if $\C\setminus f(\D^*)$ has zero area \cite[\S 9.6]{Dur83}. These extremal points play an important role in the study of coefficient bounds for the space $\Sigma$
(the analogue of the Bieberbach Conjecture/De Brange's Theorem is still open for class $\Sigma$), but the purely transcendental nature of these extremal maps add to their complexity. For each $d\geq 2$, the compact subspace $\Sigma_d^*\subset\Sigma$ consists of maps in $\Sigma$ that extend as degree $d+1$ rational maps of $\widehat{\C}$ with the maximal number of critical points on $\mathbb{S}^1$. The union of these subspaces in dense in $\Sigma$. The extremal points $f$ of $\Sigma_d^*$ correspond to maps with the maximal number of singular points on $f(\mathbb{S}^1)$: $d+1$ cusps and $d-2$ double points. This allows one to describe the `shape' of extremal maps of $\Sigma_d^*$ in terms of certain combinatorial trees with angle data, which we call \emph{bi-angled trees}. Using the tools developed in this paper, we give a new proof of the following result that recently appeared in~\cite{LMM19}.

\begin{thmx}[Extremal points for schlicht functions]\label{extremal_points_intro_version}
Extremal points of $\Sigma_d^*$ are classified by bi-angled trees with $d-1$ vertices.
\end{thmx}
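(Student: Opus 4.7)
\textbf{Proof proposal for Theorem~\ref{extremal_points_intro_version}.} The plan is to set up a bijection between bi-angled trees $T$ with $d-1$ vertices and extremal points of $\Sigma_d^*$, using the David surgery machinery of the paper. One direction (extremal $f \mapsto $ tree) is combinatorial: an extremal $f \in \Sigma_d^*$ extends to a rational map $F$ of degree $d+1$ whose boundary curve $\Gamma = f(\mathbb{S}^1)$ carries $d+1$ cusps and $d-2$ double points. The cusps cut $\mathbb{S}^1$ into $d+1$ arcs; the image arcs on $\Gamma$ are identified in pairs by the $d-2$ double points, and I would check that these identifications form a tree $T_f$ on $d-1$ vertices (one vertex per connected component of $\widehat{\C}\setminus\Gamma$ containing a critical value of $F$, one edge per double point). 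Recording, at each double point, the two tangency angles and, at each cusp, the angular data of the two incident arcs defines the bi-angled structure. Injectivity of $f \mapsto T_f$ would follow from the fact that $T_f$ determines both the homotopy class of $\Gamma$ in $\widehat{\C}$ and the critical orbit data of $F$.

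For the harder direction, given a bi-angled tree $T$, I would first produce a purely hyperbolic topological model for the intended rational map $F_T$. Specifically, using the realization of bi-angled trees as Hubbard trees of critically fixed anti-polynomials (as in Theorem~\ref{bijection_via_david__intro_version} and the Poirier-style constructions of~\cite{Poi10,Poi13}), I obtain a postcritically finite hyperbolic anti-polynomial $P_T$ of degree $d+1$ whose combinatorics realize $T$, together with a distinguished invariant Fatou component $U_\infty$ on whose boundary one has a degree-$d$ covering dynamics. On $\mathbb{S}^1$ I would then construct the companion piecewise-reflection model as in Theorem~\ref{david_intro_version}: place $d+1$ circular arcs orthogonal to $\mathbb{S}^1$ at the prescribed cusp locations and let $f_T$ act on each arc as the corresponding reflection. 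The boundary map $P_T\big|_{\partial U_\infty}$ and $f_T$ are topologically conjugate as orientation-preserving expansive circle maps with equivalent Markov partitions.

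Next I would apply Theorem~\ref{theorem:extension_generalization} to this conjugacy to obtain a David homeomorphism $H$ of $U_\infty$ intertwining $P_T\big|_{U_\infty}$ with the piecewise reflection dynamics inside the disk bounded by the $d+1$ circular arcs. The David surgery of Section~\ref{david_surgery_sec} then pulls back the standard complex structure through $H$, producing a David-invariant complex structure on $\widehat{\C}$ for the modified map. The David Integrability Theorem yields a quasiconformal/David conjugacy to an honest rational map $F_T$ of the same topological degree $d+1$. At each of the $d+1$ points originally converging to the attractor of $P_T$ inside $U_\infty$, the replacement by reflection dynamics creates a parabolic fixed point, which manifests on the image curve as a cusp; the $d-2$ double points of $\Gamma = F_T(\mathbb{S}^1)$ appear at the tangency loci prescribed by the edges of $T$. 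Restricting $F_T$ to $\D^*$ then gives the desired extremal element of $\Sigma_d^*$ with bi-angled tree $T_{F_T} = T$.

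The main obstacle is verifying that the David surgery produces a rational map of the precisely correct degree, in $\Sigma_d^*$, and with the right critical data: one must (i) check that the replacement on $U_\infty$ is compatible, across $\partial U_\infty$, with the unchanged dynamics on $\widehat{\C}\setminus\overline{U_\infty}$ so that the pulled-back complex structure is globally invariant and the David Integrability Theorem applies; (ii) ensure that no spurious critical points are introduced, so that the $d+1$ critical points of $F_T$ on $\mathbb{S}^1$ are exactly the ones required by membership in $\Sigma_d^*$; and (iii) show that the construction is essentially inverse to the tree-extraction above, so that every extremal point arises from some $T$. Step (i) is exactly the setting in which Theorem~\ref{theorem:extension_generalization} is designed to deliver matching David extensions across boundaries, and steps (ii)--(iii) reduce to a straightforward bookkeeping of critical orbits under the surgery, parallel to the arguments carried out for Theorems~\ref{bijection_via_david__intro_version} and~\ref{mating_intro_version}.
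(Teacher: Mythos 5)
Your outline uses the right toolbox (bi-angled trees as Hubbard trees, critically fixed anti-polynomials, David surgery, Schwarz reflections), but the structure of the surgery is misidentified, and this is not a cosmetic slip — the combinatorics do not come out right.

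In the paper's construction, the bi-angled tree $\mathcal{T}$ with $d-1$ vertices (each with local degree $2$) is realized as the angled Hubbard tree of a critically fixed anti-polynomial $P$ of degree $d$, not $d+1$. The David surgery of Lemma~\ref{david_surgery_lemma} is then performed on the $d-1$ \emph{bounded} invariant Fatou components $U_1,\dots,U_{d-1}$ (the immediate basins of the $d-1$ fixed simple critical points), replacing on each of them the $\br z^2$-conjugate dynamics by the Nielsen map $\pmb{\rho}_2|_{\D}$ of the \emph{ideal triangle} group $\pmb{\Gamma}_3$. After integrating the invariant David coefficient, the complement of the $d-1$ resulting ideal triangles $\Psi(T_i)$ becomes a single simply connected quadrature domain $\Omega_1$; the $3(d-1)$ vertices of the triangles contribute $d+1$ cusps (after $2(d-2)$ of them coalesce into $d-2$ double points at the touching points of the $U_i$, exactly encoded by the edges of $\mathcal{T}$). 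The uniformizing rational map $f_1:\D^*\to\Omega_1$ has degree $d+1$ by Proposition~\ref{simp_conn_quad}, and normalizing it yields the Suffridge polynomial in $\Sigma_d^*$.

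What you propose is essentially the surgery of Theorem~\ref{bijection_via_david__intro_version}/Theorem~\ref{mating_intro_version}: take a single distinguished invariant component $U_\infty$ of an anti-polynomial of degree $d+1$ with degree-$d$ boundary dynamics, and replace it by a rank-$(d+1)$ piecewise reflection model. Even if one fixed the degree mismatch, this surgery carves out a \emph{single} ideal $(d+1)$-gon, so the complementary quadrature domain has $d+1$ cusps and \emph{zero} double points; for $d\geq 3$ the resulting curve cannot be a Suffridge curve (which must have exactly $d-2$ double points). The tree structure with $d-1$ vertices simply has nowhere to go in your setup — it is the identifications between the $d-1$ \emph{separate} replaced basins that produce the $d-2$ double points and hence the tree.

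Finally, your treatment of injectivity is too quick. The paper's uniqueness argument builds a topological conjugacy between the two candidate Schwarz reflections $\sigma_f$ and $\sigma_g$ (first on the tiling set by iterated lifting, then on the limit set by expansivity, then gluing with the B\"ottcher coordinates) and then promotes it to a M\"obius conjugacy precisely because $\Lambda(\sigma_f)$ is conformally removable — which in turn is where Theorem~\ref{theorem:john_removable} and the David homeomorphism from the existence step are used. "Determining the homotopy class of $\Gamma$ and the critical orbit data" is not by itself enough; you need the removability mechanism to conclude rigidity.
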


We refer the readers to Theorem~\ref{existence_suffridge_thm} for a precise formulation of Theorem~\ref{extremal_points_intro_version}. Let us also remark that while the problem of classifying the extremal points of $\Sigma_d^*$ is a non-dynamical one, the proof of Theorem~\ref{extremal_points_intro_version} makes essential use of the dynamics of Schwarz reflection maps associated with the members of $\Sigma_d^*$. With this perspective, the classification problem becomes a problem of realization and rigidity of certain Schwarz reflection dynamical systems.
In fact, the desired Schwarz reflection maps are realized using David surgery techniques developed in the paper.
On the other hand, rigidity of such Schwarz reflection maps is a consequence of conformal removability of their limit sets.

\bigskip

\noindent\textbf{Acknowledgements.} The authors would like to thank Malik Younsi for posing the question on removability of limit sets of Kleinian reflection groups, and for a motivating discussion. The second and third authors thank the Institute for Mathematical Sciences, and Simons Center for Geometry and Physics at Stony Brook University for their hospitality.

\bigskip

\section{Preliminaries on David homeomorphisms}

\subsection{David Integrability Theorem}\label{section:david}

Recall that if $U\subset {\C}$ is an open set and $p\in [1,\infty]$, a measurable function $f\colon U \to \C$ lies in the Sobolev space $W^{1,p}_{\loc}(U)$ if $f\in L^p_{\loc}(U)$ and $f$ has distributional derivatives lying in $L^p_{\loc}(U)$. We say that a function $f\colon U\to \C$ lies in $W^{1,p}(U)$ if $f\in W^{1,p}_{\loc}(U)$ and the quantity 
$$\|f\|_{W^{1,p}(U)}\coloneqq \|f\|_{L^p(U)} + \|Df\|_{L^p(U)}$$
is finite. Here, $Df$ denotes the differential matrix of $f$ and $\|Df\|_{L^p(U)}$ denotes the $L^p$ norm of the operator norm of $Df$. Moreover, the $L^p$ norm here is taken with respect to the Lebesgue measure of $\C$. 

Consider sets $U_i$, $i\in \{1,2\}$, and suppose that each of them is an open subset of either $\C$ or $\widehat{\C}$; we make this distinction because we are going to use a different metric and measure in each case. We say that a measurable function $f\colon U_1\to U_2$ lies in $W^{1,p}_{\loc}(U_1\to U_2)$ if it lies in $W^{1,p}_{\loc}(U_1)$ in local coordinates. Consider the derivative $D_{(d_1,d_2)}f$ of $f$, regarded as a map between the Riemannian manifolds $(U_1,d_1,\mu_1)$ and $(U_2,d_2,\mu_2)$; here $d_i,\mu_i$ are the Euclidean metric and measure if the set $U_i$ is a subset of the plane and $d_i,\mu_i$ are the spherical metric and measure, if $U_i$ is a subset of $\widehat{\C}$. Note that in planar coordinates the derivative $D_{(d_1,d_2)}f$ is just a multiple of the Euclidean differential $Df$. We say that $f\in W^{1,p}(U_1\to U_2)$ if $d_2(f,0)\in L^p(U_1;\mu_1) $ and $D_{(d_1,d_2)}f \in L^p( U_1;\mu_1)$. We note that if the point at $\infty$ does not lie in the closure of any of the sets $U_1$ and $U_2$ then the space $W^{1,p}(U_1\to U_2)$ coincides as a set with $W^{1,p}(U_1)$, since the Euclidean and spherical metrics of $U_1$ and $U_2$ are comparable. In all cases we will be using the simplified notation $W^{1,p}_{\loc}(U_1)$ and $W^{1,p}(U_1)$, whenever it does not lead to a confusion.

An orientation-preserving homeomorphism $H\colon U\to V$ between domains in the Riemann sphere $\widehat{\C}$ is called a \textit{David homeomorphism} if it lies in the Sobolev class $W^{1,1}_{\loc}(U)$ and there exist constants $C,\alpha,K_0>0$ with
\begin{align}\label{definition:david1}
\sigma(\{z\in U: K_H(z)\geq K\}) \leq Ce^{-\alpha K}, \quad K\geq K_0.
\end{align}
Here $\sigma$ is the spherical measure and $K_H$ is the distortion function of $H$, given by
\begin{align*}
K_H(z)=\frac{1+|\mu_H|}{1-|\mu_H|}, 
\end{align*}
where
\begin{align*}
\mu_H= \frac{\partial H/ \partial \br z}{\partial H/\partial z}
\end{align*}
is the Beltrami coefficient of $H$. By condition \eqref{definition:david1}, $K_H$ is finite a.e. and $\mu_H$ takes values in $\D$ a.e. Condition \eqref{definition:david1} is equivalent to 
\begin{align}\label{definition:david2}
\sigma(\{z\in U: |\mu_H(z)|\geq 1-\varepsilon\}) \leq C'e^{-\alpha'/\varepsilon}, \quad \varepsilon\leq \varepsilon_0,
\end{align}
where $C',\alpha',\varepsilon_0$ depend only on $C,\alpha,K_0$. Moreover, another condition equivalent to \eqref{definition:david1} is the exponential integrability of $K_H$:
\begin{align*}
\int_U \exp(p K_H) \,d\sigma <\infty
\end{align*}
for some $p>0$, where $p$ and $\alpha$ are related to each other. Hence, David homeomorphisms are also called homeomorphisms of \textit{exponentially integrable distortion}. The theory of such mappings has been developed to a great extent over the past decades. We direct the reader to \cite[Chapter 20]{AIM09} for more background.  

We list some properties of David homeomorphisms. Let $H\colon U\to V$ be such a map. First, $H$ and $H^{-1}$ are \textit{absolutely continuous in measure}: 
\begin{align*}
\sigma (H(E))=0 \quad \textrm{if and only if} \quad \sigma(E)=0
\end{align*}
for all measurable sets $E\subset U$; see \cite[Theorem 20.4.21]{AIM09}. Second, $H$ satisfies the change of coordinates formula
\begin{align}\label{david_change_of_variables}
\int_{V} g \, d\sigma =\int_{U} g\circ H\cdot  J_H^{\sigma}\, d\sigma
\end{align} 
for all non-negative Borel measurable functions $g$ on $V$, where $J_H^{\sigma}$ is the spherical Jacobian of $H$. This result is due to Federer \cite[Theorem 3.2.5, p.~244]{Fed69} and holds in much more general settings. In particular, \eqref{david_change_of_variables} implies that $J_H^{\sigma}\neq 0$ a.e.

The main result in the theory of David homeomorphisms is the following integrability theorem. If $U$ is an open subset of $\widehat {\C}$ and $\mu \colon U \to \D$ is a measurable function such that $(1+|\mu|)/(1-|\mu|)$ is exponentially integrable in $U$, then we say that $\mu$ is a \textit{David coefficient in $U$} or just a \textit{David coefficient} if $U$ is implicitly understood.
\begin{theorem}[David Integrability Theorem; see {\cite{Dav88}}, {\cite[Theorem~20.6.2, p.~578]{AIM09}}]\label{theorem:integrability_david}
Let $\mu\colon \widehat{\C} \to \D$ be a David coefficient.  Then there exists a homeomorphism $H\colon \widehat{\C} \to \widehat{\C}$ of class $W^{1,1}(\widehat \C)$ that solves the Beltrami equation
\begin{align*}
\frac{\partial H}{\partial \br z}= \mu \frac{\partial H}{\partial z}.
\end{align*}
Moreover, $H$ is unique up to postcomposition with M\"obius transformations.
\end{theorem}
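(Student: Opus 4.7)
The plan is to reduce the problem to the classical measurable Riemann mapping theorem via an approximation–compactness argument, and then to use the Beltrami distortion estimates tailored to David coefficients to pass to the limit.

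First, I would truncate the coefficient: set
\begin{align*}
\mu_n(z) = \mu(z) \cdot \mathbf{1}_{\{|\mu(z)| \leq 1 - 1/n\}}.
\end{align*}
Each $\mu_n$ satisfies $\|\mu_n\|_\infty < 1$, so the classical measurable Riemann mapping theorem produces a unique quasiconformal homeomorphism $H_n \colon \widehat{\C} \to \widehat{\C}$ solving $\partial_{\bar z} H_n = \mu_n \partial_z H_n$ and fixing three prescribed points, say $0,1,\infty$. The goal is then to extract a subsequence converging (uniformly on $\widehat{\C}$ with the spherical metric) to the desired $H$.

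The key technical input is a uniform modulus-of-continuity estimate for the family $\{H_n\}$, which is where the exponential integrability of $K_\mu = (1+|\mu|)/(1-|\mu|)$ is essential. Unlike in the quasiconformal case, one cannot bound $K_{H_n}$ uniformly in $L^\infty$, but one does get a uniform bound on $\int \exp(p K_{H_n}) \, d\sigma$ because $K_{\mu_n} \leq K_\mu$ pointwise. Standard distortion inequalities for mappings of exponentially integrable distortion then yield a spherical modulus-of-continuity estimate of the form
\begin{align*}
\sigma(H_n(z), H_n(w)) \leq \omega\bigl(\sigma(z,w)\bigr),
\end{align*}
with $\omega(t) \to 0$ as $t \to 0$ independently of $n$; the same estimate holds for $H_n^{-1}$. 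By Arzelà–Ascoli, a subsequence converges uniformly on $\widehat{\C}$ to a continuous map $H$, and the inverses converge to a continuous map $G$ with $G \circ H = \id$, so $H$ is a homeomorphism. The uniform modulus bound is the main obstacle: it requires the delicate Chen–Chen–He / David / AIM-type distortion lemma, whereas everything else is formal.

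Next I would check that $H \in W^{1,1}_{\loc}$ and solves the Beltrami equation with coefficient $\mu$. From the change-of-variables formula for $H_n$ and the uniform exponential integrability of $K_{H_n}$, I get a uniform $L^1$ bound for the differentials $DH_n$ on compact sets (via a Cauchy–Schwarz / elementary inequality splitting $|DH_n|^2 \leq K_{H_n} J_{H_n}$ and using that $J_{H_n}$ integrates to the spherical area of the image). Hence $\{H_n\}$ is bounded in $W^{1,1}_{\loc}$, and after passing to a further subsequence the weak $W^{1,1}_{\loc}$ limit is $H$. Weak convergence of the partials combined with the a.e. convergence $\mu_n \to \mu$ off $\{|\mu|=1\}$, together with the fact (from the truncated Beltrami equations) that $\partial_{\bar z}H_n - \mu \partial_z H_n = (\mu_n - \mu)\partial_z H_n$ vanishes in the limit once one verifies that $\partial_z H_n$ is uniformly integrable on compacta, give $\partial_{\bar z} H = \mu \partial_z H$ a.e. The verification that the David condition \eqref{definition:david1} is inherited by $H$ follows from lower semicontinuity and the uniform exponential integrability.

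Finally, for uniqueness, suppose $H_1, H_2$ are two solutions. Then $F = H_2 \circ H_1^{-1}$ is well-defined, and the chain rule for David homeomorphisms (which is valid because of absolute continuity in measure and the change-of-variables formula \eqref{david_change_of_variables}) shows that $\mu_F = 0$ a.e. By Weyl's lemma, $F$ is conformal on $\widehat{\C}$, hence a Möbius transformation. I expect the modulus-of-continuity step to be the true obstruction: the passage from $L^\infty$-bounds on $\mu_n$ to uniform spherical equicontinuity in the David regime is precisely where exponential integrability, rather than boundedness, is indispensable.
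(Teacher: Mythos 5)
This theorem is cited in the paper from David~\cite{Dav88} and Astala--Iwaniec--Martin~\cite[Theorem~20.6.2]{AIM09} without a proof, so there is no in-text argument of the paper's to compare against. With that understood, your outline does follow the standard proof found in those references: truncate the coefficient, solve the truncated Beltrami equations by the Measurable Riemann Mapping Theorem with a three-point normalization, extract a normal limit via uniform modulus-of-continuity estimates, verify the limit is a $W^{1,1}_{\loc}$ solution, and obtain uniqueness from Weyl's lemma via the Stoilow factorization.

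However, there is a genuine gap in your handling of the inverses. You write that ``the same estimate holds for $H_n^{-1}$,'' appealing to the same distortion lemma, and use this to conclude that the uniform limits $H$ and $G$ satisfy $G\circ H = \id$. This is precisely the point at which the asymmetry of the David class bites. The distortion of $H_n^{-1}$ is $K_{H_n^{-1}} = K_{H_n}\circ H_n^{-1}$, and because $H_n^{-1}$ may distort measure badly, exponential integrability of $K_{H_n}$ does not transfer to $K_{H_n^{-1}}$; the paper itself records Zakeri's example $\phi(re^{i\theta}) = e^{i\theta}/(\log(1/r)+1)$, a David map whose inverse is not David. Thus the Chen--Chen--He/AIM modulus-of-continuity lemma, which is stated in terms of exponential integrability of the distortion, cannot be applied to $H_n^{-1}$ as though it were another David map. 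What one actually needs is a separate, \emph{lower}, modulus estimate for $H_n$ itself --- a quantitative statement that $|H_n(z)-H_n(w)|$ cannot be too small given $|z-w|$, controlled only by the exponential integrability constants of $K_{H_n}$ --- which then yields an upper modulus of continuity for $H_n^{-1}$. Alternatively, one can bypass equicontinuity of inverses entirely by first showing the uniform limit $H$ is a monotone $W^{1,1}_{\loc}$ map with Jacobian positive a.e.\ (the latter via the change-of-variables identity) and then invoking a topological degree argument to conclude it is a homeomorphism. Either repair is standard, but as written your passage from uniform convergence of $H_n$ to homeomorphy of $H$ does not go through without one of these additional steps.
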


The David Integrability Theorem is a generalization of the Measurable Riemann Mapping Theorem \cite[Theorem 5.3.4, p.~170]{AIM09}, which states that if $\|\mu\|_\infty<1$, then there exists a quasiconformal homeomorphism  $H\colon \widehat{\C} \to \widehat{\C}$ (thus, of class $ W^{1,2}(\widehat \C)$) that solves the Beltrami equation
\begin{align*}
\frac{\partial H}{\partial \br z}= \mu \frac{\partial H}{\partial z}.
\end{align*}

The uniqueness part in Theorem \ref{theorem:integrability_david} also holds locally and we have the following factorization theorem.

\begin{theorem}[{\cite[Theorem~20.4.19, p.~565]{AIM09}}]\label{theorem:stoilow}
Let $\Omega\subset \widehat{\C}$ be an open set and $f,g\colon \Omega\to \widehat{\C}$ be David embeddings with
\begin{align*}
\mu_f=\mu_g
\end{align*}
almost everywhere. Then $f\circ g^{-1}$ is a conformal map on $g(\Omega)$.  
\end{theorem}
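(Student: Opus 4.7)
The plan is to show that $h \coloneqq f \circ g^{-1}\colon g(\Omega) \to f(\Omega)$ is holomorphic; since it is simultaneously an orientation-preserving homeomorphism between planar open sets, this will force $h$ to be conformal. The issue is local, so I would work in charts of $\widehat{\C}$ and assume that all domains in question lie in $\C$.

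First I would upgrade the Sobolev regularity of $g^{-1}$. A standard consequence of the David theory, coming from the change-of-variables formula \eqref{david_change_of_variables} together with the distortion bound \eqref{definition:david2}, is that $g^{-1}\in W^{1,2}_{\loc}(g(\Omega))$, despite $g$ itself being only in $W^{1,1}_{\loc}$. Combining this $W^{1,2}$ bound on $g^{-1}$ with $f\in W^{1,1}_{\loc}(\Omega)$ and with the fact that both $f$ and $g^{-1}$ satisfy Lusin's $N$ and $N^{-1}$ properties (absolute continuity in measure), the standard chain rule for compositions of Sobolev homeomorphisms yields $h\in W^{1,1}_{\loc}(g(\Omega))$ together with the pointwise identity
\[
Dh(w) \;=\; Df(g^{-1}(w))\cdot Dg^{-1}(w)
\]
for almost every $w\in g(\Omega)$, namely the image under $g$ of the full-measure set on which $g$ is differentiable with $J_g>0$ and $f$ is differentiable.

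Next, at such a point $w=g(z)$, I would insert this chain rule into the standard composition formula for Beltrami coefficients to obtain
\[
\mu_h(w) \;=\; \frac{\mu_f(z)-\mu_g(z)}{1-\overline{\mu_g(z)}\,\mu_f(z)}\cdot \frac{g_z(z)}{\overline{g_z(z)}},
\]
so the hypothesis $\mu_f=\mu_g$ a.e.\ gives $\mu_h=0$ a.e.\ on $g(\Omega)$. Since $h\in W^{1,1}_{\loc}$, its almost-everywhere partial derivatives coincide with its distributional derivatives, and hence $\overline{\partial}h=0$ as a distribution. Weyl's lemma then produces a holomorphic representative of $h$, and a holomorphic orientation-preserving homeomorphism between planar open sets is automatically conformal (its derivative cannot vanish, else it would be locally $z\mapsto z^k$ with $k\geq 2$, violating injectivity).

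The main obstacle is the rigorous justification of the chain rule in the second step. In $W^{1,1}$ alone the composition and a.e.\ differentiability theory is notoriously delicate, and it is precisely the improved $W^{1,2}_{\loc}$ regularity of $g^{-1}$, a genuinely non-trivial output of the David theory and not available from the definition of a David map alone, that permits the classical composition-of-Sobolev-maps chain rule to apply and to deliver the pointwise derivative identity needed to compute $\mu_h$.
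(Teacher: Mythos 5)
The paper does not prove this theorem; it cites it verbatim from Astala--Iwaniec--Martin (Theorem~20.4.19 there), so there is no internal argument to compare against. Your overall strategy is the right one, and the claim $g^{-1}\in W^{1,2}_{\loc}$ is indeed a true and nontrivial fact (Hencl--Koskela: a planar homeomorphism of finite distortion with locally integrable distortion has inverse in $W^{1,2}_{\loc}$, and exponential integrability of $K_g$ is far more than is needed). The gap is in the chain-rule step. You assert that the improved regularity of $g^{-1}$ ``permits the classical composition-of-Sobolev-maps chain rule to apply,'' but it does not. The composition theorem available (Theorem~\ref{theorem:Hencl_Koskela}, taken with inner map $F=g^{-1}$ and outer function $f$) needs either $f\in W^{1,2}_{\loc}$ (branch (i)) --- which a David map need not satisfy, since it lies only in $W^{1,2-\varepsilon}_{\loc}$ for each $\varepsilon>0$ --- or $K_{g^{-1}}\|Dg^{-1}\|^{-\varepsilon}\in L^{1/(1-\varepsilon)}_{\loc}(g(\Omega))$ (branch (ii)), which after a change of variables reduces to $\int_V\|Dg\|^{(2-\varepsilon)/(1-\varepsilon)}\,dz<\infty$, i.e.\ to $g\in W^{1,p}_{\loc}$ for some $p>2$. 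Both requirements exceed David regularity. Placing the $W^{1,2}_{\loc}$ bound on $g^{-1}$, the inner map, does not help: the theorems place the $L^2$-type burden on the outer function, and no generic composition theorem closes this step.

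What actually makes the argument work is that $\mu_f=\mu_g$ must be used \emph{inside} the regularity step, not only at the end to read off $\mu_h=0$. At a.e.\ $z$ one has $Df(z)=A(z)\,Dg(z)$ with $A(z)$ complex-linear, so $|A|=(J_f/J_g)^{1/2}$, and the change-of-variables identity
\[
\int_{g(V)}\|Dh\|\,dw=\int_V |A|\,J_g\,dz=\int_V J_f^{1/2}J_g^{1/2}\,dz\le\Bigl(\int_V J_f\Bigr)^{1/2}\Bigl(\int_V J_g\Bigr)^{1/2}<\infty
\]
supplies the local $L^1$ bound on the candidate gradient of $h$ that a generic composition would never provide; upgrading this to an honest $W^{1,1}_{\loc}$ membership (an ACL argument) is precisely the technical content of the Astala--Iwaniec--Martin proof. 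You should replace the appeal to a ``standard chain rule'' with this structure-specific computation, or cite the relevant step of [AIM09] explicitly.
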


We note that, unlike the theory of quasiconformal mappings, the inverse of a David homeomorphism is not necessarily a David map. The following simple example is given in \cite[p.~123]{Zak04}. The homeomorphism $\phi\colon\D\to \D$ defined by 
$$\phi(re^{i\theta})= \frac{1}{\log(1/r)+1} e^{i\theta}$$
is a David map but its inverse is not a David map. Indeed $K_\phi(re^{i\theta}) \simeq \log(1/r)$, which is exponentially integrable near $0$, but $K_{\phi^{-1}}(re^{i\theta})\simeq \log(1/r)^2$, which is not exponentially integrable near $0$.

\bigskip 

\subsection{David extensions of circle homeomorphisms}\label{section:preliminaries_extension}
We will need an extension result for David maps, which is a generalization of the well-known extension of Beurling and Ahlfors \cite{BA56}. Let $h\colon \mathbb S^1\to \mathbb S^1$ be an orientation-preserving homeomorphism. We define the \textit{distortion function} of $h$ to be 
\begin{align*}
\rho_h(z,t)= \max\left\{  \frac{|h(e^{2\pi i t}z)-h(z)| }{ |h(e^{-2\pi i t}z)-h(z)| } ,  \frac{|h(e^{-2\pi i t}z)-h(z)| }{ |h(e^{2\pi i t}z)-h(z)| }\right\},
\end{align*}
where $z\in \mathbb S^1$ and $0<t<1/2$. We define the \textit{scalewise distortion function} of $h$ to be  
\begin{align*}
\rho_h(t)= \max_{z\in \mathbb{S}^1}\rho_h(z,t),  
\end{align*}
where $0<t<1/2$. If $\rho_h(t)$ is bounded above, then the function $h$ is a quasisymmetry and has a quasiconformal extension on $\D$, by the theorem of Beurling and Ahlfors. Zakeri observed in \cite[Theorem 3.1]{Zak08}, by applying a result of \cite{CCH96}, that there is a growth condition on $\rho_h(t)$ that is sufficient for a homeomorphism $h$ of the circle to have a David extension in the disk. See also the recent work \cite{KN22} for a stronger extension result.

\begin{theorem}[{\cite[Theorem 3]{CCH96},\cite[Theorem 3.1]{Zak08}}]\label{theorem:extension_david}
Let $h\colon \mathbb S^1\to \mathbb S^1$ be an orientation-preserving homeomorphism and suppose that
\begin{align*}
\rho_h(t) = O(\log(1/t))\quad \textrm{as} \quad t\to 0.
\end{align*}
Then $h$ has an extension to a David homeomorphism $\widetilde h\colon \D\to \D$. 
\end{theorem}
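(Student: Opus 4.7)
The plan is to produce the extension $\widetilde h$ via a Beurling--Ahlfors-type integral formula, estimate its pointwise distortion at scale $1-|z|$ in terms of $\rho_h$, and deduce exponential integrability from the logarithmic hypothesis.

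First I would construct $\widetilde h$ explicitly. Lift $h$ to an orientation-preserving homeomorphism $\tilde h\colon \R\to \R$ with $\tilde h(x+1)=\tilde h(x)+1$, and apply the classical Beurling--Ahlfors formula
\begin{equation*}
H(x+iy) = \tfrac{1}{2y}\int_{x-y}^{x+y}\tilde h(t)\,dt + \tfrac{i}{2y}\Bigl[\int_{x}^{x+y}\tilde h(t)\,dt - \int_{x-y}^{x}\tilde h(t)\,dt\Bigr],
\end{equation*}
obtaining a homeomorphism $H\colon \UHP\to\UHP$ continuous up to $\R$ with boundary values $\tilde h$. The formula is $\Z$-equivariant, so $H$ descends through the exponential uniformization $z\mapsto e^{2\pi i z}\colon\UHP\to\D\setminus\{0\}$ to a homeomorphism of $\D\setminus\{0\}$. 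This extends continuously across the origin to yield $\widetilde h\colon\D\to\D$ with $\widetilde h|_{\mathbb S^1}=h$.

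Next I would bound the distortion pointwise. The technical heart of \cite{CCH96} is the estimate that the Beltrami coefficient of $H$ at $w=x+iy$ satisfies $K_H(w)\leq C\,\rho_{\tilde h}(x,y)+C'$, where $\rho_{\tilde h}(x,y)$ is the symmetric distortion of $\tilde h$ on the interval $[x-y,x+y]$. This follows by direct computation of $\partial H$ and $\br\partial H$: both can be written as divided differences of $\tilde h$ at scale $y$, and the Jacobian lower bound ultimately comes down to comparing forward and backward divided differences, which is exactly what $\rho_{\tilde h}$ controls. Pushing down through the exponential map, a point $z\in\D$ near $\mathbb S^1$ corresponds to $y\asymp 1-|z|$ and an arc on $\mathbb S^1$ of length $\asymp 1-|z|$. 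Since the exponential map is conformal, its distortion contribution is absorbed into a multiplicative constant, and one gets
\begin{equation*}
K_{\widetilde h}(z)\leq C\,\rho_h(1-|z|)+C' \qquad\text{for } |z|\text{ close to }1,
\end{equation*}
with $K_{\widetilde h}$ locally bounded in the interior of $\D$.

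Finally, the logarithmic hypothesis $\rho_h(t)\leq C_0\log(1/t)$ yields $K_{\widetilde h}(z)\leq C_1\log(1/(1-|z|))+C_2$, so in polar coordinates, for $p>0$ small enough,
\begin{equation*}
\int_\D e^{p K_{\widetilde h}(z)}\,dA(z) \;\lesssim\; 1+\int_{1/2}^{1} (1-r)^{-pC_1}\,r\,dr \;<\;\infty,
\end{equation*}
which is precisely the exponential integrability of the distortion, i.e., $\widetilde h$ is a David homeomorphism. The main obstacle lies in Step two: in the classical BA theory $\rho_h$ is assumed bounded, and one has to check that the linear bound $K_H\lesssim \rho_{\tilde h}$ (rather than some superlinear expression) remains valid when $\rho_h$ is unbounded, since it is exactly this linear dependence that pairs the logarithmic input against the exponential output required by the David condition. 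Ensuring injectivity and positivity of the Jacobian of $H$ uniformly on compacta of $\UHP$, despite the unboundedness of $\rho_{\tilde h}$, is what must be handled with care.
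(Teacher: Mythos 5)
Your proposal follows essentially the same route as the paper, which cites Zakeri~\cite{Zak08} and Chen--Chen--He~\cite{CCH96} for precisely this argument: lift $h$ through $z\mapsto e^{2\pi i z}$, take the Beurling--Ahlfors extension, invoke the pointwise distortion estimate of~\cite{CCH96}, and descend back to $\D$. You correctly identify the crux, namely that the pointwise bound $K_H(x+iy)\lesssim \rho_h(y)$ must be \emph{linear} in $\rho_h$ (not the quadratic bound of the original Beurling--Ahlfors analysis), since only then does the logarithmic hypothesis convert to the sub-exponential measure decay required by the David condition; this linear bound is exactly the content of \cite[Theorem~3]{CCH96}.
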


In fact, Zakeri discusses the extensions of homeomorphisms $H$ of $\R$ that commute with the function $z\mapsto z+1$. These homeomorphisms arise as lifts of circle homeomorphisms $h$ under the universal covering map $z\mapsto e^{2\pi i z}$. Then he simply takes the Beurling--Ahlfors extension $\widetilde H$ of $H$ to the upper half-plane and observes that \cite[Theorem 3]{CCH96} implies that under the condition of Theorem \ref{theorem:extension_david}, translated to the homeomorphism $H$ of $\R$, the map $\widetilde H$ is a David map of the upper half-plane. Finally, the extension $\widetilde H$ descends to a David extension $\widetilde h$ of $h$ in $\D$, as it is pointed out in \cite[p.~243]{Zak08}.

\begin{remark}\label{remark:symmetric_extension}
If $h\colon \mathbb S^1\to \mathbb S^1$ is an orientation-preserving homeomorphism that is real-symmetric, i.e., $h(\br z)=\br{h(z)}$ for $z\in \mathbb S^1$, then the David extension of $h$ that is given by Theorem \ref{theorem:extension_david} has the same property. Indeed, let $H$ denote the lift of $h$ to the real line, under the universal cover $z\mapsto e^{2\pi i z}$. The real-symmetry of $h$ is equivalent to the condition $H(1-z)=1-H(z)$ for all $z\in \R$. Let $\widetilde H$ be the Beurling-Ahlfors extension of $H$. 
The Beurling-Ahlfors extension operator is equivariant under precomposition and postcomposition with linear maps of the form $z\mapsto az+b$, $a>0$, $b\in \R$. That is, if $T_1$ and $T_2$ are such linear maps, then the Beurling-Ahlfors extension of $T_2\circ H\circ T_1$ is the map $T_2\circ \widetilde H \circ T_1$; see \cite[p.~246]{Zak08}. In our case, this implies that $\widetilde H(1-z)=1-\widetilde H(z)$ for all $z$ in the upper half-plane. Equivalently, the extension $\widetilde h$ of $h$ to the disk $\D$ is real-symmetric, as claimed. 
\end{remark}

\bigskip

\subsection{Composition with quasiconformal maps}\label{section:composition}
We also discuss the invariance of the David property under quasiconformal maps. Recall that a \textit{quasidisk} is the image of the unit disk under a quasiconformal map of $\widehat{\C}$. A domain $\Omega\subset \widehat{\C}$ is a \textit{John domain} if for each base point $z_0\in \Omega$ there exists a constant $c>0$ such that for each point $z_1\in \Omega$ there exists a simple path $\gamma$ joining $z_0$ to $z_1$ in $\Omega$ with the property that for each point $z$ on the path $\gamma$ we have 
\begin{align*}
\dist_{\sigma}(z, \partial \Omega) \geq c \length_{\sigma}(\gamma|_{[z,z_1]}),
\end{align*}
where $\gamma|_{[z,z_1]}$ denotes the subpath of $\gamma$ whose endpoints are $z$ and $z_1$. Here we are using the spherical distance and spherical length. Intuitively, John domains can have inward cusps but not outward cusps and they are a generalization of a quasidisks, which cannot have any cusps at all.

\begin{prop}\label{prop:david_qc_invariance}
Let $f\colon U\to V$ be a David homeomorphism between open sets $U,V\subset \widehat {\C}$.
\begin{enumerate}[\upshape(i)]
\item If $g\colon V \to \widehat{\C}$ is a quasiconformal embedding then $g\circ f$ is a David map.
\item If $W\subset {\widehat{\C}}$ is an open set and $g\colon W \to U$ is a quasiconformal homeomorphism that extends to a quasiconformal homeomorphism of an open neighborhood of $\br W$ onto an open neighborhood of $\br U$, then $f\circ g$ is a David map.
\item If $W\subset \widehat{\C}$ is an open set, and $g\colon W\to U$ is a non-constant quasiregular map that extends to a quasiregular map in an open neighborhood of $\br W$, then the function $$\mu_{f\circ g}=  \frac{\partial (f\circ g)/ \partial \br z}{\partial (f\circ g)/\partial z} $$
is a David coefficient in $W$. 
\item If $U$ is a quasidisk, $W$ is a John domain, and $g\colon W\to U$ is a quasiconformal homeomorphism, then $f\circ g$ is a David map. 
\end{enumerate}
\end{prop}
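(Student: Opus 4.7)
The overall strategy is uniform across the four parts: combine the pointwise chain rule
\[
K_{F\circ G}(z)\le K_F(G(z))\, K_G(z)\quad \text{a.e.}
\]
for the maximal dilatation with an area-distortion transfer, and check the Sobolev regularity $W^{1,1}_{\loc}$ via the chain rule, bounded distortion of the quasiconformal/quasiregular factor, and the David change-of-variables formula \eqref{david_change_of_variables}.

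For (i), with $K_g\le K$ on $V$, the estimate $K_{g\circ f}(z)\le K\cdot K_f(z)$ on $U$ transfers the David condition for $K_f$ directly to $g\circ f$. For (ii), $K_{f\circ g}(z)\le K\cdot K_f(g(z))$ on $W$ yields
\[
\sigma(\{z\in W:K_{f\circ g}(z)\ge M\})\le \sigma\bigl(g^{-1}(\{w\in U:K_f(w)\ge M/K\})\bigr),
\]
and Astala's area-distortion theorem, applied to the given quasiconformal extension of $g$ past $\partial W$, produces $\sigma(g^{-1}(E))\le C\,\sigma(E)^{1/K}$ on compact pieces, hence exponential decay of the level sets of $K_{f\circ g}$.

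For (iii), the branch set of $g$ has measure zero, so the chain-rule bound holds a.e. where $\mu_{f\circ g}$ is defined, and Astala's area-distortion extends to quasiregular maps via Stoilow factorization $g=h\circ\phi$ (with $h$ holomorphic and $\phi$ quasiconformal). The argument of (ii) then gives exponential integrability of $K_{f\circ g}$, i.e., $\mu_{f\circ g}$ is a David coefficient in $W$; no injectivity of $f\circ g$ is required for this conclusion, which is why the conclusion is phrased in terms of $\mu_{f\circ g}$ rather than of a map.

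Part (iv) is the main obstacle, since $g$ is not assumed to extend beyond $W$. My plan is a direct change-of-variables argument combined with H\"older's inequality: for suitable $p>0$ and conjugate exponents $q,q'$,
\[
\int_W \exp(p K_{f\circ g})\,d\sigma \le \int_U \exp(p K\, K_f)\,J^\sigma_{g^{-1}}\,d\sigma \le \Bigl(\int_U \exp(p K q'\, K_f)\Bigr)^{1/q'}\Bigl(\int_U (J^\sigma_{g^{-1}})^q\Bigr)^{1/q}.
\]
The first factor is finite for $p$ small by the David property of $f$. The critical step is $J^\sigma_{g^{-1}}\in L^q(U)$ for some $q>1$; locally this is Gehring's higher-integrability lemma, so the real issue is integrability up to the boundary. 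Here the quasidisk hypothesis on $U$ furnishes a quasiconformal reflection across $\partial U$, allowing $g^{-1}$ to be extended to a quasiconformal map across $\partial U$, while the John hypothesis on $W$ ensures the extension lands in a domain of sufficiently regular geometry to validate Gehring's theorem globally on $U$.
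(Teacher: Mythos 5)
Your treatment of the exponential-integrability half is reasonable and in places a genuine alternative to the paper's. For (i) the argument is the same. For (ii) you apply Astala's area-distortion theorem directly to the level sets of $K_{f\circ g}$, whereas the paper changes variables and pairs $\exp(pK_f)$ against $J_{g^{-1}}^\sigma\in L^{1+\varepsilon}$ via H\"older; both routes work given the extension hypothesis, so this is a legitimate variant. For (iii) the paper also uses Stoilow to split off a holomorphic factor, but then does an explicit local computation of $J_h^{-\delta}$ near branch points; your appeal to ``area distortion for quasiregular maps'' is a sketch that would have to be carried out near critical points (where the exponent degrades with the local degree), but the idea is compatible with the paper's.

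There are, however, two substantial gaps. First, establishing that the composition lies in $W^{1,1}_{\loc}$ is not a byproduct of the chain rule and the change-of-variables formula: even the pointwise identity $D(F\circ G)=DF\circ DG$ a.e., which your dilatation bound $K_{F\circ G}\le (K_F\circ G)\cdot K_G$ presupposes, is nontrivial for compositions of Sobolev homeomorphisms. The paper invokes a composition theorem of Hencl--Koskela (their Theorem~5.13). For $g\circ f$ in (i) this needs $K_f\in L^1_{\loc}$ and $g\in W^{1,2}_{\loc}$; for $f\circ g$ in (ii) and (iv) it needs the subtler facts that David maps lie in $W^{1,2-\varepsilon}_{\loc}$ for every $\varepsilon>0$ and that $J_g^{-\delta}\in L^1_{\loc}$ for small $\delta$. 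Your proposal gives no mechanism for this regularity.

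Second, the reflection argument you propose for (iv) does not work. The quasidisk hypothesis gives a quasiconformal reflection across $\partial U$, but to extend $g^{-1}$ quasiconformally past $\partial U$ one also needs a quasiconformal reflection across $\partial W$ on the target side, and a John domain does not admit one: $\partial W$ need not be a quasicircle, and $W$ need not even be a Jordan domain. This is precisely why the paper reaches for Martio--V\"ais\"al\"a's Theorem~2.16, which yields $J_{g^{-1}}\in L^{1+\varepsilon}(U)$ for a quasiconformal map from a bounded uniform domain onto a John domain \emph{without} any boundary extension; the paper then handles the unboundedness of the quasidisk $U$ by deleting a small ball to land in the bounded-uniform/John framework. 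Replacing Martio--V\"ais\"al\"a by a reflection step is not a simplification but a step that fails.
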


Note that (ii) and (iii) imply that if $\mu$ is a David coefficient and $g$ is a quasiconformal or quasiregular map, then the pullback $g^*\mu$ is also a David coefficient. The proposition actually requires the existence of a David map $f$ with $\mu_f=\mu$; however, there is always such a map $f$ given by Theorem \ref{theorem:integrability_david}.

The proof relies partially on \cite[Theorem 5.13]{HK14}, which concerns the composition of Sobolev functions with Sobolev homeomorphisms. We quote below some special instances of the theorem that we will use.

\begin{theorem}[{\cite[Theorem 5.13]{HK14}}]\label{theorem:Hencl_Koskela}
Let $\Omega_1,\Omega_2$ be open subsets of $\C$ and let $F\colon \Omega_1\to \Omega_2$ be a homeomorphism with $F\in W^{1,1}_{\loc}(\Omega_1)$ and $J_F\neq 0$ a.e. We set $K_F= \|DF\|^2/J_F$ a.e. in $\Omega_1$.
\begin{enumerate}[\upshape(i)]
\item If $K_F\in L^{1}_{\loc}(\Omega_1)$ and  $g\in W^{1,2}_{\loc}(\Omega_2)$, then $g\circ F\in W^{1,1}_{\loc}(\Omega_1)$.
\item If $K_F \cdot \|DF\|^{-\varepsilon}\in L_{\loc}^{1/(1-\varepsilon)}(\Omega_1)$ for some $\varepsilon\in (0,1)$ and $g\in W^{1,2-\varepsilon}_{\loc}(\Omega_2)$, then $g\circ F \in W^{1,1}_{\loc}(\Omega_1)$.
\end{enumerate} 
Moreover, we have the usual chain rule:
\begin{align*}
D(g\circ F)(z)=Dg(F(z))\circ   DF(z) 
\end{align*}
for a.e.\ $z\in \Omega_1$.
\end{theorem}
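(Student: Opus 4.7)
The plan is to reduce both parts of the statement to a Hölder inequality applied to the pointwise chain rule estimate $|D(g\circ F)(z)| \le |Dg(F(z))|\cdot \|DF(z)\|$, using the crucial factorization $\|DF\|^2 = K_F\cdot J_F$ that splits $\|DF\|$ into a piece controlled by the distortion and a piece ($J_F^{1/2}$) that plays well with the change of variables under $F$. The non-routine part is not the Hölder computation but justifying that $g\circ F$ is a well-defined measurable function, that it is Sobolev, and that the chain rule formula holds a.e.\ under the very weak regularity hypotheses on $F$ (just $W^{1,1}_{\loc}$ with $J_F\ne 0$ a.e.).

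First I would set up the composition. The Sobolev homeomorphism $F$ with $J_F\ne 0$ a.e.\ satisfies the Lusin $(N^{-1})$ condition (preimages of null sets are null); this follows from the general area-inequality $|F(E)|\ge \int_E J_F$ applied to $F^{-1}$, noting $J_{F^{-1}}(F(z))\,J_F(z)=1$ a.e.\ for such a homeomorphism. Consequently, choosing any precise representative of $g$, defined outside an exceptional null set $E\subset\Omega_2$, the composition $g\circ F$ is well-defined outside the null set $F^{-1}(E)$. Next I would prove the chain rule via the ACL characterization: $F$ is absolutely continuous on almost every vertical and horizontal line (since $F\in W^{1,1}_{\loc}$), and an a.e.\ selected representative of $g\in W^{1,p}_{\loc}$ is absolutely continuous on the $F$-images of almost every such line (using the $(N^{-1})$-property to transfer the exceptional set). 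Differentiating the one-variable composition along lines gives the formula $D(g\circ F)=Dg(F)\cdot DF$ a.e., and one verifies that the composite is ACL; integrability of this product will upgrade ACL to $W^{1,1}_{\loc}$.

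For (i), from $\|DF\|=K_F^{1/2}J_F^{1/2}$ and Cauchy--Schwarz on any precompact $\Omega'\Subset\Omega_1$,
\[
\int_{\Omega'}|Dg(F)|\,\|DF\|\,dz
\le \Bigl(\int_{\Omega'}|Dg(F)|^{2}\,J_F\,dz\Bigr)^{1/2}
\Bigl(\int_{\Omega'}K_F\,dz\Bigr)^{1/2}.
\]
The change of variables formula for Sobolev homeomorphisms (valid under $J_F\ne 0$ a.e., with inequality in the absence of $(N)$) bounds the first factor by $\|Dg\|_{L^2(F(\Omega'))}$, finite since $g\in W^{1,2}_{\loc}$. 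The second factor is finite by the hypothesis $K_F\in L^1_{\loc}$. For (ii), I would apply Hölder with exponents $p=2-\varepsilon$ and $q=(2-\varepsilon)/(1-\varepsilon)$ to the factorization $|Dg(F)|\|DF\| = \bigl(|Dg(F)|J_F^{1/p}\bigr)\cdot\bigl(\|DF\|J_F^{-1/p}\bigr)$, obtaining
\[
\int_{\Omega'}|Dg(F)|\,\|DF\|\,dz
\le \Bigl(\int_{\Omega'}|Dg(F)|^{2-\varepsilon}J_F\,dz\Bigr)^{1/(2-\varepsilon)}
\Bigl(\int_{\Omega'}(K_F\|DF\|^{-\varepsilon})^{1/(1-\varepsilon)}\,dz\Bigr)^{(1-\varepsilon)/(2-\varepsilon)},
\]
where the identity $\|DF\|^q J_F^{-q/p}=(K_F\|DF\|^{-\varepsilon})^{1/(1-\varepsilon)}$ is a direct computation using $\|DF\|^2=K_FJ_F$. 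Change of variables again controls the first factor by $\|Dg\|_{L^{2-\varepsilon}(F(\Omega'))}$, and the second factor is finite by hypothesis.

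The main obstacle is the technical one of validating composition and the chain rule under only $W^{1,1}_{\loc}$ regularity for $F$ and only $W^{1,p}_{\loc}$ with $p<2$ for $g$: one must handle the possible failure of differentiability of $F$ at irregular points of $g$, the possible failure of the Lusin $(N)$ condition in the $F$-direction (which forces the change of variables to be used as an inequality, pushing integrals forward rather than pulling back), and the selection of pointwise representatives. The assumptions $K_F\in L^1_{\loc}$ (in (i)) and $K_F\|DF\|^{-\varepsilon}\in L^{1/(1-\varepsilon)}_{\loc}$ (in (ii)) are precisely what is needed to run the Hölder argument, but the regularity/measurability groundwork is the substantive content to be executed carefully before that step.
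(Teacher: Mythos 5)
This theorem is not proved in the paper; it is quoted verbatim from Hencl--Koskela and used as a black box, so there is no ``paper's proof'' to compare against. That said, your proposal is a genuine attempt at the theorem and deserves scrutiny on its own terms.

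The two H\"older computations are correct: the factorization $\|DF\|=K_F^{1/2}J_F^{1/2}$ and the splitting $|Dg(F)|\,\|DF\| = \bigl(|Dg(F)|J_F^{1/p}\bigr)\bigl(\|DF\|J_F^{-1/p}\bigr)$ with exponents $p=2-\varepsilon$, $q=(2-\varepsilon)/(1-\varepsilon)$ give exactly the stated hypotheses, and the identity $\|DF\|^q J_F^{-q/p}=(K_F\|DF\|^{-\varepsilon})^{1/(1-\varepsilon)}$ checks out. Pushing the first factor forward by the change-of-variables inequality (valid for $W^{1,1}_{\loc}$ homeomorphisms, with inequality because Lusin $(N)$ may fail) is also the right move, and your derivation of $(N^{-1})$ from $J_F\neq 0$ a.e.\ is sound. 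These are the parts that align with the standard treatment.

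The gap is in the chain-rule step, which you correctly flag as ``the substantive content'' but then dispatch too quickly. You assert that a representative of $g\in W^{1,p}_{\loc}$ is absolutely continuous on $F(\ell)$ for almost every line $\ell$, justified by transferring the exceptional set via $(N^{-1})$. This does not follow: $(N^{-1})$ transfers null \emph{sets of points} in $\Omega_2$ back to null sets in $\Omega_1$, but the ACL property of $g$ concerns a null set of \emph{coordinate lines}, and the images $F(\ell)$ are general rectifiable curves, not lines. There is no mechanism in your argument that relates the exceptional line family for $g$ in $\Omega_2$ to the line family $\{\ell\}$ in $\Omega_1$. (One could try to run this through a $p$-modulus argument -- Fuglede's theorem plus a modulus inequality for $F$ -- but that inequality would itself have to be proved from the distortion hypothesis, and in case (ii) with $p=2-\varepsilon<2$ the family of horizontal lines is not controlled in the relevant way.) The standard route, and the one in Hencl--Koskela, avoids this entirely: mollify $g$ to get smooth $g_k$, note that the chain rule for $g_k\circ F$ is elementary because $g_k$ is Lipschitz and $F\in W^{1,1}_{\loc}$, then use the very same H\"older/change-of-variables estimate applied to $Dg_k-Dg_j$ in place of $Dg$ to show $g_k\circ F$ is Cauchy in $W^{1,1}_{\loc}$, and that $g_k\circ F\to g\circ F$ in $L^1_{\loc}$ by $(N^{-1})$. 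Passing to the limit yields both $g\circ F\in W^{1,1}_{\loc}$ and the chain rule a.e. Your estimates are precisely the ones needed in the Cauchy step, so the fix is to reorganize the argument around approximation rather than a direct ACL verification.
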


We note that $K_F$ here agrees with the definition of the distortion function given in the beginning of Section \ref{section:david}. Recall that $\|Df\|$ denotes the operator norm of the differential $Df$ of $f$ and the $L^p$ spaces are with respect to the Lebesgue measure of $\C$. 

\begin{proof}[Proof of Proposition \ref{prop:david_qc_invariance}]
(i) First, we reduce the assertion to the case that $U$ and $V$ are planar sets. If $U=V=\widehat{\C}$, we precompose $f$ and we postcompose $g$ with suitable isometries of $\widehat{\C}$ so that $f$ and $g$ fix the point at $\infty$. Thus, we may assume that $f\colon \C\to \C$ is a David map and $g\colon \C \to \C$ is a quasiconformal map. If $U,V$ are strict subsets of $\widehat{\C}$, by precomposing and postcomposing $f$ and $g$ with suitable isometries of $\widehat{\C}$, we can assume that $U,V\subset \C$.

We note that if $f\colon U\to V$ is a David homeomorphism, then $\exp(pK_f)\in L^1_{\loc}(U)$, so $K_f\in L^1_{\loc}(U)$. By Theorem \ref{theorem:Hencl_Koskela} (i), we obtain that if $g\in W^{1,2}_{\loc}(V)$, then $g\circ f \in W^{1,1}_{\loc}(U)$ and
$$D(g\circ f)(z)= Dg(f(z)) \circ Df(z)$$
for a.e.\ $z\in U$. 

Suppose that $g$ is $K$-quasiconformal as in the statement, so that $g\in W^{1,2}_{\loc}(V)$ and  
\begin{align*}
\|Dg(w)\|^2\leq K J_g(w)
\end{align*}
for a.e.\ $w\in V$. Note that $Df(z)$ exists for a.e.\ $z\in U$ and $Dg(w)$ exists for a.e.\ $w\in V$. Since $f^{-1}$ is absolutely continuous, it follows that the set of $z\in U$ such that $Dg(f(z))$ does not exist has measure zero. Hence, for a.e.\ $z\in U$ we have
\begin{align*}
\|D(g\circ f) (z)\|^2& = \|Dg(f(z))\circ Df(z)\|^2 \leq \|Dg(f(z))\|^2 \|Df(z)\|^2 \\
& \leq K K_f(z) J_g(f(z))J_f(z) = KK_f(z) J_{g\circ f}(z). 
\end{align*}
It follows that $K_{g\circ f} \leq KK_f$ a.e.\ and thus $K_{g\circ f}$ is exponentially integrable, as desired.

\bigskip

\noindent
(ii) As in (i), by suitable compositions with isometries of $\widehat{\C}$, we may assume that $U,V,W$ are planar sets. In this case, if $g$ is $K$-quasiconformal, then
$$\frac{\|Dg\|^{2-\varepsilon}}{J_g} \leq  K \|Dg\|^{-\varepsilon}\leq K J_g^{-\varepsilon/2}.$$
It is known that $J_g^{-\delta}$ is locally integrable for a small $\delta>0$; see e.g.\ \cite[Theorem 13.4.2, p.~345]{AIM09}. Hence, $\|Dg\|^{2-\varepsilon}/J_g\in L^{1/(1-\varepsilon)}_{\loc}(W)$. On the other hand, if $f$ is a David homeomorphism, then $f\in W^{1,2-\varepsilon}_{\loc}(U)$ for all $\varepsilon>0$ by \cite[(20.69), p.~557]{AIM09}. By Theorem \ref{theorem:Hencl_Koskela} (ii), it follows that $f\circ g\in W^{1,1}_{\loc}(W)$.

Next, we wish to show that 
\begin{align*}
\int_W \exp(p K_{f\circ g}) \, d\sigma<\infty 
\end{align*}
for some $p>0$. Arguing as in (i), we have $K_{f\circ g} \leq KK_f(g(w))$ for a.e.\ $w\in W$. By changing coordinates, we have
\begin{align*}
\int_W \exp(p K_{f\circ g}) \, d\sigma & \leq \int_W \exp(p K \cdot K_f\circ g)\, d\sigma \\
&= \int_W \exp(p K \cdot K_f(g(w))) J_g^{\sigma}(w) J_{g^{-1}}^{\sigma}( g(w)) \, d\sigma(w)\\
&=\int_U \exp(p K \cdot K_f(z))  J_{g^{-1}}^{\sigma}(z) \, d\sigma(z)\\
&\leq \left(\int_U \exp(qp K \cdot K_f(z)) \, d\sigma(z) \right)^{1/q}  \left(\int_U J_{g^{-1}}^{\sigma}(z)^{q'}\, d\sigma(z) \right)^{1/q'},
\end{align*}
where $1/q +1/q'=1$. The second factor is finite if $q'>1$ is sufficiently close to $1$, since $g^{-1}$ is quasiconformal in a neighborhood of $\br U$ by assumption; see \cite[Theorem 13.4.2, p.~345]{AIM09}. The first factor is also finite for a sufficiently small $p>0$, since $f$ is a David homeomorphism. 

\bigskip
\noindent
(iii) We show, first, that the Beltrami coefficient $$\mu_{f\circ g}=  \frac{\partial (f\circ g)/ \partial \br z}{\partial (f\circ g)/\partial z} $$ is defined a.e. in $W$ and takes values in $\D$. If $w\in W$ is not a critical point of $g$, then there exists a neighborhood $O$ of $w$ in which $g$ is quasiconformal. By (ii) we conclude that $f\circ g$ is a David map in $O$. Therefore, $\mu_{f\circ g}$ is defined in $O$ and takes values in $\D$. Since $g$ is quasiregular and non-constant, it has at most countably many critical points in $W$. It follows that $\mu_{f\circ g}$ and $K_{f\circ g}=(1+|\mu_{f\circ g}|)/({ 1- |\mu_{f\circ g}|})$ are defined a.e. 

Next, we will show that $K_{f\circ g}$ is exponentially integrable. First, we reduce to the case that $g$ is holomorphic. Suppose that $g$ is $K$-quasiregular in a neighborhood $Z$ of $\br W$. By the measurable Riemann mapping theorem, there exists a $K$-quasiconformal homeomorphism $\widetilde g$ of $\widehat{\C}$ such that $\mu_{\widetilde g}=\mu_g \cdot \x_Z$. Moreover, the map $h=g\circ (\widetilde g)^{-1}$ is holomorphic in $\widetilde Z=\widetilde g(Z)$ and we set $\widetilde W=\widetilde g(W)$. We note that $K_{f\circ g}=K_{f\circ h\circ \widetilde g}\leq KK_{f\circ h}(\widetilde g(w))$ as in (ii). The exact same computation of (ii) implies that since 
$$\int_{\widetilde W} J_{(\widetilde g)^{-1}}^{\sigma}(z)^{q'} \, d\sigma(z)<\infty$$
for some $q'>1$ by the quasiconformality of $\widetilde g$, it suffices to show that
$$\int_{\widetilde W} \exp(p K_{f\circ h}) \, d\sigma <\infty$$
for some $p>0$. Thus, we have reduced to the case that $g\colon W\to U$ is holomorphic and has a holomorphic extension to a neighborhood $Z$ of $\br W$.

Let $w_0\in Z$ and let $O(w_0)\subset \br{O(w_0)}\subset Z$ be an open neighborhood of $w_0$. As in (ii) we have
\begin{align*}
\int_{O(w_0)\cap W} \exp(p K_{f\circ g}) \, d\sigma  &\leq  \int_{O(w_0)\cap W} \exp(p  K_f(g(w))) J_g^{\sigma}(w) J_{g}^{\sigma}(w)^{-1} \, d\sigma(w)\\
&\leq \left(\int_{O(w_0)\cap W} \exp(qp  K_f(g(w)))J_g^{\sigma}(w) \, d\sigma(w) \right)^{1/q} \\
&\qquad \qquad \cdot \left(\int_{O(w_0)} J_{g}^{\sigma}(w)^{1-q'}\, d\sigma(w) \right)^{1/q'}\\
&=\left(\int_{g(O(w_0))\cap U} \exp(qp  K_f(z)) \, d\sigma(z) \right)^{1/q}\\
&\qquad\qquad \cdot  \left(\int_{O(w_0)} J_{g}^{\sigma}(w)^{1-q'}\, d\sigma(w) \right)^{1/q'},
\end{align*}
where $1/q+1/q'=1$. For each choice of $q'$ there exists a $p>0$ so that the first factor is finite, since $f$ is a David map on $U$. We claim that the second factor is finite if the neighborhood $O(w_0)$ of $w_0$ is sufficiently small and $q'$ is sufficiently close to $1$.

By using an isometry of $\widehat{\C}$ we assume that $w_0=0$ and we set $O=O(w_0)$. If $O$ is a sufficiently small neighborhood of the origin then the spherical Jacobian and the spherical measure are comparable to the Euclidean ones. Therefore, it suffices to show that 
\begin{align*}
\int_{O} J_g (w)^{-\delta}dw <\infty 
\end{align*}
for a sufficiently small $\delta>0$ and a sufficiently small neighborhood $O$ of the origin. Suppose that $g$ is $m$-to-$1$ at the origin for some $m\geq 1$. Then there exists a neighborhood $O$ of the origin and a conformal map $\phi$ on $O$ with $\phi(0)=0$ such that $g(w)=\phi(w)^m$ for all $w\in O$. We have $J_g=|g'|^2= m^2 |\phi|^{2m-2} |\phi'|^2$. By shrinking the neighborhood $O$, we may have $|\phi'|\simeq 1$ and $|\phi(w)|\simeq |w|$ for all $w\in O$. Hence $J_g(w)\simeq |w|^{2m-2}$ for $w\in O$. Now, we have
\begin{align*}
\int_O J_g(w)^{-\delta} dw \simeq \int_O |w|^{-\delta(2m-2)} dw, 
\end{align*}
which is finite for all small $\delta>0$, as desired. 

Summarizing, we have proved that each point $w_0\in Z$ has a neighborhood $O(w_0)$ in $Z$ such that 
\begin{align*}
\int_{O(w_0)\cap W} \exp(p K_{f\circ g}) \, d\sigma <\infty.
\end{align*} 
Since $\br W$ is a compact subset of $Z$, we can cover it by finitely many neighborhoods $O(w_0)$ of points $w_0 \in \br W$. We conclude that
 \begin{align*}
\int_{W} \exp(p K_{f\circ g}) \, d\sigma <\infty,
\end{align*} 
as desired.

\bigskip
\noindent
(iv) As in (ii), it follows that $f\circ g\in W^{1,1}_{\loc}(W)$. In order to obtain the exponential integrability of $K_{f\circ g}$, we only have to justify that 
\begin{align}\label{prop:uniform}
\int_U J_{g^{-1}}^{\sigma}(z)^{q'}\, d\sigma(z) <\infty
\end{align}
for some $q'>1$, close to $1$. 

In order to prove this, we need the notion of a \textit{uniform domain}. An open set $\Omega\subset \C$ is a uniform domain if there exists a constant $c>0$ such that for any two points $a,b\in \Omega$ there exists a continuum $E\subset \Omega$  that connects them with $\diam(E)\leq c|a-b|$ and the set
$$\bigcup_{x\in E} B(x,c^{-1}\min\{|x-a|,|x-b|\}),$$
which is called a $c$-cigar, is contained in $\Omega$. A bounded uniform domain is also a John domain. Other uniform domains include quasidisks and annuli bounded by two quasicircles. We direct the reader to the work of Martio and V\"ais\"al\"a \cite{MV88} and the references therein for more background on uniform and John domains.

The desired condition \eqref{prop:uniform} follows from a result of Martio and V\"ais\"al\"a \cite[Theorem 2.16]{MV88}, which implies that a quasiconformal map from a \textit{bounded} {uniform domain} onto a John domain has Jacobian lying in $L^{1+\varepsilon}$ for some small $\varepsilon>0$. The quoted result does not apply immediately to $g^{-1}$, since the quasidisk $U$, which is a uniform domain, is not necessarily bounded as a subset of the plane.   

In order to apply the result, we first precompose and postcompose $g$ with suitable isometries of $\widehat{\C}$ so that the point at $\infty$ lies in $W$ and $U$, and $g$ fixes $\infty$. Next, we remove from $W$ a closed ball $B\subset W$ that contains $\infty$. The set $W\setminus B$ is still a John domain, with a constant that is possibly different from the constant of $W$. We note that $W\setminus B$ is a John domain even if we use the Euclidean rather than the spherical metric in the definition of a John domain. Moreover, $U\setminus g(B)$ is a uniform domain since it is an annulus bounded by two quasicircles. Now the map $g^{-1}\colon U\setminus g(B) \to W\setminus B$ satisfies the assumptions of \cite[Theorem 2.16]{MV88}, so $J_{g^{-1}} \in L^{1+\varepsilon} (U\setminus g(B))$ for some $\varepsilon>0$. This implies that 
\begin{align*}
\int_{U\setminus g(B)} J_{g^{-1}}^{\sigma}(z)^{1+\varepsilon}\, d\sigma(z) <\infty
\end{align*}
since the Euclidean and spherical measures are comparable in the bounded set $U\setminus g(B)$. Finally, we also have
\begin{align*}
\int_{g(B)} J_{g^{-1}}^{\sigma}(z)^{1+\varepsilon}\, d\sigma(z) <\infty
\end{align*}
for a possibly smaller $\varepsilon>0$ by the local regularity of the Jacobian of a quasiconformal map; see  \cite[Theorem 13.4.2, p.~345]{AIM09}.
\end{proof}

\bigskip

\subsection{David maps and removability}\label{david_removability_subsec}

A compact set $E\subset \widehat{\C}$ is \textit{conformally removable} if every homeomorphism $f\colon \widehat{\C}\to \widehat{\C}$ that is conformal on $\widehat{\C} \setminus E$ is a M\"obius transformation. We also say that $E$ is \textit{locally} conformally removable if for every open set $U\subset \widehat{\C}$, \textit{not necessarily containing $E$}, every homeomorphic embedding $f\colon U\to \widehat{\C}$ that is conformal on $U\setminus E$ is conformal on $U$. It is an open problem whether every removable set is locally removable. 

While quasiconformal maps preserve the quality of conformal removability, it is not clear whether David homeomorphisms do so. However, they interact with a stronger notion of removability; namely, \textit{removability for the Sobolev space $W^{1,1}$}. A compact set $E\subset \widehat{\C}$ is removable for $W^{1,1}$ functions if every continuous function $f\colon \widehat{\C}\to \R$ that lies in $W^{1,1}(\widehat{\C}\setminus E)$ lies actually in $W^{1,1}(\widehat{\C})$. Equivalently, $E$ is removable for $W^{1,1}$ functions if for every open set $U$, \textit{not necessarily containing $E$}, every continuous function $f\colon U \to \R$ that lies in $W^{1,1}(U\setminus E)$ lies in $W^{1,1}(U)$ \cite[Introduction]{Nta20a}. To see the necessity, if $f$ is continuous in $U$ and lies in $W^{1,1}(U\setminus E)$ and $\phi$ is a smooth function compactly supported in $U$, then we may consider a smooth function $\psi$ that is compactly supported in $U$ and is identically equal to $1$ on the support of $\phi$. Since $f\cdot \psi$ is continuous in $\widehat \C$ and lies in $W^{1,1}(\widehat \C\setminus E)$, we have $f\cdot \psi\in W^{1,1}(\widehat \C)$ by the removability of $E$. Working in local coordinates that contain the support of $\psi$, we see that for $i=1,2$, 
$$\int \partial_i f \cdot \phi=\int \partial_i(f\psi) \cdot \phi= - \int f \psi \partial_i \phi= - \int f \cdot \partial_i\phi.$$ This shows that $f\in W^{1,1}(U)$, as desired. Hence, in the case of removability for the Sobolev space $W^{1,1}$ the notions of removability and local removability agree.

\begin{remark}Similarly one can define sets that are removable for $W^{1,p}$ functions, where $p\geq 1$. There are examples of sets that are removable for some values of $p$ and non-removable for other values. One example is the Sierpi\'nski gasket, which is removable if and only if $p>2$ \cites{Nta19,Nta20a}. For further examples see \cite{KRZ17}*{Lemma 4.4}. Such examples are rare in the literature though, because of the technicalities of the constructions. We note that $W^{1,1}$-removable sets are also $W^{1,2}$-removable, and $W^{1,2}$-removable sets have measure zero and are also conformally removable (by Weyl's lemma). It seems plausible that there exists a set that is $W^{1,2}$-removable (and thus conformally removable) and not $W^{1,1}$-removable, but we have not been able to locate such an example in the literature.
\end{remark}

\begin{theorem}[Conformal removability]\label{theorem:w11_removable}
Suppose that $E\subset \widehat{\C}$ is a compact set that is removable for $W^{1,1}$ functions and $f\colon \widehat{\C}\to \widehat{\C}$ is a David homeomorphism. Then $f(E)$ is locally conformally removable.  
\end{theorem}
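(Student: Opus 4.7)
Let $U\subset\widehat{\C}$ be an open set and $g\colon U\to\widehat{\C}$ a homeomorphic embedding which is conformal on $U\setminus f(E)$; the goal is to show that $g$ is conformal on all of $U$. Setting $V\coloneqq f^{-1}(U)$, my strategy is to study the continuous embedding $h\coloneqq g\circ f\colon V\to\widehat{\C}$, to prove that $h$ is itself a David embedding of $V$ whose Beltrami coefficient agrees almost everywhere with that of $f|_V$, and then to apply the Stoilow-type factorization Theorem~\ref{theorem:stoilow} to the pair $(h,f|_V)$ on $V$ to conclude that $h\circ(f|_V)^{-1}=g$ is conformal on $f(V)=U$.

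On the open set $V\setminus E$, the map $g$ is conformal, hence $1$-quasiconformal, on $f(V\setminus E)\subset U\setminus f(E)$. Proposition~\ref{prop:david_qc_invariance}(i) applied to $f|_{V\setminus E}$ and $g|_{f(V\setminus E)}$ therefore shows that $h|_{V\setminus E}$ is a David map; in particular, $h\in W^{1,1}_{\loc}(V\setminus E)$, and the chain rule yields $\mu_h=\mu_f$ a.e.\ on $V\setminus E$. To upgrade this regularity across $E$, I would argue locally: for each $z_0\in V$ choose a neighborhood $W\Subset V$ and a M\"obius transformation $M$ with $M\circ h(\overline W)\Subset\C$, and write $M\circ h=u+iv$ on $W$. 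Each of the real-valued continuous functions $u,v$ lies in $W^{1,1}(W\setminus E)$, so local $W^{1,1}$-removability of $E$, applied componentwise, gives $u,v\in W^{1,1}(W)$, whence $h\in W^{1,1}_{\loc}(V)$.

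Every $W^{1,1}$-removable compact subset of $\widehat{\C}$ has Lebesgue measure zero (a standard fact in Sobolev removability theory), so $|E|=0$, and the identity $\mu_h=\mu_f$ established on $V\setminus E$ holds in fact a.e.\ on $V$. Consequently, $K_h=K_f$ is exponentially integrable on $V$, confirming that $h$ is a David embedding of $V$. Applying Theorem~\ref{theorem:stoilow} to the two David embeddings $h$ and $f|_V$ of $V$ with matching Beltrami coefficients then produces a conformal map $h\circ(f|_V)^{-1}$ on $f(V)=U$, which is precisely $g$.

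The principal obstacle is the extension step across $E$: the formulation of $W^{1,1}$-removability is for $\R$-valued continuous functions, whereas $h$ takes values in $\widehat{\C}$. The remedy is to pass to local Euclidean charts on the target via a M\"obius transformation and invoke removability on the real and imaginary components separately, while taking care that the continuity on $V$ and the Sobolev regularity on $V\setminus E$ are both preserved. Once $h\in W^{1,1}_{\loc}(V)$ is in hand, the matching of Beltrami coefficients on the full open set $V$ is automatic from $|E|=0$, and the conformality of $g$ follows immediately from Stoilow uniqueness.
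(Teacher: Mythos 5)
Your overall strategy coincides with the paper's: set $h=g\circ f$, use Proposition~\ref{prop:david_qc_invariance}(i) to get that $h$ is David on $V\setminus E$ where $V=f^{-1}(U)$, upgrade to David on $V$ via $W^{1,1}$-removability of $E$, observe $\mu_h=\mu_f$ a.e.\ because $E$ has measure zero, and conclude via Theorem~\ref{theorem:stoilow}. However, there is a genuine gap in the upgrade step, and you have misidentified where the real difficulty lies.

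You pass from ``$h\in W^{1,1}_{\loc}(V\setminus E)$'' to the assertion that, after post-composing with a M\"obius map $M$, the components $u,v$ of $M\circ h$ lie in $W^{1,1}(W\setminus E)$ for $W\Subset V$. This does not follow. Membership in $W^{1,1}_{\loc}(V\setminus E)$ only controls the gradient on compact subsets of $V\setminus E$, i.e.\ \emph{away from} $E$; it gives no integrability of $Dh$ up to $E$. The hypothesis of $W^{1,1}$-removability (in the local form used here) is precisely membership in $W^{1,1}(W\setminus E)$, so this is the step one must actually prove, not assert. The paper isolates this as Lemma~\ref{lemma:david_removable} and proves it by the pointwise Cauchy--Schwarz bound $\|Dh\|=K_h^{1/2}J_h^{1/2}$ a.e.\ on $V\setminus E$, then integrates: $K_h\in L^1(W\setminus E)$ because the David condition gives $\exp(pK_h)\in L^1(V\setminus E)$ globally (not merely locally), and $J_h\in L^1(W\setminus E)$ by the change-of-variables identity $\int_{W\setminus E}J_h = \area\bigl(h(W\setminus E)\bigr)$, which is finite after normalizing the normalization point at $\infty$. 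That area-plus-distortion argument is what your proposal is missing. The $\R$- versus $\widehat{\C}$-valued issue you flag as the ``principal obstacle'' is, by contrast, cosmetic and handled by the same isometry normalization.

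Once this lemma is in place, your remaining steps (measure zero of $E$ from $W^{1,1}$-removability, matching of Beltrami coefficients a.e., Stoilow factorization) are correct and are exactly what the paper does.
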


We will prove this using the following auxiliary result.
\begin{lemma}[David removability]\label{lemma:david_removable}
Suppose that $E\subset \widehat{\C}$ is a compact set that is removable for $W^{1,1}$ functions, $U\subset \widehat{\C}$ is an open set, and $g\colon U \to \widehat{\C}$ is a homeomorphic embedding that is a David  map in $U\setminus E$. Then $g$ is a David map in $U$.
\end{lemma}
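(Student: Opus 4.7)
The plan is to leverage the $W^{1,1}$-removability of $E$ to upgrade the Sobolev regularity of $g$ from $U\setminus E$ to all of $U$, and then to check that the exponential integrability of the distortion function survives the extension.

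First, I reduce to a local planar setting by composing with M\"obius transformations of $\widehat{\C}$, so that Sobolev spaces can be applied in standard planar coordinates. Fix a ball $B$ with $\overline{B}\subset U$ and consider the real and imaginary parts of $g|_B$: they are continuous on $B$ (as $g$ is a homeomorphic embedding) and lie in $W^{1,1}(B\setminus E)$ (because $g|_{U\setminus E}$ is a David map and $\overline{B}$ is compactly contained in $U$). The equivalent local formulation of $W^{1,1}$-removability recalled before the lemma then yields that each coordinate function lies in $W^{1,1}(B)$, and covering $U$ by such balls gives $g\in W^{1,1}_{\loc}(U)$. By the uniqueness of weak derivatives, the derivatives $\partial g/\partial z$ and $\partial g/\partial \bar z$ computed on $U$ agree a.e.\ on $U\setminus E$ with the classical partial derivatives provided by the David property, so the Beltrami coefficient $\mu_g$ of the $W^{1,1}_{\loc}(U)$ extension coincides a.e.\ on $U\setminus E$ with the original David coefficient.

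It remains to transfer the exponential integrability of $K_g$ from $U\setminus E$ to $U$. Here I invoke the fact that a compact set $E\subset\widehat{\C}$ removable for $W^{1,1}$ functions has two-dimensional Lebesgue (hence spherical) measure zero, a standard property of $W^{1,1}$-removable sets that I would cite from \cite{Nta20a}. Granted this, the extended $\mu_g$ takes values in $\D$ a.e.\ on $U$, and
\[
\int_U \exp(pK_g)\,d\sigma \;=\; \int_{U\setminus E}\exp(pK_g)\,d\sigma \;<\;\infty
\]
for some $p>0$, so $g$ satisfies both defining properties of a David map on $U$. The principal obstacle is precisely this measure-zero property: without it, $K_g$ could in principle blow up on a positive-measure subset of $E$, destroying the exponential integrability of the extension, and the argument would collapse.
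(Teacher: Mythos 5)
Your overall strategy matches the paper: upgrade $g$ from $W^{1,1}_{\loc}(U\setminus E)$ to $W^{1,1}_{\loc}(U)$ via removability, and use the fact that $E$ has measure zero to carry the exponential integrability across. However, there is a genuine gap at the very first substantive step.

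You assert that the coordinate functions of $g$ lie in $W^{1,1}(B\setminus E)$ ``because $g|_{U\setminus E}$ is a David map and $\overline{B}$ is compactly contained in $U$.'' This does not follow: the David hypothesis gives $g\in W^{1,1}_{\loc}(U\setminus E)$, which means $g\in W^{1,1}(V)$ only for $V$ compactly contained in $U\setminus E$. A ball $B$ compactly contained in $U$ is \emph{not} compactly contained in $U\setminus E$ once $B$ meets $E$, so $B\setminus E$ may accumulate on $E$ and the Sobolev norm could, a priori, blow up there. Showing that it does not is exactly the technical heart of the lemma, and your proposal skips it. The argument one actually needs is to estimate $\|Dg\| = K_g^{1/2}J_g^{1/2}$ a.e.\ and bound the two factors separately via Cauchy--Schwarz: $K_g\in L^1(V\setminus E)$ follows from the exponential integrability $\exp(pK_g)\in L^1$, while $J_g\in L^1(V\setminus E)$ follows from the change-of-variables formula for David homeomorphisms, $\int_{V\setminus E}J_g\,dz = \area(g(V\setminus E))<\infty$, which crucially uses that $g$ is a homeomorphism onto a bounded planar set (after normalizing by M\"obius maps so that $g$ fixes $\infty$). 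Without this step the rest of the proof is sound: once $g\in W^{1,1}(B\setminus E)$ and continuous on $B$ is established, $W^{1,1}$-removability gives $g\in W^{1,1}(B)$, and the measure-zero property of $W^{1,1}$-removable sets (the paper cites \cite[Theorem~1.3]{Nta19} for this, not \cite{Nta20a}) handles the distortion integral exactly as you describe.
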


Note that the set $E$ is not necessarily contained in $U$.

\begin{proof}
By precomposing and postcomposing $g$ with suitable isometries of $\widehat{\C}$, we may assume that $E$ and $g(E)$  do not contain the point at $\infty$ and $g(\infty)=\infty$. Since $g$ is a David map on $U\setminus E$, by definition, we have
$$\int_{U\setminus E}\exp(pK_g)\, d\sigma <\infty $$
for some $p>0$. By \cite[Theorem 1.3]{Nta19}, the set $E$ must have measure zero, since it is removable for $W^{1,1}$ functions.  Hence, we have
$$\int_{U}\exp(pK_g)\, d\sigma <\infty.$$
It suffices to show that $g\in W^{1,1}_{\loc}(U)$.

We claim that if $V$ is a bounded open subset of $U$, then $\|Dg\|\in L^1(V\setminus E)$.  We have 
\begin{align*}
\|Dg(z)\|= K_g(z)^{1/2} J_g(z)^{1/2}
\end{align*} 
for a.e.\ $z\in V\setminus E$. Note that $K_g\in L^1(V\setminus E)$ since $\exp(pK_g)\in L^1(V)$. Moreover, $J_g\in L^1(V\setminus E)$ since by the change of coordinates formula \eqref{david_change_of_variables} (expressed in Euclidean coordinates) we have
\begin{align*}
\int_{V\setminus E} J_g(z) \, dz = \area(g(V\setminus E))<\infty.
\end{align*}
Here we used the normalization $g(\infty)=\infty$. It follows that $\|Dg\|\in L^1(V\setminus E)$ as desired.

Summarizing, if $V$ is a bounded open subset of $U$ containing $U\cap E$, then the map $g$ lies in $W^{1,1}(V\setminus E)$ and is continuous on $V$. Since $E$ is removable for the space $W^{1,1}$ by assumption, we have $g\in W^{1,1}(V)$. We conclude that $g\in W^{1,1}_{\loc}(U)$, as desired.
\end{proof}

\begin{remark}In the proof we used \cite[Theorem 1.3]{Nta19}, which asserts that if a compact set $E$ is removable for $W^{1,p}$ functions, where $1\leq p<\infty$, then it must have measure zero. Without invoking this result, we note that the non-removability of sets of positive measure for conformal maps (which can be proved using the Measurable Riemann Mapping Theorem) does not imply in a straightforward way the non-removability of sets of positive measure for $W^{1,2}$ and $W^{1,1}$ functions. 
\end{remark}

\begin{proof}[Proof of Theorem \ref{theorem:w11_removable}]
Let $U\subset \widehat{\C}$ be an open set and $h \colon U\to \widehat{\C}$ be a homeomorphic embedding that is conformal in $U\setminus f(E)$. Our goal is to show that $h$ is conformal on $U$. Consider the map $g=h\circ f$.  Since $f$ is a David map and $h$ is conformal on $U\setminus f(E)$, by Proposition \ref{prop:david_qc_invariance} (i) it follows that $g$ is a David map on $f^{-1}(U)\setminus E$. By Lemma \ref{lemma:david_removable} we conclude that $g$ is a David map on $f^{-1}(U)$.

Note $\mu_g=\mu_f$ on $f^{-1}(U)\setminus E$ because $h$ is conformal. Since $E$ is removable for $W^{1,1}$ functions, it must have measure zero by  \cite[Theorem 1.3]{Nta19}. Therefore, $\mu_{g}=\mu_f$ a.e.  Finally, the factorization Theorem \ref{theorem:stoilow} implies that $h=g\circ f^{-1}$ is conformal, as desired.
\end{proof}

Boundaries of John domains are removable for $W^{1,1}$ functions. This was proved by Jones and Smirnov  \cite[Theorem 4]{JS00}. In fact, something stronger is true:
\begin{theorem}\label{theorem:john_union_removable}
Let $\{\Omega_{i}\}_{i\in I}$ be a collection of finitely many, disjoint John domains in $\widehat{\C}$. Then $\bigcup_{i\in I} \partial \Omega_i$ is removable for $W^{1,1}$ functions.
\end{theorem}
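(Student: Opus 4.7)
The plan is to argue by induction on the cardinality $n = |I|$ of the collection. The base case $n = 1$ is precisely the Jones--Smirnov result \cite[Theorem 4]{JS00} that the boundary of a single John domain is $W^{1,1}$-removable, which is already cited in the text. For the inductive step, assume the result for any family of $n$ disjoint John domains, and let $\Omega_1, \ldots, \Omega_{n+1}$ be $n+1$ disjoint John domains in $\widehat{\C}$. Set $E = \bigcup_{i=1}^{n+1} \partial \Omega_i$ and $E' = \bigcup_{i=1}^{n} \partial \Omega_i$; note $E'$ is compact. Given an open set $U \subset \widehat{\C}$ and a continuous $f \colon U \to \R$ with $f \in W^{1,1}(U \setminus E)$, the goal is to deduce $f \in W^{1,1}(U)$.

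The key intermediate step is to show that $f \in W^{1,1}(U \setminus E')$. Consider the open set $W = U \setminus E'$. By hypothesis $f$ is continuous on $W$, and since
\begin{align*}
W \setminus \partial \Omega_{n+1} = U \setminus (E' \cup \partial \Omega_{n+1}) = U \setminus E,
\end{align*}
we have $f \in W^{1,1}(W \setminus \partial \Omega_{n+1})$. Now apply Jones--Smirnov to the single John domain $\Omega_{n+1}$: the compact set $\partial \Omega_{n+1}$ is $W^{1,1}$-removable. The crucial point, emphasized in the excerpt preceding Definition~1.2 of~\cite{Nta20a} and noted in the paragraph above Theorem~\ref{theorem:w11_removable}, is that $W^{1,1}$-removability coincides with its local version: the open set to which removability is applied need not contain the removable set. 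Applying this to $W$ (which may or may not meet $\partial \Omega_{n+1}$) and to $f$, we conclude that $f \in W^{1,1}(W) = W^{1,1}(U \setminus E')$, as required.

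Once $f \in C(U) \cap W^{1,1}(U \setminus E')$ is in hand, the inductive hypothesis applied to the disjoint John domains $\Omega_1, \ldots, \Omega_n$ and the open set $U$ immediately yields $f \in W^{1,1}(U)$, completing the induction. The whole argument is essentially a bookkeeping exercise: the only conceptual ingredient beyond Jones--Smirnov is the equivalence of $W^{1,1}$-removability with its local version, which is a special feature of Sobolev removability (as contrasted with conformal removability, whose local version is noted to be an open problem in the text). Accordingly, I do not anticipate a genuine obstacle here; the main point is simply to notice that removability applied on the auxiliary open set $W = U \setminus E'$ allows one to strip off the boundaries $\partial \Omega_i$ one at a time, and that disjointness of the John domains is used only implicitly to preserve the John property of each $\Omega_i$ under the induction.
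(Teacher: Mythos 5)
Your argument is correct, but it takes a genuinely different route from the paper's. The paper proves the statement directly, without induction: it invokes \cite[Proposition~5.3]{Nta20a}, which asserts that a continuous function on $\overline{\Omega_i}$ lying in $W^{1,1}(\Omega_i)$ is absolutely continuous along almost every line $L$ restricted to $L\cap\overline{\Omega_i}$, patches this with the classical ACL property of $W^{1,1}$ functions on the open set $\widehat{\C}\setminus\bigcup_i\overline{\Omega_i}$, and concludes that $f$ is absolutely continuous on almost every line; since boundaries of John domains have measure zero, $f\in W^{1,1}(\widehat{\C})$ follows. In that argument the disjointness of the $\Omega_i$ is used essentially, to guarantee $\Omega_i\subset\widehat{\C}\setminus\bigcup_j\partial\Omega_j$ and hence that the a priori Sobolev hypothesis passes to each $\Omega_i$. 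Your inductive stripping argument instead relies only on the equivalence of global and local $W^{1,1}$-removability (which the paper states with a reference to \cite{Nta20a}) together with the single-domain Jones--Smirnov result \cite[Theorem~4]{JS00}; what it actually proves is the formally stronger statement that any finite union of $W^{1,1}$-removable compact sets is again $W^{1,1}$-removable, with the John geometry and disjointness playing no role beyond making Jones--Smirnov applicable to each individual $\partial\Omega_i$. (Your closing remark that disjointness is needed to ``preserve the John property'' is a small inaccuracy --- the John property of each $\Omega_i$ is intrinsic and does not depend on the others --- but this does not affect the argument.) The paper's direct proof avoids the union-of-removable-sets abstraction by using the geometric boundary result for John domains; your approach is more modular and yields the extra general principle for free.
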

A similar statement is discussed without proof in \cite[p.~265]{JS00}. We give a proof based on the following result from \cite[Proposition 5.3]{Nta20a}. We state a slightly modified version that is proved in the same way and is sufficient for our purposes.
\begin{prop}\label{prop:john_absolute_continuity}
Let $\Omega\subset \widehat\C$ be a John domain and let $f\colon \br \Omega \to \R$ be a continuous function lying in $W^{1,1}(\Omega)$. Then 
$$m_1(f(L\cap \partial \Omega)) =0$$
for a.e.\ line $L$ parallel to a fixed direction.  
\end{prop}
Here $m_1$ denotes the $1$-dimensional Lebesgue measure and the lines $L$ are considered as subsets of $\widehat{\C}$ through stereographic projection.

\begin{proof}[Proof of Theorem \ref{theorem:john_union_removable}]
Let $f\colon \widehat{\C}\to \R$ be a continuous function lying in the space $W^{1,1}(\widehat{\C} \setminus \bigcup_{i\in I} \partial \Omega_i)$. By a known characterization of Sobolev spaces \cite[Theorem 2.1.4]{Zie89}, for each $i\in I$, $f|_L$ is absolutely continuous on $L\cap { \Omega_i}$ for almost every line $L$, in the sense that $f$ maps sets of linear measure zero to sets of linear measure zero. Proposition \ref{prop:john_absolute_continuity} implies that for each $i\in I$, $f|_L$ is absolutely continuous on $L\cap \br{ \Omega_i}$ for almost every line $L$. Hence, $f$ is absolutely continuous on almost every line. It follows that $f\in W^{1,1}(\widehat{\C})$, since boundaries of John domains have measure zero. 
\end{proof}

By combining \cite[Theorem 4]{JS00}  with Theorem \ref{theorem:w11_removable}, we have the following result.

\begin{theorem}\label{theorem:john_removable}
Suppose that $\Omega \subset {\widehat{\C}}$ is a John domain and $f\colon \widehat{\C}\to \widehat{\C}$ is a David homeomorphism. Then, $f(\partial \Omega)$ is conformally removable. 
\end{theorem}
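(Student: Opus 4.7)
The plan is to assemble this result directly from the two ingredients that have just been assembled in the excerpt: the Jones--Smirnov $W^{1,1}$-removability of boundaries of John domains, and the author's Theorem~\ref{theorem:w11_removable} transporting $W^{1,1}$-removability across David homeomorphisms to (local) conformal removability. There is no genuine obstacle here; the statement is essentially the composition of these two facts.

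Concretely, I would proceed as follows. First, invoke \cite[Theorem 4]{JS00} to conclude that $E := \partial \Omega$ is removable for $W^{1,1}$ functions, using the hypothesis that $\Omega$ is a John domain. Second, apply Theorem~\ref{theorem:w11_removable} to $E$ and the David homeomorphism $f\colon \widehat{\C}\to\widehat{\C}$; this yields that $f(E)=f(\partial \Omega)$ is \emph{locally} conformally removable. Finally, observe that local conformal removability is a strictly stronger property than conformal removability in the sense defined at the beginning of the subsection: any homeomorphism $h\colon\widehat{\C}\to\widehat{\C}$ that is conformal on $\widehat{\C}\setminus f(\partial \Omega)$ is in particular a homeomorphic embedding of the open set $U=\widehat{\C}$ that is conformal on $U\setminus f(\partial\Omega)$, hence (by local removability) conformal on all of $\widehat{\C}$, and therefore a Möbius transformation. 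This gives conformal removability of $f(\partial\Omega)$, as required.

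The only substantive step is the invocation of Theorem~\ref{theorem:w11_removable}, but the hypotheses of that theorem are already met verbatim: $f$ is a David homeomorphism of $\widehat{\C}$, and the compact set $E=\partial\Omega$ is $W^{1,1}$-removable by Jones--Smirnov. No additional estimate on $\Omega$ or on $f$ is needed, and the John property enters only through Jones--Smirnov. If one wished to state the result more flexibly, the same argument using Theorem~\ref{theorem:john_union_removable} in place of \cite[Theorem 4]{JS00} would yield conformal removability of $f\bigl(\bigcup_{i\in I}\partial\Omega_i\bigr)$ for any finite family $\{\Omega_i\}_{i\in I}$ of disjoint John domains, which is the version actually used in the applications to necklace-group limit sets and geometrically finite Julia sets later in the paper.
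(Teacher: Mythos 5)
Your argument is exactly the paper's: it states Theorem~\ref{theorem:john_removable} as an immediate combination of \cite[Theorem~4]{JS00} (boundaries of John domains are $W^{1,1}$-removable) with Theorem~\ref{theorem:w11_removable}, and you have simply spelled out that combination, including the trivial passage from local conformal removability to conformal removability. Your closing remark about using Theorem~\ref{theorem:john_union_removable} to handle finite unions of disjoint John domain boundaries is also the version the paper invokes in later applications.
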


\bigskip

\section{Expansive covering maps of the circle}\label{section:expansive}
Let $f,g\colon \mathbb{S}^1\to \mathbb{S}^1$ be covering maps of degree $d\geq 2$, having the same orientation. Then {under some expansion assumptions} there exists an orientation-preserving homeomorphism $h\colon \mathbb S^1\to \mathbb S^1$ that conjugates the map $f$ to $g$. Our goal, under suitable assumptions on the covering maps $f$ and $g$, is to extend  the conjugating homeomorphism $h$ to a David self-map of the unit disk. This is the content of Section \ref{section:main_theorem}.  The conditions on $f$ and $g$ will be in terms of a Markov partition associated to $f$ and $g$. In particular, we will give conditions in the special, but very interesting case that one of the maps is $z\mapsto z^d$ or $z\mapsto \br z^d$. In this section we discuss the notion of an expansive covering map of the circle and the notion of a Markov partition associated to such a map. 

If $a,b\in \mathbb{S}^1$, we denote by $\arc{[a,b]}$ and $\arc{(a,b)}$ the closed and open arcs, respectively, from $a$ to $b$ in the positive orientation. The arc $\arc{(b,a)}$, for example, is the complementary arc of $\arc{[a,b]}$. We also denote the arc $\arc{(a,b)}$ by $\inter{\arc{[a,b]}} $. We say that two non-overlapping arcs $I,J\subset \mathbb{S}^1$ are \textit{adjacent} if they share an endpoint. 

\begin{definition}\label{definition:markov_partition}
A \textit{Markov partition} associated to a covering map $f\colon \mathbb{S}^1\to \mathbb{S}^1$ is a covering of the unit circle by closed arcs $A_k=\arc{[a_k, a_{k+1}]}$, $k\in \{0,\dots, r\}$, $r\geq 1$, that have disjoint interiors and satisfy the following conditions.
\begin{enumerate}[(i)]
\item The map $f_k=f|_{\inter{A_k}}$ is injective for $k\in \{0,\dots,r\}$.
\item If $f(\inter{A_k})\cap \inter{A_j}\neq \emptyset$ for some $k,j\in \{0,\dots,r\}$, then $\inter{A_j}\subset f(\inter{A_k})$. 
\item The set $\{a_0,\dots,a_r\}$ is invariant under $f$.
\end{enumerate}    
We denote the above Markov partition by $\mathcal P(f;\{a_0,\dots,a_r\})$. 
\end{definition}

Note that by definition the points $a_0,\dots,a_r$ are ordered in the positive orientation if $r\geq 2$; if $r=1$, there is no natural order. Moreover, (ii) and (iii) are equivalent under condition (i). 

We provide some more definitions. Let $f\colon \mathbb{S}^1\to \mathbb{S}^1$ be a covering map and consider a Markov partition $\mathcal P=\mathcal P(f;\{a_0,\dots,a_r\})$. We can associate a matrix $B=(b_{kj})_{k,j=0}^r$ to $\mathcal P$ so that $b_{kj}=1$ if $f_k(A_k)\supset A_j$ and $b_{kj}=0$ otherwise. In the case $b_{kj}=1$ we define $A_{kj}=f_k^{-1}(A_j)$. If $w=(j_1,\dots,j_n)\in \{0,\dots,r\}^n$, $n\in \N$, $k\in \{0,\dots,r\}$, and once $A_w$ has been defined, we define $A_{kw}=f_k^{-1}(A_w)$ whenever $b_{kj_1}=1$. A \textit{word} $w=(j_1,\dots,j_n)\in \{0,\dots,r\}^n$, $n\in \N$, is \textit{admissible (for the Markov partition $\mathcal P$)} if $b_{j_1j_2} =\dots=b_{j_{n-1}j_n}=1$. We also define $A_w=\emptyset$ if $w$ is not admissible. The \textit{length} of a word $w=(j_1,\dots,j_n)\in \{0,\dots,r\}^n$ is defined to be $|w|=n$. It follows from properties (i) and (ii) that for each $n\in \N$ the arcs $A_w$, where $|w|=n$, have disjoint interiors and their union is equal to $\mathbb{S}^1$. Inductively, we have $A_{wj}\subset A_w$ for all admissible words $w$ and $j\in \{0,\dots,r\}$. If $A_{wj}$ is non-empty, we say that $A_{wj}$ is a \textit{child} of $A_w$ and $A_w$ is the \textit{parent} of $A_{wj}$. Thus, $A_w$ has at most $r+1$ children. We direct the reader to \cite[Section 19.14]{Lyu20} for more background on Markov partitions.

\bigskip

\begin{definition}\label{definition:expansive}
A continuous map $f\colon \mathbb{S}^1\to \mathbb{S}^1$ is called \textit{expansive} if there exists a constant $\delta>0$ such that for any $a,b\in \mathbb{S}^1$ with $a\neq b$ we have $|f^{\circ n}(a)-f^{\circ n}(b)|>\delta$ for some $n\in \N\cup\{0\}$.
\end{definition}

We now list some important properties of expansive maps of $\mathbb S^1$. Let $f\colon \mathbb{S}^1\to \mathbb{S}^1$ be an expansive covering map.
\begin{enumerate}[($E$\upshape1)]
\item\label{lemma:expansive_isolated}
For all $n\in \N$ the map $f^{\circ n}$ has finitely many fixed points.
\item\label{corollary:diameters}
Let  $\mathcal P(f;\{a_0,\dots,a_r\})$ be a Markov partition. Then
\begin{align*}
\lim_{n\to\infty}\max\{\diam{A_w}: |w|=n, \, w\,\, \textup{admissible} \} =0.
\end{align*} 
\item\label{theorem:expansive_conjugate}Suppose that the degree of $f$ is $d\geq 2$. Then there exists an orientation-preserving homeomorphism $h\colon \mathbb{S}^1\to \mathbb{S}^1$ that conjugates $f$ to either the map $g(z)=\br z^d$ or the map $g(z)=z^d$, depending on whether $f$ is orien\-tation-reversing or orientation-preserving, respectively. Moreover, $h$ is unique up to rotation by a $(d+1)$-st root of unity if $g(z)=\br z^d$ or a $(d-1)$-st root of unity if $g(z)=z^d$.
\end{enumerate}

Property \ref{corollary:diameters} can be proved easily using \cite[Theorem 3.6.1, p.~143]{PU11}. Property \ref{theorem:expansive_conjugate} is a consequence of \ref{corollary:diameters}. A more general statement for expansive self-maps of a compact manifold can be found in \cite[Property $(2')$, p.~99]{CR80}. A refined version of property \ref{theorem:expansive_conjugate} that we will need for our considerations is the following lemma. Its proof is straightforward, based on property \ref{corollary:diameters}, and is omitted.

\begin{lemma}\label{lemma:expansive_conjugate_fg}
Let $f,g\colon \mathbb S^1\to \mathbb S^1 $ be expansive covering maps of the same orientation, and $\mathcal P(f;\{a_0,\dots,a_r\})$, $\mathcal P(g;\{b_0,\dots,b_r\})$  be  Markov partitions. Consider the map $h\colon \{a_0,\dots,a_r\} \to \{b_0,\dots,b_r\}$ defined by $h(a_k)=b_k$ for $k\in \{0,\dots,r\}$ and suppose that $h$ conjugates the map $f$ to $g$ on the set $\{a_0,\dots,a_r\}$, i.e., 
\begin{align*}
h(f(a_k))=g(b_k)
\end{align*}
for $k\in \{0,\dots,r\}$. Then $h$ has an extension to an orientation-preserving homeomorphism of $\mathbb S^1$ that conjugates $f$ to $g$ on $\mathbb S^1$.
\end{lemma}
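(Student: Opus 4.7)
The plan is to extend $h$ to all of $\mathbb S^1$ by using the equivalence of the Markov combinatorics together with property \ref{corollary:diameters}. First I would show that the transition matrices $B_f = (b^f_{kj})$ and $B_g = (b^g_{kj})$ of the two partitions coincide. Indeed, the hypothesis that $h(f(a_k)) = g(b_k)$ together with $h(a_k) = b_k$ forces $f$ and $g$ to act identically on the labels $\{0,\dots,r\}$, so by the injectivity of $f_k$ and $g_k$ on $\inter{A_k}$ and $\inter{B_k}$ (where I write $B_k = \arc{[b_k,b_{k+1}]}$) and Definition \ref{definition:markov_partition}(ii), the relations $f_k(A_k)\supset A_j$ and $g_k(B_k)\supset B_j$ are equivalent. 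Hence a word $w$ is admissible for $\mathcal P(f;\cdot)$ iff it is admissible for $\mathcal P(g;\cdot)$, and the labeled arcs $A_w$, $B_w$ are simultaneously non-empty.

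Next I would define $h$ on the dense set $E = \bigcup_{n\geq 0} f^{-n}\{a_0,\dots,a_r\}$ of all endpoints of Markov arcs by induction on the refinement level. At level $n$, the endpoints of the arcs $A_w$ with $|w|=n$ are precisely $E_n := f^{-(n-1)}\{a_0,\dots,a_r\}$ and similarly for $B_w$; moreover the cyclic order of these endpoints in $\mathbb S^1$ is determined entirely by the admissibility relations, which by the previous paragraph are identical for $f$ and $g$. I define $h$ on $E_n$ by sending the endpoints of $A_w$ to the endpoints of $B_w$ in order-preserving fashion; this is consistent with the given value of $h$ on $\{a_0,\dots,a_r\}$ and, by construction, satisfies $h\circ f = g\circ h$ on $E$.

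For a general point $x \in \mathbb S^1 \setminus E$, I would use that $x$ lies in a unique nested sequence of arcs $A_{w_1}\supset A_{w_2}\supset \cdots$ with $|w_n|=n$, and by property \ref{corollary:diameters} applied to $f$, these diameters tend to zero, so $\bigcap_n A_{w_n} = \{x\}$. Applying property \ref{corollary:diameters} to $g$, the corresponding nested arcs $B_{w_n}$ also shrink to a single point, which I define to be $h(x)$. Well-definedness follows since $x\notin E$ belongs to only one such chain. Continuity and injectivity are then immediate from the nested-arc description: any two distinct points are eventually separated by arcs $A_w$, $A_{w'}$ with disjoint interiors, whose images $B_w$, $B_{w'}$ are also disjoint. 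Orientation preservation follows because the labeling of children within each $A_w$ and the corresponding $B_w$ is consistent with the cyclic order, since $f$ and $g$ have the same orientation.

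The conjugacy relation $h\circ f = g\circ h$ has already been established on the dense set $E$; by continuity of $h$, $f$, $g$ it extends to all of $\mathbb S^1$. The main (minor) obstacle is bookkeeping: verifying that the combinatorial equivalence of the two Markov partitions really does follow from the pointwise hypothesis $h(f(a_k)) = g(b_k)$, so that the tree of nested arcs on the $f$-side is canonically isomorphic to the one on the $g$-side. Once that is in hand, the shrinking-diameter property \ref{corollary:diameters} applied separately to $f$ and to $g$ makes the construction of the extension mechanical.
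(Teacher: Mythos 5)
Your proof is correct and follows essentially the approach the authors had in mind (the paper explicitly omits the proof, calling it ``straightforward, based on property \ref{corollary:diameters}''). The commented-out draft in the source constructs piecewise-linear approximations $h_n$ with $h_n(A_w)=B_w$ for $|w|\leq n$ and passes to a uniform limit, whereas you define $h$ directly on the dense set of Markov endpoints by level-wise combinatorics and then on the remaining points via the nested-arc intersection. These are minor mechanical variants of the same strategy: in both, the essential content is that the hypothesis $h(f(a_k))=g(b_k)$, together with the shared orientation of $f$ and $g$, forces the two transition matrices to coincide, after which property \ref{corollary:diameters} applied to each of $f$ and $g$ gives the shrinking arcs needed for convergence. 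Your treatment of continuity at points of $E$ could be stated a touch more explicitly (the uniform-limit formulation handles this automatically by combining the bounds for $h_n$ and $h_n^{-1}$), but the uniform smallness of the arcs $B_w$ at a fixed level makes the argument go through without difficulty.
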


\begin{remark}
The conjugacy assumption on the map $h$ defined in the statement of Lemma~\ref{lemma:expansive_conjugate_fg} implies that $f$ and $g$ have the same degree.
\end{remark}

Suppose that $a$ is a \textit{periodic point} of a covering map $f\colon \mathbb S^1\to \mathbb S^1$. That is, there exists a minimal $n\in \N$ such that $f^{\circ n}(a)=a$. The number $n$ is called the \textit{period} of $a$. We note that if $f^{\circ n}$ is orientation-preserving, then it maps an arc of the form $\arc{(z_1,a)}$ to an arc of the form $\arc{(z_2,a)}$. We say that $f^{\circ n}$ is the \textit{first orientation-preserving return map to the periodic point $a$} and $n$ is the \textit{orientation-preserving period of $a$}. If $f^{\circ n}$ is orientation-reversing, then it maps  an arc of the form $\arc{(z_1,a)}$ to an arc of the form $\arc{(a,z_2)}$. In the latter case, $f^{\circ 2n}$ is orientation-preserving, it maps an arc of the form $\arc{(z_1,a)}$ to an arc of the form $\arc{(z_3,a)}$ fixing $a$, and $2n$ is the smallest integer with that property. In this case $f^{\circ 2n}$ is the \textit{first orientation-preserving return map to the periodic point $a$} and $2n$ is the \textit{orientation-preserving period of $a$}. We denote by $f_a$ the first orientation-preserving return map to $a$. In what follows we suppress the term ``orientation-preserving'' and the term ``first return map'' always refers to the first orientation-preserving return map. 

We define the \textit{one-sided multipliers} $\lambda(a^+), \lambda(a^-)$ of $f$ at $a$ to be the one-sided derivatives of the first return maps, if they exist:
\begin{align*}
\lambda(a^+)&= f_a'(a^+)= \lim_{\substack{z\to a\\ \tiny{z\in \arc{(a,z_0)}}}} \frac{f_a(z)-a}{z-a}=\lim_{z\to a^+} \frac{f_a(z)-a}{z-a} \quad\textrm{and}\\
\lambda(a^-)&=f_a'(a^-)=  \lim_{\substack{z\to a\\ \tiny{z\in \arc{(z_0,a)}}}} \frac{f_a(z)-a}{z-a}=\lim_{z\to a^-} \frac{f_a(z)-a}{z-a},
\end{align*} 
where $z_0\neq a$ is any point on $\mathbb{S}^1$. Observe that $\lambda(a^+),\lambda(a^-)$ are always non-negative real numbers, since $f_a$ maps the circle to itself with positive orientation. 

We extend this definition to preperiodic points. If $a$ is a preperiodic point of $f$, then there exists a minimal $m\in \N$ such that $f^{\circ m}(a)$ is periodic. We define 
$$\lambda(a^{\pm})= \lambda (f^{\circ m}(a)^{\pm})$$
if $f^{\circ m}$ is orientation-preserving and
$$\lambda(a^{\pm})= \lambda (f^{\circ m}(a)^{\mp})$$
if $f^{\circ m}$ is orientation-reversing. We also define the orientation-preserving period of the preperiodic point $a$ to be equal to the orientation-preserving period of the periodic point $f^{\circ m}(a)$.

If $f$ is expansive, we obtain some extra information about the one-sided multipliers.

\begin{enumerate}[($E$\upshape 4)]
\item\label{lemma:expansive_multiplier}
Suppose that $a$ is a periodic point of $f$. If the one-sided multiplier $\lambda(a^\pm)$ exists, then $\lambda(a^\pm)\geq 1$.
\end{enumerate}
Indeed, if the conclusion of the statement were not true, then some orbits would be attracted to the periodic point $a$ and this would contradict the expansivity of $f$.

Next, we give two general classes of expansive maps of the circle. We denote by $\D^*$ the complement of the closed unit disk $\overline{\D}$ in the Riemann sphere $\widehat{\C}$. For a Euclidean circle $C$ in the plane, the bounded complementary component of $C$ will be denoted by $\Int{C}$. This is not to be confused with the interior of an arc $I$ of $\mathbb{S}^1$, which is denoted by $\inter{I}$. The distinction will be clear from the context. 

\begin{example}[Blaschke products]\label{example:expansive_blaschke}
Consider a Blaschke product $$B(z)=e^{i\theta}\prod_{i=1}^d \frac{z-c_i}{1-\overline{c_i}z},$$ where $\theta\in\R$, $d\geq 2$, and $c_1,\cdots, c_d\in\D$. Then, $B\colon \D\to\D$ and $B\colon \D^*\to\D^*$ are branched coverings of degree $d$, and $B:\mathbb{S}^1\to\mathbb{S}^1$ is a degree $d$ covering. 

Suppose further that $B$ has a parabolic fixed point at $1\in\mathbb{S}^1$. By the theory of parabolic fixed points \cite[\S 10]{Mil06}, there exists a basin of attraction of the point $1$, i.e., an open set of points whose iterates under $B$ converge to $1$. Since the action of $B$ in $\D$ is conjugate under the map $z\mapsto 1/\br z$ to the action of $B$ in $\D^*$, it follows that the basin of attraction is symmetric with respect to the unit circle, and in particular it intersects both $\D$ and $\D^*$. Then, by the Denjoy--Wolff Theorem \cite[Theorem 5.4]{Mil06}, we conclude that the successive iterates $B^{\circ n}$ converge, uniformly on compact subsets of $\D$ (respectively, on $\D^*$), to the constant function $z\mapsto 1$. Note that if $B''(1)=0$, then there are at least two attracting directions to the parabolic fixed point $1$, and hence at least two immediate basins of attraction of $1$. Hence, if $B''(1)=0$, then the Fatou set of $B$ must have at least two components, so its Julia set must be equal to $\mathbb{S}^1$. We remark that if $B''(1)\neq 0$, then the Julia set of $B$ would be a Cantor set contained in $\mathbb S^1$.

According to the Riemann-Hurwitz formula, $B$ has $(d-1)$ critical points (counted with multiplicity) in each of $\D$ and $\D^*$. Thus, $B$ has no critical point on the Julia set. By \cite[Theorem~4]{DU91}, it follows that if $B''(1)=0$, then the map $B\colon \mathbb{S}^1\to\mathbb{S}^1$ is expansive.

A particular example of such a Blaschke product is
$$
B(z)=\frac{(d+1)z^d+(d-1)}{(d-1)z^d+(d+1)},\ d\geq 2.
$$
The point 1 is a parabolic fixed point of $B$ and $B''(1)=0$.
\end{example}

\begin{example}[Circle reflections]\label{example:expansive_reflection}
Consider ordered points $a_0=a_{d+1},a_1,\dots$, $a_{d}$, $d\geq 2$, on the circle $\mathbb{S}^1$. Let $f_k$ be the reflection along the circle $C_k$ that is orthogonal to the unit circle at the points $a_k$ and $a_{k+1}$. Set $A_k=\arc{[a_k, a_{k+1}]}\subset\mathbb{S}^1$, $k\in \{0,\dots,d\}$. We will assume that the points $a_0,\dots,a_{d}$ satisfy the following condition.
\begin{align}
\tag{$\star$} \length(\arc{(a_k,a_{k+1})})<\pi \quad \textrm{for} \quad k\in \{0,\dots,d\}.
\end{align}
We now define a map $f\colon \mathbb{S}^1\to\mathbb{S}^1$ by $z\mapsto f_k(z)$ for  $z\in A_k$. Since $f(A_k)=\bigcup_{k'\neq k} A_{k'}$, it follows that $f$ is an orientation-reversing covering map of degree $d$ with fixed point set $\{a_0,\cdots, a_{d}\}$. Moreover, condition~($\star$) guarantees that $\vert f'\vert\geq 1$ on $\mathbb{S}^1$ with equality precisely at the fixed points $a_k$. It will follow from Lemma~\ref{lem:expansive_map} below that $f$ is an expansive map.
\end{example}

\begin{example}[Hybrid]\label{example:expansive_hybrid}
Let $d\in\N,\ d\geq2$, and $a_j=e^{\frac{2\pi j}{d+1}},\ j\in \{0,1,\dots, d\}$, be the fixed points of the map $z\mapsto\bar z^d$. We select some adjacent pairs 
$$
\{a_{j_1},a_{j_1+1}\},\dots, \{a_{j_m},a_{j_m+1}\}, 
$$
where the indices are taken modulo $d+1$. For each selected pair $\{a_{j_k},a_{j_k+1}\},\ k\in \{1,\dots,m\}$, let $C_k$ be the circle that is orthogonal to the unit circle at the points $a_{j_k}$ and $a_{j_k+1}$. If $A_j=\arc{[a_{j}, a_{j+1}]}\subset\mathbb{S}^1,\ j\in \{0,1,\dots, d\}$, we can define a map $f\colon \mathbb{S}^1\to\mathbb{S}^1$ by setting it to be the reflection on $A_{j_k},\ k\in \{1,\dots,m\}$, in the circle $C_k$, and on the remaining arcs $A_j$ to be the map $z\mapsto\bar z^d$. The map $f$ is an orientation-reversing covering map of degree $d$ whose fixed point set is $\{a_0,\cdots, a_{d}\}$. Moreover, we have that the map $f$ is piecewise $C^1$, and its set of non-differentiability is contained in $\{a_{j_1}, a_{j_1+1},\dots, a_{j_k}, a_{j_k+1}\}$. The one-sided derivatives of $f$ exist at each point of non-differentiability and the map $f$ is $C^1$ on each closed arc $A_j,\ j\in \{0,1,\dots, d\}$. Moreover, $\vert f'\vert\geq 1$ on $A_j,\ j\in \{0,1,\dots, d\}$, with equality only at the endpoints of $A_{j_k},\ k\in \{1,\dots, m\}$. It will follow from Lemma~\ref{lem:expansive_map} below that $f$ is an expansive map.
\end{example}

\begin{lemma}\label{lem:expansive_map}
Let $f:\mathbb{S}^1\to\mathbb{S}^1$ be a piecewise $C^1$ covering map that has one-sided derivatives at points of non-differentiability, it is $C^1$ on the closure of each complementary arc of the non-differentiability set, and such that
\begin{enumerate}[\upshape(i)]
\item  $f$ has finitely many fixed points, and each non-differentiability point is fixed,

\item $\vert f'\vert\geq 1$ on $\mathbb{S}^1$, where at points of non-differentiability we assume that the magnitudes of both one-sided derivatives are $\geq1$, and

\item if $\zeta\in \mathbb S^1$ is not a fixed point of $f$, then $|f'(\zeta)|>1$.
\end{enumerate}
Then, $f$ is expansive.
\end{lemma}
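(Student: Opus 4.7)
Let $F=\{p_1,\dots,p_m\}$ denote the finite fixed-point set of $f$; by hypothesis~(i), every non-differentiability point of $f$ lies in $F$. Set $M:=\sup_{\mathbb{S}^1}|f'|$ (finite by the piecewise $C^1$ regularity on each of the finitely many closed pieces) and fix $\delta>0$ smaller than both $\tfrac14\min_{i\neq j}d(p_i,p_j)$ and $\pi/M$. The plan is to prove that $\delta$ is an expansivity constant for $f$ by first establishing the following \emph{strict expansion at small scales}: for every pair of distinct points $x,y\in\mathbb{S}^1$ with $d(x,y)\leq\delta$, one has $d(f(x),f(y))>d(x,y)$.

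To establish this strict monotonicity, let $I$ be the shorter arc joining $x$ and $y$, of length $d(x,y)$. By the choice of $\delta$, $I$ contains at most one point of $F$. If such an interior fixed point $p\in\inter{I}$ exists, split $I$ at $p$ into two closed subarcs $I_1,I_2$ meeting at $p$; otherwise take $I_1=I$, $I_2=\emptyset$. On each $I_j$ the map $f$ is $C^1$ with $|f'|\geq 1$, so $f'$ has constant sign by continuity and $f|_{I_j}$ is monotone, hence injective, giving $|f(I_j)|=\int_{I_j}|f'|$. Condition~(iii) gives $|f'|>1$ on the full-measure subset $I_j\setminus F$, and a continuity argument then yields $|f(I_j)|>|I_j|$ strictly whenever $|I_j|>0$. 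A brief case analysis on the orientation of $f$ at $p$ shows that $f(x)$ and $f(y)$ lie on opposite sides of $p$ in both the orientation-preserving and orientation-reversing cases (in the reversing case, $f(I_1)$ lies on $I_2$'s side of $p$ and vice versa). Consequently the arc through $p$ joining $f(x)$ to $f(y)$ has length $|f(I_1)|+|f(I_2)|\leq M|I|<\pi$, so it is the shorter arc, yielding $d(f(x),f(y))=|f(I_1)|+|f(I_2)|>|I_1|+|I_2|=d(x,y)$.

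With the small-scale strict expansion in hand, expansivity follows by a compactness argument. Assume for contradiction that there exist distinct $a,b\in\mathbb{S}^1$ with $d_n:=d(f^{\circ n}(a),f^{\circ n}(b))\leq\delta$ for all $n\geq 0$. Iterating the monotonicity, the sequence $(d_n)$ is strictly increasing and bounded above by $\delta$, hence converges to some $D\in(0,\delta]$. Extract a convergent subsequence $(f^{\circ n_k}(a),f^{\circ n_k}(b))\to (x^*,y^*)$; continuity of $d$ gives $d(x^*,y^*)=D$, and continuity of $f$ together with $d_{n_k+1}\to D$ gives $d(f(x^*),f(y^*))=D$. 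Since $0<d(x^*,y^*)\leq\delta$, applying the strict-expansion estimate to $(x^*,y^*)$ forces $d(f(x^*),f(y^*))>D$, a contradiction.

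The principal obstacle is the orientation-reversing subcase at a parabolic fixed point $p\in\inter{I}$ (the situation relevant to the circle reflections of Example~\ref{example:expansive_reflection}, where $|f'(p^\pm)|=1$): one must verify that although $f$ swaps the sides of $p$, the image subarcs $f(I_1)$ and $f(I_2)$ still join up at $p=f(p)$ into a single simple arc of length $<\pi$ whose length realizes the distance $d(f(x),f(y))$. The choice $\delta<\pi/M$, together with the one-sided monotonicity of $f$ on each $I_j$, is precisely what makes this step work and lets the strict integral inequality on each half propagate to a strict inequality between distances.
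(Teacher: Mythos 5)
Your proof is correct and takes a genuinely different route from the paper's. The paper's argument first establishes an \emph{escape lemma} — the orbit of a non-fixed point cannot remain in a small neighborhood of a fixed point for all time — and then splits into two cases according to whether the orbit of $z_1$ ever lands on a fixed point; in the remaining case, the orbits visit the complement of the $\delta$-neighborhood of the fixed-point set infinitely often, at which times the mutual distance grows by a definite factor $\lambda>1$ (and it never decreases in between), so boundedness forces $z_1=z_2$. You instead establish the cleaner pointwise estimate $d(f(x),f(y))>d(x,y)$ for all $0<d(x,y)\leq\delta$ — this is where the integral of $|f'|>1$ a.e.\ on each monotonicity subarc and the orientation bookkeeping at an intermediate fixed point do the work — and then invoke compactness of $\mathbb{S}^1\times\mathbb{S}^1$: if $d_n:=d(f^{\circ n}(a),f^{\circ n}(b))\leq\delta$ for all $n$ with $a\neq b$, the strict monotonicity makes $(d_n)$ increase to some $D\in(0,\delta]$, and any subsequential limit $(x^*,y^*)$ of the orbit pairs satisfies $d(x^*,y^*)=d(f(x^*),f(y^*))=D$, contradicting strictness. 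This soft argument replaces both the escape lemma and the case split on whether an orbit hits a fixed point, and is arguably shorter; what it gives up is the explicit uniform expansion factor $\lambda$ away from the fixed-point set, which the paper's proof produces as a by-product but which the qualitative conclusion does not require.
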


\begin{proof}
We only provide a sketch of the proof, since it is elementary. We denote by $d\colon \mathbb S^1\times \mathbb S^1\to [0,\pi]$ the length metric in $\mathbb S^1$ and by $X$ the finite set of fixed points of $f$. First, using the assumption (iii), one can show that there exists $\delta>0$ such that if $z_0\in\mathbb{S}^1$ and $d(f^{\circ n}(z_0),\zeta)<\delta$ for all $n\geq 0$, and some $\zeta\in X$, then $z_0=\zeta$. In other words, an orbit of a point $z_0$ that is not a fixed point cannot stay for all times near a fixed point. Now, consider two points $z_1,z_2\in \mathbb S^1$ such that $d(f^{\circ n}(z_1),f^{\circ n}(z_2))$ remains small, say less than $\eta$, for all $n\geq 0$. There are two main cases. 

If $f^{\circ n_0}(z_1)$ is a fixed point of $f$ for some $n_0\geq 0$, then $f^{\circ n}(z_2)$ remains near a fixed point of $f$ for all times $n\geq n_0$. By the previous, we must have $f^{\circ n_0}(z_1)=f^{\circ n_0}(z_2)$. Without loss of generality $n_0\geq 1$. By assumption, $$d(f^{\circ (n_0-1)}(z_1),f^{\circ (n_0-1)}(z_2))<\eta.$$ If $\eta$ is chosen suitably, then by (ii) $f$ is injective on an arc  containing $f^{\circ (n_0-1)}(z_1)$ and $f^{\circ (n_0-1)}(z_2)$. Hence, $f^{\circ (n_0-1)}(z_1)=f^{\circ (n_0-1)}(z_2)$. Inductively, $z_1=z_2$. The same conclusion holds if $f^{\circ n_0}(z_2)$ is a fixed point for some $n_0$. 

The other case is that $f^{\circ n}(z_1)$ and $f^{\circ n}(z_2)$ are not fixed points for all $n\geq 0.$ Then, they actually have to stay away from the $\delta$-neighborhood of the fixed point set $X$ infinitely often, by the first paragraph. Property (iii) implies that $d(f^{\circ (n+1)}(z_1),f^{\circ (n+1)}(z_2))\geq \lambda d(f^{\circ n}(z_1),f^{\circ n}(z_2))$ for some definite factor $\lambda>1$, whenever $f^{\circ n}(z_1)$ and $f^{\circ n}(z_2)$ are not in the $\delta$-neighborhood of $X$. If either $f^{\circ n}(z_1)$ or $f^{\circ n}(z_2)$ is $\delta$-close to $X$ then we still have $d(f^{\circ (n+1)}(z_1),f^{\circ (n+1)}(z_2))\geq  d(f^{\circ n}(z_1),f^{\circ n}(z_2))$ by (ii). Since the first alternative occurs infinitely often, we obtain a contradiction, unless $z_1=z_2$.
\end{proof}
 
\begin{remark}\label{rem:C2_1}
For piecewise $C^2$ maps, assumptions of Lemma~\ref{lem:expansive_map} can probably be
relaxed. Namely, it should be sufficient to assume that all periodic points are topologically repelling on both sides (compare \cite{Man85}).
Moreover, if the map has negative Schwarzian derivative (for piecewise $C^3$ maps)
this assumption is satisfied automatically  by Singer's Theorem (see \cite{CE80}),
except at points of non-differentiability,
and the generalization of the lemma to this setting looks straightforward. 
\end{remark}

We end this section with a known result for hyperbolic Blaschke products that we will use later.
\begin{lemma}\label{lemma:blaschke_qs}
Let $B$ be a holomorphic (resp.\ anti-holomorphic) Blaschke product of degree $d\geq 2$ that has an attracting fixed point in $\D$. Then there exists a quasisymmetric map $h\colon \mathbb S^1\to \mathbb S^1$ that conjugates the map $B$ to the map $f(z)=z^d$ (resp.\  $f(z)=\br z^d$).
\end{lemma}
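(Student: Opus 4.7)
The plan is to reduce to the classical fact that any two $C^{1+\alpha}$ uniformly expanding covering maps of $\mathbb{S}^1$ of equal degree and orientation are quasisymmetrically conjugate; thus the task splits into verifying uniform expansion of $B|_{\mathbb{S}^1}$ and then running a Markov-partition argument based on the Koebe distortion theorem. First, since the reflection $z\mapsto 1/\overline{z}$ conjugates $B|_{\D}$ to $B|_{\D^*}$, the attracting fixed point of $B$ in $\D$ produces an attracting fixed point in $\D^*$. By the Denjoy--Wolff theorem all of $\D\cup\D^*$ lies in the Fatou set of $B$, and since $\mathbb{S}^1$ is totally invariant it must equal the Julia set. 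Riemann--Hurwitz places all $2(d-1)$ critical points of $B$ inside $\D\cup\D^*$, so the postcritical set is compactly contained in the Fatou set and $B$ is a hyperbolic (anti-)rational map. A standard compactness and chain-rule argument then yields constants $C>0$ and $\lambda>1$ with $|(B^{\circ n})'(\zeta)|\geq C\lambda^n$ for every $\zeta\in\mathbb{S}^1$ and $n\geq 1$.

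Because $B|_{\mathbb{S}^1}$ is a covering map of degree $d$ having the same orientation as $f$, and is expansive by the uniform expansion bound above (or by Lemma~\ref{lem:expansive_map}), property $(E3)$ supplies an orientation-preserving topological conjugacy $h\colon \mathbb{S}^1\to\mathbb{S}^1$ between $B|_{\mathbb{S}^1}$ and $f$. To upgrade $h$ to a quasisymmetric map I would fix a repelling fixed point $p\in\mathbb{S}^1$ of $B$ and consider the Markov partition of $B$ whose first-level pieces are the $d+1$ closed arcs into which $B^{-1}(p)\cap\mathbb{S}^1$ cuts the circle; its $n$-th refinement decomposes $\mathbb{S}^1$ into arcs $\{A_w:|w|=n\}$ on each of which $B^{\circ n}$ is a real-analytic diffeomorphism. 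Under $h$ these correspond to the Markov partition $\{A_w':|w|=n\}$ of $f$ generated by a fixed point of $f$, and $h(A_w)=A_w'$ for every admissible word $w$.

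The essential analytic input is the Koebe distortion theorem applied to inverse branches of $B^{\circ n}$. Because $B$ is hyperbolic, there exists an annular neighbourhood $N\subset\widehat{\C}$ of $\mathbb{S}^1$, disjoint from the postcritical set of $B$, such that every branch of $B^{\circ (-n)}$ defined on a first-level arc extends univalently to a fixed-size topological disk in $N$ containing that arc. Koebe then furnishes a universal distortion constant $K\geq 1$ for all these inverse branches, independent of $n$; combined with uniform expansion this forces any two adjacent level-$n$ arcs of $B$ to have comparable diameters, and $\diam A_w\asymp |(B^{\circ n})'(\zeta_w)|^{-1}$ for any $\zeta_w\in A_w$. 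The analogous comparability $\diam A_w'\asymp d^{-n}$ for $f$ is immediate from the explicit form of $f$. Given $z\in\mathbb{S}^1$ and $t\in(0,1/2)$, choose $n$ so that the level-$n$ arc of $B$ containing $z$ has diameter comparable to $t$; then the three consecutive level-$n$ arcs around $z$ have mutually comparable diameters on the $B$-side, and their $h$-images, being three consecutive level-$n$ arcs of $f$, have diameters comparable to $d^{-n}$ on the $f$-side. This bounds both ratios defining $\rho_h(z,t)$ by a constant depending only on $B$ and $d$, establishing quasisymmetry. The main technical obstacle is producing the univalent extensions of the inverse branches through $\mathbb{S}^1$, which is precisely where the hyperbolicity of $B$ enters essentially.
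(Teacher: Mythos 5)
Your proposal is correct and takes essentially the same route as the paper: both verify that hyperbolicity of $B$ (forced by the attracting fixed point, the symmetry $z\mapsto 1/\overline{z}$, and Riemann--Hurwitz) makes an iterate of $B|_{\mathbb{S}^1}$ uniformly expanding, obtain the topological conjugacy via property \ref{theorem:expansive_conjugate}, and then upgrade it to a quasisymmetry using the classical fact that a conjugacy between $C^{1+\alpha}$ (or analytic) expanding circle maps is quasisymmetric. The only difference is presentational: the paper cites this fact from \cite{Lyu20}, whereas you sketch its proof via Markov partitions and Koebe distortion --- correct in outline, modulo the small slip that $B^{-1}(p)$ cuts $\mathbb{S}^1$ into $d$ arcs rather than $d+1$.
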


Here, an anti-holomorphic Blaschke product is the complex conjugate of a holomorphic Blaschke product.

\begin{proof}
The proof is based on the fact that a conjugacy $h$ between two expanding maps of $\mathbb S^1$ that are of class $C^{2}$ is quasisymmetric; see \cite[Proposition 19.64]{Lyu20}. 

Since the Blaschke product $B$ has an attracting fixed point in $\D$, it follows that $B$ is hyperbolic, so $B^{\circ m}$ is expanding for some $m\in \N$ (see \cite[Lemma 2.1]{CG93}). By property \ref{theorem:expansive_conjugate}, there exists an orientation-preserving conjugacy $h$ between $B$ and $f$. Note that $h$ also conjugates $B^{\circ m}$ and $f^{\circ m}$. By the above fact, the conjugacy between $B^{\circ m}$ and $f^{\circ m}$ is quasisymmetric.
\end{proof}

\bigskip 

\section{David extensions of dynamical homeomorphisms of the circle}\label{section:main_theorem}
Let $f,g\colon \mathbb{S}^1\to \mathbb{S}^1$ be covering maps of degree $d\geq 2$ having the same orientation and consider Markov partitions $\mathcal P(f;\{a_0,\dots,a_r\})$ and $\mathcal P(g;\{b_0,\dots,b_r\})$. Let $h\colon \{a_0,\dots,a_r\} \to \{b_0,\dots,b_r\}$ be the map defined by $h(a_k)=b_k$ for $k\in \{0,\dots,r\}$ and suppose that $h$ conjugates the map $f$ to $g$ on the set $\{a_0,\dots,a_r\}$. If the maps $f$ and $g$ are expansive, then we know from Lemma \ref{lemma:expansive_conjugate_fg} that $h$ extends to an orientation-preserving homeomorphism $h\colon \mathbb{S}^1\to \mathbb{S}^1$ that conjugates the map $f$ to $g$. That is, $h( f(z) )=g( h(z))$ for all $z\in \mathbb{S}^1$. In this section, under some assumptions, we will extend the map $h$ to a homeomorphism of the unit disk $\D$ that is a quasiconformal or a David map.  Our main extension result is Theorem \ref{theorem:extension_generalization} and its proof will be given in Subsection \ref{section:distortion}. 

To illustrate the technical difficulties of the proof of Theorem \ref{theorem:extension_generalization}, we consider the following example. Let $C_1,\dots,C_4$ be circles that bound disjoint disks, are orthogonal to the unit circle, and such that $C_i$ is tangent to $C_{i+1}$ (here the index $i$ is taken mod $4$). Define $f$ to be a degree $3$ covering map of $\mathbb S^1$ that is piecewise the reflection on the circles $C_i$, $i\in \{1,\dots,4\}$. Similarly, define another covering map $g$, corresponding to some other circles $C_1',\dots,C_4'$ with the same properties. The maps $f$ and $g$ are conjugate to each other and to $z\mapsto \br z^3$. The conjugacy between $f$ and $g$ can be constructed implicitly as follows. Consider a quasiconformal map $h$ from the ideal $4$-gon defined by the circles $C_i$ and contained in $\D$ onto the ideal $4$-gon defined by the circles $C_i'$, such that $h$ preserves the cusps. Then $h$ can be extended by reflections to a quasiconformal map of $\D$. It follows from the theory of quasiconformal maps that $h$ extends to a homeomorphism of $\mathbb S^1$ that is quasisymmetric and conjugates $f$ to $g$. Note that $h$ takes the parabolic fixed points of $f$ to the parabolic fixed points of $g$. However, if one wishes to study the conjugacy between $\br z^3$ and $f$, then such a reflection argument cannot be implemented. Instead, we study directly the conjugating homeomorphism $h\colon \mathbb S^1\to \mathbb S^1$, and prove certain bounds for its scalewise distortion (a quantity defined in Subsection \ref{section:preliminaries_extension}). These bounds imply, by Theorem \ref{theorem:extension_david}, that there exists a David extension of $h$ in $\D$. The bounds for $h$ are proved by studying carefully the local behavior of $f$ and $z\mapsto \br z^3$ near parabolic and hyperbolic points and then use dynamics to spread around the estimates to all points of the circle.

Before formulating Theorem \ref{theorem:extension_generalization}, we describe the assumptions on the maps $f,g$ and the Markov partitions $\mathcal P(f;\{a_0,\dots,a_r\})$, $\mathcal P(g;\{b_0,\dots,b_r\})$. We will give the definitions using the notation associated to the map $f$.

\bigskip

\subsection{Markov partitions of dynamical covering maps}\label{section:markov_partitions}
Define $A_k=\arc{[a_k,a_{k+1}]}$ for $k\in \{0,\dots,r\}$ and recall that $f_k=f|_{\inter{A_k}}$ is injective by the definition of a Markov partition. We assume, that $f_k$ is analytic and that there exist open neighborhoods $U_k$ of $\inter{A_k}$ and $V_k$ of $f_k(\inter{A_k})$ in the plane such that $f_k$ has a conformal extension from $U_k$ onto $V_k$. We still denote the extension by $f_k$. We impose the condition that 
\begin{align}\label{condition:uv}
\bigcup_{\substack{0\leq j\leq r\\(k,j) \,\, \textrm{admissible}}}U_j \subset V_k
\end{align} 
for all $k\in \{0,\dots,r\}$. We also require that 
\begin{align}\label{condition:holomorphic}
\textrm{$f_k$ extends holomorphically to neighborhoods of $a_k$ and $a_{k+1}$}
\end{align}
for each $k\in \{0,\dots,r\}$.

\begin{example}[Power map]\label{example:power_map}
Let $f\colon \mathbb S^1 \to \mathbb S^1$ be the map $z\mapsto z^d$ or $z\mapsto \br z^d$.  Then for any Markov partition $\mathcal P(f;\{a_0,\dots,a_r\})$ both conditions \eqref{condition:uv} and \eqref{condition:holomorphic} are satisfied. Indeed, if $f(z)=z^d$ on $\mathbb S^1$, then $f$ has an analytic extension to $\C$. If $f(z)=\br z^d$ on $\mathbb S^1$, observe that $f(z)=1/z^d$, so $f$ has an analytic extension to $\C\setminus \{0\}$. We denote the extension by $f$. Condition \eqref{condition:holomorphic} holds trivially. For $k\in \{0,\dots,r\}$ let $U_k$ an open sector with vertex $0$ and angle subtended by the arc $A_k$.  Since $f$ is injective on $\inter{A_k}$ by the definition of a Markov partition, it follows that $f$ is conformal on $U_k$. We set $V_k=f(U_k)$. If $A_j\subset f(A_k)$, then $U_j\subset V_k$. This shows that \eqref{condition:uv} is satisfied. 
\end{example}

\begin{example}[Blaschke product]\label{example:blaschke}
The map
$$
B(z)=\frac{(d+1)z^d+(d-1)}{(d-1)z^d+(d+1)}
$$ 
has a Markov partition that satisfies both conditions~\eqref{condition:uv} and \eqref{condition:holomorphic}. Such a Markov partition is $\mathcal P(B;\{a_0, \dots, a_{2d-1}\})$, where $a_0, \dots, a_{2d-1}$ are $d$-th roots of $1$ and $-1$ with $a_0=1$. Here, in condition~\eqref{condition:uv}, the set $U_k$, $k\in \{0,1,\dots 2d-1\}$, is an open sector with vertex 0 and angle $\pi/d$, whose boundary contains the points $a_k$ and $a_{k+1}$, indices taken modulo $2d$; the sets $V_k$ are the upper half-plane for even $k$ and the lower half-plane for odd $k$. Condition~\eqref{condition:holomorphic} is trivially satisfied.
\end{example}

\begin{example}[Circle reflections]\label{example:reflection_uv}
Consider ordered points $a_0,\dots,a_d$ on the circle $\mathbb{S}^1$ such that $\length(\arc{(a_k,a_{k+1})})<\pi$ for $ k\in \{0,\dots,d\}$. If $f_k$ is the reflection along the circle $C_k$ that is orthogonal to the unit circle at the points $a_k$ and $a_{k+1}$, then the conformal extension of $f_k$ is \textit{not} the reflection along $C_k$ (which is anti-conformal). Instead, it is the reflection along $C_k$, composed with a reflection in the unit circle, so that the resulting map is conformal. (Note that reflections along circles that intersect orthogonally commute.) The map $f_k$ is a M\"obius transformation, so \eqref{condition:holomorphic} is trivially satisfied. 

Suppose now that  $f_l$ is the reflection along $C_l$ for all $l\in \{0,\dots,d\}$. In this way, we define a covering map $f$ of $\mathbb{S}^1$ of degree $d$, as in Example \ref{example:expansive_reflection}. Note that the circles $C_l$ bound disjoint disks and the points $a_0,\dots,a_d$ are fixed points of $f$. We fix $k\in \{0,\dots,d\}$. The map $f_k$ maps $\Int{C_k}$ conformally to its exterior (recall that $f_k$ denotes the conformal extension of $f_k|_{\arc{(a_k,a_{k+1})}}$ and not the reflection itself). We consider a large open ball $B_k$ containing all circles $C_l$, $l\in \{0,\dots,d\}$, and we define $V_k$ to be the intersection of the exterior of $C_k$ with $B_k$. It is important that $V_k$ is planar and $\infty\notin V_k$. Now, we simply define $U_k=f_k^{-1}(V_k)$. It is immediate now that \eqref{condition:uv} is satisfied.  
\end{example}

\begin{example}[Hybrid]\label{example:hybrid_uv}
Consider the hybrid map $f$ from Example \ref{example:expansive_hybrid}. This map satisfies both conditions~\eqref{condition:uv} and \eqref{condition:holomorphic}. Indeed, reflections in circles orthogonal to the unit circle can be treated similarly to the previous Example~\ref{example:reflection_uv}. For the map $z\mapsto\bar z^d$, the conformal extension is $z\mapsto 1/z^d$. If $A_k$ is an arc on which the map $f$ is $z\mapsto\bar z^d$, the set $U_k$ in condition~\eqref{condition:uv} is the smallest open sector with vertex 0 that contains ${\rm int}A_k$, intersected with an annulus $\{1/R<|z|<R\}$  with sufficiently large $R>1$.
\end{example}

\begin{definition}\label{definition:parabolic_hyperbolic}
Let $a\in \{a_0,\dots,a_r\}$. We say that $a$ is \textit{parabolic on the right} (resp., \textit{on the left}) if  $\lambda(a^{+})=1$ (resp., $\lambda(a^{-})=1$) and $a$ is an isolated fixed point of $f_a$. Likewise, $a$ is \textit{hyperbolic on the right} (resp., \textit{on the left}) if $\lambda(a^{+})>1$ (resp., $\lambda(a^{-})>1$). 

For convenience, we say that $a^+$ is \textit{parabolic} (resp., \textit{hyperbolic}) if $a$ is parabolic (resp., hyperbolic) on the right. 
Similarly, we say that $a^-$ is \textit{parabolic} (resp., \textit{hyperbolic}) if $a$ is parabolic (resp., hyperbolic) on the left. 
\end{definition}

We  observe that if $f$ is expansive, then by property \ref{lemma:expansive_multiplier} and property \ref{lemma:expansive_isolated}
\begin{align}\label{condition:multipliers}
\textrm{each of}\quad a^+,a^- \quad \textrm{is parabolic or hyperbolic}  
\end{align}
for all $a\in \{a_0,\dots,a_r\}$. Equivalently, $\lambda(a^{\pm})\geq 1$ and $f_a$ is not the identity map {for all} $a\in \{a_0,\dots,a_r\}$.

In the case of parabolic points there is a further distinction. If $a=a_k$, $k\in \{0,\dots,r\}$, and $a^+$ is parabolic, then condition \eqref{condition:holomorphic} implies that the first return map $f_a$ has a holomorphic extension valid in a complex neighborhood of $\arc{[a,z_0]}$ for some point $z_0\in \mathbb S^1$. We denote the extension by $f_a^+$. The map $f_a^+$ is not the identity map, so there exists a Taylor expansion
\begin{align*}
f_a^+(z)&=z+c(z-a)^{N+1} +O((z-a)^{N+2})
\end{align*}
where $c\neq 0$ and $N$ is a non-negative integer. The number $N+1$ is called the \textit{multiplicity} of $f_a^{+}$ at the parabolic point $a^+$, and it is invariant under conjugation. Abusing terminology, we also say that $N+1$ is the multiplicity of $a^+$. We denote the multiplicity by $N(a^{+})+1$. Similarly, we define the multiplicity $N(a^-)+1$ of $a^-$, in the case that $a^-$ is parabolic.

From the theory of parabolic fixed points (see e.g.\ \cite[\S 10]{Mil06}) there are $N(a^+)$ attracting directions and $N(a^+)$ repelling directions for the holomorphic germ $f_a^+$. Note that $f_a^+$ maps the arc $\arc{[a,z_0]}$ into $\mathbb S^1$, since it is an extension of $f_a$. This implies that either all points of $\arc{[a,z_0]}$ are attracted to the point $a$ under iteration of $f_a$, or they are repelled away from $a$. We say that $\arc{[a,z_0]}$ defines an attracting or repelling direction for $f_a$, respectively. Similarly, $\arc{[z_0,a]}$ defines either an attracting or a repelling direction for $f_a$, which has a holomorphic extension $f_a^-$ in a neighborhood of $\arc{[z_0,a]}$. A trivial consequence of the expansivity of $f$ is that
\begin{align}\label{condition:repelling}
\begin{aligned}
&\textrm{if $a^+$ is parabolic then $\arc{[a,z_0]}$ defines a repelling direction for $f_a$ and}\\
&\textrm{if $a^-$ is parabolic then $\arc{[z_0,a]}$ defines a repelling direction for $f_a$.}
\end{aligned}
\end{align}

\begin{definition}\label{definition:parabolic_hyperbolic_symmetric}
Let $a\in \{a_0,\dots,a_r\}$. We say that $a$ is symmetrically parabolic if $a^{+}$ and $a^-$ are parabolic with $N(a^+)=N(a^-)$. In this case we denote this common number by $N(a)$. We say that $a$ is symmetrically hyperbolic if $a^+$ and $a^-$ are hyperbolic with $\lambda(a^+)=\lambda(a^-)$. In this case we denote by $\lambda(a)$ the common multiplier.
\end{definition}

\begin{remark}\label{remark:parabolic}
If $f$ is \textit{orientation-reversing} and $a\in \{a_0,\dots,a_r\}$ is a periodic point with \textit{odd} period, then it is automatic  from condition \eqref{condition:holomorphic} and from condition \eqref{condition:multipliers} that $a$ is symmetrically hyperbolic or parabolic. Indeed, suppose that the period of $a$ is an odd number $n\in \N$. The first return map $f_a$ is $f^{\circ 2n}$ in this case. Since $f^{\circ n}$ is orientation-reversing, by the chain rule we have: 
\begin{align*}
\lambda(a^+)= (f^{\circ 2n})'(a^+)=(f^{\circ n})' ( a^-) \cdot (f^{\circ n})'(a^+) =(f^{\circ 2n})'(a^-)= \lambda(a^-).
\end{align*}
Moreover, for points $z$ lying in an arc $\arc{[a,z_1]}$ we have
\begin{align*}
f^{\circ n}\circ f_a^+(z) = f_a^- \circ f^{\circ n}(z).
\end{align*}
By condition \eqref{condition:holomorphic}, $f^{\circ n}$ has a holomorphic extension to a complex neighborhood $U$ of a possibly smaller arc $\arc{[a,z_1]}$ that we denote by $(f^{\circ n})^+$. The uniqueness of analytic maps implies that 
\begin{align*}
(f^{\circ n})^+\circ f_a^+(z) = f_a^- \circ (f^{\circ n})^+(z)
\end{align*}
for all $z\in U$, assuming that $U$ is sufficiently small so that the holomorphic maps involved are defined on $U$. The map $(f^{\circ n})^+$ is injective near $a$ since its derivative at $a$ is non-zero by condition \eqref{condition:multipliers}. It follows that $(f^{\circ n})^+$ conjugates $f_a^+$ to $f_a^-$ near $a$. Therefore $a^+$ is parabolic (resp.\ hyperbolic) if and only if $a^-$ is parabolic (resp.\ hyperbolic) and the multipliers $\lambda(a^+),\lambda(a^-)$ are necessarily equal to each other by the conjugation. Moreover, if $a^+$ is parabolic, then the multiplicity of $a^+$ is a conjugation invariant, so it is equal to the multiplicity of $a^-$. Our claim follows.
\end{remark}

\begin{remark}
It is not hard to construct expansive, real-analytic, circle coverings having parabolic fixed points with arbitrarily many attracting directions in $\D$ (see \cite[\S 4]{Lom15} for constructions of such maps as external classes of so-called parabolic-like maps).
\end{remark}

\begin{example}\label{example:reflection_multipliers}
The Blaschke product $B(z)=(2z^3+1)/(z^3+2)$, which is a special case of Example~\ref{example:blaschke}, has two fixed points on $\mathbb{S}^1$, which are $1$ and $-1$. The point $1$ is symmetrically parabolic, while the point $-1$ is symmetrically hyperbolic with multiplier $9$.

If $f(z)=\br z^d$, which is equal to $1/z^d$ on $\mathbb{S}^1$, and $\omega_k$, $k\in \{0,\dots,d\}$, are the fixed points of $f$, then $f'(\omega_k)=-d$ for $k=0,\dots,d$. Since $f$ is orientation-reversing, for each fixed point the first return map in this case is $f^{\circ 2}$. Thus all multipliers of the first return map at the points $\omega_k$ are equal to $d^2$. It follows that all fixed points of $f$ are symmetrically hyperbolic.

If, instead, $a_0,\dots,a_d$, $d\geq 2$, are ordered points on $\mathbb{S}^1$ and for each $k\in \{0,\dots,d\}$ the map $f_k=f|_{{\arc{(a_k,a_{k+1})}}}$ is the reflection along the circle that is orthogonal to $\mathbb{S}^1$ at $a_k$ and $a_{k+1}$, then $f'(a_k^+)=f'(a_{k+1}^-)=-1$. Indeed, $f_k$ is conjugate to the reflection  of the top half of the unit circle along the real line, i.e., $f_k$ is conjugate to $\br z$. Restricted to the circle, this map is equal to the map $z\mapsto 1/z$, so the derivative at the fixed points $\pm 1$ is $-1$. For all fixed points the first return map is again $f^{\circ 2}$, so all multipliers are equal to $1$. Moreover, since $d\geq 2$, the map $f^{\circ 2}$ cannot be the identity map on any arc $\arc{(a_k,a_{k+1})}$. Therefore, all points $a_k$, $k\in \{0,\dots,d\}$, are symmetrically parabolic, in view of Remark \ref{remark:parabolic}.  

In the hybrid case of Example \ref{example:expansive_hybrid}, if $a_k,\ k\in \{0,1,\dots, d\}$, is a fixed point such that on one side of it the map $f$ is the reflection in the circle orthogonal to the unit circle and on the other side it is $z\mapsto \bar z^d$, then $a_k$ is symmetrically hyperbolic. Indeed, it is easy to see that the first return map is $f^{\circ 2}$ and the multiplier is $d\geq 2$. 
\end{example}

We now state the main extension theorem.

\begin{theorem}\label{theorem:extension_generalization}Let $f,g\colon \mathbb{S}^1\to \mathbb{S}^1$ be expansive covering maps with the same orientation and $\mathcal P(f;\{a_0,\dots,a_r\})$, $\mathcal P(g;\{b_0,\dots,b_r\})$ be Markov partitions satisfying conditions \eqref{condition:uv} and \eqref{condition:holomorphic}.  Suppose that the map $h\colon \{a_0,\dots,a_r\} \to \{b_0,\dots,b_r\}$ defined by $h(a_k)=b_k$, $k\in \{0,\dots,r\}$, conjugates $f$ to $g$ on the set $\{a_0,\dots,a_r\}$ and assume that for each periodic point $a\in \{a_0,\dots,a_r\}$ of $f$ and for $b=h(a)$ one of the following alternatives occurs.
\vspace{1em}

\begin{enumerate}[\upshape {\scriptsize(\textbf{H/P${\rightarrow}$H/P})}, leftmargin=6em] 
\item\label{HH} There exists $\mu>0$ such that if $a^{\pm}$ is parabolic then $b^{\pm}$ is parabolic with 
$\mu^{-1} N(a^{\pm})=N(b^{\pm})$, and if $a^{\pm}$ is hyperbolic then  $b^{\pm}$ is hyperbolic with  $\lambda(a^{\pm})^{\mu}=\lambda(b^{\pm})$.
\end{enumerate}

\begin{enumerate}[\upshape{\scriptsize(\textbf{H$\to$P})}, leftmargin=6em]
\item\label{HP} $a$ is symmetrically hyperbolic and $b$ is symmetrically parabolic.
\end{enumerate}

\vspace{1em}

\noindent
Then the map $h$ extends to a homeomorphism $\widetilde h$ of $\br \D$ such that $\widetilde h|_{\mathbb S^1}$ conjugates $f$ to $g$ and $\widetilde h|_{\D}$ is a David map. Moreover, if the alternative \ref{HP} does not occur, then $\widetilde h|_{\D}$ is a quasiconformal map and $\widetilde h|_{\mathbb S^1}$ is a quasisymmetry.
\end{theorem}

\begin{remark}We note that the alternative \ref{HH} allows $a^+$ (resp.\ $b^+$) to be hyperbolic and $a^{-}$ (resp.\ $b^-$) to be parabolic and vice versa. In fact, \ref{HH} covers the following cases:
\begin{itemize}
\item $a^-$ is hyperbolic and $a^+$ is hyperbolic
\item $a^-$ is hyperbolic and $a^+$ is parabolic
\item $a^-$ is parabolic and $a^+$ is hyperbolic
\item $a^-$ is parabolic and $a^+$ is parabolic
\end{itemize} 
The only restriction is that the multiplicities and multipliers of the points $a^\pm$ and $b^\pm$ have to be related by the same number $\mu$, which depends only on the point $a$.
\end{remark}

\begin{remark}\label{rem:C2_2}
It is conceivable  that by means of the smooth distortion techniques (see \cite{dMvS93})
Theorem~\ref{theorem:extension_generalization} can be extended to the piecewise $C^2$ setting
described in Remark~\ref{rem:C2_1}.
Moreover, this looks straightforward under the negative Schwarzian derivative assumption. 
\end{remark}

The essential assumption in Theorem~\ref{theorem:extension_generalization} is that parabolic periodic points of $f$ must be mapped to parabolic points of $g$. On the other hand, hyperbolic points can be mapped to either hyperbolic or parabolic points. We remark that not all points need to be checked, but only the boundary points $\{a_0,\dots,a_r\}$ of the Markov pieces. The theorem holds if one strengthens the two alternatives to one of the following symmetric alternatives, in which the left and right multipliers and multiplicities are the same:
\begin{enumerate}[\upshape ({{S}}-I)]
\item\label{S_HH} $a$ is symmetrically hyperbolic and $b$ is symmetrically hyperbolic.
\item\label{S_PP} $a$ is symmetrically parabolic and $b$ is symmetrically parabolic.
\item\label{S_HP} $a$ is symmetrically hyperbolic and $b$ is symmetrically parabolic.
\end{enumerate}

A special case of Theorem \ref{theorem:extension_generalization} is the following statement.

\begin{theorem}[Power map]\label{theorem:extension_special_case}Let $f\colon \mathbb{S}^1\to \mathbb{S}^1$ be an expansive covering map of degree $d\geq 2$ and let $\mathcal P(f;\{a_0,\dots,a_r\})$ be a Markov partition satisfying conditions \eqref{condition:uv} and \eqref{condition:holomorphic}, and with the property that $a_k$ is either symmetrically hyperbolic or symmetrically parabolic for each $k\in \{0,\dots,r\}$. Then there exists an orientation-preserving homeomorphism $h\colon \mathbb{S}^1\to \mathbb{S}^1$ that conjugates the map $z\mapsto z^d$ or $z\mapsto \br z^d$ to $f$ and has a David extension in $\D$.
\end{theorem}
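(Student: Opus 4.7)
Let $g$ denote the power map: $g(z)=z^d$ if $f$ is orientation-preserving and $g(z)=\overline{z}^d$ otherwise. The plan is to derive Theorem \ref{theorem:extension_special_case} directly from Theorem \ref{theorem:extension_generalization}, applied with $g$ in the role of the source dynamics and $f$ in the role of the target. First I would use property \ref{theorem:expansive_conjugate} to fix some orientation-preserving conjugacy $h_0\colon\mathbb{S}^1\to\mathbb{S}^1$ of $g$ to $f$, and then set $b_k:=h_0^{-1}(a_k)$ for $k\in\{0,\dots,r\}$. The arcs $B_k:=h_0^{-1}(A_k)$ form a Markov partition $\mathcal{P}(g;\{b_0,\dots,b_r\})$ with the same combinatorics as $\mathcal{P}(f;\{a_0,\dots,a_r\})$, and the vertex map $h\colon b_k\mapsto a_k$ (which is just the restriction of $h_0$) conjugates $g$ to $f$ on $\{b_0,\dots,b_r\}$.

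It remains to verify the hypotheses of Theorem \ref{theorem:extension_generalization} for the pair $(g,f)$. Expansiveness of $f$ is given, while expansiveness of $g$ follows from Lemma \ref{lem:expansive_map} since $|g'|\equiv d>1$ on $\mathbb{S}^1$; the two maps agree in orientation by construction. Conditions \eqref{condition:uv} and \eqref{condition:holomorphic} hold for the partition of $f$ by hypothesis, and for the pulled-back partition of $g$ by Example \ref{example:power_map}, which applies to any Markov partition of a power map. The essential combinatorial check is on periodic-point data: any periodic point $b\in\{b_0,\dots,b_r\}$ of $g$ with orientation-preserving period $n$ has first return map $g_b=g^{\circ n}$, which is locally conformal at $b$ with $|g_b'(b)|=d^n>1$, so $b$ is symmetrically hyperbolic with $\lambda(b)=d^n$. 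The corresponding point $a=h_0(b)$ is by hypothesis either symmetrically hyperbolic or symmetrically parabolic. In the first case, taking $\mu:=\log\lambda(a)/\log\lambda(b)>0$ places the pair $(b,a)$ in alternative \ref{HH}; in the second case the pair is in alternative \ref{HP}. Theorem \ref{theorem:extension_generalization} then provides the desired extension of $h_0$ to a homeomorphism of $\overline{\D}$ whose restriction to $\D$ is a David map.

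Since the analytic work is packaged inside Theorem \ref{theorem:extension_generalization}, no substantive obstacle remains beyond matching hypotheses. The one conceptual point worth flagging is that the direction of the reduction is forced: alternative \ref{HP} allows a hyperbolic source point to be matched with a parabolic target point but not the reverse, so $g$, which has only symmetrically hyperbolic periodic points, must play the role of the source and $f$ the role of the target.
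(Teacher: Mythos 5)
Your proposal is correct and follows essentially the same route as the paper: obtain a conjugacy from the power map $g$ to $f$ via property (E3), pull back the Markov partition to $g$, invoke Example~\ref{example:power_map} for conditions~\eqref{condition:uv} and~\eqref{condition:holomorphic}, and apply Theorem~\ref{theorem:extension_generalization}. The paper's proof is terser and treats the verification of expansiveness of $g$ and the periodic-point alternatives as obvious, whereas you spell out that every periodic point of the power map is symmetrically hyperbolic with multiplier $d^n$ and explicitly sort the pairs into \ref{HH} or \ref{HP}; your closing remark that the hypotheses of Theorem~\ref{theorem:extension_generalization} force $g$ to be the source (since \ref{HP} only permits hyperbolic$\to$parabolic, not the reverse) is a correct and worthwhile observation that the paper leaves implicit.
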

\begin{proof}
Using property \ref{theorem:expansive_conjugate}, we obtain an orientation-preserving homeomorphism $h\colon \mathbb S^1 \to \mathbb S^1$ that conjugates the map $z\mapsto z^d$ or $z\mapsto \br z^d$ to $f$. Consider the induced Markov partition $\{h^{-1}(a_0),\dots,h^{-1}(a_r)\}$. By Example \ref{example:power_map}, this Markov partition satisfies \eqref{condition:uv} and \eqref{condition:holomorphic}. We now apply Theorem \ref{theorem:extension_generalization} to the map $h$.
\end{proof}

As an application of Theorem \ref{theorem:extension_special_case} we have the following theorem.

\begin{theorem}[Blaschke product--Circle reflections]\label{theorem:reflections}
Consider ordered points $a_{0}=a_{d+1}, a_1,\dots,a_d$ on the circle $\mathbb S^1$, where $d\geq 2$. For $k\in \{0,\dots,d\}$ let $f|_{\arc{(a_k,a_{k+1})}}$ be the reflection along the circle $C_k$ that is orthogonal to the unit circle at the points $a_k$ and $a_{k+1}$. Moreover, let $B$ be an anti-holomorphic Blaschke product of degree $d$ with an attracting fixed point in $\D$. Then there exists a homeomorphism $h\colon \mathbb S^1\to \mathbb S^1$ that conjugates $B|_{\mathbb S^1}$ to $f$ and has a David extension in $\D$.
\end{theorem}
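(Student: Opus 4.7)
The plan is to reduce the statement to Theorem~\ref{theorem:extension_special_case} applied to the reflection map $f$, and then to transfer the result to the Blaschke product $B$ via a quasisymmetric conjugacy supplied by Lemma~\ref{lemma:blaschke_qs}.

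First I would check that $f$ meets the hypotheses of Theorem~\ref{theorem:extension_special_case}. By Example~\ref{example:expansive_reflection} together with Lemma~\ref{lem:expansive_map}, the map $f\colon\mathbb S^1\to\mathbb S^1$ is an expansive orientation-reversing covering map of degree $d$. Taking the natural Markov partition $\mathcal P(f;\{a_0,\dots,a_d\})$, Example~\ref{example:reflection_uv} verifies conditions \eqref{condition:uv} and \eqref{condition:holomorphic}, and Example~\ref{example:reflection_multipliers} shows that each fixed point $a_k$ is symmetrically parabolic. Hence Theorem~\ref{theorem:extension_special_case} produces an orientation-preserving homeomorphism $h_1\colon\mathbb S^1\to\mathbb S^1$ that conjugates the model map $z\mapsto\overline z^{\,d}$ to $f$, together with a David extension $\widetilde{h_1}\colon\D\to\D$.

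Next, since $B$ is an anti-holomorphic Blaschke product of degree $d$ with an attracting fixed point in $\D$, Lemma~\ref{lemma:blaschke_qs} provides a quasisymmetric homeomorphism $\varphi\colon\mathbb S^1\to\mathbb S^1$ conjugating $B|_{\mathbb S^1}$ to $z\mapsto \overline z^{\,d}$. By the Beurling--Ahlfors extension theorem (applied on both sides of $\mathbb S^1$ via the reflection $z\mapsto 1/\overline z$), $\varphi$ extends to a quasiconformal self-homeomorphism $\widetilde\varphi$ of $\widehat{\mathbb C}$ that maps $\D$ onto $\D$.

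I then set
\[
h := h_1\circ \varphi\colon \mathbb S^1\to\mathbb S^1, \qquad \widetilde h := \widetilde{h_1}\circ \widetilde\varphi\colon \D\to\D.
\]
A direct chain computation gives $h\circ B = h_1\circ \varphi\circ B = h_1\circ\bigl(\overline{\,\cdot\,}\bigr)^d\circ\varphi = f\circ h_1\circ\varphi = f\circ h$, so $h$ conjugates $B|_{\mathbb S^1}$ to $f$. Since $\widetilde{h_1}$ is David on $\D$ and $\widetilde\varphi$ is quasiconformal on a neighborhood of $\overline\D$, Proposition~\ref{prop:david_qc_invariance}(ii) guarantees that $\widetilde h$ is a David homeomorphism of $\D$ extending $h$. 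There is no genuinely hard step left here: the serious analytic content is packaged inside Theorem~\ref{theorem:extension_special_case}, while Lemma~\ref{lemma:blaschke_qs} and the standard quasiconformal composition calculus handle the transition from the power map to the Blaschke product. The only point requiring care is matching orientations and dynamical directions when chaining the two conjugacies, which is immediate because Lemma~\ref{lemma:blaschke_qs} produces the anti-holomorphic power model precisely for anti-holomorphic Blaschke products.
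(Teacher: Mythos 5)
Your proposal follows essentially the same two-stage strategy as the paper: first extend the conjugacy between the power map $z\mapsto\overline z^{\,d}$ and $f$ by invoking Theorem~\ref{theorem:extension_special_case}, then transfer to a general anti-Blaschke product $B$ via the quasisymmetry from Lemma~\ref{lemma:blaschke_qs} and Proposition~\ref{prop:david_qc_invariance}(ii). The chain computation for the conjugacy and the composition calculus are both correct. However, there is a genuine gap in your very first step.

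You cite Example~\ref{example:expansive_reflection} to conclude that $f$ is expansive, and then feed $f$ into Theorem~\ref{theorem:extension_special_case}. But Example~\ref{example:expansive_reflection} carries the standing hypothesis $(\star)$, namely $\length(\arc{(a_k,a_{k+1})})<\pi$ for all $k$, and it is precisely this inequality that forces $|f'|\geq 1$ on $\mathbb S^1$ and hence expansivity via Lemma~\ref{lem:expansive_map}. The statement of Theorem~\ref{theorem:reflections} makes no such length restriction: since $d+1\geq 3$ points split $\mathbb S^1$ into $d+1$ arcs, it is perfectly possible (and happens for exactly one index $i$) that $\length(\arc{(a_i,a_{i+1})})\geq\pi$. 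In that case the reflection $f_i$ is contracting somewhere on its arc, $f$ is not expansive, and Theorem~\ref{theorem:extension_special_case} does not apply as you have set it up. The paper fills this gap by first conjugating $f$ by a M\"obius automorphism $M$ of $\D$ chosen so that $\length(\arc{(M(a_k),M(a_{k+1}))})<\pi$ for all $k$ (such an $M$ exists because the ideal polygon with vertices $a_0,\dots,a_d$ has nonempty interior; move an interior point to the origin). One then applies the $(\star)$-case to $F=M\circ f\circ M^{-1}$, obtains the David extension, and conjugates back by $M$ using Proposition~\ref{prop:david_qc_invariance}(i) and (ii). You should insert this M\"obius reduction before invoking Theorem~\ref{theorem:extension_special_case}; the rest of your argument then goes through unchanged.
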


\begin{proof}[Proof of Theorem~\ref{theorem:reflections}]
We first treat the special case $B(z)=\br z^d$. Suppose first that we have $\length(\arc{(a_k,a_{k+1}}) )<\pi$ for all $k\in \{0,\dots,d\}$. In Example \ref{example:expansive_reflection} we proved that $f$ is expansive. Consider the Markov partition $\mathcal P(f; \{a_0,\dots,a_d\})$. In Example \ref{example:reflection_uv} we verified that conditions \eqref{condition:uv} and \eqref{condition:holomorphic} hold. Finally, in Example \ref{example:reflection_multipliers} we proved that if $d\geq 2$, then the fixed points $a_k$, $k\in \{0,\dots,d\}$, are all symmetrically parabolic. Therefore, Theorem \ref{theorem:extension_special_case} gives the desired conclusion. 

Suppose now that $\length(\arc{(a_i,a_{i+1}}) )>\pi$ for some $i\in \{0,\dots,d\}$. Note that this can only be the case for one value of $i$. Then there exists a M\"obius transformation $M$ of $\D$ such that $\length(\arc{(M(a_k),M(a_{k+1})}) )<\pi$ for all $k\in \{0,\dots,d\}$. Consider the map $F=M\circ f \circ M^{-1}$. Then $F|_{\arc{(M(a_k),M(a_{k+1}))}}$ is the reflection along the circle $M(C_k)$ that is orthogonal to the unit circle at the points $M(a_k)$ and $M(a_{k+1})$. By the previous case, the map $F$ has a David extension in $\D$, that we still denote by $F$. By Proposition \ref{prop:david_qc_invariance} (i) and (ii) with $U=V=W=\D$, the map $M^{-1}\circ F\circ M$ is a David map that extends $f$, as desired.

Next, let $B$ be a general anti-holomorphic Blaschke product of degree $d$ with an attracting fixed point in $\D$. By Lemma \ref{lemma:blaschke_qs} there exists a quasisymmetric map $h_1\colon \mathbb S^1\to \mathbb S^1$ that conjugates $B$ to $z\mapsto \br z^d$. We extend this map to a quasiconformal map of $\D$ that we still denote by $h_1$. By the previous, there exists a homeomorphism $h_2\colon \mathbb S^1\to \mathbb S^1$ that conjugates $z\mapsto \br z^d$ to $f$ and has a David extension in $\D$. We also denote the extension by $h_2$. Then $h_2\circ h_1$ conjugates on $\mathbb S^1$ the map $B$ to $f$. Moreover, by Proposition \ref{prop:david_qc_invariance} (ii) $h_2\circ h_1$ is David map on $\D$.   
\end{proof}

Another special case of Theorem \ref{theorem:extension_generalization} is the following theorem.

\begin{theorem}[Hybrid--Circle reflections]\label{theorem:hybridtoreflections}
Let $a_{0}=a_{d+1}, a_1,\dots,a_d$ be fixed points on the circle $\mathbb S^1$ of the map $z\mapsto \bar z^d$, where $d\geq 2$, and $f$ be a hybrid map as in Example~\ref{example:expansive_hybrid}. Moreover, for $k\in \{0,\dots,d\}$ let $g|_{\arc{(a_k,a_{k+1})}}$ be the reflection along the circle $C_k$ that is orthogonal to the unit circle at the points $a_k$ and $a_{k+1}$. Then there exists a homeomorphism $h\colon \mathbb S^1\to \mathbb S^1$ that conjugates $f$ to $g$ and has a David extension in $\D$.
\end{theorem}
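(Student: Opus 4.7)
The plan is to derive this theorem as an immediate consequence of Theorem~\ref{theorem:extension_generalization}, applied to the two maps $f$ and $g$ equipped with the common Markov partition $\mathcal{P}(\cdot;\{a_0,\dots,a_d\})$ and to the candidate boundary map $h$ defined on the fixed-point set by $h(a_k)=a_k$. The whole argument amounts to collecting the verifications already packaged in the preceding examples and reading off, at each boundary point $a_k$ of the partition, which of the two alternatives \ref{HH} or \ref{HP} applies.

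First I would assemble the inputs provided by the earlier examples: expansivity of $f$ from Example~\ref{example:expansive_hybrid}, expansivity of $g$ from Example~\ref{example:expansive_reflection}, and conditions \eqref{condition:uv} and \eqref{condition:holomorphic} for the two Markov partitions from Examples~\ref{example:hybrid_uv} and~\ref{example:reflection_uv}. The Markov property itself is immediate because both $f|_{A_k}$ and $g|_{A_k}$ send $A_k=\arc{[a_k,a_{k+1}]}$ bijectively onto $\mathbb{S}^1\setminus\inter A_k$: this is clear for the reflection pieces, and for the $\bar z^d$ pieces it follows from the fact that $\bar z^d$ cyclically permutes the arcs $A_k$ while fixing all of their endpoints. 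Both maps share orientation (reversing) and degree $d$, and $h$ trivially conjugates $f$ to $g$ on the finite set $\{a_0,\dots,a_d\}$, since each $a_k$ is fixed by both maps.

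The only substantive step is to classify each fixed point $a_k$ according to the dichotomy of Theorem~\ref{theorem:extension_generalization}. For $g$ this is uniform: by Example~\ref{example:reflection_multipliers}, every $a_k$ is symmetrically parabolic with $N(a_k)=1$. For $f$, the behavior at $a_k$ depends only on the types of the two arcs $A_{k-1}$ and $A_k$ abutting $a_k$. If both of these arcs are reflection arcs, then the local picture at $a_k$ coincides with the pure reflection situation, so $a_k$ is symmetrically parabolic with $N(a_k)=1$, matching $g$ under alternative~\ref{HH} with $\mu=1$. If at least one of the two arcs carries $\bar z^d$, then the chain rule applied to the one-sided derivatives tabulated in Example~\ref{example:reflection_multipliers} (which are $-1$ for a reflection side and $-d$ for a $\bar z^d$ side at a fixed point) gives one-sided multipliers of the first return map $f^{\circ 2}$ at $a_k$ equal to either $d^2$ or $d$, both strictly bigger than $1$ since $d\geq 2$. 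Thus $a_k$ is symmetrically hyperbolic while $g(a_k)=a_k$ is symmetrically parabolic, and this is exactly alternative~\ref{HP}.

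I do not foresee a real obstacle: the main extension theorem is designed precisely to accommodate this kind of mixed dynamics, where some hyperbolic points are forced to become parabolic on the target side. The only point requiring care is to confirm that every possible configuration of the two arcs adjacent to $a_k$ — reflection/reflection, $\bar z^d$/$\bar z^d$, or the mixed case — falls under one of the two allowed alternatives, which is what the preceding classification accomplishes. Once this tabulation is in place, Theorem~\ref{theorem:extension_generalization} delivers the David homeomorphism $\widetilde h$ extending $h$ to $\overline{\mathbb{D}}$ and conjugating $f$ to $g$ on $\mathbb{S}^1$, concluding the proof.
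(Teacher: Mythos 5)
Your proposal is correct and follows essentially the same route as the paper: verify the hypotheses of Theorem~\ref{theorem:extension_generalization} by invoking Examples~\ref{example:expansive_hybrid}, \ref{example:expansive_reflection}, \ref{example:hybrid_uv}, \ref{example:reflection_uv}, and~\ref{example:reflection_multipliers}, and then apply the theorem to the identity map on $\{a_0,\dots,a_d\}$. The only difference is that you re-derive, via the chain rule, the multiplier tabulation already recorded in Example~\ref{example:reflection_multipliers} (reflection/reflection gives a symmetrically parabolic point, any arc carrying $\bar z^d$ gives a symmetrically hyperbolic point with multiplier $d$ or $d^2$), whereas the paper's proof simply cites that example; both land on alternatives \ref{HH} and \ref{HP} in exactly the same way.
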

\begin{proof}
In Examples \ref{example:expansive_hybrid} and \ref{example:expansive_reflection} we proved that $f$ and $g$ are expansive. The Markov partitions $\mathcal P(f;\{a_0,\dots,a_d\})$ and $\mathcal P(g;\{a_0,\dots,a_r\})$ satisfy \eqref{condition:uv} and \eqref{condition:holomorphic} by Examples \ref{example:hybrid_uv} and \ref{example:reflection_uv}. Finally, in Example \ref{example:reflection_multipliers} we saw that each point $a_k$ is symmetrically hyperbolic or parabolic for $f$ and symmetrically parabolic for $g$. Therefore, by Theorem \ref{theorem:extension_generalization}, the identity map on the set $\{a_0,\dots,a_d\}$ extends to a homeomorphism $h\colon \br \D \to \br \D$ that conjugates $f $ to $g$ on $\mathbb S^1$ and is a David map in $\D$.
\end{proof}

\bigskip

\subsection{Distortion estimates and proof of Theorem \ref{theorem:extension_generalization}}\label{section:distortion}
We will need some preparation in order to prove Theorem \ref{theorem:extension_generalization}. We will formulate most of the statements in this subsection using the notation associated to the map $f$ and the Markov partition $\mathcal P(f;\{a_0,\dots,a_r\})$. The map $f$ is assumed, throughout, to be an expansive covering map of $\mathbb S^1$, satisfying conditions \eqref{condition:uv} and \eqref{condition:holomorphic}. As already remarked, conditions  \eqref{condition:multipliers} and \eqref{condition:repelling} follow automatically from expansivity, so they will be assumed. The analogous results hold, of course, for the map $g$, which satisfies the same assumptions. Moreover, the expansivity of $f$ and $g$ and Lemma \ref{lemma:expansive_conjugate_fg} imply that the map $h\colon \{a_0,\dots,a_r\}\to \{b_0,\dots,b_r\}$ from the statement of Theorem \ref{theorem:extension_generalization} extends to an orientation-preserving homeomorphism of $\mathbb S^1$ that conjugates $f$ to $g$. It is also implicitly assumed that if $a^{\pm}$ is parabolic, then $b^{\pm}=h(a^{\pm})$ is also parabolic, as in the assumptions of Theorem \ref{theorem:extension_generalization}. 

Using condition \eqref{condition:uv}, we can define $U_{kj}=f_k^{-1}(U_j)$ for $k,j\in\{0,\dots,r\}$, whenever $(k,j)$ is admissible; recall the definitions given after Definition \ref{definition:markov_partition}. Note that $U_{kj}\subset U_k$ and $f_k$ maps conformally $U_{kj}$ onto $U_j$. Inductively, for each admissible word $w$ we can find open regions $U_w$ with the following properties:
\begin{enumerate}[(i)]
\item $U_{wj} \subset U_w$, if $(w,j)$ is admissible, and 
\item $f_k$ maps conformally $U_{kw}$ onto $U_w$, if $(k,w)$ is admissible. 
\end{enumerate}
If $w=(k_1,\dots, k_n)$ is admissible, we define $f_w=f_{k_n}\circ\dots \circ f_{k_1}$ on the set $\bigcup \{U_{wj}: 0\leq j\leq r,\, (w,j)\,\, \textrm{admissible}\}$. It follows that $f_w$ maps conformally $U_{wj}$ onto $U_j$. Observe that for each admissible word $w$ the open arc $\inter{A_w}$ is contained in $U_w$ and is a preimage of one of the arcs $\inter{A_k}$, $k\in \{0,\dots,r\}$, under some iterate of $f$. Note that $f_w|_{{\inter{A_{wj}}}}$, where $(w,j)$ is admissible, is just a restriction of the $n$-th iterate $f^{\circ n}$.

We let $F_n$ be the preimages of $F_1= \{a_0,\dots,a_r\}$ under $n-1$ iterations of $f$ and $F_{0}=\emptyset$. Observe that $F_{n}\supset F_{n-1}$ for each $n\in \N$. Indeed, $F_1\supset f(F_1)$ by condition (iii) in Definition \ref{definition:markov_partition}, hence $F_2=f^{-1}(F_1)\supset f^{-1}(f(F_1)) \supset F_1$, so the conclusion follows by induction. We define the \textit{level} of a point $c\in \bigcup_{n\geq 1}F_n$ to be the unique $n\in \N$ such that $c\in F_n\setminus F_{n-1}$. 

We say that a finite collection of non-overlapping arcs $I_1,\dots,I_m\subset \mathbb{S}^1$, $m\in \N$, consists of \textit{consecutive} arcs if every arc $I_i$ shares at least one endpoint with another arc $I_j$, $j\neq i$. If all arcs in the family are open, after renumbering, we may assume that $I_i=\arc{(x_{i-1},x_i)}$ for some points $x_j\in \mathbb{S}^1$, $j=0,1,\dots,m$.

For each $n\in \N$, the set $\mathbb S^1\setminus F_n$ consists of consecutive open arcs. For practical purposes we will use the terminology \textit{complementary arcs of $F_n$} to indicate the family of the \textit{closures} of the components of $\mathbb S^1 \setminus F_n.$ Hence, all complementary arcs of $F_n$ are closed arcs. Note that the complementary arcs of $F_n$ are the arcs $A_w$, where $w$ is an admissible word with $|w|=n$; see the comments after Definition \ref{definition:markov_partition}.

Let $X,Y$ be metric spaces. We say that a homeomorphism $\phi\colon X\to Y$ has \textit{bounded relative distortion} if there exists $M\geq 1$ such that for any sets $A,B\subset X$ we have
\begin{align*}
M^{-1} \frac{\diam{A}}{\diam{B}}\leq \frac{\diam{\phi(A)}}{\diam{\phi(B)}} \leq M \frac{\diam{A}}{\diam{B}}.
\end{align*} 
In this case we say that $\phi$ has {relative distortion bounded by $M$}. Note that this implies that $\phi$ is an $\eta$-quasisymmetry with $\eta(t)=Mt$. The inverse $\phi^{-1}$ of a homeomorphism $\phi$ of bounded relative distortion also has bounded relative distortion. In what follows, if we say that a map $\phi$ has bounded relative distortion, it is implicitly understood that $\phi$ is a homeomorphism. 

An important consequence of expansivity is the following lemma. Recall the definition of the orientation-preserving period of a periodic point $a\in \{a_0,\dots,a_r\}$, given in Section \ref{section:expansive}.

\begin{lemma}\label{lemma:pre_expansivity}
Let $a\in F_1$ be a periodic point with orientation-preserving period equal to $q$, $n\in \N$, and $A$ be a complementary arc of $F_n$ having the point $a$ as an endpoint. Then $A$ contains at least two and at most $(r+1)^q$ complementary arcs of $F_{n+q}$. 
\end{lemma}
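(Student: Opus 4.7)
The upper bound is immediate from the Markov structure: every complementary arc of $F_m$ has at most $r+1$ children in $F_{m+1}$, so $q$ subdivisions yield at most $(r+1)^q$ complementary arcs of $F_{n+q}$ inside $A$.

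For the lower bound, I would first reduce to $n=1$. Writing $A = A_w$ for an admissible word $w$ of length $n$ with last letter $j_n$, the map $f^{\circ(n-1)}$ restricts to a homeomorphism $A \to A_{j_n}$ that identifies $F_{n+q}\cap A$ with $F_{q+1}\cap A_{j_n}$; moreover $a':=f^{\circ(n-1)}(a)\in F_1$ is fixed by $f^{\circ q}$. The task becomes showing that $\inter{A_{j_n}}$ contains a point of $F_{q+1}$. Assume, toward a contradiction, that it does not. Then $f^{\circ q}(\inter{A_{j_n}})\cap F_1=\emptyset$; by connectedness the image lies in a single $\inter{A_\ell}$; and since $f^{\circ q}$ is orientation-preserving (because $q$ is the orientation-preserving period of $a$) and fixes $a'$, necessarily $A_\ell=A_{j_n}$. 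Iterating this forces the $f$-orbit $A_{j_n}, f(A_{j_n}), \ldots, f^{\circ q}(A_{j_n})=A_{j_n}$ to be a length-$q$ cycle of Markov pieces on each of which $f$ is a homeomorphism, and condition \eqref{condition:uv} then yields a conformal extension of $\phi:=f^{\circ q}|_{A_{j_n}}$ to a complex neighborhood of $A_{j_n}$. Thus $\phi$ is a real-analytic orientation-preserving homeomorphism of $A_{j_n}$ onto itself fixing both endpoints.

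Condition \eqref{condition:multipliers} says $f_{a'}$ is not the identity, and since $\phi$ is a positive iterate of the local biholomorphism $f_{a'}$ at $a'$ (by \eqref{condition:holomorphic}), $\phi$ is not the identity either, so its fixed-point set in $A_{j_n}$ is finite; enumerate it as $a'=c_0<c_1<\cdots<c_{m+1}=a''$. The key dynamical point is that each $c_i$ is repelling for $\phi$ from both sides: for $a',a''\in F_1$ this is condition \eqref{condition:repelling}; for an interior fixed point $c_i$---a periodic point of $f$ not in $F_1$---it follows from property \ref{lemma:expansive_multiplier} together with the same expansivity argument underlying \eqref{condition:repelling}, since an attracting parabolic petal or hyperbolic contracting side at $c_i$ would produce an attracting sub-arc and violate expansivity of $f$. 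Consequently, on each subinterval $(c_i,c_{i+1})$ we have $\phi(x)>x$ near $c_i$ and $\phi(x)<x$ near $c_{i+1}$, so the intermediate value theorem yields a further fixed point of $\phi$ in $(c_i,c_{i+1})$, contradicting consecutivity.

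The step I expect to be the main obstacle is verifying the two-sided repelling behavior at interior fixed points of $\phi$, which are periodic points of $f$ not guaranteed to lie in $F_1$. This requires applying property \ref{lemma:expansive_multiplier} in this broader setting and rerunning the expansivity argument behind \eqref{condition:repelling} at such points. The remaining ingredients---the reduction to $n=1$, the structural consequence that $\phi$ is a self-homeomorphism of $A_{j_n}$, and the conformal extension via \eqref{condition:uv}---are routine given the Markov formalism developed earlier in the section.
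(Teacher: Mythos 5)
Your proof is correct and takes a genuinely different route from the paper's. Both arguments carry out the same reduction, arriving at the statement that $\phi := f^{\circ q}$ restricts to an orientation-preserving self-homeomorphism of the Markov piece $A_{j_n}$ fixing its endpoints $a', b'\in F_1$, with a conformal extension supplied by conditions \eqref{condition:uv} and \eqref{condition:holomorphic}. From there the paper passes to the inverse branch $F = (f^{\circ q}|_{U_{j_n}})^{-1}$, which maps the hyperbolic domain $U_{j_n}$ conformally into itself; it applies Montel's theorem to the iterates $F^{\circ m}$ and uses the local dynamics at $a'$ and $b'$ only (each is attracting or parabolic for $F$ by \eqref{condition:multipliers}, with the arc an attracting direction in the parabolic case by \eqref{condition:repelling}) to force a subsequential limit to be simultaneously the constants $a'$ and $b'$. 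You instead stay on the circle and run a one-dimensional argument: finiteness of the fixed-point set of the non-identity real-analytic map $\phi$, two-sided topological repelling at each fixed point, and the intermediate value theorem. Your route is more elementary (no Montel, no hyperbolicity of $U_{j_n}$), but the price is that you need the repelling condition at \emph{interior} fixed points of $\phi$, which are periodic points of $f$ not in $F_1$, where \eqref{condition:repelling} is not stated; the paper sidesteps this entirely by touching only the two endpoints. Your plan to bridge the gap is sound: since $\inter{A_{j_n}}\cap F_{q+1}=\emptyset$, the $f$-orbit of an interior fixed point never hits $F_1$, so the first return map is a well-defined analytic germ there, property \ref{lemma:expansive_multiplier} applies, and topological attraction on either side would contradict expansivity of $f$ exactly as at points of $F_1$. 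Two cosmetic points: near $a'$ the map $\phi$ \emph{is} $f_{a'}$ rather than a higher iterate of it; and for $0<k<q$ the arcs $f^{\circ k}(A_{j_n})$ are complementary arcs of $F_{q+1-k}$ contained in Markov pieces, not Markov pieces themselves, which is all that the extension via \eqref{condition:uv} requires.
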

\begin{proof}
We first treat the upper bound. By the definition of a Markov partition, if $A$ is a complementary arc of $F_n$, then $A$ contains at most $r+1$ complementary arcs of $F_{n+1}$, since $A$ has at most $r+1$ children; see the comments after Definition \ref{definition:markov_partition}. Therefore, $A$ contains at most $(r+1)^q$ complementary arcs of $F_{n+q}$.

Next, we prove the lower bound. There exists an admissible word $(j_1,\dots,j_n)$ such that $A_{j_1}\supset A_{j_1j_2}\supset \cdots\supset A_{j_1\dots j_n}=A$. Consider the corresponding regions $U_{j_1}\supset U_{j_1j_2} \supset\cdots \supset U_{j_1\dots j_n}$. 

We argue by contradiction. Suppose that the arc  $A$  does not contain two complementary arcs of $F_{n+q}$. Then $A$ is itself a complementary arc of $F_{n+q}$. It follows that there exist $j_{n+1},\dots,j_{n+q}\in \{0,\dots,r\}$ such that $A=A_{j_1\dots j_n}=\dots= A_{j_1\dots j_{n+q}}$. By applying $f^{\circ (n-1)}$, we see that $A_{j_n}= A_{j_{n}j_{n+1}}=\dots=A_{j_{n}\dots j_{n+q}}\eqqcolon A'$ and that $A'$ is  a complementary arc of $F_1$.  We define $a'=f^{\circ (n-1)}(a)$, and note that $a'$ has orientation-preserving period equal to $q$, since it is contained in the orbit of the point $a$. 

By the definition of the regions $U_w$, where $w$ is an admissible word, it follows that the map $f^{\circ q}$ maps $U_{j_{n}\dots j_{n+q}}$ conformally onto $U_{j_{n+q}}$ and $A_{j_n\dots j_{n+q}}$ onto $A_{j_{n+q}}$. However, the orientation-preserving period of $a'$, which is an endpoint of $A_{j_n\dots j_{n+q}}$ is $q$. This implies that $A_{j_n\dots j_{n+q}}\subset A_{j_{n+q}}$, and therefore $j_n=j_{n+q}$ and $A_{j_n\dots j_{n+q}}=A_{j_n}= A_{j_{n+q}}$. It follows that $U_{j_{n}\dots j_{n+q}}\subset U_{j_{n+q}}$ and that the orientation-preserving period of the other endpoint $b'$ of $A'$ is a multiple of $q$.

Summarizing, the map $F= (f^{\circ  q}|_{U_{j_{n+q}}})^{-1}$ maps conformally $U\coloneqq U_{j_{n+q}}$ onto $V\coloneqq U_{j_{n}\dots j_{n+q}}$, which is a subset of $U$. Moreover, $F$ has two fixed points $a'$ and $b'$ in $\br U$ and $F$ extends holomorphically to a neighborhood of $a'$ and $b'$, by condition \eqref{condition:holomorphic}. Each of $a'$ and $b'$ is either attracting or parabolic, by condition \eqref{condition:multipliers}. 

We note that $U$ is necessarily a hyperbolic subset of $\widehat{\C}$. Indeed, otherwise, the conformal map $F$ would have to be a loxodromic M\"obius transformation with only two fixed points, one of which is attracting and the other is repelling. This is a contradiction. Since $U\supset V$ and $U$ is hyperbolic, it follows by Montel's Theorem that the sequence of conformal maps $F^{\circ m}$, $m\in \N$, has a subsequence that converges locally uniformly to a holomorphic map $F_{\infty}$ on $U$. 

If $a'$ is an attracting fixed point, then all points near $a'$ are attracted under iterates of $F$ to $a'$. If $a'$ is a parabolic fixed point, then by condition \eqref{condition:repelling} the arc $A'$ defines an attracting direction for $F$, so all points of $A'$ near $a'$ are attracted to $a'$ under iteration of $F$. It follows that $F_{\infty}$ is identically the constant function $z\mapsto a'$. However, the same conclusions hold for the fixed point $b'$, so $F_\infty$ has to be the function $z\mapsto b'$. We have arrived at a contradiction, because $a'\neq b'$.   
\end{proof}

\bigskip

\subsubsection{Local estimates near hyperbolic and parabolic points}
We are first going to establish local estimates near hyperbolic preperiodic points. Recall that all points of $F_1=\{a_0,\dots,a_r\}$ are preperiodic by condition (iii) in Definition \ref{definition:markov_partition}. Before proceeding to the next lemma, recall the definition of the one-sided multipliers $\lambda(a^{\pm})$ and the orientation-preserving period of a preperiodic point $a\in \{a_0,\dots,a_r\}$, given in Section \ref{section:expansive}.

\begin{lemma}[Hyperbolic estimates]\label{lemma:adjacenthyperbolic}
Suppose that $a\in F_1$, $a^+$ is hyperbolic, and let $q\in \N$ be the orientation-preserving period of $a$. For each $p\in \N$, $z_0\in \mathbb S^1$ with $z_0\neq a$, and for all sufficiently large $N_0\in \N$ there exists a constant $L\geq 1$ such that the following is true. If $I_1,\dots,I_p\subset \arc{[a,z_0]}$ are consecutive complementary arcs of $F_n$, $n\geq 1$, and $a$ is an endpoint of $I_1$, then for each $i\in \{1,\dots,p\}$ we have
\begin{align}\label{inequality:adjacenthyperbolic}
L^{-1}\lambda(a^+)^{-n/q}&\leq \diam {I_i} \leq L\lambda(a^+)^{-n/q}.
\end{align}
Moreover, if $n\geq N_0$, the map $f^{\circ(n-N_0)}$ has relative distortion bounded by $L$ on $\bigcup_{i=1}^p I_i$. The analogous statements hold if $a^-$ is hyperbolic and $I_1,\dots,I_p \subset \arc{[z_0,a]}$.
\end{lemma}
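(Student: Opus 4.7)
The strategy is to reduce to the case where $a$ is periodic, linearize the first orientation-preserving return map via its holomorphic extension, and propagate estimates across all levels by iterating the conformal inverse branch while invoking Koebe's distortion theorem uniformly in the iterate. If $a$ is strictly preperiodic, let $m$ be the minimal integer with $a' = f^{\circ m}(a)$ periodic. By condition \eqref{condition:holomorphic}, $f^{\circ m}$ extends to a holomorphic map near $a$ with nonzero derivative (as a composition of local conformal maps), so it has bounded relative distortion on a small neighborhood of $a$; since $f^{\circ m}$ carries complementary arcs of $F_n$ near $a$ bijectively onto complementary arcs of $F_{n-m}$ near $a'$ and preserves the orientation-preserving period and the relevant multiplier, it suffices to prove the estimate at $a'$. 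Assuming now that $a$ is periodic of orientation-preserving period $q$, let $g$ denote the inverse branch of the holomorphic extension $(f^{\circ q})^+$ fixing $a$ and mapping the $+$ side into itself; this $g$ is conformal on a neighborhood of $a$, with $g(a)=a$ and $g'(a)=1/\lambda(a^+) < 1$.

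Choose $r>0$ so that $g$ is univalent on $B(a,r)$ and $g(\overline{B(a,r/2)}) \subset B(a,r/4)$; then every iterate $g^{\circ j}$ is univalent on $B(a,r/2)$ and takes values in $B(a,r/4)$. By Koebe's distortion theorem applied to the univalent maps $g^{\circ j}\colon B(a,r/2)\to \C$, there is a constant $C\geq 1$ \emph{independent of $j$} bounding the relative distortion of $g^{\circ j}$ on $\overline{B(a,r/4)}$; in particular $\diam g^{\circ j}(E) \asymp \lambda(a^+)^{-j}\diam E$ for any $E\subset \overline{B(a,r/4)}$. The identity $f^{\circ q}\circ g=\id$, combined with $g(F_n) \subset F_{n+q}$, gives an order-preserving bijection from $F_n\cap \arc{[a,z]}$ onto $F_{n+q}\cap \arc{[a,g(z)]}$ for $z$ in the domain of $g$. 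Choose $N_0$ large enough---using property \ref{corollary:diameters}---so that for every $n\geq N_0$ the union of any $p$ consecutive complementary arcs of $F_n$ in $\arc{[a,z_0]}$ starting at $a$ lies in $\overline{B(a,r/4)}\cap \arc{[a,z_0]}$. For such $n$, writing $n=N_0+s+jq$ with $s\in\{0,\dots,q-1\}$ and $j\geq 0$, iterated application of the bijection gives $I_i^{(n)} = g^{\circ j}(I_i^{(N_0+s)})$ for $i\leq p$. Together with the Koebe comparison and the boundedness of $\diam I_i^{(N_0+s)}$ over the finitely many configurations in $s$, this yields \eqref{inequality:adjacenthyperbolic}.

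For the distortion bound, decompose $f^{\circ(n-N_0)} = f^{\circ s}\circ (g^{\circ j})^{-1}$ on $\bigcup_i I_i^{(n)}$: the Koebe estimate gives uniform bounded distortion for $(g^{\circ j})^{-1}$, while $f^{\circ s}$ is holomorphic with nonzero derivative on a fixed neighborhood of $a$ by condition \eqref{condition:holomorphic}, hence has bounded distortion on a sufficiently small neighborhood. Composing these over the finite range of $s$ produces the required bound, and the finitely many small levels $1\leq n < N_0$ for which $I_1,\dots,I_p$ fit into $\arc{[a,z_0]}$ are absorbed by enlarging $L$. The chief technical obstacle is to secure uniformity of the Koebe constant across all iterates $g^{\circ j}$; it is resolved by choosing a disk around $a$ that $g$ maps strictly into itself, so that the $g^{\circ j}$ remain univalent on a common disk. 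The analogous argument on the $-$ side is symmetric and uses the inverse branch of $(f^{\circ q})^-$.
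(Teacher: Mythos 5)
Your proof is correct and follows the same overall scaffolding as the paper's: reduce the preperiodic case to the periodic one via a bi-Lipschitz $f^{\circ m}$, fix a finite collection of reference arcs at levels $N_0,\dots,N_0+q-1$, transport the level-$n$ arcs to these reference arcs by iterating the (extension of the) return map, and absorb the residual $s<q$ iterates by a separate bi-Lipschitz map. The one genuine variation is the linearization tool. The paper invokes the K\oe{}nigs linearization theorem to obtain an exact conjugacy $\phi\circ f_a^{+}=\lambda(a^+)\,\phi$ near $a$, and reads off the diameter estimate from the scaling of $\phi$. You instead work with the inverse branch $g=(f_a^{+})^{-1}$ fixing $a$, observe that once $g$ maps a closed disk strictly into a smaller one the iterates $g^{\circ j}$ are all univalent on a common disk, and then apply Koebe's distortion theorem to get a $j$-uniform comparison $\diam g^{\circ j}(E)\asymp\lambda(a^+)^{-j}\diam E$. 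These two routes are essentially dual (K\oe{}nigs for a repelling point is in fact proved by passing to the contracting inverse branch), so neither is structurally shorter; Koebe is slightly more elementary in that it avoids constructing a conjugacy, while K\oe{}nigs packages the estimate more transparently. Both correctly yield the diameter bound and, since a univalent map with uniformly controlled Koebe distortion has uniformly bounded relative distortion, both yield the distortion estimate for $f^{\circ(n-N_0)}=f^{\circ s}\circ(g^{\circ j})^{-1}$ as you write.
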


Intuitively, the last statement of the lemma says that using dynamics one can blow up with bounded distortion complementary intervals of $F_n$ near a hyperbolic point to complementary intervals of $F_{N_0}$.

\begin{proof}
For any $N_1\in \N$ there exists a constant $L\geq 1$ so that \eqref{inequality:adjacenthyperbolic} holds for $n\leq N_1$. This is trivial since there is only a finite number of complementary arcs of $F_n$, $n\leq N_1$. Hence, we will find bounds only when $n$ is sufficiently large.

Suppose first that $a$ is a periodic point. Consider the first return map $f_a=f^{\circ q}$.  By condition \eqref{condition:holomorphic} $f_a$ has a holomorphic extension valid in a complex neighborhood of $\arc{[a,z_1]} \subset \arc{[a,z_0]}$ for some point $z_1\in \mathbb S^1$, $z_1\neq a$. We denote this extension by $f_a^+$. Since $a^+$ is hyperbolic, we have $\lambda(a^{+})>1$. 

By the K{\oe}nigs linearization theorem (see \cite[Theorem 8.2]{Mil06}), there exists a conformal map $\phi$ defined in a neighborhood $U$ of $\arc{[a,z_1]}$ such that $\phi(a)=0$ and
\begin{align}\label{koenigs_conjugation}
\phi (f_a^{+}(z)) =\lambda(a^{+}) \phi(z)
\end{align}
for all $z\in U$. Later, we are going to shrink the arc $\arc{[a,z_1]}$ and the neighborhood $U$ appropriately.

We let $N_0\in \N$ be an integer such that there are $p$ consecutive complementary arcs of $F_{N_0}$ that are contained in $\arc{[a,z_1]}$ and one of which has $a$ as an endpoint. The existence of $N_0$ follows from the expansivity of $f$ and property \ref{corollary:diameters}. For each $j\in \{1,\dots,q-1\}$ we also consider the $p$ consecutive complementary arcs of $F_{N_0+j}$ that are contained in $\arc{[a,z_1]}$ and one of which has $a$ as an endpoint. In total, we fixed $qp$ arcs $J_l$, $l\in \{1,\dots,qp\}$. Suppose that $n\geq N_0$.

Consider the arcs $I_1,\dots,I_p$ as in the statement of the lemma, which are complementary arcs of $F_n$. For each $i\in \{1,\dots,p\}$ we have $I_i\subset \bigcup_{l=1}^{qp} J_l\subset \arc{[a,z_1]}$. Hence, $(f_a^{+})^{\circ m}(I_i)=J_l$ for some $l\in \{1,\dots,qp\}$, where $m=\lfloor (n-N_0)/q \rfloor$. Note that $\phi$ is bi-Lipschitz on $J_l$ and on $I_i$ with a uniform constant, since all arcs are contained in a fixed compact subset of $U$. In combination with the conjugation relation \eqref{koenigs_conjugation}, we obtain
\begin{align*}
\diam{J_l}&=\diam {(f_a^{+})^{\circ m}(I_i)} \simeq \diam{\phi((f_a^{+})^{\circ m}(I_i))}\\
&\simeq \lambda(a^+)^m \diam{\phi(I_i)}\simeq \lambda(a^+)^{(n-N_0)/q}\diam{I_i}.
\end{align*}
It follows that we have
\begin{align*}
\diam{I_i}\simeq \lambda(a^+) ^{-n/q},
\end{align*}
since the arcs $J_l$, $l\in \{1,\dots,qp\}$, and the integer $N_0$ are fixed. 

Now, if $I \subset \bigcup_{i=1}^p I_i$ is any arc, then the preceding argument shows that 
\begin{align*}
\diam{f^{\circ qm}(I)}=\diam{(f_a^+)^{\circ m}(I)} \simeq  \lambda(a^+)^{n/q}\diam{I}.
\end{align*}
Since $n\geq N_0$, there exists $k\in \{0,1,\dots,q-1\}$ such that $n-N_0=qm+k$. By condition \eqref{condition:holomorphic}, the maps $f,f^{\circ 2},\dots,f^{\circ (q-1)}$ have analytic extensions in a neighborhood of the arc $\arc{[a,z_1]}$, after possibly shrinking the arc. Moreover, the derivatives of these extensions are non-zero at $a$ since $\lambda(a^+)\neq 0$. It follows that $f,f^{\circ 2},\dots, f^{\circ (q-1)}$ are bi-Lipschitz in $\arc{[a,z_1]}$. Since $f^{\circ qm}(I) \subset \bigcup_{l=1}^{qp} J_l\subset \arc{[a,z_1]}$, we have
\begin{align*}
\diam{f^{\circ (n-N_0)}(I)}= \diam{f^{\circ k}(f^{\circ qm}(I))}  \simeq \diam{f^{\circ qm}(I)} \simeq  \lambda(a^+)^{n/q}\diam{I}.
\end{align*}
Therefore, if $I,J \subset \bigcup_{i=1}^p I_i$, we have
\begin{align*}
\frac{\diam{f^{\circ (n-N_0)}(I)}}{\diam{f^{\circ (n-N_0)}(J)}} \simeq \frac{\diam{I}}{\diam{J}},
\end{align*}
showing that $f^{\circ(n-N_0)}$ has bounded relative distortion. This completes the proof in the case that $a$ is periodic. Note that the constants in all above inequalities depend on $N_0$.

If $a\in F_1$ is a preperiodic point and $a^+$ is hyperbolic, then there exists a smallest $k\in \{0,\dots,r\}$ such that $f^{\circ k}(a)$ is periodic and $f^{\circ k}(a^+)$ is hyperbolic. Observe that the map $f^{\circ k}$ is bi-Lipschitz in an arc $\arc{[a,z_1]}$. The map $f^{\circ k}$ takes the complementary arcs $I_1,\dots,I_p\subset \arc{[a,z_1]}$ of $F_n$ to complementary arcs of $F_{n-k}$, as long as $n\geq r+1\geq k+1$. It follows by the previous case of periodic points that
\begin{align*}
\diam{I_i} \simeq \diam{f^{\circ k}(I_i)}\simeq \lambda( f^{\circ k}(a^+))^{-(n-k)/q} \simeq \lambda(a^+) ^{-n/q}.
\end{align*}
This also holds trivially for $n<r+1$, since there are only finitely many complementary intervals of $F_n$ for $n<r+1$. 

Let $N_0$ be a sufficiently large integer corresponding to the relative distortion bounds near the periodic point $f^{\circ k}(a)$. The bounds of the relative distortion for the point $a$ follow in the same way, if one observes that for any $I\subset \bigcup_{i=1}^p I_i$ we have
\begin{align*}
\diam {f^{\circ (n-N_0)}(I)} \simeq \diam{f^{\circ (n+k-N_0)}(I)} = \diam{f^{\circ (n-N_0)} (f^{\circ k}(I))}
\end{align*}
and then uses the relative distortion bounds near the periodic point $f^{\circ k}(a)$.
\end{proof}

We prove analogous estimates near parabolic points.

\begin{lemma}[Parabolic estimates]\label{lemma:adjacentparabolic}
Suppose that $a\in F_1$ and $a^+$ is parabolic. For each $p\in \N$, $z_0\in \mathbb S^1$ with $z_0\neq a$, and for all sufficiently large $N_0\in \N$ there exists a constant $L\geq 1$ such that the following is true. If $I_1,\dots,I_p\subset \arc{[a,z_0]}$ are consecutive complementary arcs of $F_n$, $n\geq 1$, and $a$ is an endpoint of $I_1$, then we have
\begin{align}\label{inequality:parabolic_1}
{L^{-1}}{n^{-1/{N(a^{+})}}}&\leq \diam {I_1} \leq {L}{n^{-1/{N(a^{+})}}}
\end{align}
and 
\begin{align}\label{inequality:parabolic_2}
{L^{-1}}{n^{-1/{N(a^{+}) -1}}}&\leq \diam {I_i} \leq {L}{n^{-1/{N(a^{+})-1}}}
\end{align}
for $i\in \{2,\dots,p\}$. Moreover, if $n\geq N_0$, the map $f^{\circ(n-N_0)}$ has relative distortion bounded by $L$ on $\bigcup_{i=2}^p I_i$. The analogous statements hold if $a^-$ is parabolic and $I_1,\dots,I_p \subset \arc{[z_0,a]}$.
\end{lemma}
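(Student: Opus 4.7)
The plan is to parallel the proof of Lemma~\ref{lemma:adjacenthyperbolic}, with the K{\oe}nigs linearization replaced by a Fatou coordinate. First, as in the last paragraph of that proof, we reduce to the case where $a$ is periodic of orientation-preserving period $q$: if $a$ is only preperiodic, let $k$ be the minimal integer such that $a' := f^{\circ k}(a)$ is periodic; by \eqref{condition:holomorphic} and the nonvanishing of the one-sided derivatives, $f^{\circ k}$ is bi-Lipschitz near $a$ and sends the arcs $I_i$ to complementary arcs of $F_{n-k}$ at the parabolic periodic point $a'$. Assume henceforth that $a$ is periodic, so $f_a = f^{\circ q}$ has holomorphic extension $f_a^+(z) = z + c(z-a)^{N+1} + O((z-a)^{N+2})$ with $N := N(a^+) \geq 1$, and by \eqref{condition:repelling} the arc $\arc{[a, z_0]}$ defines a repelling direction.

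By the Leau--Fatou flower theorem \cite[\S 10]{Mil06}, there is a univalent Fatou coordinate $\phi$ on a repelling petal $P$ containing $\arc{[a, z_1]}$ for some $z_1 \in \arc{(a, z_0)}$, satisfying $\phi \circ f_a^+ = \phi + 1$ on $P$, together with
\[
\phi(z) = \frac{-1}{Nc(z-a)^N}\bigl(1 + o(1)\bigr), \qquad |\phi'(z)| \simeq |z-a|^{-N-1} \qquad \text{as } z \to a,
\]
uniformly on a sub-petal $P' \subset P$ chosen as a backward-invariant horizontal strip in Fatou coordinates. By property~\ref{corollary:diameters}, pick $N_0$ so large that for every $j \in \{0,\dots,q-1\}$ the first $p$ complementary arcs of $F_{N_0+j}$ contained in $\arc{[a, z_0]}$ all lie in $\arc{[a, z_1]} \cap P'$. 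For $n \geq N_0$ with $n - N_0 = qm + j$, $0 \leq j < q$, each complementary arc $I_i$ of $F_n$ in $\arc{[a, z_0]}$ is the preimage $(f_a^+)^{-m}(J_{i,j})$ of a fixed arc $J_{i,j}$ of $F_{N_0+j}$, so $\phi(I_i) = \phi(J_{i,j}) - m$.

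For $i \geq 2$, the arcs $J_{i,j}$ lie uniformly away from $a$, so the sets $\phi(J_{i,j})$ and hence their translates $\phi(I_i)$ each have diameter $\simeq 1$. For $z \in I_i$, $|\phi(z)| \simeq m \simeq n/q$ gives $|z - a| \simeq n^{-1/N}$ and $\diam(I_i) \simeq |\phi'(z)|^{-1} \simeq n^{-(N+1)/N}$, yielding \eqref{inequality:parabolic_2}. The arc $I_1$ behaves differently because its Fatou image is unbounded, but its non-$a$ endpoint $y_n$ still satisfies $|\phi(y_n)| \simeq n/q$, hence $|y_n - a| \simeq n^{-1/N}$, proving \eqref{inequality:parabolic_1}. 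The relative distortion of $f^{\circ(n-N_0)}$ on $\bigcup_{i=2}^p I_i$ follows from the factorization $f^{\circ(n-N_0)} = f^{\circ j} \circ (f_a^+)^{\circ m}$: the second factor is the isometric translation $w \mapsto w + m$ in Fatou coordinates, $f^{\circ j}$ is bi-Lipschitz on $\arc{[a, z_1]}$ with constants depending only on $j < q$, and $\phi, \phi^{-1}$ have bounded distortion on the fixed compact set containing all $\phi(J_{i,j})$. The case of $a^-$ parabolic is identical by symmetry.

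The main obstacle is verifying uniform control of the Fatou coordinate's asymptotic expansion (and hence uniform distortion bounds for $\phi$ and $\phi^{-1}$) on all the backward translates $\phi^{-1}(\phi(J_{i,j}) - m)$ as $m \to \infty$. This forces one to choose $P'$ as a Fatou-coordinate strip tall enough to contain every such translate, so that the leading-order behavior $\phi(z) \simeq -1/(Nc(z-a)^N)$ holds throughout with implicit constants independent of $m$. The dichotomy between $I_1$, whose Fatou image extends to the petal's tip at infinity, and $I_2, \ldots, I_p$, whose Fatou images remain in bounded translates of a fixed compact set, is precisely what produces the two different scaling exponents.
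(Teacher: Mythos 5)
Your proof is correct and follows essentially the same strategy as the paper's, with the Fatou coordinate as the central tool. The two write-ups differ in how much of the Fatou-coordinate theory is invoked directly. You lean on the full asymptotic expansion $\phi(z)\sim -1/(Nc(z-a)^N)$ together with the derivative estimate $|\phi'(z)|\simeq |z-a|^{-N-1}$, valid uniformly on a sub-petal; from this you extract both the scale $n^{-1/N}$ for $I_1$ and the scale $n^{-(N+1)/N}$ for $I_2,\dots,I_p$ in one stroke, by translating in Fatou coordinates and pulling back. The paper, by contrast, avoids the asymptotics of $\phi$ itself: it uses only the Abel conjugation $\phi\circ f_a^+=\phi+1$ together with Koebe distortion on fixed balls and their integer translates inside a left half-plane to establish the relative distortion claim, and it gets the diameter scales by a separate two-step argument — Milnor's Lemma 10.1 for $\diam I_1\simeq n^{-1/N}$, and then the Taylor expansion $|w_n-f_a^+(w_n)|\simeq|w_n-a|^{N+1}$ combined with the pre-expansivity bound (Lemma~\ref{lemma:pre_expansivity}) and the already-proved relative distortion to pin down $\diam I_2$. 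Your route is a bit more direct, but it requires a clean uniform version of the asymptotic formula for $\phi$ on the backward-invariant sub-petal; you correctly flag this as the crux. The paper's route is slightly longer but sidesteps that uniformity issue, needing only the soft Koebe distortion bound on translated disks, which is why it is phrased the way it is. Both are valid; in a write-up following your route you should cite or supply the uniformity statement for the Fatou coordinate asymptotics (e.g., via the change of variable $\zeta=-1/(Nc(z-a)^N)$ and boundedness of $\phi(\zeta)-\zeta$ on the sub-petal).
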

Recall that $N(a^{\pm})+1$ is the multiplicity of a parabolic point $a^{\pm}$. Also note that we only obtain relative distortion bounds for the arcs $I_2,\dots,I_p$, but not for the arc $I_1$. This contrasts the hyperbolic case in Lemma \ref{lemma:adjacenthyperbolic}, in which we also had relative distortion bounds for $I_1$.

\begin{proof}
As in the proof of Lemma \ref{lemma:adjacenthyperbolic}, we will only find bounds for sufficiently large $n$. Also, the case of preperiodic points is treated exactly in the same way, so we will only focus on periodic points here. Suppose that $a$ is periodic and $a^+$ is parabolic.

We consider the map $f_{a}^+$, which is a holomorphic extension of $f_a$ in a complex neighborhood of some arc $\arc{[a,z_1]}\subset \arc{[a,z_0]}$. Since $(f_a^+)'(a)=1$ and $a$ is an isolated fixed point of $f_a^+$ by condition \eqref{condition:multipliers}, it follows that near $a$ we have 
\begin{align}\label{parabolic_taylor}
f_a^+(z)=a+ (z-a) +c(z-a)^{N(a^+)+1}+O((z-a)^{N(a^+)+2}),
\end{align}
where $c\neq 0$ and $N(a^+)+1$ is the multiplicity of $a^+$. By \cite[Lemma 10.1]{Mil06} we deduce that there exists a constant $M\geq 1$ such that if a backwards orbit $w_0\mapsto w_1\mapsto w_2\mapsto\cdots$ under $ (f_a^+)^{-1}$ converges to $a$, then for all sufficiently large $m\in \N$ we have 
\begin{align}\label{parabolic_estimates}
M^{-1 }\frac{1}{m^{1/N(a^+)}}\leq |w_m-a| \leq M  \frac{1}{m^{1/N(a^+)}}.
\end{align}

Recall that by condition \eqref{condition:repelling} the arc $\arc{[a,z_1]}$ defines a {repelling direction} of the parabolic point $a$. Equivalently, the inverse orbit of the point $w\in \arc{[a,z_1]}$ under $(f_a^{+})^{-1}$ converges to $a$. Therefore, \eqref{parabolic_estimates} holds for all inverse orbits of points in $\arc{[a,z_1]}$.

Let $N_0\in \N$ be an integer such that there are $p$ consecutive complementary arcs of $F_{N_0}$ that are contained in $\arc{[a,z_1]}$ and one of which has $a$ as an endpoint. As in Lemma \ref{lemma:adjacenthyperbolic}, the existence of $N_0$ follows from the expansivity of $f$. Let $n\in\N,\ n\geq N_0$. Suppose that $I_1=\arc{[a,w_n]}$ is a complementary arc of $F_n$ as in the statement of the lemma. If $n-N_0=qm+k,\ k\in \{0,\dots,q-1\},\ m\in \N\cup \{0\}$,  from \eqref{parabolic_estimates} we have
$$ |w_n-a|\simeq  \frac{1}{((n-N_0)/q)^{1/N(a^+)}} \simeq \frac{1}{n^{1/N(a^+)}}.$$
Note that there are only $q$ possible backward orbits $w_n$, $n\in \N$,  under $f_a^+$, where $w_n$ is an endpoint of a complementary arc $\arc{[a,w_n]}$ of $F_n$, and hence we may have \eqref{parabolic_estimates} for all $n\in \N$. This already proves the first inequality in the statement of the lemma.

The second inequality is more subtle and follows from the relative distortion bounds on $\bigcup_{i=2}^p I_i$ that we claim below.
\vspace{1em}

\noindent
\textbf{Claim.} Let $s\in \N$ and suppose that $K_1,\dots,K_s\subset \arc{[a,z_1]}$ are consecutive complementary arcs of $F_n$ and $a$ is an endpoint of $K_1$. If $N_0$ is sufficiently large, then for $n\geq N_0$ the map $f^{\circ(n-N_0)}$ has bounded relative distortion on $\bigcup_{i=2}^s K_i$ with constants independent of $n$.

\vspace{1em}

We postpone the proof of the claim for the moment. The claim implies that 
\begin{align*}
\frac{\diam{I_i}}{\diam{I_j}}  \simeq \frac{\diam{f^{\circ (n-N_0)}(I_i)}}{\diam{f^{\circ (n-N_0)}(I_j)}}
\end{align*}
for $i,j\in \{2,\dots,p\}$. The arcs ${f^{\circ (n-N_0)}(I_i)},{f^{\circ (n-N_0)}(I_j)}$ are complementary arcs of $F_{N_0}$, so the ratio of their diameters is comparable to $1$, with constants depending on $N_0$. Therefore, $\diam{I_i}\simeq \diam{I_j}$ for $i,j\in \{2,\dots,p\}$.

Let $I_2=\arc{[w_n,b]},\ w_n,b\in F_n$, and let $w_{n-1}=f_a^{+}(w_n)$. Consider the arc $\arc{[a,w_{n-1}]}$, which is a complementary arc of $F_{n-q}$. By Lemma \ref{lemma:pre_expansivity} the arc $\arc{[a,w_{n-1}]}$ contains at least two complementary arcs of $F_n$, one of which is $\arc{[a,w_n]}$. Therefore,   $I_2$ is contained in  $\arc{[w_n,w_{n-1}]}$, which contains at most $(r+1)^q-1$ complementary arcs of $F_n$, by Lemma \ref{lemma:pre_expansivity}.  Since $\arc{[w_n,w_{n-1}]}$ contains a uniformly bounded number of complementary arcs of $F_n$, including $I_2$, we have $\diam {I_2}\simeq \diam {\arc{[w_n,w_{n-1}]}}$ by the Claim. It follows that $\diam{I_i}\simeq  \diam {\arc{[w_n,w_{n-1}]}}$ for $i\in \{2,\dots,p\}$. If $n$ is sufficiently large, then $\diam {\arc{[w_n,w_{n-1}]}}=|w_n-w_{n-1}|$. Therefore, by the Taylor expansion of $f_a^+$ at $a$ in \eqref{parabolic_taylor} and by \eqref{parabolic_estimates}, we have
\begin{align*}
\diam{I_i} \simeq |w_n-w_{n-1}|= |w_n-f_a^+(w_n)| \simeq |w_n-a|^{N(a^+)+1}\simeq \frac{1}{n^{1/N(a^+)+1}}
\end{align*}
for $i\in \{2,\dots,p\}$.
\end{proof}

\begin{proof}[Proof of Claim]
Consider the map $f_a^+$ that is holomorphic in a neighborhood of its parabolic fixed point $a$. For this map there exists a \textit{repelling petal} $\mathcal P$, which is an open set in the plane, that contains a neighborhood of $a$ inside the arc $\arc{(a,z_1]}$; this is because all points of $\arc{(a,z_1]}$ near $a$ are attracted to $a$ under iterations of $(f_a^+)^{-1}$ by condition \eqref{condition:repelling}. 

By the parabolic linearization theorem \cite[Theorem 10.9]{Mil06} there exists a conformal embedding $ \alpha \colon \mathcal P\to \C$, unique up to a translation of $\C$, such that
\begin{align*}
\alpha(f_a^{+}(z))=1+\alpha(z)
\end{align*}
for all $z\in \mathcal P\cap (f_a^+)^{-1}(\mathcal P)$.  Moreover, the domain $\alpha(\mathcal P)$ contains a left half-plane $\mathbb H$. By shrinking $\arc{(a,z_1]}$, we assume that the above conjugation holds in $\arc{(a,z_1]}$ and that $\alpha(\arc{(a,z_1]})\subset \mathbb H$. Since $f_a^+$ maps $\arc{(a,z_1]}\subset \mathbb{S}^1$ to an arc of the circle $\mathbb{S}^1$, by symmetry, we have that $\alpha(\arc{(a,z_1]})$ is an interval on the negative real axis, contained in $\mathbb H$. In particular, the distance between $\alpha(\arc{(a,z_1]})$ and $\partial \mathbb H$ is positive. 

We let $N_0\in \N$ be an integer such that there are $s$ consecutive complementary arcs of $F_{N_0}$ that are contained in $\arc{[a,z_1]}$ and one of which has $a$ as an endpoint. The existence of $N_0$ follows from the expansivity of $f$ and property \ref{corollary:diameters}. For each $j\in \{1,\dots,q-1\}$ we also consider the $s$ consecutive complementary arcs of $F_{N_0+j}$ that are contained in $\arc{[a,z_1]}$ and one of which has $a$ as an endpoint. We discard from these collections all arcs that have $a$ as an endpoint. In total, we have $q(s-1)$ fixed arcs $J_l$, $l\in \{1,\dots,q(s-1)\}$. Suppose that $n\geq N_0$. 

Consider the arcs $K_1,\dots,K_s$ as in the statement of the claim, which are complementary arcs of $F_n$. For each $i\in \{2,\dots,s\}$ we have $K_i\subset \arc{(a,z_1]}$. Hence, $(f_a^{+})^{\circ m}(K_i)=J_l$ for some $l\in \{1,\dots,q(s-1)\}$, where $m=\lfloor (n-N_0)/q \rfloor$. Consider the corresponding arcs $K_i'=\alpha(K_i)$ and $J_l'=\alpha(J_l)$. We then have $J_l'=K_i'+m$ by the conjugation. 

Note that $\bigcup_{l=1}^{q(s-1)} J_l'$ is contained in a fixed ball $B(x,R)$ with $B(x,tR) \subset \mathbb H$ for some fixed $t>1$, since the distance between $\bigcup_{l=1}^{q(s-1)} J_l'$ and $\partial \mathbb H$ is positive and we have discarded all unbounded arcs. Using Koebe's distortion theorem, we see that $\alpha^{-1}$ has bounded relative distortion on $\bigcup_{l=1}^{q(s-1)} J_l'$. Now observe that $\bigcup_{i=2}^s K_i'$ is contained in $B(x-m,R)$ and $B(x-m,tR)$ is contained in $\mathbb H$. Koebe's distortion theorem implies in this case that $\alpha^{-1}$ has bounded relative distortion on $\bigcup_{i=2}^s K_i'$. Since the inverse of a map of bounded relative distortion has the same property, we conclude that $\alpha$ has bounded relative distortion on $\bigcup_{i=2}^s K_i$.
 
By the conjugation, the map $(f_a^+)^{\circ m}$, restricted to $\bigcup_{i=2}^s K_i$, is the composition of $\alpha^{-1}$, the translation to the right by $m$, and $\alpha$. Since all maps involved have bounded relative distortion, we conclude that $(f_a^+)^{\circ m}=f^{\circ qm}$ has bounded relative distortion on $\bigcup_{i=2}^s K_i$.

Since $n\geq N_0$, there exists $k\in \{0,1,\dots,q-1\}$ such that $n-N_0=qm+k$. By condition \eqref{condition:holomorphic}, the maps $f,f^{\circ 2},\dots,f^{\circ (q-1)}$ have analytic extensions in a neighborhood of the arc $\arc{[a,z_1]}$, after possibly shrinking the arc. Since $\lambda(a^+)\neq 0$, after possibly shrinking the arc again, the derivatives of these extensions are non-zero. It follows that $f,f^{\circ 2},\dots, f^{\circ (q-1)}$ are bi-Lipschitz in $\arc{[a,z_1]}$. Since $f^{\circ qm}( \bigcup_{i=2}^s K_i)$ is contained in $\arc{[a,z_1]}$, we conclude that $f^{\circ (n-N_0)}=f^{\circ k} \circ  f^{\circ qm}$ has bounded relative distortion on $\bigcup_{i=2}^s K_i$.
\end{proof}

\begin{corollary}\label{corollary:hyperbolic_parabolic}
Suppose that $a\in F_1$. For each $p\in \N$, $z_0\in \mathbb S^1$ with $z_0\neq a$, there exists a constant $L\geq 1$ such that the following is true. If $I_1,\dots,I_p\subset \arc{[a,z_0]}$ are consecutive complementary arcs of $F_n$, $n\geq 1$, and $a$ is an endpoint of $I_1$, then we have
\begin{align*}
\diam{I_1}\geq L^{-1} \diam{I_i} \quad \textrm{and}\quad 
L^{-1}\diam{I_j} \leq \diam{I_i} \leq L \diam{I_j}.
\end{align*}
for $i,j\in \{2,\dots,p\}$.
\end{corollary}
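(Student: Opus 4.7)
The plan is to split into cases according to whether $a^+$ (resp.\ $a^-$) is hyperbolic or parabolic, and invoke the preceding two lemmas. Recall that since $f$ is expansive, condition \eqref{condition:multipliers} is automatic, so one of these two alternatives must hold. By symmetry we only treat the case when $I_1,\dots,I_p\subset \arc{[a,z_0]}$.

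\textbf{Hyperbolic case.} Suppose $a^+$ is hyperbolic with orientation-preserving period $q$. By Lemma~\ref{lemma:adjacenthyperbolic}, there is a constant $L_1\geq 1$ (depending on $p$ and $z_0$) such that
\begin{align*}
L_1^{-1}\lambda(a^+)^{-n/q}\leq \diam I_i \leq L_1\lambda(a^+)^{-n/q}
\end{align*}
for every $i\in\{1,\dots,p\}$. Thus all of $I_1,\dots,I_p$ have comparable diameters, with ratio bounded by $L_1^2$. Both inequalities in the corollary follow with $L=L_1^2$.

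\textbf{Parabolic case.} Suppose $a^+$ is parabolic. By Lemma~\ref{lemma:adjacentparabolic}, there is a constant $L_2\geq 1$ such that
\begin{align*}
\diam I_1 \simeq n^{-1/N(a^+)}\quad\textrm{and}\quad \diam I_i \simeq n^{-1-1/N(a^+)}\qquad\text{for }i\in\{2,\dots,p\}.
\end{align*}
In particular, all the arcs $I_2,\dots,I_p$ have comparable diameters with ratio bounded by $L_2^2$, giving the second inequality. Moreover, since $n\geq 1$, we have
\begin{align*}
\frac{\diam I_1}{\diam I_i} \gtrsim \frac{n^{-1/N(a^+)}}{n^{-1-1/N(a^+)}} = n \geq 1
\end{align*}
for $i\in\{2,\dots,p\}$, which yields $\diam I_1\geq L_2^{-2}\diam I_i$. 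Of course $\diam I_1\geq L_2^{-2}\diam I_1$ holds trivially, so taking $L=\max(L_1^2,L_2^2)$ gives the required uniform constant in either alternative.

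No genuine obstacle is expected: the corollary is a direct consequence of the two preceding lemmas, with the only mild subtlety being the observation that, in the parabolic case, even though $I_1$ is of a different scale than $I_2,\dots,I_p$, the asymmetry goes in the right direction (namely $I_1$ is \emph{larger}), so the factor $n\geq 1$ absorbs the discrepancy.
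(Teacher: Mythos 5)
Your proof is correct and carries out exactly the derivation the paper intends: the corollary has no explicit proof in the source because it is meant to follow immediately from Lemmas~\ref{lemma:adjacenthyperbolic} and \ref{lemma:adjacentparabolic}, and your case split on $a^+$ hyperbolic versus parabolic (with condition~\eqref{condition:multipliers} from expansivity guaranteeing one must hold) is the obvious way to do so. The one genuinely nonobvious point — that in the parabolic case $\diam I_1\simeq n^{-\alpha}$ dominates $\diam I_i\simeq n^{-\alpha-1}$ precisely because $n\geq 1$ — is the reason the corollary only asserts a one-sided bound between $I_1$ and $I_i$, and you correctly identify and handle it.
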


So far, we have established diameter bounds for \textit{dynamical} arcs, i.e., complementary arcs of $F_n$. In the next very technical lemma we prove estimates for the diameter of a \textit{non-dynamical} arc $I$, located near a point $a\in F_1$. Recall that $h\colon \mathbb S^1\to \mathbb S^1$ is an orientation-preserving homeomorphism that conjugates $f$ to $g$, by Lemma \ref{lemma:expansive_conjugate_fg}.

\begin{lemma}[One-sided estimates]\label{lemma:one_sided_estimates}
Suppose that $a\in F_1$, $q$ is the orientation-preserving period of $a$, and $b=h(a)$. For each $p\in \N$ with $p\geq 2$, $z_0\in \mathbb S^1$ with $z_0\neq a$, and $M\geq 1$ there exists $L\geq 1$ such that the following is true. If $I_1,\dots,I_p\subset \arc{[a,z_0]}$ are consecutive complementary arcs of $F_n$, $n\geq 1$, and $a$ is an endpoint of $I_1$, then for each closed arc $I\subset \mathbb S^1$ with 
\begin{align*}
I\subset \bigcup_{i=1}^p I_i \quad \textrm{and} \quad \diam{I} \geq M^{-1} \diam{I_2}
\end{align*}
the following alternatives occur. We let $k\in \N\cup \{0\}$ be the smallest integer such that a  complementary arc of $F_{n+k}$ not having $a$ as an endpoint intersects  $I$ and $l\in \N\cup \{0\}$ be the smallest integer, if there exists one, such that there exists a complementary arc of $F_{n+k+l}$ contained in $\arc{[a,z_0]}$ having $a$ as an endpoint and not intersecting $I$. If no such $l$ exists, it is set to be $\infty$.
\begin{itemize}
\item If $a^+$ is parabolic, then	
\begin{align*}
L^{-1} (n+k)^{-\alpha-1}\leq \frac{\diam{I}}{\min\{l+1,n+k\}}\leq L (n+k)^{-\alpha-1},
\end{align*}
where $\alpha=1/N(a^+)$. If, in addition, $b^+$ is parabolic, the same estimates hold for the arc $h(I)$, if we replace $\alpha$ by $\beta=1/N(b^+)$. Namely, 
\begin{align*}
L^{-1} (n+k)^{-\beta-1}\leq \frac{\diam{h(I)}}{\min\{l+1,n+k\}}\leq L (n+k)^{-\beta-1}.
\end{align*}
In particular, if $a^+$ and $b^+$ are both parabolic, then 
\begin{align*}
L^{-2} (n+k)^{\alpha-\beta}\leq \frac{\diam{h(I)}}{\diam{I}} \leq L^2 (n+k)^{\alpha-\beta}.
\end{align*}

\item If $a^+$ is hyperbolic, then
$$L^{-1}\lambda(a^+)^{-n/q}\leq \diam{I} \leq L\lambda(a^+)^{-n/q}$$ 
and $k\leq L$. If, in addition, $b^+$ is hyperbolic, then
$$L^{-1}\lambda(b^+)^{-n/q}\leq \diam{h(I)} \leq L\lambda(b^+)^{-n/q}$$
and if $b^+$ is parabolic, then
$$L^{-1} (n+k)^{-\beta-1}\leq \frac{\diam{h(I)}}{\min\{l+1,n+k\}}\leq L (n+k)^{-\beta-1},$$
where $\beta=1/N(b^+)$.
\end{itemize}
The corresponding estimates hold for $a^-$ and $b^-$.
\end{lemma}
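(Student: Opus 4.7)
The plan is to analyze the local structure of $I$ relative to the nested complementary arcs of $F_m$ having $a$ as endpoint, and to convert this into the required diameter estimates by invoking the diameter and relative-distortion bounds from Lemmas~\ref{lemma:adjacenthyperbolic} and~\ref{lemma:adjacentparabolic}, together with their analogues on the $g$-side at $b = h(a)$. Write $I = \arc{[u,v]}$ with $u$ the endpoint of $I$ closer to $a$ along $\arc{[a,z_0]}$, and let $J_m \subset \arc{[a,z_0]}$ denote the complementary arc of $F_m$ with $a$ as an endpoint. Unpacking the definitions of $k$ and $l$, when $k \geq 1$ the arc $I$ sits inside $J_{n+k-1}$ but meets a non-$J_{n+k}$ complementary arc of $F_{n+k}$, and when $l$ is finite the endpoint $u$ lies in the shell $J_{n+k+l-1} \setminus J_{n+k+l}$, while $l = \infty$ corresponds to $u = a$. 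I will decompose $I = I_{\mathrm{near}} \sqcup I_{\mathrm{far}}$ with $I_{\mathrm{near}} = I \cap J_{n+k}$ and $I_{\mathrm{far}} = I \setminus J_{n+k}$ and estimate each piece separately.

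In the parabolic case for $a^+$, the near part is controlled by the scaling $\diam J_m \sim m^{-\alpha}$ from Lemma~\ref{lemma:adjacentparabolic}: a Taylor expansion of $m \mapsto m^{-\alpha}$ at $m = n+k$ gives $\diam I_{\mathrm{near}} \sim \diam J_{n+k} - \diam J_{n+k+l} \sim \min\{l+1, n\}(n+k)^{-\alpha-1}$ when $l$ is finite, and $\diam I_{\mathrm{near}} = \diam J_{n+k}$ when $l = \infty$. For the far part, the complementary arcs of $F_{n+k}$ filling $I_{\mathrm{far}}$ each have diameter $\sim (n+k)^{-\alpha-1}$ by the same lemma, and the Claim in the proof of Lemma~\ref{lemma:adjacentparabolic} supplies a uniform relative-distortion bound for $f^{\circ(n+k-N_0)}$ on the arcs adjacent to $J_{n+k}$; pulling $I_{\mathrm{far}}$ back to level $N_0$ via this iterate, where the configuration sits at bounded scale inside a fixed finite union of level-$N_0$ arcs, yields the count $\diam I_{\mathrm{far}} \sim (k+1)(n+k)^{-\alpha-1}$. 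Summing the near and far contributions produces the desired formula for $\diam I$. The hyperbolic $a^+$ case is handled in the same spirit but is strictly simpler: Lemma~\ref{lemma:adjacenthyperbolic} gives uniform comparability of all level-$n$ arcs adjacent to $a$ with common scale $\lambda(a^+)^{-n/q}$, so $k$ is automatically bounded by a constant depending only on $p$, and a single-scale estimate suffices.

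For the $g$-side bounds on $\diam h(I)$, I exploit that $h$, built from the conjugacy of Lemma~\ref{lemma:expansive_conjugate_fg}, bijectively carries $F_m$ for $f$ onto $F_m$ for $g$, and in particular carries each $J_m$ to its counterpart on the $g$-side; consequently the indices $k$ and $l$ are preserved, and the decomposition $h(I) = h(I_{\mathrm{near}}) \sqcup h(I_{\mathrm{far}})$ is compatible. Substituting $\beta = 1/N(b^+)$ in place of $\alpha$ (respectively, $\lambda(b^+)$ in place of $\lambda(a^+)$) in the Taylor expansion and the counting argument yields the asserted bound for $\diam h(I)$ in each alternative, and the parabolic-to-parabolic ratio $\diam h(I)/\diam I \sim (n+k)^{\alpha - \beta}$ follows by dividing the two near/far sums. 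The main obstacle I anticipate is the counting step in the far part, since a priori the number of level-$(n+k)$ arcs inside $I_{\mathrm{far}}$ is bounded only by the exponential quantity $(r+1)^k$; it is crucial that the bounded-distortion pullback via $f^{\circ(n+k-N_0)}$ reduces the count to a bounded-scale combinatorial estimate at level $N_0$, producing the correct linear-in-$k$ growth. A secondary bookkeeping task is to verify that the decomposition behaves correctly in the degenerate cases $k = 0$, $l = 0$, and $l = \infty$, where one of the two pieces may be empty or equal to all of $I$; these are absorbed as boundary specializations of the main argument.
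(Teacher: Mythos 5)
The observation that $h$ carries $F_m$-arcs for $f$ bijectively to $F_m$-arcs for $g$ (by Lemma~\ref{lemma:expansive_conjugate_fg}) and hence preserves the indices $k$ and $l$ is correct and useful, and the near/far decomposition is a sensible starting point. But there are two genuine gaps. First, the far-part estimate $\diam I_{\mathrm{far}} \simeq (k+1)(n+k)^{-\alpha-1}$ is wrong, and the ``linear-in-$k$ count'' you anticipate does not exist. By minimality of $k$, the arc $I$ lies inside $J_{n+k-1}$, so $I_{\mathrm{far}} = I\setminus J_{n+k}\subset J_{n+k-1}\setminus J_{n+k}$, which is a shell made of at most $r$ complementary arcs of $F_{n+k}$ (the children of $J_{n+k-1}$ other than $J_{n+k}$), each of diameter $\simeq (n+k)^{-\alpha-1}$ by Lemma~\ref{lemma:adjacentparabolic}. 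Thus $\diam I_{\mathrm{far}}\lesssim (n+k)^{-\alpha-1}$ with no $k$-dependence whatever; the a priori count $(r+1)^k$ you worry about is irrelevant because $I_{\mathrm{far}}$ is confined to a single shell. (The paper's own proof sidesteps this entirely: it replaces $I_1,\dots,I_p$ by the children of $J_{n+k-1}$, reduces outright to $k=0$ using $\diam I\gtrsim\diam I_2\gtrsim\diam J_2$, and then does a single telescoping sum over the nested arcs $J_{n+jq}$; no contribution proportional to $k$ ever appears.)

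Second, and more fundamentally, the passage to the $g$-side is not a matter of ``substituting $\beta$ for $\alpha$.'' Index preservation does give $\diam h(I_{\mathrm{near}})\simeq \min\{l,n+k\}(n+k)^{-\beta-1}$ when $l\geq 2$, by reading off $h(u),h(v)$ against the nested arcs at $b$ and applying Lemma~\ref{lemma:adjacentparabolic} there. But the cases $l\in\{0,1\}$, which you dismiss as ``boundary specializations,'' are exactly where the $f$-side estimate leans on the hypothesis $\diam I\geq M^{-1}\diam I_2$; the corresponding $g$-side lower bound $\diam h(I)\gtrsim\diam h(I_2)$ is then not automatic, and it is precisely what has to be proved. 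This is the central technical step of the paper's argument (inequality \eqref{lemma:one_sided_estimates:parabolic}): one passes to the grandchildren $K_i$ at level $n+q$, arranges $\diam(I\cap\bigcup_{i\geq 2}K_i)\simeq\diam K_2$, blows up via the relative-distortion bounds of Lemma~\ref{lemma:adjacentparabolic} for $f^{\circ(n-N_0)}$ on $\bigcup_{i\geq 2}K_i$ (and $g^{\circ(n-N_0)}$ on $\bigcup_{i\geq 2}h(K_i)$), and invokes continuity of $h$ at the fixed level $N_0+q$. The same lower bound is also needed, and is the entire content of, the alternative where $a^+$ is hyperbolic and $b^+$ is parabolic, which your proposal does not address. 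Without this step the lemma does not follow.
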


\begin{proof}
First, we note that it suffices to prove the statements for sufficiently large $n$. Indeed, if $n<N_0$ for some $N_0\in \N$, then $\diam{I_i}\simeq 1$ and $\diam{h(I_i)} \simeq 1$ for $i\in \{1,\dots,p\}$, as there are only finitely many complementary arcs of $F_{n}$, $n<N_0$. Since $\diam{I}\gtrsim \diam{I_2}$, we have $\diam{I}\simeq 1$. By the continuity of the homeomorphism $h$, we have $\diam{h(I)}\simeq 1$. This shows that the estimates in the case that $a^+$ or $b^+$ is hyperbolic hold with constants depending on $N_0$. Moreover, if $k_0$ is a sufficiently large integer, then the complementary arcs of $F_{n+k_0}$ are small enough by property \ref{corollary:diameters}, so that the arc $I$, whose diameter is comparable to $1$, is not contained in any complementary arc of $F_{n+k_0}$. Hence, $I$ intersects at least two complementary arcs of $F_{n+k_0}$ and $k\leq k_0$, where $k$ is the integer that is defined in the statement of the lemma. Since $k$ and $n$ are bounded, we also obtain the desired estimates in case $a^+$ or $b^+$ is a parabolic point.

Another reduction is to assume that $a$ is a periodic point of $f$. Otherwise, if $a$ is strictly preperiodic, one can obtain the desired estimate using the fact that an iterate of $f$ that maps $a$ to a periodic point is bi-Lipschitz on $\bigcup_{i=1}^p I_i$, if $n$ is sufficiently large, as in the proof of Lemma \ref{lemma:adjacenthyperbolic}.
We will split the proof into the two basic cases.

\bigskip
\noindent
\textbf{Case 1.} $a^+$ is parabolic. We let $k\in \N\cup \{0\}$ be the smallest integer such that a  complementary arc of $F_{n+k}$ not having $a$ as an endpoint intersects  $I$. We will reduce to the case that $k=0$. If $k\geq 1$, there exists a complementary arc $J$ of $F_{n+k-1}$ that has $a$ as an endpoint and contains $I$. By passing to the next level, we obtain consecutive complementary arcs $J_1,\dots,J_{p'}$ of $F_{n+k}$, which are the children of $J$, so $p'\leq r+1$ by Lemma \ref{lemma:pre_expansivity}, such that $I$ is not contained in $J_1$. By assumption, we have $\diam{I}\gtrsim \diam{I_2}$. Inequality~\eqref{inequality:parabolic_2} from Lemma \ref{lemma:adjacentparabolic} implies that $\diam{I_2}\simeq n^{-\alpha-1}$ and $\diam{J_2}\simeq (n+k)^{-\alpha-1}$. Since $n^{-\alpha-1} \geq (n+k)^{-\alpha-1}$, it follows that $\diam{I}\gtrsim \diam{J_2}$. Therefore, we have reduced to the case that 
\begin{align}\label{lemma:one_sided_estimates:reduction}
I\subset \bigcup_{i=1}^p I_i,\quad \diam{I}\gtrsim \diam{I_2},\quad  I\cap \bigcup_{i=2}^p I_i\neq \emptyset, \quad \textrm{and}\quad  k=0.
\end{align}

We let $l\in \N\cup\{0\}$ be the smallest integer such that there exists a complementary arc of $F_{n+l}$ that has $a$ as an endpoint and does not intersect $I$. If such an integer does not exist, we set $l=\infty$. In the latter case we have $I\supset I_1$, since $I\cap \bigcup_{i=2}^pI_i\neq \emptyset$. It follows from \eqref{inequality:parabolic_1} and \eqref{inequality:parabolic_2} in Lemma \ref{lemma:adjacentparabolic} that
\begin{align*}
n^{-\alpha} &\simeq \diam{I_1} \lesssim \diam{I}\\
 &\lesssim \diam{I_1} +\sum_{i=2}^p\diam{I_i} \lesssim n^{-\alpha} +(p-1)n^{-\alpha-1} \simeq n^{-\alpha}.
\end{align*} 
Hence $\diam{I}\simeq n^{-\alpha-1} \cdot \min\{\infty, n\} $. Similarly, if $l=0$, then $I_1\cap I=\emptyset $ and $I\subset \bigcup _{i=2}^p I_i$, so
$$n^{-\alpha-1} \simeq \diam{I_2}\lesssim \diam{I} \lesssim \sum_{i=2}^p\diam{I_i} \simeq n^{-\alpha-1}.$$
Hence, $\diam{I}\simeq n^{-\alpha -1} \cdot \min \{1,n\}$.

Suppose now that $0<l<\infty$. Recall that $q$ is the orientation-preserving period of $a$. Consider $m\in \N$ such that $l= q(m-1)+s$, $s\in \{1,\dots,q\}$. Note that $q(m-1)< l\leq qm$. Set $J_0=I_1$ and let $J_1\subset J_0$ be the complementary arc of $F_{n+q}$ that has $a$ as an endpoint. Then, by Lemma \ref{lemma:pre_expansivity}, $J_0\setminus J_1$ contains at least one complementary arc of $F_{n+q}$ and is contained in the union of at most $(r+1)^q-1$ complementary arcs of $F_{n+q}$. It follows from \eqref{inequality:parabolic_2} in Lemma \ref{lemma:adjacentparabolic} that $\diam(J_0\setminus J_1)\simeq (n+q)^{-\alpha-1}$. Inductively, we let $J_j\subset J_{j-1}$ be the complementary arc of $F_{n+jq}$ that has $a$ as an endpoint. Again from Lemma \ref{lemma:adjacentparabolic} we have $\diam(J_{j-1}\setminus J_j)\simeq (n+jq)^{-\alpha-1}$. We note that 
\begin{align*}
\bigcup_{j=1}^{m-1}(J_{j-1}\setminus J_{j}) \subset I \subset  \bigcup_{j=1}^{m}(J_{j-1}\setminus J_{j}) \bigcup \bigcup_{i=2}^p I_i.
\end{align*}
Since $\diam{I}\geq M^{-1} \diam{I_2}$, we have 
\begin{align*}
\sum_{j=1}^{m-1} \diam(J_{j-1}\setminus J_j)+ \diam{I_2}  \lesssim \diam{I} \leq \sum_{j=1}^m \diam(J_{j-1}\setminus J_j)  + \sum_{i=2}^p \diam {I_i}.
\end{align*}
It follows that 
\begin{align*}
\diam{I} &\simeq \sum_{j=0}^m (n+jq)^{-\alpha-1} \\
&\simeq \int_n^{n+l+1} x^{-\alpha-1}\, dx \simeq n^{-\alpha}-(n+l+1)^{-\alpha}\simeq n^{-\alpha}(1- (1+(l+1)/n)^{-\alpha})\\
&\simeq  n^{-\alpha} \cdot  \begin{cases}(l+1)/n, & \textup{if} \quad l+1\leq n \\ 1, & \textup{if} \quad l+1>n \end{cases}.
\end{align*}
The conclusion now follows.

Next, we show that if $b^+$ is parabolic, it satisfies the same estimate. Note that by \eqref{lemma:one_sided_estimates:reduction} we have $h(I)\subset \bigcup_{i=1}^p h(I_i)$ and $h(I)\cap \bigcup_{i=2}^p h(I_i)\neq \emptyset$. If we had that 
\begin{align}\label{lemma:one_sided_estimates:parabolic}
\diam{h(I)}\gtrsim \diam {h(I_2)},
\end{align}
then we would have the desired estimate (for $k=0$) by following the above argument. Hence, our goal will be to establish inequality \eqref{lemma:one_sided_estimates:parabolic}. 

We consider the family of complementary arcs of $F_{n+q}$ that are contained in $I_i$, $i\in \{1,\dots,p\}$. We denote these arcs by $K_i$, $i\in \{1,\dots,p''\}$. Note that each one of $I_i$ contains at most $(r+1)^q$ such arcs by Lemma \ref{lemma:pre_expansivity}, so $p''\leq p(r+1)^q$. Moreover, by Lemma \ref{lemma:pre_expansivity} $I_1$ contains at least two complementary arcs of $F_{n+q}$. Let $K_1\subset I_1$ be the complementary arc of $F_{n+q}$ that has $a$ as an endpoint and $K_2\subset I_1$ be its adjacent arc. Lemma \ref{lemma:adjacentparabolic} implies that 
$$\diam{I_2}\simeq  n^{-\alpha-1} \simeq (n+q)^{-\alpha-1}\simeq \diam{K_2}\simeq \diam{K_i}$$
for all $i\in \{2,\dots,p''\}$. If $I\cap K_1\neq \emptyset$, then we necessarily have $K_2\subset I$, since $I\cap \bigcup_{i=2}^p I_i\neq \emptyset$. Otherwise, if $I\cap K_1=\emptyset$, we have $I\subset \bigcup_{i=2}^{p''} K_i$.  Therefore, in both of the above cases, since $\diam{I}\gtrsim \diam{K_2}$, we have 
$$\diam{K} \simeq \diam{K_2}, \quad \textrm{where}\quad K=I\cap \bigcup_{i=2}^{p''} K_i \subset \bigcup_{i=2}^{p''} K_i.$$
 
The importance of these conditions is that on $\bigcup_{i=2}^{p''}K_i$ and on $\bigcup_{i=2}^{p''} h(K_i)$ we have relative distortion bounds for the maps $f^{\circ (n-N_0)}$ and $g^{\circ (n-N_0)}$, respectively, by Lemma \ref{lemma:adjacentparabolic}. These distortion bounds are not available on $\bigcup_{i=1}^{p''} K_i$ and $\bigcup_{i=1}^{p''} h(K_i)$. By Lemma \ref{lemma:adjacentparabolic}, for all sufficiently large $N_0\in \N$, if $n\geq N_0$, we have
$$\diam{f^{\circ (n-N_0)} (K)}\simeq \diam{f^{\circ (n-N_0)} (K_2)}.$$
Since $f^{\circ (n-N_0)}(K_2)$ is a complementary arc of $N_0+q$, we may assume that 
$$\diam{f^{\circ (n-N_0)} (K)}\simeq \diam{f^{\circ (n-N_0)} (K_2)}\simeq 1$$
with constants depending on $N_0$. Using the continuity of $h$ and the conjugation, we obtain
$$\diam{g^{\circ (n-N_0)} (h(K))}\simeq \diam{g^{\circ (n-N_0)} h((K_2))}\simeq 1.$$
Using again the distortion bounds from Lemma \ref{lemma:adjacentparabolic} and enlarging $N_0$ if necessary, we conclude that $$\diam{h(K)} \simeq \diam{h(K_2)}.$$ Since $I\supset K$, we have $\diam {h(I)} \geq  \diam{h(K_2)}$. Finally, by Lemma \ref{lemma:adjacentparabolic}, we have $$\diam{h(K_2)} \simeq (n+q)^{-\beta-1} \simeq n^{-\beta-1} \simeq \diam{h(I_2)}.$$ Altogether, we have $\diam{h(I)}\gtrsim \diam {h(I_2)}$, i.e., \eqref{lemma:one_sided_estimates:parabolic} holds, assuming that $n\geq N_0$. This completes the proof of Case 1.

\bigskip

\noindent
\textbf{Case 2.} $a^+$ is hyperbolic. We have $\diam{I_i} \simeq \lambda(a^+)^{-n/q}$ for $i\in \{1,\dots,p\}$ by Lemma \ref{lemma:adjacenthyperbolic}. Since 
$$M^{-1}\diam{I_2} \leq  \diam{I} \leq \sum_{i=1}^{p}\diam{I_i},$$
it follows that $\diam{I} \simeq \lambda(a^+)^{-n/q}$ with constants depending on $p$ and $M$.

Let $k\in \N\cup \{0\}$ be the smallest integer such that a  complementary arc of $F_{n+k}$ not having $a$ as an endpoint intersects  $I$. If $k\geq 1$, there exists a complementary arc $J$ of $F_{n+k-1}$ that has $a$ as an endpoint and contains $I$. By passing to a further level, we obtain consecutive complementary arcs $J_1,\dots,J_{p'}$ of $F_{n+k}$, which are the children of $J$, so $p'\leq r+1$, such that $I$ is not contained in $J_1$ and $I\subset \bigcup_{i=1}^{p'}J_i$. Inequality \eqref{inequality:adjacenthyperbolic} from Lemma \ref{lemma:adjacenthyperbolic} implies that $\diam{J_i}\simeq \lambda(a^+)^{-(n+k)/q}$, so $\diam{I}\lesssim \lambda(a^+)^{-(n+k)/q}$. However, $\diam{I}\simeq \lambda(a^+)^{-n/q}$. Therefore, $k\leq C$ for some uniform constant $C>0$.

Suppose that $b^+$ is also hyperbolic. By Lemma \ref{lemma:adjacenthyperbolic}, for all sufficiently large $N_0\in \N$, if $n\geq N_0$ then $f^{\circ (n-N_0)}$ has bounded distortion on $\bigcup_{i=1}^pI_i$. It follows that 
\begin{align*}
\diam{f^{\circ (n-N_0)} (I)} &\gtrsim  \diam{f^{\circ (n-N_0)} (I_2)}.
\end{align*}
Note that $f^{\circ (n-N_0)} (I_2)$ is a complementary arc of $F_{N_0}$. If $N_0$ is fixed, then we may assume that $\diam{f^{\circ (n-N_0)} (I_2)}\simeq 1$. By continuity, this implies that 
\begin{align*}
\diam{h(f^{\circ (n-N_0)} (I))}\simeq 1,
\end{align*}
or equivalently
\begin{align*}
\diam{g^{\circ (n-N_0)} (h(I))} \simeq 1 \simeq \diam {g^{\circ (n-N_0)}(h(I_2))}.
\end{align*}
Since $b^+$ is hyperbolic, using Lemma \ref{lemma:adjacenthyperbolic} and enlarging $N_0$, we may also have that $g^{\circ (n-N_0)}$ has bounded distortion on $\bigcup_{i=1}^p h(I_i)$. Therefore, we obtain 
\begin{align*}
\diam{h(I)} \simeq \diam {h(I_2)}.
\end{align*}
By \eqref{inequality:adjacenthyperbolic} we have 
$$\diam{h(I_2)} \simeq \lambda(b^+)^{-n/q}.$$
Hence, $\diam{h(I)}\simeq \lambda(b^+)^{-n/q}$, as desired. 

Suppose now that $b^+$ is parabolic. In this case, we cannot apply distortion estimates near $b^+$ so we will first pass to some further subdivisions. Consider the consecutive complementary arcs $J_1,\dots,J_{p'}$ of $F_{n+k}$, such that $I$ is not contained in $J_1$ and $I\subset \bigcup_{i=1}^{p'}J_i$. Since $k$ is uniformly bounded, we have $\diam{J_2}\simeq \lambda(a^+)^{-n/q}\simeq \diam{I}$ by \eqref{inequality:adjacenthyperbolic} and $\diam{h(J_2)} \simeq n^{-\beta-1} \simeq \diam{h(I_2)}$ by \eqref{inequality:parabolic_2}. If we replace $n$ with $n+k$ and the arcs $I_i$ with the arcs $J_i$, we have $I\subset \bigcup_{i=1}^pI_i$, $\diam{I}\simeq \diam{I_2}$, and $I\cap \bigcup_{i=2}^p I_i\neq \emptyset$. By arguing as in Case 1 (see \eqref{lemma:one_sided_estimates:parabolic}), it suffices to prove that
\begin{align*}
\diam{h(I)} \gtrsim \diam{h(I_2)}.
\end{align*}

We can now proceed as in Case 1. We consider the family of complementary arcs of $F_{n+q}$ that are contained in $I_i$, $i\in \{1,\dots,p\}$. We denote these arcs by $K_i$, $i\in \{1,\dots,p''\}$, where $p''\leq p(r+1)^q$. Let $K_1\subset I_1$ be the complementary arc of $F_{n+q}$ that has $a$ as an endpoint and $K_2\subset I_1$ be its adjacent arc. By Lemma \ref{lemma:adjacenthyperbolic}, we have $\diam{I_2}\simeq \diam{K_2}\simeq \diam{K_i}$ for all $i\in \{1,\dots,p''\}$. Since $\diam{I}\simeq \diam{I_2}\simeq \diam{K_2}$, we have 
$$\diam{K} \simeq \diam{K_2}, \quad \textrm{where}\quad K=I\cap \bigcup_{i=2}^{p''} K_i \subset \bigcup_{i=2}^{p''} K_i.$$
The argument now continues exactly as in Case 1, using distortion bounds to blow up the arcs $K_2$ and $K$ to arcs of large diameter. On $\bigcup_{i=1}^{p''} K_i$ the distortion bounds come from Lemma \ref{lemma:adjacenthyperbolic} and on $\bigcup_{i=2}^{p''} h(K_i)$ we use Lemma \ref{lemma:adjacentparabolic}. The proof is complete.
\end{proof}

\bigskip

\subsubsection{Conformal elevator and completion of proof of Theorem \ref{theorem:extension_generalization}}
Our first goal here is to start with an arbitrary arc $I\subset \mathbb S^1$ and map it conformally and with bounded relative distortion, by applying a suitable iterate of $f$, to an arc $I'$ that is located ``near" a point $a\in F_1$. This procedure is referred to as the \textit{conformal elevator} and is described more precisely in Lemma \ref{lemma:blowup_simultaneous}. Once the arc $I$ is blown up to the arc $I'$ that is near $a\in F_1$, then one can apply the diameter estimates from Lemma \ref{lemma:one_sided_estimates}. However, there is a basic dichotomy. Either $I'$ contains the point $a$, or $I'$ lies only on one side of $a$. Each of these cases is treated separately in Lemma \ref{lemma:one_sided_distortion} and Lemma \ref{lemma:two_sided_distortion}, respectively. Finally, using the latter two lemmas, we conclude the proof of the main Theorem \ref{theorem:extension_generalization}.

\begin{lemma}[Conformal elevator]\label{lemma:blowup_simultaneous}
There exists $M\geq 1$ such that for any non-degenerate closed arc $I\subset \mathbb S^1$ there exist $n\in \N$, $m\in \N\cup \{0\}$, $a\in F_1$, and $z_0\in \mathbb S^1$ with $z_0\neq a$ such that one of the following alternatives holds.
\begin{enumerate}[\upshape (i)]
\item\label{lemma:blowup:two_sides} The arc $I'=f^{\circ m}(I)$ contains the point $a\in F_1$.
\item\label{lemma:blowup:one_side} There exist consecutive complementary arcs $I_1,\dots,I_p$, $p\leq M$, of $F_{n+m}$ with 
\begin{align*}
I\subset \bigcup_{i=1}^p I_i \quad \textrm{and} \quad I\cap \bigcup_{i=2}^p I_i \neq \emptyset
\end{align*}
such that the arcs $I_1'=f^{\circ m}(I_1),\dots,I_p'=f^{\circ m}(I_p)$ are consecutive complementary arcs of $F_n$ that contain the arc $I'=f^{\circ m}(I)$, the point $a$ is an endpoint of $I_1'$, 
\begin{align*}
&\bigcup_{i=1}^p I_i'\subset \arc{[a,z_0]} \quad \textrm{or}\quad  \bigcup_{i=1}^p I_i'\subset \arc{[z_0,a]}, \quad \textrm{and}\\
&\diam{I'}\geq M^{-1}\diam{I_2'}.
\end{align*}
\end{enumerate}
Moreover, in both cases $f^{\circ m}$ and $g^{\circ m}$ have relative distortion bounded by $M$ on $I$ and $h(I)$, respectively, and
\begin{align*}
\diam{I'}\geq M^{-1} \diam{I}.
\end{align*} 
\end{lemma}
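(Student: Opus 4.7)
The strategy is the classical \emph{conformal elevator}: iterate $I$ under $f$ until the image first straddles a point of $F_1$, and then control the ensuing map by Koebe's distortion theorem applied to the conformal inverse branch.

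Let $m\geq 0$ be the smallest integer such that $f^{\circ m}(I)$ is \emph{not} contained in a single complementary arc of $F_1$; this exists by property \ref{corollary:diameters} applied to the successive iterates of $I$. If $m=0$, the connected arc $I$ meets two adjacent arcs $A_k$ and hence contains their common endpoint $a\in F_1$, so case (i) is immediate. If $m\geq 1$, then for each $j<m$ there is a unique index $k_j$ with $f^{\circ j}(I)\subset A_{k_j}$, and the word $w=(k_0,\dots,k_{m-1})$ is admissible with $I\subset A_w$. By construction $f^{\circ m}(I)$ is a connected arc inside $f(A_{k_{m-1}})=\bigcup_{b_{k_{m-1}j}=1}A_j$, a connected union of consecutive level-one arcs, and by the minimality of $m$ it fails to lie in any single one of them. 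Hence $f^{\circ m}(I)$ contains the common endpoint $a\in F_1$ between two adjacent such $A_j$'s, again realizing case (i); the more intricate alternative (ii) is not needed in this approach.

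It remains to secure the distortion control and the lower bound $\diam I'\geq M^{-1}\diam I$. Iterating condition \eqref{condition:uv} shows that $f^{\circ m}$ is conformal on an open neighborhood $U_w$ of $\inter{A_w}$ and maps $U_w$ onto $V_{k_{m-1}}$. The holomorphic extension \eqref{condition:holomorphic} at the (finitely many) endpoints in $F_1$ enlarges each $U_j$ to a neighborhood of $\overline{A_j}$; combined with \eqref{condition:uv}, this shows that $\overline{f(A_{k_{m-1}})}$ lies in a compact subset of $V_{k_{m-1}}$ whose inradius depends only on the Markov data (via the finitely many choices of $k_{m-1}\in\{0,\dots,r\}$). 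Koebe's distortion theorem applied to the conformal inverse branch $(f^{\circ m})^{-1}\colon V_{k_{m-1}}\to U_w$, restricted to $\overline{f(A_{k_{m-1}})}$, therefore yields a uniform bound on the relative distortion of $f^{\circ m}$ on $I$. The lower bound on $\diam I'/\diam I$ then follows from this bounded distortion together with $\diam f^{\circ m}(A_w)\geq \min_k\diam A_k>0$. The same argument run for $g$ along the same admissible word $w$, using that $h$ conjugates the Markov structures, delivers identical bounds for $g^{\circ m}$ on $h(I)$.

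The main technical difficulty is the uniformity of the Koebe constant as $m\to\infty$: although $\diam A_w\to 0$ and the domain $U_w$ collapses, the target of the inverse branch is always one of the finitely many fixed neighborhoods $V_0,\dots,V_r$, and the compactly contained subset $\overline{f(A_{k_{m-1}})}$ sits inside it at a distance from $\partial V_{k_{m-1}}$ bounded below by a constant depending only on the Markov data. This uniform compact containment---guaranteed precisely by \eqref{condition:uv} together with the endpoint extension \eqref{condition:holomorphic}---is exactly what makes the elevator estimates independent of the level $m$, and thereby produces the universal constant $M$ demanded by the lemma.
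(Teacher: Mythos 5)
Your plan is to iterate until $f^{\circ m}(I)$ first straddles a point of $F_1$ and then control $f^{\circ m}$ by applying Koebe to the inverse branch $(f^{\circ m})^{-1}\colon V_{k_{m-1}}\to U_w$. The step that fails is the claim that $\overline{f(A_{k_{m-1}})}$ is compactly contained in $V_{k_{m-1}}$ with modulus bounded below by a constant depending only on the Markov data. This is simply false: the endpoints of $f(A_{k_{m-1}})$, namely $f(a_{k_{m-1}})$ and $f(a_{k_{m-1}+1})$, lie on $\partial V_{k_{m-1}}$. For the power map with $U_k$ the open sector subtended by $A_k$, the arc $f(A_k)$ has its endpoints on the two boundary rays of the sector $V_k=f(U_k)$; in the circle-reflection example the endpoints of $f(A_k)$ are the tangency points $a_k,a_{k+1}\in C_k=\partial V_k$. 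Enlarging $V_{k_{m-1}}$ by means of the holomorphic extension \eqref{condition:holomorphic} of the single factor $f_{k_{m-1}}$ does not help, because $(f^{\circ m})^{-1}$ is only guaranteed to be conformal on $V_{k_{m-1}}$: the earlier factors $f_{k_0},\dots,f_{k_{m-2}}$ have no inverse branches over the enlarged domain, so extending the image of the last factor does not extend the domain of conformality of the full inverse branch. Consequently, when $f^{\circ m}(I)$, though containing a point of $F_1$, reaches close to an endpoint of $f(A_{k_{m-1}})$ (equivalently, when $f^{\circ (m-1)}(I)$ is pushed towards an endpoint of $A_{k_{m-1}}$), there is no uniform Koebe constant; near a parabolic point of $F_1$ the relative sizes of the subarcs are governed by the Fatou coordinate and the distortion of $f^{\circ m}$ on $I$ is genuinely unbounded.

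This is exactly why the paper introduces the compactly contained subarcs $\widetilde{A_k}\subset\subset A_k$, whose endpoints are separated from the endpoints of $A_k$ by a fixed number $q$ of generations of the Markov partition, and why the proof splits into two cases according to whether $\phi(I)=f^{\circ(l-1)}(I)$ lands inside $\widetilde{A_{j_l}}$ or not. Your choice $m=l$ corresponds to the paper's Case~1, where $\phi(I)\subset\widetilde{A_{j_l}}$ and Koebe indeed gives bounded distortion, producing alternative~(i). But when $\phi(I)\not\subset\widetilde{A_{j_l}}$, iterating all the way to $m=l$ loses the distortion bound, and the paper instead stops at the strictly smaller $m$ determined by the level of the nearest endpoint of $A_{j_1\dots j_l}$ and shows that $f^{\circ(m-1)}(A_{j_1\dots j_l})$ then lands inside $\widetilde{A_{j_m}}$; at that level $f^{\circ m}(I)$ does not contain a point of $F_1$, so the outcome is alternative~(ii), not (i). Alternative~(ii) is therefore not a stylistic redundancy but the essential case, and your argument does not establish the lemma. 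The ``uniform compact containment'' on which your proposal rests is precisely the thing that requires the extra construction.
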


\begin{proof}
Recall that $A_k,\ k\in \{0,\dots,r\}$, are the complementary arcs of $F_1$. For each $k\in \{0,\dots,r\}$ we define $\widetilde{A_k}\subset\subset  A_k$ to be a closed arc with the property that each of its endpoints is separated from the corresponding endpoint of $A_k$ by one arc of $F_q$. Here, $q\in \N$ is chosen so that $\widetilde{A_k}$ contains all points of $F_2$ that lie in $\inter{A_k}$ and moreover each point of $F_2$ in $\widetilde{A_k}$ is separated from the endpoints of $\widetilde{A_k}$ by at least one complementary arc of $F_q$. The existence of such a $q$ follows from property \ref{corollary:diameters}. Indeed, as $q\to \infty$, the endpoints of the arc $\widetilde{A_k}$ converge to the endpoints of the arc $A_k$, since the diameters of the arcs $A_w$, $|w|=q$, tend to $0$. Hence, we can achieve that $\widetilde{A_k}$ contains in its interior all points of $F_2$ that are contained in $\inter{A_k}$. Moreover, any point of $z\in F_2\cap \widetilde{A_k}$ has positive distance from the endpoints of $\widetilde{A_k}$. Using again the fact that the diameters of $A_w$, $|w|=q$, tend to $0$ as $q\to\infty$, we may achieve that there are arbitrarily many consecutive arcs $A_w$, $|w|=q$, separating $z$ from the endpoints of $\widetilde{A_k}$.  

If $I$ contains a point of $F_1$, there is nothing to show, since we are already in alternative \ref{lemma:blowup:two_sides} of the lemma. Hence, we assume that $I$ does not intersect $F_1$. Then, by property \ref{corollary:diameters} there exists a largest integer $l\geq 1$ such that the arc $I$ is contained a nested sequence of arcs $A_{j_1}\supset A_{j_1j_2}\supset \cdots\supset A_{j_1\dots j_l}\eqqcolon A$. By the choice of $l$, the arc $I$ must contain a point $c\in F_{l+1}$. Consider the corresponding regions $U_{j_1}\supset U_{j_1j_2} \supset\cdots \supset U_{j_1\dots j_l} \eqqcolon U$ and the map $\phi=f^{\circ (l-1)}=f_{j_1\dots j_{l-1}}$, which maps $U$ conformally onto $U_{j_l}$. We denote by $\widetilde A$ the preimage of $\widetilde{A_{j_l}}$ under $\phi$ and observe that $\phi(c)\in F_2$. The basic dichotomy arises from whether $I\subset \widetilde A$ or not. 

\bigskip

\noindent
\textbf{Case 1.} $I\subset \widetilde A$. Then $\phi(I)\subset \widetilde{A_{j_l}}$. Since $\widetilde A\subset\subset A\subset U$, by applying Koebe's distortion theorem to the conformal map $\phi^{-1}|_{\widetilde{A_{j_l}}}$, we see that $\phi$ has uniformly bounded relative distortion on $I$. Moreover, since $\phi(I)\subset \widetilde {A_{j_l}}\subset\subset  U_{j_l}$, we can apply again Koebe's distortion theorem to $f|_{U_{j_l}}$ and conclude that $f\circ \phi=f^{\circ l}$ has bounded relative distortion on $I$. The point $a= f^{\circ l}(c)$ lies in $F_1$, so we have arrived to  alternative \ref{lemma:blowup:two_sides}, with $m=l$. Using the conjugation between $f$ and $g$, we can perform the same combinatorial analysis and show that $g^{\circ m}$ has bounded relative distortion on $h(I)$. 

\bigskip

\noindent
\textbf{Case 2.} $I$ is not contained in $\widetilde A$.  By the choice of $q$, there exists at least one complementary arc of $F_q$ that separates $\phi(c)\in F_2$ from the endpoints of $\widetilde{A_{j_l}}$. Since $\phi(I)$ is not contained in $\widetilde{A_{j_l}}$, we conclude that there exists a complementary arc of $F_q$ that is contained in $\phi(I)\cap \widetilde{A_{j_l}}$.

Suppose that $A=\arc{[z_1,z_2]}$ and without loss of generality $z_1$ has the smallest level among the two endpoints, equal to $m+1\in \N$. We note that $m+1\leq l$, since $A$ is a complementary arc of $F_l$. The arc $A_{j_l}=\phi(A)$ consists of a uniformly bounded number of complementary arcs of $F_{q}$. Hence, $A$ is the union of a uniformly bounded number of consecutive complementary arcs of $F_{q+l-1}$ that we denote by $I_1,\dots,I_p$. We number them so that $I_1$ has $z_1$ as an endpoint. Note that by the previous discussion, there exists $i_0\in \{2,\dots,p\}$ such that $I_{i_0}\subset I$. In particular, $I\cap \bigcup_{i=2}^p I_i\neq \emptyset$. We set $n=q+l-1-m \geq q$ and we claim that alternative \ref{lemma:blowup:one_side} holds with the defined $m,n$ and the arcs $I_i$. 

If $m=0$, then the level of $z_1$ is $1$, thus $a=z_1\in F_1$, and there is nothing to be proved. So we assume that $m\geq 1$.
Consider the arc $A_{j_1\dots j_{m}}$, which is a complementary arc of $F_m$ and its endpoints have level at most $m$. It follows that $A_{j_1\dots j_m}$ contains the arc $A$ and the point $z_1$ in its interior. Let $\psi= f^{\circ (m-1)}$ and note that $\psi$ maps conformally $U_{j_1 \dots j_m}$ onto $U_{j_m}$. Since $\psi(z_1)\in F_2$ and $\psi(A)$ is a complementary arc of $F_{l-m+1}$, the other endpoint $\psi(z_2)$ of $\psi(A)$ has level $k$, where $2\leq k\leq l-m+1$. Note also that $\psi(z_1)\in F_2\cap \inter{A_{j_m}}$, so $\psi(z_1)\in \widetilde {A_{j_m}}$, by the definition of $ \widetilde {A_{j_m}}$.

If $k\leq q$, then $\psi(z_2)$ cannot be contained in the interior of a complementary arc of $F_q$ that separates the endpoints of $\widetilde {A_{j_m}}$ from the endpoints of $A_{j_m}$. Therefore, $\psi(z_2)\in \widetilde {A_{j_m}}$ and $\psi(A)$ is contained in $\widetilde {A_{j_m}}$. 

If $k\geq q+1$, then $l-m+1\geq q+1$, so the complementary arc $\psi(A)$ of $F_{l-m+1}$ cannot contain in its interior a point of level $q$ or less. Thus, the arc $\psi(A)$ cannot intersect the interior of the complementary arcs of $F_q$ that separate the endpoints of $\widetilde {A_{j_m}}$ from the endpoints of $A_{j_m}$. In this case, we also have that $\psi(A)\subset \widetilde {A_{j_m}}$.

It follows  as in Case 1 that $f\circ \psi=f^{\circ m}$ has bounded relative distortion on $\bigcup_{i=1}^p I_i$, which is contained in $\psi^{-1}(\widetilde {A_{ j_m}})$. The same conclusion holds for $g$, using the same combinatorial analysis. Moreover, since $I_{i_0} \subset I$, we have
\begin{align*}
\diam{f^{\circ m}(I)} \geq \diam{f^{\circ m}(I_{i_0})}.
\end{align*}
The point $a=f^{\circ m}(z_1)$ lies in $F_1$, and the arc $f^{\circ m}(A)=\bigcup_{i=1}^p f^{\circ m}(I_i)$ is contained in $\arc{[a,z_0]}$ or in $\arc{[z_0,a]}$ for some $z_0\neq a$. Since $i_0\neq 1$, by  Corollary \ref{corollary:hyperbolic_parabolic} we conclude that 
\begin{align*}
\diam {f^{\circ m}(I_{i_0})}\simeq \diam{f^{\circ m}(I_2)}.
\end{align*}
This proves the desired inequality
\begin{align*}
\diam{f^{\circ m}(I)} \gtrsim \diam {f^{\circ m}(I_{2})}.
\end{align*}

\bigskip
Finally, we prove the last statement of the lemma. In both cases, by the relative distortion bounds of $f^{\circ m}$ we have
\begin{align*}
\frac{\diam{f^{\circ m}(I)}}{\diam{I}} \simeq \frac{\diam{\widetilde {A_{j_m}}}}{\diam{\widetilde A}}.
\end{align*}
Since $\widetilde A\subset U\subset U_{j_1}$, we have
\begin{align*}
\frac{\diam{f^{\circ m}(I)}}{\diam{I}} \gtrsim \frac{\diam{\widetilde {A_{j_m}}}}{\diam{U_{j_1}}}.
\end{align*}
Let $M_0= \max\{ \diam{\widetilde{A_{k_1}}}/\diam{U_{k_2}}:k_1,k_2 \in \{0,\dots,r\}\}$. Then we have
$$\diam{f^{\circ m}(I)} \gtrsim M_0 \diam{I},$$
as desired. 
\end{proof}

\begin{lemma}[One-sided distortion]\label{lemma:one_sided_distortion}
Let $I,J\subset \mathbb S^1$ be adjacent closed arcs each of which has length $t\in (0,1/2)$. Suppose that alternative \refeq{lemma:blowup:one_side} of Lemma \refeq{lemma:blowup_simultaneous} occurs for the arc $I\cup J$ and consider points $a\in F_1$ and $b=h(a)$ as in Lemma \refeq{lemma:blowup_simultaneous}.
\begin{itemize}
\item If $a^{\pm}$ is hyperbolic and $b^{\pm}$ is hyperbolic, 
then 
\begin{align*}
\frac{\diam{h(I)}}{\diam{h(J)}} \simeq 1.
\end{align*}
\item If $a^{\pm}$ is parabolic and $b^{\pm}$ is parabolic, then 
\begin{align*}
\frac{\diam{h(I)}}{\diam{h(J)}} \simeq 1.
\end{align*}
\item If $a^{\pm}$ is hyperbolic and $b^{\pm}$ is parabolic, then
\begin{align*}
\max \left\{\frac{\diam{h(J)}}{\diam{h(I)}}, \frac{\diam{h(I)}}{\diam{h(J)}}\right\}  \lesssim \log(1/t).
\end{align*}
\end{itemize}  
\end{lemma}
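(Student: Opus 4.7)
The plan is to use the conformal elevator of Lemma~\ref{lemma:blowup_simultaneous} to reduce the estimate to a setting near the preperiodic point $a$, and then invoke the one-sided estimates of Lemma~\ref{lemma:one_sided_estimates} separately on each of the images of $I$ and $J$.

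First I would apply Lemma~\ref{lemma:blowup_simultaneous} to $I\cup J$; by hypothesis we are in alternative~(ii), which yields $m\in\N\cup\{0\}$, $n\in\N$, a uniformly bounded number of consecutive complementary arcs $I_1',\dots,I_p'$ of $F_n$ (with $a$ as an endpoint of $I_1'$), and the inclusions $I'=f^{\circ m}(I)\subset\bigcup_i I_i'$, $J'=f^{\circ m}(J)\subset\bigcup_i I_i'$ together with $\diam(I'\cup J')\geq M^{-1}\diam(I_2')$. Both $f^{\circ m}$ and $g^{\circ m}$ have uniformly bounded relative distortion on $I\cup J$ and $h(I\cup J)$ respectively, and since $g^{\circ m}\circ h=h\circ f^{\circ m}$, we obtain $\diam(h(I))/\diam(h(J))\simeq\diam(h(I'))/\diam(h(J'))$, reducing the problem to bounding the latter ratio.

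Since $\diam(I)=\diam(J)=t$, bounded distortion of $f^{\circ m}$ gives $\diam(I')\simeq\diam(J')\simeq\diam(I'\cup J')\gtrsim\diam(I_2')$, so the hypothesis of Lemma~\ref{lemma:one_sided_estimates} is met by each of $I'$ and $J'$. Applying it to both produces parameters $(k_I,l_I)$ and $(k_J,l_J)$ together with explicit formulas for $\diam(I')$, $\diam(J')$, $\diam(h(I'))$, and $\diam(h(J'))$ in terms of $n$, $q$, $k$, $l$, $\lambda(a^\pm)$, $\lambda(b^\pm)$, $N(a^\pm)$, $N(b^\pm)$, depending on which of $a^\pm,b^\pm$ are hyperbolic versus parabolic. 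I would then handle the three cases in turn. In the hyperbolic/hyperbolic case the lemma gives $\diam(h(I'))\simeq\diam(h(J'))\simeq\lambda(b^\pm)^{-n/q}$ directly, so the ratio is $\simeq 1$. In the parabolic/parabolic case the lemma gives $\diam(h(I'))/\diam(I')\simeq(n+k_I)^{\alpha-\beta}$ (and similarly for $J'$), whence $\diam(h(I'))/\diam(h(J'))\simeq\bigl((n+k_I)/(n+k_J)\bigr)^{\alpha-\beta}$ using $\diam(I')\simeq\diam(J')$. The key analytic input is that adjacent arcs of comparable size inside $\bigcup_i I_i'$ satisfy $n+k_I\simeq n+k_J$, which I would derive from the Fatou linearization of the parabolic first-return germ $f_a^\pm$, forcing such arcs to occupy a bounded number of adjacent unit cells in the linearizing coordinate. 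In the hyperbolic/parabolic case the lemma gives $\diam(I')\simeq\lambda(a^\pm)^{-n/q}\simeq t$, hence $n\simeq\log(1/t)$, and uniformly $k_I,k_J\leq L$; it also provides $\diam(h(I'))\simeq n^{-\beta-1}(k_I+\min(l_I+1,n))$, with an analogous expression for $\diam(h(J'))$. Since $k+\min(l+1,n)$ always lies in $[1,n+L+1]$, the ratio is bounded by $n+O(1)\simeq\log(1/t)$, and the same bound holds symmetrically for $\diam(h(J'))/\diam(h(I'))$.

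The main obstacle is the parabolic/parabolic case: one must upgrade comparability of the diameters of $I'$ and $J'$ to comparability of the depth parameters $n+k_I$ and $n+k_J$, since otherwise the factor $((n+k_I)/(n+k_J))^{\alpha-\beta}$ could amplify into a polynomial in $n$ whenever $\alpha\neq\beta$. This upgrade is to be extracted from the local parabolic model via Fatou coordinates: at each physical scale near the parabolic fixed point there are only boundedly many \emph{positions} an arc of that scale can occupy, so two adjacent arcs of comparable size must sit within boundedly many linearizing fundamental domains of each other, which yields the desired bounded ratio of depths.
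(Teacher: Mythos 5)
Your overall skeleton — conformal elevator, reduce to $I',J'$, then feed each into Lemma~\ref{lemma:one_sided_estimates} and do a three-case analysis — is exactly the paper's. The hyperbolic/hyperbolic and hyperbolic/parabolic cases you handle the same way the paper does. (One small overstatement: in the H/P case you cannot claim $n\simeq\log(1/t)$; the conformal elevator gives only $\diam I'\gtrsim t$, so you only know $\lambda(a^+)^{-n/q}\gtrsim t$, hence $n\lesssim\log(1/t)$. Fortunately that one-sided bound is all that the conclusion needs.)

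The real divergence is the parabolic/parabolic case. You correctly identify the obstacle — one must upgrade $\diam I'\simeq\diam J'$ to comparability of the depth parameters $n+k_I$, $n+k_J$ — and you propose to get this from the Fatou linearization. The paper instead extracts it combinatorially from the adjacency: normalizing so that $I'$ lies between $J'$ and $a$, the fact that $(I'\cup J')\cap\bigcup_{i\geq 2}I_i'\neq\emptyset$ forces $k_J=0$, and the fact that $J'$ begins exactly where $I'$ ends forces $l_J=k_I$. Plugging these identities into the two diameter formulas from Lemma~\ref{lemma:one_sided_estimates} and using $\diam I'\simeq\diam J'$ then gives $k_I\lesssim n$ by a short algebraic manipulation, whence $n+k_I\simeq n=n+k_J$ and the ratio $\diam h(I')/\diam h(J')\simeq n^{\alpha-\beta}/n^{\alpha-\beta}\simeq 1$. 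Your Fatou-coordinate route is plausible — the depth $n+k$ is essentially governed by the distance from $a$ to the far endpoint of the arc, and one can check directly that this distance is $\gtrsim n^{-\alpha}$ for both $I'$ and $J'$ — but as written it remains a claim: you say the estimate ``is to be extracted,'' and the slogan ``only boundedly many positions at each scale'' does not by itself pin down which endpoint of the arc controls the depth parameter (it is the far one, not the near one, because $k$ records when the arc first escapes the piece containing $a$). So the proposal is correct in spirit and identifies the right difficulty, but the one nontrivial step is left as an unproved lemma whose precise formulation and proof you would still need to supply; the paper's combinatorial identities $k_J=0$, $k_I=l_J$ render this step a few lines of algebra and are the cleaner route.
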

\begin{proof}
Let $m,n,p$ be as in Lemma \ref{lemma:blowup_simultaneous} \ref{lemma:blowup:one_side} and set $I'=f^{\circ m}(I)$ and $J'=f^{\circ m}(J)$. Moreover, let $I_1,\dots,I_p$ be complementary arcs of $F_{n+m}$, $n\geq 1$, such that $I\cup J\subset \bigcup_{i=1}^p I_i$, $(I\cup J)\cap \bigcup_{i=2}^p I_i\neq \emptyset$, and set $I_i'=f^{\circ m}(I_i)$ for $i\in \{1,\dots,p\}$. Since $f^{\circ m}$ has bounded distortion on $I\cup J$, we have $\diam{I'}\simeq \diam{J'}$. Moreover, $\diam{I'}\gtrsim \diam {I_2'}$ and $\diam{J'}\gtrsim \diam {I_2'}$. We will work in the proof with $a^+$ and $b^+$, so $\bigcup_{i=1}^p I_i\subset \arc{[a,z_0]}$. We have $I'\cap \bigcup_{i=2}^p I_i' \neq \emptyset$ or $J'\cap \bigcup_{i=2}^p I_i' \neq \emptyset$. If $J'\cap \bigcup_{i=2}^p I_i' \neq \emptyset$ and $I'$ is not contained in the arc between $J'$ and the point $a$, then $I'$ also has this intersection property. Hence, by reversing the roles of $I'$ and $J'$ if necessary, we assume that $I'$ is closer to the point $a$ than $J'$, i.e., $I'$ is contained in the arc between $J'$ and $a$, and that $J'\cap \bigcup_{i=2}^p I_i' \neq \emptyset$. We note that since  $g^{\circ m}$ has bounded relative distortion on $h(I)\cup h(J)$ it suffices to derive the conclusions of the lemma for the arcs $h(I')$ and $h(J')$.

First, suppose that $a^+$ and $b^+$ are hyperbolic. Since $\diam{I'}\simeq \diam{J'}\gtrsim \diam{I_2'}$, we can apply Lemma \ref{lemma:one_sided_estimates} to each of the arcs $I',J'$. We obtain 
$$\diam{I'}\simeq \diam{J'}\simeq  \lambda(a^+)^{-n/q},$$
where $q$ is the orientation-preserving period of $a$. Since $b^+$ is hyperbolic, it follows from the same lemma that 
\begin{align*}
\diam{h(I')}\simeq \diam{h(J')} \simeq \lambda(b^+)^{-n/q}.
\end{align*}

Next, suppose that $a^+$ and $b^+$ are parabolic. We will apply again Lemma \ref{lemma:one_sided_estimates}. We let $k_1$ be the smallest integer such that a complementary arc of $F_{n+k_1}$ not having $a$ as an endpoint intersects $I'$. Similarly, we define $k_2$, corresponding to $J'$. Since $J'\cap \bigcup_{i=2}^p I_i'\neq \emptyset$, we conclude that $k_2=0$.  Moreover, let $l_1$ be the smallest integer such that there exists a complementary arc of $F_{n+k_1+l_1}$ having $a$ as an endpoint and not intersecting $I'$, and $l_2$ be the corresponding integer for $J'$. Since $I'$ and $J'$ are adjacent and $I'$ is between $J'$ and $a$, it follows that $\vert k_1-l_2\vert\leq 1$.  By Lemma \ref{lemma:one_sided_estimates} we have, since $\vert k_1-l_2\vert\leq 1$ and $k_2=0$,
\begin{align*}
n^{-\alpha-1} \min\{k_1+1,n\} \simeq \diam{J'} \simeq \diam{I'} \simeq (n+k_1)^{-\alpha-1}(\min\{l_1+1,n+k_1\}).
\end{align*}
If $k_1+1\geq n$, then we obtain
\begin{align*}
n^{-\alpha} \simeq  (k_1+1)^{-\alpha-1} (\min\{l_1+1,n+k_1\}) \lesssim (k_1+1)^{-\alpha-1}(k_1+n)\lesssim (k_1+1)^{-\alpha}.
\end{align*}
It follows that $n\gtrsim k_1+1$. So, in any case $0\leq k_1\lesssim n$. Therefore, from Lemma \ref{lemma:one_sided_estimates}  we conclude that
\begin{align*}
\frac{\diam{h(I')}}{\diam{I'}} \simeq n^{\alpha-\beta} \simeq \frac{\diam{h(J')}}{\diam{J'}},
\end{align*}
which implies that $\diam {h(I')}\simeq\diam {h(J')}$. 

Finally, suppose that $a^+$ is hyperbolic, but $b^+$ is parabolic. First, note that $\diam{I'} \simeq \diam{J'}\simeq  \lambda(a^+)^{-n/q}$ by Lemma \ref{lemma:one_sided_estimates}.  According to the very last inequality in Lemma \ref{lemma:blowup_simultaneous} \ref{lemma:blowup:one_side}, we have $\diam {I'}\gtrsim \diam{I}=t$. Hence, $\lambda(a^+)^{-n/q}\gtrsim t$. This implies that $\log(1/t) \gtrsim n$, since $t<1/2$ by assumption. Since $b^+$ is parabolic, by Lemma \ref{lemma:one_sided_estimates} we have, as in the previous paragraph, that there exist $k_1,k_2,l_1,l_2\in \N\cup \{0,\infty\}$ with $k_2=0$ and  $\vert k_1-l_2\vert\leq 1$ such that
\begin{align*}
\diam{h(I')}&\simeq (n+k_1)^{-\beta-1} (\min\{l_1+1,n+k_1\}) \quad \textrm{and}\\
\diam{h(J')}&\simeq n^{-\beta-1} \min\{k_1+1,n\}.
\end{align*}
Moreover, $k_1$ is uniformly bounded by Lemma \ref{lemma:one_sided_estimates}, so
\begin{align*}
\diam{h(I')}\simeq n^{-\beta-1} \min\{l_1+1,n\} \quad \textrm{and} \quad \diam{h(J')}\simeq n^{-\beta-1} \min \{k_1+1,n\}.
\end{align*}
It follows that 
\begin{align*}
\max \left\{ \frac{\diam{h(J')}}{\diam{h(I')}}, \frac{\diam{h(I')}}{\diam{h(J')}} \right\} \lesssim n\lesssim \log(1/t).
\end{align*}
The proof is complete.
\end{proof}

\begin{lemma}[Two-sided distortion]\label{lemma:two_sided_distortion}
Let $I,J\subset \mathbb S^1$ be adjacent arcs each of which has length $t\in (0,1/2)$. Suppose that alternative \textup{(i)} of Lemma \refeq{lemma:blowup_simultaneous} occurs for the arc $I\cup J$ and consider points $a\in F_1$ and $b=h(a)$ as in Lemma \refeq{lemma:blowup_simultaneous}.  Then we have 
\begin{align*}
\frac{\diam{h(I)}}{\diam{h(J)}} \simeq 1.
\end{align*}
under condition \ref{HH} for the points $a,b$, and 
\begin{align*}
\max \left\{\frac{\diam{h(J)}}{\diam{h(I)}}, \frac{\diam{h(I)}}{\diam{h(J)}}\right\}  \lesssim \log(1/t).
\end{align*}
under condition \ref{HP} for the points $a,b$.
\end{lemma}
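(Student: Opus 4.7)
Let $I' = f^{\circ m}(I)$ and $J' = f^{\circ m}(J)$. The bounded relative distortion of $f^{\circ m}$ on $I \cup J$ and of $g^{\circ m}$ on $h(I) \cup h(J)$ guaranteed by Lemma \ref{lemma:blowup_simultaneous} gives $\diam(I') \simeq \diam(J')$ and
$$
\frac{\diam(h(I))}{\diam(h(J))} \simeq \frac{\diam(h(I'))}{\diam(h(J'))},
$$
so it suffices to estimate the right-hand ratio. Since alternative (i) of Lemma \ref{lemma:blowup_simultaneous} holds, $I' \cup J'$ is a closed arc containing the point $a$. The plan is to split $I'$ and $J'$ at $a$ into one-sided pieces $I' = I'_- \cup I'_+$ and $J' = J'_- \cup J'_+$, and, after swapping $I \leftrightarrow J$ if necessary, assume $J'_- = \emptyset$, so that $J'$ lies entirely on one side of $a$ while $I'$ either straddles $a$ or has $a$ as its right endpoint.

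\textbf{One-sided reduction.} Each nonempty piece $K \in \{I'_-, I'_+, J'_+\}$ has $a$ as an endpoint, and I will apply Lemma \ref{lemma:one_sided_estimates} to it. Let $n_K$ be the largest integer such that $K$ is contained in the unique complementary arc of $F_{n_K}$ on the appropriate side of $a$ having $a$ as an endpoint. Then $K$ fails to be contained in the analogous arc at level $n_K + q$, so by Lemma \ref{lemma:pre_expansivity} the arc $K$ meets at least two of the $p \leq (r+1)^q$ complementary arcs of $F_{n_K + q}$ nested inside this parent. Taking these to play the role of $I_1, \ldots, I_p$ in Lemma \ref{lemma:one_sided_estimates} (with $a$ an endpoint of $I_1$), Corollary \ref{corollary:hyperbolic_parabolic} yields $\diam(K) \geq L^{-1}\diam(I_2)$, verifying the hypotheses of Lemma \ref{lemma:one_sided_estimates} for $K$.

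\textbf{Conclusion in each case.} Under condition \ref{HH}, the hyperbolic-to-hyperbolic form of Lemma \ref{lemma:one_sided_estimates} gives $\diam(K) \simeq \lambda(a)^{-n_K/q}$ and $\diam(h(K)) \simeq \lambda(b)^{-n_K/q}$ for each piece. Decomposing $\diam(I') = \diam(I'_-) + \diam(I'_+)$ and $\diam(J') = \diam(J'_+)$, the comparability $\diam(I') \simeq \diam(J')$ forces the minimum of the $n_K$'s on each side of $a$ to agree up to a bounded additive constant, and the analogous sum decomposition of the image arcs then yields $\diam(h(I')) \simeq \diam(h(J'))$. Under condition \ref{HP} (hyperbolic $a$, parabolic $b$), Lemma \ref{lemma:one_sided_estimates} still gives $\diam(K) \simeq \lambda(a)^{-n_K/q}$, together with $\diam(h(K)) \simeq (n_K + k_K)^{-\beta-1}(k_K + \min(l_K + 1, n_K))$ with $k_K$ uniformly bounded. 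Since $\diam(J'_+) \gtrsim t$, the largest piece satisfies $n_{J'_+} \lesssim \log(1/t)$, and the one-sided estimates show that image diameters for pieces with comparable $n_K$ differ by at most a factor of $n_K$, yielding the claimed $\log(1/t)$ bound.

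The main obstacle will be the potential asymmetry in the location of $a$ relative to $I'$ and $J'$: the three one-sided pieces can have rather different sizes, hence different parameters $(n_K, k_K, l_K)$, and I must verify carefully that the dominant contributions to $\diam(h(I'))$ and $\diam(h(J'))$ are governed by a common scale so that only a single $\log(1/t)$ factor enters in the HP case.
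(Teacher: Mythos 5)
Your overall structure mirrors the paper's: blow up $I\cup J$ by $f^{\circ m}$, split $I'$ at $a$ into one-sided pieces $K^{\pm}$ while $J'$ stays on one side, apply Lemma~\ref{lemma:one_sided_estimates} to each piece, and recombine. However, there are two genuine gaps.

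First, under alternative \ref{HH} you only treat the hyperbolic-to-hyperbolic subcase, but \ref{HH} also permits $a^+$ and/or $a^-$ (and correspondingly $b^+$ and/or $b^-$) to be parabolic, with the constraint $\mu^{-1}N(a^{\pm})=N(b^{\pm})$. In the parabolic subcases Lemma~\ref{lemma:one_sided_estimates} gives polynomial rather than exponential estimates in $n$, and the one-sided pieces on the two sides of $a$ may be of different types (say $a^+$ hyperbolic but $a^-$ parabolic). The paper resolves this by proving, uniformly for every one-sided piece $K$, the single power-law relation $\diam h(K)\simeq(\diam K)^{\mu}$ with the \emph{same} exponent $\mu$; this is what allows the sum $\diam I'=\diam K^++\diam K^-$ to be compared with $\diam J'$ after applying $h$, regardless of the parabolic/hyperbolic type on each side. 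In the parabolic subcase, establishing this power law requires an extra step (showing $l+1\gtrsim n$ using \eqref{inequality:parabolic_1}) which is absent from your argument. Your alternative idea that $\diam I'\simeq\diam J'$ "forces the $n_K$'s to agree up to a bounded additive constant" does not directly work because $\lambda(a^+)$ and $\lambda(a^-)$ may differ.

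Second, your treatment of \ref{HP} is too sketchy to be checked, and you correctly flag the asymmetry of the pieces as an unresolved obstacle. The paper handles this by writing out the Lemma~\ref{lemma:one_sided_estimates} estimates for $J'$, $K^+$, $K^-$ (with integers $n$, $n_2$, $n_1$), noting WLOG $\diam K^+\leq\diam K^-$ so $\diam K^-\simeq\diam J'$, deducing $n_2\gtrsim n_1$ and $n_1\simeq n$, and then isolating the dominant image piece $h(K^-)\simeq n^{-\beta}$. Only after this comparison does the ratio collapse to $n^{-1}\min\{l+1,n\}$, from which $\max\{\cdot,\cdot\}\lesssim n\lesssim\log(1/t)$. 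The assertion that "image diameters for pieces with comparable $n_K$ differ by at most a factor of $n_K$" is not a consequence of Lemma~\ref{lemma:one_sided_estimates} as stated, and would need to be replaced by this explicit comparison of $n,n_1,n_2$.
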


\begin{proof}
We let $I'=f^{\circ m}(I)$ and $J'=f^{\circ m}(J)$, where $m$ is as in Lemma \refeq{lemma:blowup_simultaneous}. We have $\diam{I'}\simeq \diam{J'}$ by the relative distortion bounds of $f^{\circ m}$. We note that since  $g^{\circ m}$ has bounded relative distortion on $h(I)\cup h(J)$ it suffices to derive the conclusions of the lemma for the arcs $h(I')$ and $h(J')$.

Without loss of generality, suppose that $J'\subset \arc{[a,z_0]}$ and $I'= K^+\cup K^-$, where $K^+ \subset \arc{[a,z_0]}$ and $K^- \subset \arc{[z_0,a]}$. We may assume that $I',J'$ are sufficiently small arcs, so that $K^+\cup J'$ is contained in a complementary arc of $F_1$ and $K^-$ is contained in a complementary arc of $F_1$. Indeed, if $I',J'$ have diameters comparable to $1$, then by continuity, $h(I'),h(J')$ also have diameters comparable to $1$ and, thus, to each other.

We let $n-1\in \N$ be the largest integer such that there exists a complementary arc $J_1''$ of $F_{n-1}$ having $a$ as an endpoint and containing $J'$. Then, there exists $p\leq r+1$ and consecutive complementary arcs $J_1',J_2',\dots,J_p'\subset \arc{[a,z_0]}$ of $F_n$, which are the children of $J_1''$, such that $J'\subset \bigcup_{i=1}^p J_i'$ and $J'\cap \bigcup_{i=2}^p J_i'\neq \emptyset$. We note that $\diam{I'\cup J'} \geq \diam{J_1'}$, so $\diam{J'}\gtrsim \diam{J_1'}$.  In view of Corollary \ref{corollary:hyperbolic_parabolic}, we also have $\diam{J'}\gtrsim \diam{J_2'}$. Similarly, we can also find consecutive complementary arcs $K_1^-,\dots,K_{p_1}^-$ of $F_{n_1}$ such that $K^-\subset \bigcup_{i=1}^{p_1} K_i^-$, $K^-\cap \bigcup_{i=2}^{p_1}K_i^-\neq \emptyset$, and $\diam{K^-}\gtrsim \diam{K_2^-}$ and consecutive complementary arcs $K_1^+,\dots,K_{p_2}^+$ of $F_{n_2}$ such that $K^+\subset \bigcup_{i=1}^{p_2} K_i^+$, $K^+\cap \bigcup_{i=2}^{p_2}K_i^+\neq \emptyset$. and $\diam{K^+}\gtrsim \diam{K_2^+}$ for some $n_1,n_2\in \N$ and $p_1,p_2\leq r+1$.  We are exactly in the setting of  Lemma \ref{lemma:one_sided_estimates}. 

We first suppose that  the alternative \ref{HH} holds for the points $a^{\pm},b^{\pm}$. That is, there exists $\mu>0$ such that if $a^{\pm}$ is parabolic, then $b^{\pm}$ is necessarily parabolic with $\mu^{-1} N(a^{\pm})=N(b^{\pm})$ and if $a^{\pm}$ is hyperbolic, then $b^{\pm}$ is hyperbolic with $\lambda(a^{\pm})^{\mu}=\lambda(b^{\pm})$. Our goal is to prove that in all of these cases we have
\begin{align*}
\diam h(J') &\simeq (\diam J')^{\mu} \quad \textrm{and}\\
\diam h(K^{\pm}) &\simeq (\diam K^{\pm})^{\mu}.
\end{align*}
These imply that
\begin{align*}
\diam h(I')&\simeq \diam h(K^+)+\diam h(K^-) \simeq (\diam K^+)^\mu +(\diam K^-)^\mu \\
&\simeq (\diam I')^{\mu} \simeq (\diam J')^{\mu} \simeq \diam h(J'),
\end{align*}
which is the desired conclusion.

Suppose that $a^+$ and $b^+$ are hyperbolic. Then by Lemma \ref{lemma:one_sided_estimates} we have 
\begin{align*}
\diam{J'} \simeq \lambda(a^+)^{-n/q} \quad &\textrm{and} \quad \diam{h(J')} \simeq \lambda(b^+)^{-n/q}\simeq \lambda(a^+)^{-\mu n/q}, \quad \textrm{and}\\
\diam{K^+} \simeq \lambda(a^+)^{-n_2/q}\quad &\textrm{and} \quad  \diam{h(K^+)} \simeq \lambda(b^+)^{-n_2/q} \simeq \lambda(a^+)^{-\mu n_2/q}. 
\end{align*}
It follows that 
\begin{align*}
\diam h(J') &\simeq (\diam J')^{\mu} \quad \textrm{and}\\
\diam h(K^{+}) &\simeq (\diam K^{+})^{\mu}.
\end{align*}
If $a^-$ and $b^-$ are hyperbolic, then with the same argument we have
\begin{align*}
\diam h(K^{-}) \simeq (\diam K^{-})^{\mu}.
\end{align*}

Next, assume that $a^+$ and $b^+$ are parabolic. Consider $k,l\in \N \cup \{0,\infty\}$ as in Lemma \ref{lemma:one_sided_estimates} corresponding to $J'$ and and $k_2,l_2$ corresponding to $K^+$. Since $J'\cap \bigcup_{i=2}^p J_i' \neq \emptyset$, we have $k=0$. Similarly, $k_2=0$. Moreover, since $a$ is an endpoint of $K^+$, we have $l_2=\infty$. Therefore, if we set $\alpha^+=1/N(a^+)$ and $\beta^+=1/N(b^+)$, we have
\begin{align*}
\diam{J'} \simeq n^{-\alpha^+-1}\min\{l+1,n\} \quad &\textrm{and}\quad \diam{h(J')} \simeq n^{-\beta^+-1}\min \{l+1,n\}, \quad \textrm{and}\\
\diam{K^+} \simeq n_2^{-\alpha^+} \quad &\textrm{and}\quad \diam{h(K^+)} \simeq n_2^{-\beta^+}.
\end{align*}
We have  $\diam{J'} \simeq \diam(I')\gtrsim \diam(K^+)$. Therefore,
\begin{align}\label{lemma:two_sided_distortion:parabolic11}
n_2^{-\alpha^+}\lesssim  n^{-\alpha^+-1}\min\{l+1,n\}
\end{align}
Note that by the relative position of $K^+$ and $J'$, and the definition of $l$, there exists a complementary arc of $F_{n+l}$ having $a$ as an endpoint and being contained in $K^+$. This, combined with \eqref{inequality:parabolic_1} in Lemma \ref{lemma:adjacentparabolic}, gives $(n+l)^{-\alpha^+} \lesssim n_2^{-\alpha^+}$. This inequality and \eqref{lemma:two_sided_distortion:parabolic11} imply that $l+1 \gtrsim n$. Therefore,
\begin{align*}
\diam J'\simeq n^{-\alpha^+} \quad \textrm{and}\quad \diam h(J') \simeq n^{-\beta^+} \simeq n^{-\mu \alpha^+}.
\end{align*}
It follows that 
\begin{align*}
\diam h(J') &\simeq (\diam J')^{\mu} \quad \textrm{and}\\
\diam h(K^{+}) &\simeq (\diam K^{+})^{\mu}.
\end{align*}
If $a^-$ and $b^-$ are parabolic, with the same argument we have
\begin{align*}
\diam h(K^{-}) \simeq (\diam K^{-})^{\mu}.
\end{align*}

Finally, we treat the alternative \ref{HP}. Suppose that $a^+$ and $a^-$ are hyperbolic with the same multiplier
$\lambda(a)=\lambda(a^+)=\lambda(a^-)$ and $b$ is parabolic. We can apply Lemma \ref{lemma:one_sided_estimates} and obtain
\begin{align*}
\diam{J'}\simeq \lambda(a^+)^{-n/q} \quad &\textrm{and}\quad  \diam{h(J')} \simeq n^{-\beta-1} \min \{l+1,n\}, \\
\diam{K^+} \simeq \lambda(a^+)^{-n_2/q} \quad &\textrm{and}\quad \diam{h(K^+)} \simeq n_2^{-\beta}, \quad \textrm{and}\\
\diam{K^-} \simeq \lambda(a^-)^{-n_1/q} \quad &\textrm{and}\quad \diam{h(K^-)} \simeq n_1^{-\beta}, 
\end{align*}
where $\beta=1/N(b)$. Without loss of generality we suppose that $\diam{K^+}\leq \diam{K^-}$, which implies that $\diam{K^-}\simeq \diam{J'}$. From these we deduce that $n_2\gtrsim n_1$ and $n_1\simeq n$. The first condition implies that 
\begin{align*}
\diam{h(I')}\simeq \diam{h(K^-)}. 
\end{align*}
and the latter condition implies that 
\begin{align*}
\diam{h(K^-)} \simeq n^{-\beta}
\end{align*}
Altogether, we have
\begin{align*}
\frac{\diam{h(J')}}{\diam{h(I')}} \simeq n^{-1} \min \{l+1,n\}.
\end{align*}
This implies that 
\begin{align*}
\max \left\{\frac{\diam{h(J')}}{\diam{h(I')}}, \frac{\diam{h(I')}}{\diam{h(J')}}\right\}  \lesssim n.
\end{align*}
Finally, the inequality $\diam{J'}\gtrsim \diam{J}=t$ from Lemma \ref{lemma:blowup_simultaneous} implies that  $n\lesssim \log(1/t)$.
\end{proof}

\begin{proof}[Proof of Theorem \ref{theorem:extension_generalization}]
Let $I,J\subset \mathbb S^1$ be adjacent closed arcs, each of which has length $t\in (0,1/2)$. Under the condition \ref{HH}, we obtain 
\begin{align*}
\diam{h(I)}\simeq \diam{h(J)}
\end{align*}
by Lemma \ref{lemma:one_sided_distortion} and Lemma \ref{lemma:two_sided_distortion}. Therefore, 
\begin{align*}
\rho_h(t) \simeq 1.
\end{align*}
This implies that the map $h\colon \mathbb S^1\to \mathbb S^1$ is quasisymmetric and has a quasiconformal extension in $\D$.

If condition \ref{HP} is also allowed for some periodic points $a\in F_1$, then by the same distortion lemmas we obtain instead
\begin{align*}
\rho_h(t) \lesssim \log(1/t).
\end{align*}
By Theorem \ref{theorem:extension_david} we have that $h$ has a David extension in $\D$.
\end{proof}

\bigskip

\section{David welding}\label{welding_sec}

A homeomorphism $h\colon \mathbb S^1 \to \mathbb S^1$ is a \textit{welding homeomorphism} if there exists a Jordan curve $J$ and conformal homeomorphisms $H_1$ from $\D$ onto the interior of $J$ and $H_2$ from $\widehat{\C}\setminus  \br \D$ onto the exterior of $J$ so that $h=\widetilde{H_2}^{-1}\circ \widetilde{H_1}$, where $\widetilde{H_1}$ and $\widetilde{H_2}$ are the homeomorphic extensions of $H_1$ and $H_2$ to the closures of $\D$ and $\widehat{\C}\setminus \br \D$, respectively. The Jordan curve $J$ is called a \textit{welding curve} that corresponds to $h$. We note that in general the curve $J$ is not unique up to M\"obius transformations. However, if there is a welding curve $J$ corresponding to the welding homeomorphism $h$ that is conformally removable, then $J$ is unique up to M\"obius transformations.

The goal of this section is to prove the existence of a new class of welding homeomorphisms.

\begin{theorem}\label{theorem:welding}
Let $f,g\colon \mathbb{S}^1\to \mathbb{S}^1$ be expansive covering maps of the same degree and the same orientation, and $\mathcal P(f;\{a_0,\dots,a_r\})$, $\mathcal P(g;\{b_0,\dots,b_s\})$ be Markov partitions satisfying conditions \eqref{condition:uv} and \eqref{condition:holomorphic}. Assume that each periodic point $a\in \{a_0,\dots,a_r\}$ of $f$ and each periodic point $b\in \{b_0,\dots,b_s\}$ of $g$ is either symmetrically hyperbolic or symmetrically parabolic. Then any conjugating homeomorphism $h\colon \mathbb S^1\to \mathbb S^1$ between $f$ and $g$ is a welding homeomorphism and the corresponding welding curve is unique up to M\"obius transformations.  
\end{theorem}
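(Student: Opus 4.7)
The plan is to realize $h$ as the welding of a Jordan curve by factoring $h$ through a reference power map and then gluing two David extensions via the David Integrability Theorem. Let $d\geq 2$ denote the common degree of $f,g$, and let $P$ equal $z\mapsto z^d$ or $z\mapsto\br z^d$ according to the common orientation. By property~\ref{theorem:expansive_conjugate}, $P$ is conjugate on $\mathbb S^1$ to both $f$ and $g$, with ambiguity only in a root-of-unity rotation commuting with $P$. Fix any conjugacy $\alpha\colon P\to f$ and set $\beta:=h\circ\alpha$, which is a conjugacy $P\to g$ satisfying $h=\beta\circ\alpha^{-1}$. The common image $\{c_0,\dots,c_r\}:=\alpha^{-1}(\{a_i\})=\beta^{-1}(\{b_i\})$ is a Markov partition for $P$ satisfying \eqref{condition:uv} and \eqref{condition:holomorphic} by Example~\ref{example:power_map}, and every periodic point of $P$ is symmetrically hyperbolic by Example~\ref{example:reflection_multipliers}. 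Consequently, for each $i$ the matching $c_i\to a_i$ falls under alternative~\ref{HH} if $a_i$ is symmetrically hyperbolic and under alternative~\ref{HP} if $a_i$ is symmetrically parabolic; the same holds for $c_i\to b_i$. Theorem~\ref{theorem:extension_generalization} therefore produces David extensions $\widetilde\alpha,\widetilde\beta\colon\br\D\to\br\D$ of $\alpha,\beta$.

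Next, transport $\widetilde\beta$ into the exterior via the reflection $\sigma(z):=1/\br z$: the map $\widetilde\beta^*:=\sigma\circ\widetilde\beta\circ\sigma$ is an orientation-preserving David homeomorphism of $\br{\D^*}$ with boundary values $\beta$ on $\mathbb S^1$, since the David property is preserved under conjugation by the anti-conformal diffeomorphism $\sigma$. Define $\mu$ on $\widehat{\C}$ by $\mu|_\D=\mu_{\widetilde\alpha}$ and $\mu|_{\D^*}=\mu_{\widetilde\beta^*}$; because the exponential integrability bound defining the David condition is additive across disjoint open sets, $\mu$ is a David coefficient on $\widehat{\C}$. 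By Theorem~\ref{theorem:integrability_david}, the Beltrami equation for $\mu$ is solved by a David homeomorphism $\Phi\colon\widehat{\C}\to\widehat{\C}$, which we normalize so that $\Phi(\infty)=\infty$. Applying the Stoilow factorization Theorem~\ref{theorem:stoilow} to the pair of David embeddings $\Phi|_\D,\widetilde\alpha\colon\D\to\widehat{\C}$, which share the coefficient $\mu_{\widetilde\alpha}$, yields that $H_1:=\Phi\circ\widetilde\alpha^{-1}\colon\D\to\Phi(\D)$ is conformal; analogously $H_2:=\Phi\circ(\widetilde\beta^*)^{-1}\colon\D^*\to\Phi(\D^*)$ is conformal. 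Setting $J:=\Phi(\mathbb S^1)$, so that $\Int(J)=\Phi(\D)$ and $\Ext(J)=\Phi(\D^*)$, the boundary extensions satisfy $\widetilde H_1=\Phi\circ\alpha^{-1}$ and $\widetilde H_2=\Phi\circ\beta^{-1}$ on $\mathbb S^1$, so that $\widetilde H_2^{-1}\circ\widetilde H_1=\beta\circ\alpha^{-1}=h$ exhibits $J$ as a welding curve for $h$.

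For uniqueness, $J=\Phi(\partial\D)$ is the image of the boundary of the John domain $\D$ under the global David homeomorphism $\Phi$, hence conformally removable by Theorem~\ref{theorem:john_removable}. If $(J',H_1',H_2')$ is any other welding datum for $h$, the piecewise map equal to $H_1'\circ H_1^{-1}$ on $\Int(J)$ and to $H_2'\circ H_2^{-1}$ on $\Ext(J)$ glues continuously across $J$ (the two boundary extensions agree because they represent the same $h$) and is conformal off the removable set $J$; removability upgrades it to a M\"obius transformation, so $J'$ is the M\"obius image of $J$. The conceptual crux of the argument is that $h$ itself may fail the hypotheses of Theorem~\ref{theorem:extension_generalization}, in particular when it sends a symmetrically parabolic periodic point of $f$ to a symmetrically hyperbolic one of $g$; the factorization through $P$ circumvents this obstruction because the symmetric hyperbolicity of every periodic point of $P$ routes each matching into one of the permitted alternatives~\ref{HH} or~\ref{HP}. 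The main technical checks are then the additivity of the David integrability condition across $\D$ and $\D^*$ and the Stoilow identification of $\Phi$ with both $\widetilde\alpha$ and $\widetilde\beta^*$ on the respective hemispheres.
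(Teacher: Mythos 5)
Your proposal is correct and follows essentially the same route as the paper's proof, which is itself an unwinding of the argument in Theorem~\ref{theorem:mating_general}: factor $h$ through the reference power map $P$ as $h=\beta\circ\alpha^{-1}$ (the paper's $h=h_2\circ h_1^{-1}$), obtain David disk extensions of $\alpha$ and $\beta$ via the power-map extension theorem (paper uses Theorem~\ref{theorem:extension_special_case}, you invoke Theorem~\ref{theorem:extension_generalization} directly — these coincide here), glue the two Beltrami coefficients across $\mathbb S^1$, integrate by Theorem~\ref{theorem:integrability_david}, apply Stoilow factorization (Theorem~\ref{theorem:stoilow}) to produce the two conformal pieces, and deduce uniqueness from conformal removability of $\Phi(\mathbb S^1)$ via Theorem~\ref{theorem:john_removable}. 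Your closing remark correctly identifies why the detour through $P$ is necessary — $h$ itself need not satisfy the alternatives \ref{HH}/\ref{HP} when a parabolic point of $f$ is sent to a hyperbolic one of $g$ — which the paper leaves implicit.
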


Theorem~\ref{theorem:welding} will be derived from the proof of the following mateability result for piecewise analytic circle coverings of $\mathbb{S}^1$. This mateability theorem is a direct implication of Theorem \ref{theorem:extension_generalization}.

\begin{theorem}[Mating piecewise analytic circle maps]\label{theorem:mating_general}
Let $f,g\colon \mathbb{S}^1\to \mathbb{S}^1$ be expansive covering maps of the same degree and the same orientation, and let $\mathcal P(f;\{a_0,\dots,a_r\})$, $\mathcal P(g;\{b_0,\dots,b_s\})$ be Markov partitions satisfying conditions \eqref{condition:uv} and \eqref{condition:holomorphic} where each periodic point $a\in \{a_0,\dots,a_r\}$ of $f$ and each periodic point $b\in \{b_0,\dots,b_s\}$ of $g$ is either symmetrically hyperbolic or symmetrically parabolic. Further, let $h:\mathbb{S}^1\to\mathbb{S}^1$ be a topological conjugacy between $f$ and $g$. 
Then $f$ and $g$ are conformally mateable, so that for each $x\in\mathbb{S}^1$ the point $x$ is mated with the point $y=h(x)$.
\end{theorem}

Recall that by condition \eqref{condition:uv}, $f$ (respectively, $g$) has conformal extensions in open neighborhoods of the arcs $\arc{(a_k,a_{k+1})}$, $k\in \{0,\dots,r\}$ (respectively, $\arc{(b_i,b_{i+1})}$, $i\in\{0,\cdots, s\}$).  The \textit{conformal mateability} of $f$ and $g$ in this theorem means that there exist a Jordan curve $J$, a partition of $J$ into open arcs $J_0,\cdots,J_l$, and a map $R$ that is analytic in an open neighborhood $W_m$ of $J_m$, $m\in \{0,\dots,l\}$, such that $R$ is conformally conjugate to $f$ on $W_m\cap \Int J $ and conformally conjugate to $g$ on $W_m\cap \Ext J$; here $\Int J$ and $\Ext J$ denote the interior and exterior open regions of the Jordan curve $J$, respectively. The map $R$ need not be defined in the entire sphere, although it might extend analytically to open sets that are larger than $W_m$. However, the proof given below shows that if $f$ and $g$ are restrictions of Blaschke products, then the map $R$ that realizes the mating is analytic everywhere and hence it is rational.
\begin{figure}[ht!]
\begin{tikzpicture}
\node[anchor=south west,inner sep=0] at (0,0) {\includegraphics[width=0.88\textwidth]{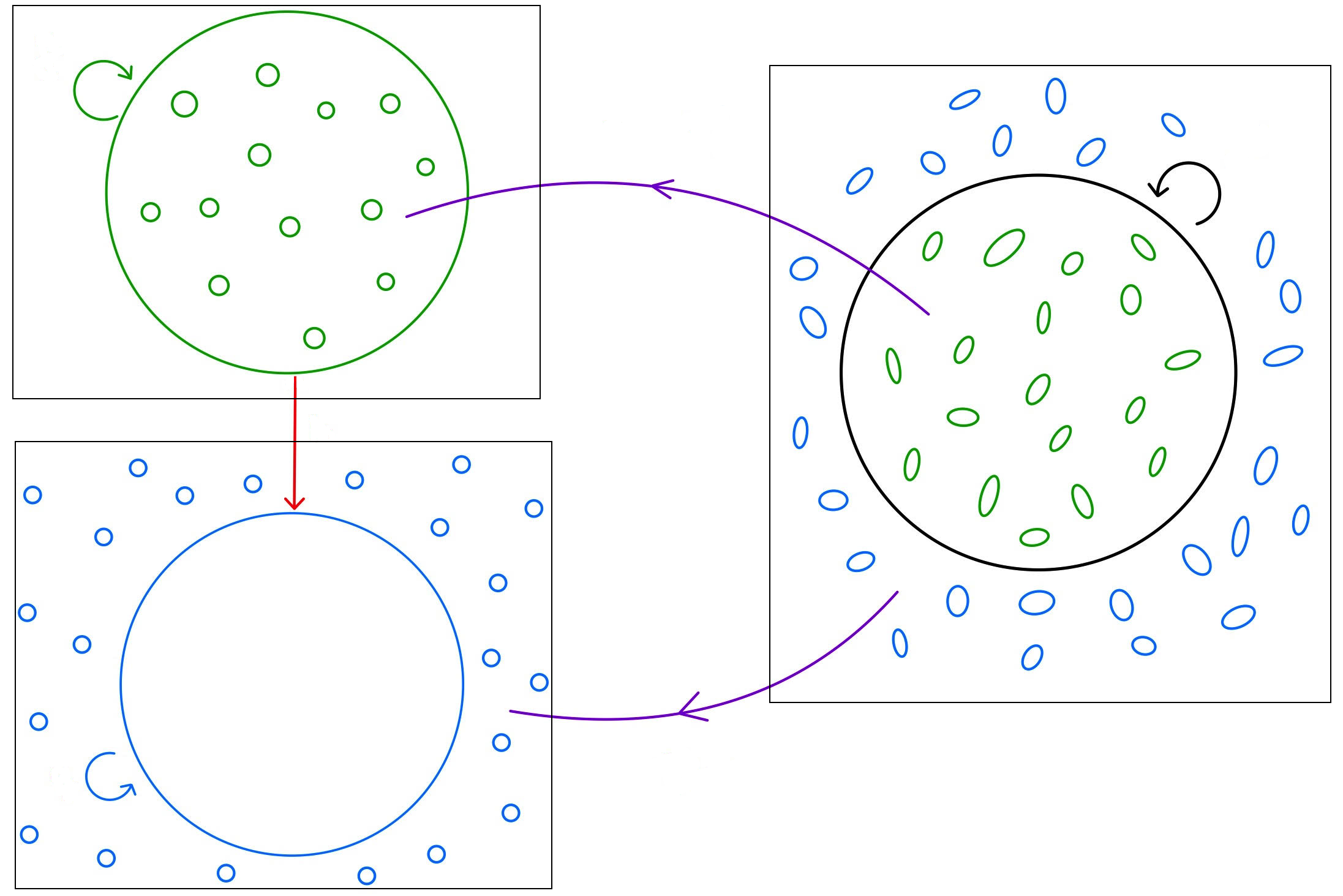}};
\node at (0.5,1) {$g$};
\node at (0.4,6.72) {$f$};
\node at (2.2,3.96) {$h$};
\node at (4.2,0.25) {$\D^*$};
\node at (2.4,5) {$\D$};
\node at (6,2) {$\widetilde{h}_2$};
\node at (6,1.2) {\textrm{David}};
\node at (5.6,5.5) {$h_1$};
\node at (5.6,6.2) {\textrm{David}};
\node at (10.32,6) {$P$};
\draw[->,line width=0.6pt] (8.64,1.8)->(8.64,0.6);
\node at (8.32,1.05) {$H$};
\node at (9.7,1.28) {\textrm{David}};
\node at (9.8,0.9) {\textrm{straightening}};
\node at (8.64,0.25) {\begin{large}$R$\end{large}};
\end{tikzpicture}
\caption{Illustrated is the proof of Theorem~\ref{theorem:mating_general}.}
\label{mating_fig}
\end{figure}
\begin{proof}
Let $P(z)$ be the map $z\mapsto z^d$ or $z\mapsto \br z^d$, depending on whether $f$ and $g$ are orientation-preserving or orientation-reversing, respectively. By Theorem \ref{theorem:extension_special_case} there exist orientation-preserving homeomorphisms $h_i\colon \mathbb S^1\to \mathbb S^1$, $i\in \{1,2\}$, that conjugate $P$ to $f$ and $g$, respectively, and have David extensions in $\D$. Note that $h_2$ and $h\circ h_1$ conjugate $P$ to $g$. By the uniqueness part in property \ref{theorem:expansive_conjugate}, we may precompose $h_2$ with a rotation so that it agrees with $h\circ h_1$. This will guarantee that each $x\in\mathbb{S}^1$ is mated with $y=h(x)$ through the construction below.

We define a Beltrami coefficient $\mu$ in the sphere as follows. In $\D$ we let $\mu$ be the pullback of the standard complex structure under the David homeomorphism $h_1$. In $\widehat{\C}\setminus \br{\D}$ we let $\mu$ the pullback of the standard complex structure under map $\widetilde h_2=(1/\br z)\circ h_2\circ (1/\br z)$. By Proposition \ref{prop:david_qc_invariance} (i) and (ii) (which also hold in the orientation-reversing setting), this map is a David homeomorphism. Therefore, $\mu$ is a David coefficient on $\widehat{\C}$. 

By the David Integrability Theorem \ref{theorem:integrability_david} there exists a David homeomorphism $H$ of $\widehat{\C}$ with $\mu_H=\mu$. Consider the map
\begin{align*}
R= \begin{cases}   H \circ h_1^{-1}\circ f\circ h_1\circ H^{-1},  & \textrm{in}\,\, H(\D)\\
 H \circ \widetilde{h_2}^{-1}\circ g\circ \widetilde h_2\circ H^{-1}, &\textrm{in}\,\, H(\widehat{\C}\setminus \D). 
\end{cases}
\end{align*}
The two definitions agree on $H(\mathbb S^1)$. Since $f$ and $g$ are not necessarily globally defined in $\D$, we have to further restrict the domain of $R$ to small neighborhoods of  suitable Jordan arcs $J_0,\cdots,J_l$ that form a partition of $\mathbb{S}^1$. In general, these arcs are obtained as a common refinement of the partitions $\{H( \arc{h_1^{-1}(a_k), h_1^{-1}(a_{k+1})}):\ k\in \{0,\dots,r\}\}$ and $\{H( \arc{\widetilde h_2^{-1}(b_i), \widetilde h_2^{-1}(b_{i+1})}):\ i\in \{0,\dots,s\}\}$. We claim that each $J_m$ has an open neighborhood $W_m$ in which $R$ is analytic. Moreover, we claim that $R$ is conformally conjugate to $f$ in $W_m\cap  H(\D)$ and conformally conjugate to $g$ in $W_m\cap H(\widehat{\C}\setminus \br \D)$.

By Theorem \ref{theorem:stoilow}, the map $h_1\circ H^{-1}$ is conformal in $H(\D)$ and the map $\widetilde h_2\circ H^{-1}$ is conformal in $H(\widehat{\C}\setminus \br \D)$. This proves the claims regarding the conformal conjugacy. 

By \eqref{condition:uv} $f$ extends conformally to a neighborhood of each arc $\arc{(a_k,a_{k+1})}$ and $g$ extends conformally to a neighborhood of each arc $\arc{(b_i,b_{i+1})}$. This implies that $R$ extends to a homeomorphism in a neighborhood $W_m$ of $J_m$ that is conformal in $W_m\setminus H(\mathbb S^1)$. Since $\mathbb S^1$ is removable for $W^{1,1}$ functions (e.g., it bounds a John domain), we conclude from Theorem \ref{theorem:w11_removable} that $H(\mathbb S^1)$ is locally conformally removable.  This implies that $R$ is conformal in $W_m$.
\end{proof}

\begin{proof}[Proof of Theorem~\ref{theorem:welding}]
From the proof of Theorem~\ref{theorem:mating_general}, we see that the map $h_1\circ H^{-1}$ is conformal in $H(\D)$ and the map $\widetilde h_2\circ H^{-1}$ is conformal in $H(\widehat{\C}\setminus \br \D)$. Therefore, 
\begin{align*}
h=\widetilde h_2\circ h_1^{-1}=  (\widetilde h_2\circ H^{-1}) \circ (h_1\circ H^{-1})^{-1}.
\end{align*}
This implies that the conjugating homeomorphism $h$ between $f$ and $g$ is a welding homeomorphism. Moreover, the welding curve $H(\mathbb S^1)$ of $h$ is conformally removable by Theorem \ref{theorem:w11_removable}, since it is the image of the unit circle under a David homeomorphism. 
\end{proof}

\begin{example}\label{pine_example}
For an illustration, consider the Blaschke product
\begin{align*}
B(z)=\frac{2z^3+1}{z^3+2},
\end{align*}
which is expansive on $\mathbb S^1$ by Example \ref{example:expansive_blaschke}. Moreover, there exists a Markov partition $\mathcal P(B;\{a_0,\dots,a_5\})$ satisfying \eqref{condition:uv} and \eqref{condition:holomorphic} by Example \ref{example:blaschke}, where $a_0,\dots, a_5$ are the $3$-rd roots of $1$ and $-1$. The point $1$ is a symmetrically parabolic fixed point of $B$ and $-1$ is a symmetrically hyperbolic fixed point. Finally, we define a bijective map $h\colon \{a_0,\dots,a_5\} \to \{a_0,\dots, a_5\}$ that preserves the orientation of $\mathbb S^1$ and maps $1$ to $-1$ and $-1$ to $1$. It is easy to see that $h$ conjugates $B$ to itself on the set $\{a_0,\dots,a_5\}$, and hence extends to a homeomorphism of $\mathbb{S}^1$ that conjugates $B$ to itself. By Theorem \ref{theorem:mating_general}, we can mate the Blaschke product $B$ with itself so that the point $1$ is mated with $-1$ and the point $-1$ is mated with the point $1$. In this case, since $B$ is analytic in the entire disk, the  mating is realized by a rational $R$; see the comments after the statement of Theorem \ref{theorem:mating_general}. One can in fact compute this rational map, after doing some normalizations and obtain the formula
\begin{align*}
R(z)=\frac{4z^3+8-3(1-\sqrt{3})}{(1-\sqrt{3})z^3+8+4\sqrt{3}}.
\end{align*}
The Julia set of $R$ is shown in Figure \ref{figure:two_sided_cusps} (the pine tree Julia set). It is a Jordan curve with both inward and outward cusps. These cusps arise from the mating of the parabolic point $1$ with the hyperbolic point $-1$. Note that this is a conformally removable Jordan curve, since it is the image of the unit circle under a David homeomorphism.  
\end{example}

We will now show that under certain regularity assumptions, the mating of parabolic points with hyperbolic points of two circle maps as in Theorem \ref{theorem:mating_general} produces~cusps.

Let $U\subset \widehat{\C}$ be a Jordan domain and let $x_0\in\partial U$. We say that $U$ has an \emph{untwisted inward cusp at $x_0$} if there exists $\theta_0\in\R/2\pi\Z$ and for each $\epsilon\in(0,\pi)$ there exists $\delta>0$ such that $\{x_0+re^{i(\theta_0+\theta)}: 0<r<\delta,\ \vert\theta\vert<\pi-\epsilon\}\subset U$. In that case we say that the region $\widehat{\C}\setminus \br U$ \emph{has an untwisted outward cusp at $x_0$} and that the curve $\partial U$ \emph{has an untwisted cusp at $x_0$}. 

\begin{prop}\label{para_hyp_cusp_prop}
Using the notation of Theorems~\ref{theorem:welding} and~\ref{theorem:mating_general}, suppose that $f, g$ admit (anti-)conformal extensions in open neighborhoods of $1, h(1)$, respectively.

\begin{enumerate}[\upshape(1)]
	\item If $1$ is a hyperbolic fixed point of $f$ and $h(1)$ is a parabolic fixed point of $g$ with an attracting direction in $\D$, then the region $H( \D^*)$ has an untwisted inward cusp at $H(h_1^{-1}(1))$.
	\item If both $1, h(1)$ are parabolic fixed points of $f, g$, respectively, then the welding curve $H(\mathbb{S}^1)$ does not have an untwisted cusp at $H(h_1^{-1}(1))$.
\end{enumerate}

\end{prop}
\begin{proof}
The assumption that $f, g$ admit (anti-)conformal extensions in open neighborhoods of $1, h(1)$ guarantees that the mating $R$ has a (anti-)conformal extension in a neighborhood of $x_0:=H(h_1^{-1}(1))$.

(1) In this case, the restriction of $R$ in a small neighborhood of $x_0$ is a parabolic germ of multiplicity $N\geq 2$ having all of its $N-1$ attracting directions in $H(\D^*)$. By the classical theory of parabolic germs, each of the corresponding attracting petals can be well-approximated at $x_0$ by a sector of angle $2\pi/(N-1)$. Hence, the domain $H(\D^*)$ can be well-approximated at $x_0$ by a sector of angle $2\pi$; i.e., the region $H(\D^*)$ has an untwisted inward cusp at $x_0$.

(2) In this case, the restriction of $R$ in a small neighborhood of $x_0$ is a parabolic germ of multiplicity $N\geq 2$ having attracting directions on both sides of $H(\mathbb{S}^1)$; note that due to the assumption that $f$ and $g$ are expansive, the directions tangent to $\mathbb S^1$ are repelling. As each of these attracting petals can be well-approximated at $x_0$ by a sector of angle $2\pi/(N-1)$, it follows that $H(\mathbb{S}^1)$ does not have an untwisted cusp at $x_0$. 
\end{proof}

A variant of the above result, which covers an important class of matings of circle maps, is proved in Subsection~\ref{cauli_triangle_mating_subsec}.
In general, we pose the following problem.

\begin{problem}
Let $f, g$ be as in Theorems~\ref{theorem:welding} and~\ref{theorem:mating_general}.
Does the mating of a hyperbolic fixed point of $f$ and a parabolic fixed point of $g$ necessarily produce an untwisted cusp (or a more general cusp in the sense of \cite{SS87}) on the resulting welding curve?
\end{problem}

\bigskip

\section{Reflection groups and Schwarz reflection maps}\label{kissing_group_sschwarz_sec}

In this section, we will collect some preliminaries on Kleinian reflection groups and Schwarz reflection maps associated with quadrature domains. 

\subsection{Kleinian reflection groups}

We denote by $\textrm{Aut}^\pm(\widehat{\C})$ be the group of all M{\"o}bius and anti-M{\"o}bius automorphisms of $\widehat{\C}$, correspondingly. 

\begin{definition}\label{Kleinian_reflection_group} 
A discrete subgroup $\Gamma$ of $\textrm{Aut}^\pm(\widehat{\C})$ is called a \emph{Kleinian reflection group} if $\Gamma$ is generated by reflections in finitely many Euclidean circles.
\end{definition}

\begin{remark}\label{3d_discrete_rem}
1) For a Euclidean circle $C$, consider the upper hemisphere $S\subset\mathbb{H}^3:=\{(x,y,t)\in\R^3: t>0\}$ such that $\partial S\cap\partial\mathbb{H}^3= C$; i.e., $C$ bounds the upper hemisphere $S$. Note that the anti-M{\"o}bius reflection $r$ with respect to $C$ extends naturally to reflection in $S$, and defines an orientation-reversing isometry of $\mathbb{H}^3$. Hence, a Kleinian reflection $\Gamma$ group can be thought of as a $3$-dimensional hyperbolic reflection group. 

2) Since $\Gamma$ is discrete, by \cite[Part~II, Chapter~5, Proposition~1.4]{VS93}, we can and will always choose its generators to be reflections in Euclidean circles $C_1,\cdots,C_{d+1}$ such that for each $i$, the closure of the bounded component of $\mathbb{C}\setminus C_i$ does not contain any other $C_j$.
\end{remark}

\begin{definition}\label{limit_regular_def} Let $\Gamma$ be a Kleinian reflection group. The \emph{domain of discontinuity} of $\Gamma$, denoted $\Omega(\Gamma)$, is the maximal open subset of $\widehat{\C}$ on which $\Gamma$ acts properly discontinuously (equivalently, the transformations in $\Gamma$ form a normal family). The \emph{limit set} of $\Gamma$, denoted by $\Lambda(\Gamma)$, is defined by $\Lambda(\Gamma):=\widehat{\mathbb{C}}\setminus\Omega(\Gamma)$. 
\end{definition}

Recall that for a Euclidean circle $C$, the bounded complementary component of $C$ is denoted by $\Int{C}$. A circle packing is a connected collection of oriented circles in $\C$ with disjoint interiors (where the interior is determined by the orientation). Up to a M{\"o}bius map, we can always assume that no circle of the circle packing contains $\infty$ in its interior; i.e., the interior of each circle $C$ of the circle packing can be assumed to be the bounded complementary component $\Int{C}$. 

\begin{definition}\label{kissing_group_def}
A \emph{kissing reflection group} is a group generated by reflections in the circles of a finite circle packing (with at least three circles).
\end{definition}

Combinatorially, a circle packing can be described by its \emph{contact graph}, where we associate a vertex to each circle, and connect two vertices by an edge if and only if the two associated circles intersect. By the Circle Packing Theorem, every connected, simple, planar graph is the contact graph of some circle packing. According to \cite[Proposition~3.4]{LLM20}, the limit set of a kissing reflection group is connected if and only if the contact graph of the underlying circle packing is $2$-connected (i.e., the contact graph remains connected if any vertex is deleted). 

Let $\Gamma$ be a kissing reflection group generated by reflections in the circles $C_1$, $\cdots$, $C_{d+1}$. Set $$\mathcal{F}_\Gamma:=\displaystyle\widehat{\C}\setminus\left(\bigcup_{i=1}^{d+1}\Int{C_i}\bigcup_{j\neq k} (C_j \cap C_k)  \right).$$

\begin{prop}\label{fund_dom_prop}
Let $\Gamma$ be a kissing reflection group. Then $\mathcal{F}_\Gamma$ is a fundamental domain for the action of $\Gamma$ on $\Omega(\Gamma)$.
\end{prop}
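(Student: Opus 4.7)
The plan is to apply Poincar{\'e}'s polyhedron theorem in hyperbolic $3$-space. Following Remark~\ref{3d_discrete_rem}, I extend each reflection $r_i$ in $C_i$ to the orientation-reversing isometry of $\mathbb{H}^3 = \{(x,y,t) : t > 0\}$ that fixes pointwise the upper hemisphere $S_i$ with $\partial S_i = C_i$. Let $P \subset \mathbb{H}^3$ be the intersection of the closed exterior half-spaces bounded by the $S_i$. Because $\{C_i\}$ is a circle packing, the open interiors of the hemispheres are pairwise disjoint, so $P$ is a non-empty convex hyperbolic polyhedron whose codimension-one faces are precisely the $S_i$. Since distinct generating circles are either disjoint or tangent, $\partial P$ has no finite edges; the only singularities of $\partial P$ are ideal vertices on $\widehat{\C} = \partial \mathbb{H}^3$, one for each tangency point $C_j \cap C_k$.

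Next, I verify the hypotheses of the Poincar{\'e} polyhedron theorem. The face pairing is supplied by the involutions $r_i$, each of which pairs the face $S_i$ with itself (acting as the identity on $S_i$), and there are no finite edges to give rise to edge cycle conditions. At each ideal vertex $p = C_j \cap C_k$, the tangent circles $C_j$ and $C_k$ share a common inscribed horocycle based at $p$, so $r_j$ and $r_k$ preserve a horoball at $p$, and their product $r_j r_k$ is a parabolic isometry fixing $p$. The cusp cycle condition at $p$ is then automatic — no angle compatibility is required since the dihedral angle at a tangency is $0$ — and the stabilizer of $p$ in $\Gamma$ coincides with the infinite dihedral parabolic subgroup $\langle r_j, r_k \rangle$. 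Poincar{\'e}'s theorem now yields that $P$ is a fundamental polyhedron for $\Gamma \curvearrowright \mathbb{H}^3$, and (consistently with Definition~\ref{Kleinian_reflection_group}) the presentation of $\Gamma$ has only the relations $r_i^2 = 1$.

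Finally, I descend to the boundary at infinity. The Euclidean closure of $P$ meets $\widehat{\C}$ in $\overline{\mathcal{F}_\Gamma} \cup \bigcup_{j \ne k}(C_j \cap C_k)$. Every tangency point is a parabolic fixed point of $\Gamma$, and so lies in $\Lambda(\Gamma)$. Every other boundary point $z$ of $P$ on $\widehat{\C}$ admits a neighbourhood in $\overline{\mathbb{H}^3}$, disjoint from horoballs at the cusps, which meets only finitely many $\Gamma$-translates of $P$ — a standard consequence of Poincar{\'e}'s theorem for cusped polyhedra — so $z \in \Omega(\Gamma)$, and if $z$ lies in the interior of $\mathcal{F}_\Gamma$ then no other $\Gamma$-translate of $z$ lies in $\mathcal{F}_\Gamma$. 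Combining these observations gives that $\mathcal{F}_\Gamma$ is a fundamental domain for $\Gamma \curvearrowright \Omega(\Gamma)$, with face identifications given by the $r_i$ along the arcs $C_i \cap \mathcal{F}_\Gamma$. The main obstacle will be invoking the correct cusped version of Poincar{\'e}'s theorem and verifying local finiteness of the tessellation near $\overline{\mathcal{F}_\Gamma} \cap \Omega(\Gamma)$; both are standard, but the absence of any finite edges and the fact that all non-trivial cycle relations occur at cusps make the setting slightly non-generic and merit explicit verification.
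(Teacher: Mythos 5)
Your proposal is correct and follows essentially the same route as the paper: pass to the convex hyperbolic polyhedron $P \subset \mathbb{H}^3$ bounded by the hemispheres over the $C_i$, establish that $P$ is a fundamental polyhedron for $\Gamma\curvearrowright\mathbb{H}^3$, and descend to $\Omega(\Gamma)$. Where the paper cites the reflection-group version of Poincar\'e's polyhedron theorem in Vinberg--Shvartsman [VS93, Part II, Chapter 5, Theorem 1.2] and Marden [Mar16, \S 3.5] for the descent to the boundary, you verify the Poincar\'e hypotheses (face pairings, absent edge cycles, cusp condition at the tangency points) and the local finiteness near $\overline{\mathcal{F}_\Gamma}\cap\Omega(\Gamma)$ by hand, but the underlying argument is the same.
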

\begin{proof}
Let $\mathcal{P}_\Gamma$ be the convex hyperbolic polyhedron (in $\mathbb{H}^3$) whose relative boundary in $\mathbb{H}^3$ is the union of the hyperplanes $S_i$ bounded by the circles $C_i$ (see Remark~\ref{3d_discrete_rem}). Then, by \cite[Part~II, Chapter~5, Theorem~1.2]{VS93}, $\mathcal{P}_\Gamma$ is a fundamental domain for the action of $\Gamma$ on $\mathbb{H}^3$. It now follows that $\mathcal{F}_\Gamma=\overline{\mathcal{P}_\Gamma}\cap\Omega(\Gamma)$ (where the closure is taken in $\Omega(\Gamma)\cup\mathbb{H}^3$) is a fundamental domain for the action of $\Gamma$ on $\Omega(\Gamma)$ \cite[\S 3.5]{Mar16}. 
\end{proof}

To a kissing reflection group $\Gamma$, we can associate a piecewise anti-M{\"o}bius reflection map $\rho_\Gamma$ that will play an important role in the paper.

\begin{definition}\label{reflection_map} Let $\Gamma$ be a kissing reflection group generated by reflections $(r_i)_{i=1}^{d+1}$ in circles $(C_i)_{i=1}^{d+1}$. We define the associated \emph{Nielsen map} $\rho_\Gamma$ by: \begin{align} \rho_{\Gamma} : \bigcup_{i=1}^{d+1} \overline{\Int{C_i}} \rightarrow \widehat{\mathbb{C}} \nonumber \hspace{16mm} \\ \hspace{10mm} z\longmapsto r_i(z) \textrm{        if } z \in \overline{\Int{C_i}}.  \nonumber\end{align}
\end{definition}

\begin{definition}\label{necklace_group} 
Let $\Gamma$ be a kissing reflection group generated by reflections in the circles of a finite circle packing $C_1,\cdots, C_{d+1}$. We say that $\Gamma$ is a \emph{necklace group} if 
\begin{enumerate}\upshape
\item each circle $C_i$ is tangent to $C_{i+1}$ (with $i+1$ taken mod $(d+1)$), and

\item the boundary of the unbounded component of $\mathbb{C}\setminus\bigcup_iC_i$ intersects each $C_i$. 
\end{enumerate} 
\end{definition}

\begin{remark}\label{outerplanar_2_conn_rem}
Necklace groups can be characterized as Kleinian reflection groups generated by reflections in the circles of a finite circle packing whose contact graph is \emph{$2$-connected} and \emph{outerplanar}; i.e., the contact graph remains connected if any vertex is deleted, and has a face containing all the vertices on its boundary. Necklace groups can be compared with complex polynomials with connected Julia set. Indeed, the connected Julia set of a polynomial is the boundary of a simply connected, completely invariant Fatou component, namely, the basin of attraction of infinity. Similarly, as we will shortly see, the limit set of a necklace group is the boundary of a simply connected, invariant component of its domain of discontinuity (compare \cite[Theorem~1.2]{LLM20}).
\end{remark}

We conclude this subsection with the definition of the regular ideal polygon reflection group, which will play a central role in the rest of the paper.

\begin{definition}\label{ideal_group} Consider the Euclidean circles $\mathbf{C}_1,\cdots, \mathbf{C}_{d+1}$ where $\mathbf{C}_j$ intersects $\mathbb{S}^1$ at right angles at the roots of unity $\exp{(\frac{2\pi i\cdot(j-1)}{d+1})}$, $\exp{(\frac{2\pi i\cdot j}{d+1})}$. By \cite[Part~II, Chapter~5, Theorem~1.2]{VS93}, the group generated by reflections in these circles is discrete. Therefore, it is also a necklace group, and we denote it by $\pmb{\Gamma}_{d+1}$. Moreover, denoting the reflection map in the circle $\mathbf{C}_j$ by  $\rho_j$, we have the following presentation
$$
\pmb{\Gamma}_{d+1}=\langle \rho_1,\cdots, \rho_{d+1}: \rho_1^2=\cdots=\rho_{d+1}^2=1\rangle.
$$
\end{definition}

To ease notations, we will denote the Nielsen map of $\pmb{\Gamma}_{d+1}$ by $\pmb{\rho}_d$. Note that $\pmb{\rho}_d$ restricts to a degree $d$ orientation-reversing covering of $\mathbb{S}^1$.

\bigskip

\subsection{Quadrature domains and Schwarz reflections}\label{schwarz_def}

\begin{definition}[Schwarz Function]
Let $\Omega\subsetneq\widehat{\C}$ be a domain such that $\infty\notin\partial\Omega$ and $\inter{\overline{\Omega}}=\Omega$. A \emph{Schwarz function} of $\Omega$ is a meromorphic extension of $\overline{z}\vert_{\partial\Omega}$ to all of $\Omega$. More precisely, a continuous function $S:\overline{\Omega}\to\widehat{\C}$ of $\Omega$ is called a Schwarz function of $\Omega$ if it satisfies the following two properties:
\begin{enumerate}\upshape
\item $S$ is meromorphic on $\Omega$,

\item $S(z)=\overline{z}$ on $\partial \Omega$.
\end{enumerate}
\end{definition}

It is easy to see from the definition that a Schwarz function of a domain (if it exists) is unique. 

\begin{definition}[Quadrature Domains]
A domain $\Omega\subsetneq\widehat{\C}$ with $\infty\notin\partial\Omega$ and $\inter{\overline{\Omega}}=\Omega$ is called a \emph{quadrature domain} if $\Omega$ admits a Schwarz function.
\end{definition}

Therefore, for a quadrature domain $\Omega$, the map $\sigma:=\overline{S}:\overline{\Omega}\to\widehat{\C}$ is an anti-meromorphic extension of the Schwarz reflection map with respect to $\partial \Omega$ (the reflection map fixes $\partial\Omega$ pointwise). We will call $\sigma$ the \emph{Schwarz reflection map of} $\Omega$.

\begin{figure}
\begin{tikzpicture}
\node at (0.9,3.1) {\begin{Large}$\overline{\D}$\end{Large}};
\node at (3.6,3.1) {\begin{Large}$\overline{\Omega}$\end{Large}};
\node at (0.9,1) {\begin{Large}$\widehat{\C}\setminus\D$\end{Large}};
\node at (3.6,1) {\begin{Large}$\widehat{\C}$\end{Large}};
\draw[->,line width=1pt] (1.28,3.1)--(3.3,3.1);
\draw[->,line width=1pt] (3.6,2.8)-|(3.6,1.32);
\draw[->,line width=1pt] (0.9,2.8)-|(0.9,1.32);
\draw[->,line width=1pt] (1.48,1)--(3.3,1);
\node at (2.4,3.5) {\begin{large}$f$\end{large}};
\node at (2.4,0.7) {\begin{large}$f$\end{large}};
\node at (0.45,2) {\begin{large}$1/\overline{z}$\end{large}};
\node at (4,2) {\begin{Large}$\sigma$\end{Large}};
\end{tikzpicture}
\caption{The rational map $f$ semiconjugates the reflection map $1/\overline{z}$ of $\D$ to the Schwarz reflection map 
$\sigma$ of $\Omega$ .}
\label{comm_diag_schwarz}
\end{figure}
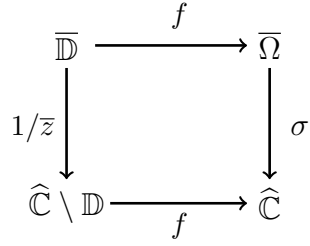

Simply connected quadrature domains are of particular interest, and these admit a simple characterization (see \cite[Theorem~1]{AS76}).

\begin{prop}[Simply Connected Quadrature Domains]\label{simp_conn_quad}
A simply connected domain $\Omega\subsetneq\widehat{\C}$ with $\infty\notin\partial\Omega$ and $\inter{\overline{\Omega}}=\Omega$ is a quadrature domain if and only if the Riemann uniformization $f:\mathbb{D}\to\Omega$ extends to a rational map on $\widehat{\C}$. 

In this case, the Schwarz reflection map $\sigma$ of $\Omega$ is given by $f\circ(1/\overline{z})\circ(f\vert_{\D})^{-1}$. Moreover, if the degree of the rational map $f$ is $d$, then $\sigma:\sigma^{-1}(\Omega)\to\Omega$ is a (branched) covering of degree $(d-1)$, and $\sigma:\sigma^{-1}(\inter{\Omega^c})\to\inter{\Omega}^c$ is a (branched) covering of degree $d$.
\end{prop}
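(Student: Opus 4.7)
\emph{Proof plan.} The plan is to construct each structure explicitly from the other via the formula $\sigma = f \circ (1/\overline{z}) \circ (f|_{\D})^{-1}$, which then proves both the equivalence and the claimed identity in one stroke.

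For the implication ``$f$ extends rationally $\Rightarrow$ $\Omega$ is a quadrature domain,'' I would define $\sigma : \overline{\Omega} \to \widehat{\C}$ by the displayed formula. Since $f|_{\D}$ is a biholomorphism, its inverse is holomorphic on $\Omega$; composing with the anti-holomorphic $z \mapsto 1/\overline{z}$ and then with the holomorphic (rational) $f$ gives an anti-meromorphic map on $\Omega$. The key check is on $\partial \Omega$: for $w \in \partial \Omega$, a boundary Carathéodory argument gives $(f|_{\D})^{-1}(w) \in \mathbb{S}^{1}$, on which $z \mapsto 1/\overline{z}$ is the identity, so $\sigma(w) = f((f|_{\D})^{-1}(w)) = w$. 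Hence $S := \overline{\sigma}$ is a Schwarz function for $\Omega$.

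For the converse, assume $\Omega$ is a quadrature domain with Schwarz reflection $\sigma = \overline{S}$, and let $f : \D \to \Omega$ be the Riemann map. Define $\tilde{f} : \D^{*} \to \widehat{\C}$ by $\tilde{f}(z) = \sigma(f(1/\overline{z}))$; this is a composition of three anti-conformal maps and is therefore holomorphic on $\D^{*}$. The goal is to glue $f$ and $\tilde f$ to a meromorphic function on $\widehat{\C}$ across $\mathbb{S}^{1}$. To justify the gluing, I would first observe that $\partial \Omega$ is contained in the real-analytic variety $\{w : S(w) = \overline{w}\}$ (well-defined in a neighborhood of any finite point of $\partial\Omega$); such varieties are locally connected, so by Carathéodory's theorem $f$ extends continuously to $\overline{\D}$, with $f(\mathbb{S}^{1}) \subset \partial \Omega$. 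Consequently $\tilde{f}$ extends continuously to $\overline{\D^{*}}$, and for $\zeta \in \mathbb{S}^{1}$, since $\sigma$ fixes $\partial \Omega$ pointwise, $\tilde{f}(\zeta) = \sigma(f(\zeta)) = f(\zeta)$. Thus the function $F$ defined as $f$ on $\overline{\D}$ and $\tilde{f}$ on $\overline{\D^{*}}$ is continuous on $\widehat{\C}$ and meromorphic on $\widehat{\C} \setminus \mathbb{S}^{1}$; since $\mathbb{S}^{1}$ has two-dimensional Lebesgue measure zero, Morera (or removability for bounded continuous extensions, applied in charts avoiding the finitely many poles) upgrades $F$ to a meromorphic, hence rational, extension of $f$. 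Along the way, substituting $w = f(z)$ shows $\sigma(w) = F(1/\overline{z}) = f(1/\overline{(f|_\D)^{-1}(w)})$, which is the claimed formula.

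Finally, for the degree statements, I would exploit the factorization $\sigma = f \circ (1/\overline{z}) \circ (f|_{\D})^{-1}$, in which the inner two maps are homeomorphisms from $\overline{\Omega}$ to $\overline{\D^{*}}$. So the covering degrees of $\sigma$ over a target set $U$ coincide with those of $f|_{\D^{*}} : \D^{*} \cap f^{-1}(U) \to U$. For $U = \Omega$: each point in $\Omega$ has $d$ preimages under the rational map $f$, exactly one of them (via $f|_{\D}$) lying in $\D$, none lying on $\mathbb{S}^{1}$ (since $f(\mathbb{S}^{1}) \subset \partial \Omega$), and the remaining $d-1$ necessarily in $\D^{*}$. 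For $U = \inter{\Omega^{c}}$: since $f(\overline{\D}) = \overline{\Omega}$, no preimage of a point in $\inter{\Omega^{c}}$ lies in $\overline{\D}$, so all $d$ preimages lie in $\D^{*}$. The main technical obstacle in the whole argument is the gluing step in the second paragraph, specifically verifying that $f$ admits a continuous extension to $\overline{\D}$ from the sole assumption that $\Omega$ carries a Schwarz function — this is where one must invoke the local real-analytic structure of $\partial \Omega$ and Carathéodory's theorem.
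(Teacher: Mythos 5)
The paper does not prove this proposition; it is stated as a known result with a citation to Aharonov--Shapiro \cite[Theorem~1]{AS76}, so there is no in-paper proof to compare against. Your plan is essentially the classical Aharonov--Shapiro argument: the forward direction (defining $\sigma$ by the displayed formula and checking it is the identity on $\partial\Omega$), the formula itself, and the preimage count for the degrees are all sound. However, the step you yourself single out as the technical crux of the converse has a genuine gap as written.

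You propose to deduce local connectivity of $\partial\Omega$ (hence Carath\'eodory extension of $f$) from the claim that $\partial\Omega$ lies in the real-analytic variety $\{w : S(w)=\overline{w}\}$, ``well-defined in a neighborhood of any finite boundary point.'' But $S$ is only given to be meromorphic on $\Omega$ and continuous on $\overline{\Omega}$; a priori it does not extend holomorphically across $\partial\Omega$, so $\{S(w)=\overline{w}\}$ is a closed subset of $\overline\Omega$, not a real-analytic variety in any full neighborhood of a boundary point. Showing that $\partial\Omega$ is real-analytic off finitely many cusps and double points from the mere existence of a Schwarz function is a deep regularity theorem (due to Sakai), considerably harder than the proposition itself and circular to invoke here. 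The standard remedy is to bypass continuity entirely: after a M\"obius normalization put $\Omega$ bounded, so $f$ is a bounded univalent map with Fatou radial limits $f^*(\zeta)\in\partial\Omega$ for a.e.\ $\zeta\in\mathbb S^1$; then $\tilde f(z):=\overline{S(f(1/\bar z))}$ is meromorphic on $\D^*$, bounded near $\mathbb S^1$, and its radial limit at $\zeta$ equals $\overline{S(f^*(\zeta))}=\overline{\overline{f^*(\zeta)}}=f^*(\zeta)$ wherever the latter exists, using only continuity of $S$ on $\overline\Omega$. A standard gluing lemma for bounded holomorphic functions with a.e.-matching radial boundary values across a circle then produces the rational extension; continuity of $f$ on $\overline\D$ and local connectivity of $\partial\Omega$ come out a posteriori from rationality rather than being inputs.

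One smaller point: in the degree paragraph you describe the inner two maps in the factorization as ``homeomorphisms from $\overline{\Omega}$ to $\overline{\D^*}$.'' This is false on the closures whenever $\partial\Omega$ has a double point, since there $f$ is not injective on $\mathbb S^1$ and $(f|_\D)^{-1}$ does not extend homeomorphically. Your actual degree count only uses the open sets $\Omega$ and $\inter{\Omega^c}$, where the factorization does give a conformal identification, so the conclusion stands, but the phrasing should be restricted accordingly.
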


\begin{remark}\label{prelim_quad}
If $\Omega$ is a simply connected quadrature domain with associated Schwarz reflection map $\sigma$, and $M$ is a M{\"o}bius transformation, then $M(\Omega)$ is also a quadrature domain with Schwarz reflection map $M\circ\sigma\circ M^{-1}$.
\end{remark}

\bigskip

\section{A general David surgery}\label{david_surgery_sec}

This section is devoted to the proof of a technical lemma, that will serve as a motor for much of the rest of the paper. The result is sufficiently general to allow us to construct various kinds of conformal dynamical systems by replacing the attracting dynamics of an anti-rational map on suitable invariant Fatou components by Nielsen maps of ideal polygon groups. 

We denote the Julia set of a rational or an anti-rational map $R$ by $\mathcal{J}(R)$. Recall that a rational map is called \emph{subhyperbolic} if every critical orbit is either finite or converges to an attracting periodic orbit.

\begin{lemma}\label{david_surgery_lemma}
Let $R$ be a subhyperbolic anti-rational map with connected Julia set, and let $U_i,\ i=1,\cdots, k$, be invariant Fatou components of $R$ such that $R\vert_{\partial U_i}$ have degree $d_i$. Then, there is a global David surgery that replaces the dynamics of $R$ on each $U_i$ by the dynamics of $\pmb{\rho}_{d_i}\vert_{\D}$, transferred to $U_i$ via a Riemann map.
\end{lemma}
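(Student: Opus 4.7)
The strategy is to build a global David homeomorphism that carries the Blaschke dynamics on each $U_i$ to the Nielsen reflection dynamics of $\pmb{\rho}_{d_i}$, and then read off the surgered map via Stoilow's theorem. For each $i\in\{1,\dots,k\}$, fix a Riemann map $\phi_i\colon\D\to U_i$. Since $R$ is subhyperbolic with connected Julia set, each invariant $U_i$ is a (super)attracting basin and a John domain (a standard consequence of subhyperbolicity, after Carleson--Jones--Yoccoz); in particular $\phi_i$ extends continuously to $\overline{\D}$, and $B_i\coloneqq \phi_i^{-1}\circ R\circ \phi_i$ extends to an anti-holomorphic Blaschke product of degree $d_i$ with an attracting fixed point in $\D$. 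Applying Theorem~\ref{theorem:reflections} to $B_i$ and to the piecewise circle-reflection map $\pmb{\rho}_{d_i}|_{\mathbb{S}^1}$ (whose break points are the $(d_i+1)$-st roots of unity), we obtain a homeomorphism $h_i\colon \overline{\D}\to \overline{\D}$ that conjugates $B_i|_{\mathbb{S}^1}$ to $\pmb{\rho}_{d_i}|_{\mathbb{S}^1}$ and is David on $\D$.

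Define a Beltrami coefficient $\mu$ on $\widehat{\C}$ by $\mu\equiv 0$ off $\bigcup_i U_i$ and $\mu\coloneqq \mu_{h_i\circ \phi_i^{-1}}$ on each $U_i$. Since $h_i$ is David on the quasidisk $\D$, $\phi_i^{-1}\colon U_i\to\D$ is conformal, and $U_i$ is a John domain, Proposition~\ref{prop:david_qc_invariance}(iv) gives that $h_i\circ \phi_i^{-1}$ is a David map on $U_i$; hence $\mu$ is a David coefficient on $\widehat{\C}$. Theorem~\ref{theorem:integrability_david} then produces a David homeomorphism $H\colon \widehat{\C}\to\widehat{\C}$ with $\mu_H=\mu$. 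Setting $\Phi_i\coloneqq h_i\circ \phi_i^{-1}\circ H^{-1}\colon H(U_i)\to\D$, we have $\mu_{\Phi_i}=0$ a.e., so by Theorem~\ref{theorem:stoilow} the map $\Phi_i$ is conformal.

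Now define the surgered map
\[
\widetilde R(z)\coloneqq \begin{cases} H(R(H^{-1}(z))), & z \in \widehat{\C}\setminus \bigcup_i H(U_i), \\ \Phi_i^{-1}\bigl(\pmb{\rho}_{d_i}(\Phi_i(z))\bigr), & z\in H(U_i), \end{cases}
\]
where $\pmb{\rho}_{d_i}$ on the second line is understood on its natural domain inside $\D$. Using $\pmb{\rho}_{d_i}\circ h_i = h_i\circ B_i$ on $\mathbb{S}^1$, together with $\Phi_i^{-1} = H\circ \phi_i \circ h_i^{-1}$ and $R\circ \phi_i = \phi_i\circ B_i$, one checks directly that both formulas evaluate to $H(R(H^{-1}(z)))$ for $z\in H(\partial U_i)$. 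Thus $\widetilde R$ is continuous across $H(\mathcal{J}(R))$. Outside $\bigcup_i\overline{H(U_i)}$, the coefficient $\mu_H$ vanishes, so $H$ is conformal there and $\widetilde R = H\circ R\circ H^{-1}$ is anti-meromorphic; on each $H(U_i)$ the map $\widetilde R$ is piecewise anti-M\"obius in the conformal coordinate $\Phi_i$.

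The most delicate step is upgrading this continuity across $H(\partial U_i)$ to (anti-)analyticity. For this we invoke that $\partial U_i$, as the boundary of a John domain, is $W^{1,1}$-removable by Theorem~\ref{theorem:john_union_removable}, and that David homeomorphisms carry $W^{1,1}$-removable sets to locally conformally removable ones by Theorem~\ref{theorem:w11_removable}. Hence $H(\partial U_i)$ is locally conformally removable, and the continuous map $\widetilde R$, being anti-meromorphic off $H(\partial U_i)$, is anti-meromorphic in a full neighborhood. This is genuinely the main obstacle: the hyperbolic dynamics of $B_i$ at its fixed points on $\mathbb{S}^1$ is glued to the parabolic dynamics of $\pmb{\rho}_{d_i}$ at the roots of unity, so no quasiconformal surgery can succeed; only the combination of the David Integrability Theorem with the John-boundary removability closes the argument.
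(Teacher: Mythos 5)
There is a genuine gap: your Beltrami coefficient $\mu$ is set to zero off $\bigcup_i U_i$, which makes it fail to be $R$-invariant, and as a consequence your surgered map $\widetilde R$ is not anti-meromorphic on iterated preimages of the $U_i$. Concretely, let $U'$ be a component of $R^{-1}(U_j)$ with $U'\neq U_j$ (these exist whenever $d_j<\deg R$, and generically there are infinitely many further preimage components). For $z\in H(U')$ you use the first branch of your definition, $\widetilde R(z)=H\circ R\circ H^{-1}(z)$. Since $\mu|_{U'}=0$ the map $H^{-1}$ is conformal near $z$, and $R$ is anti-conformal; but $R(H^{-1}(z))$ lands in $U_j$, where $H$ carries the nontrivial David coefficient $\mu_{h_j\circ\phi_j^{-1}}$. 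So $\mu_{\widetilde R}=\bigl(R\circ H^{-1}\bigr)^*\bigl(\mu|_{U_j}\bigr)\neq 0$ a.e.\ on $H(U')$, and $\widetilde R$ is not anti-meromorphic there. Your sentence ``Outside $\bigcup_i\overline{H(U_i)}$ \dots $\widetilde R=H\circ R\circ H^{-1}$ is anti-meromorphic'' is where this breaks.

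The fix is exactly what the paper does and is in fact the technical heart of Lemma~\ref{david_surgery_lemma}: you must spread the coefficient $\mu$ from $\bigcup_i U_i$ to every iterated preimage component by pulling back under the appropriate iterate of $R$, and use the standard coefficient only on the complement of the grand orbit. Once you do that, the claim that the resulting $\mu$ is still David is no longer a consequence of Proposition~\ref{prop:david_qc_invariance}(iv) alone. One then needs a finiteness/uniformity argument: by subhyperbolicity only finitely many preimage components meet the critical orbits (handled via part~(iii) of Proposition~\ref{prop:david_qc_invariance}), while the remaining infinitely many components land univalently on finitely many ``shallow'' components, and one controls the contribution of each via Koebe distortion together with the uniform John-domain estimate $(\diam U')^2\lesssim \sigma(U')$ from \cite{Mih11,Nta18}, after which one sums the sub-level set measures over all such $U'$. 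Your proposal skips this entirely. Your use of Theorem~\ref{theorem:john_union_removable} and Theorem~\ref{theorem:w11_removable} (via Lemma~\ref{lemma:david_removable}) to upgrade continuity across $H(\partial U_i)$ to anti-analyticity is in the same spirit as the paper's argument, but it cannot repair the problem above: removability is applied across $\partial U_i$, not across the boundaries of all preimage components, and in any case it presupposes that $\widetilde R$ is already anti-meromorphic on both sides, which is false on the preimages of the $U_i$ in your construction.
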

\noindent More precisely, we will show that there exists a global David homeomorphism $\Psi$, and an anti-analytic map $F$, defined on a subset of $\widehat{\C}$, such that $F\vert_{\Psi(U_i)}$ is conformally conjugate to $\pmb{\rho}_{d_i}\vert_{\D}$, and $F$ is conformally conjugate by $\Psi$ to $R$ outside the grand orbit of $\bigcup_{i=1}^k\Psi(U_i)$.
\begin{proof}
For $i\in\{1,2,\dots, k\}$, let $\phi_i:\D\to U_i$ be a Riemann map. Note that since $R$ is subhyperbolic, each $U_i$ is the immediate basin of attraction of an attracting fixed point. Hence, the Riemann map $\phi_i$ conjugates a degree $d_i$ anti-Blaschke product $B_i$, with an attracting fixed point in $\D$, to $R$. In particular, $B_i$ is expanding (with respect to some conformal metric) on its Julia set $\mathbb{S}^1$. Since connected, subhyperbolic Julia sets are locally connected \cite[Theorem~19.7]{Mil06}, we have a continuous extension $\phi_i:\mathbb{S}^1\to \partial U_i$ semiconjugating $B_i$ to $R$.

By Theorem~\ref{theorem:reflections}, there is a topological conjugacy $H_i$ between $B_i\vert_{\mathbb{S}^1}$ and $\pmb{\rho}_{d_i}\vert_{\mathbb{S}^1}$ that continuously extends to $H_i:\D\to\D$ as a David homeomorphism.

We now define a new map $\widetilde{R}$ by modifying the map $R$ as follows:
$$
\widetilde{R}:=
\begin{cases}
\phi_i\circ\left( H_i^{-1}\circ \pmb{\rho}_{d_i}\circ H_i\right)\circ\phi_i^{-1},\quad {\rm in}\ U_i\setminus \inter{T_{i}},\quad i=1,2,\dots, k,\\
R,\quad {\rm in\ } \widehat{\mathbb C}\setminus\bigcup_{i=1}^k U_i,
\end{cases}
$$
where $T_{i}=\phi_i\left( H_i^{-1}\left(T^0(\pmb{\Gamma}_{d_i+1})\right)\right)$, and $T^0(\pmb{\Gamma}_{d_i+1})$ is a regular ideal $(d_i+1)$-gon in $\D$. The fact that $H_i^{-1}\circ \pmb{\rho}_{d_i}\circ H_i\equiv B_i$ on $\mathbb{S}^1$ combined with the semiconjugacy relation $\phi_i\circ B_i\equiv R\circ\phi_i$ on $\mathbb{S}^1$ imply that the piecewise definitions of the map $\widetilde{R}$ match continuously. Thus, $\widetilde{R}$ is a continuous orientation-reversing map of $\widehat{\mathbb C}\setminus\bigcup_{i=1}^k\inter{T_{i}}$ onto $\widehat{\mathbb C}$.
If $\mu_i$ is the pullback to $U_i,\ i\in\{1,2,\cdots, k\}$, of the standard complex structure on $\mathbb D$ by the map $H_i\circ\phi_i^{-1}$, then we have $(\widetilde{R}\vert_{U_i\setminus T_{i}})^*(\mu_i)=\mu_i$. 

We now use the dynamics of $R$ to spread the complex structure out to all the iterated preimage components of $U_i,\ i\in\{1,2,\cdots, k\}$, under all the iterates of $R$. 
Everywhere else we use the standard complex structure, i.e., the zero Beltrami coefficient. This way we obtain an $\widetilde{R}$-invariant measurable complex structure $\mu$ on $\widehat{\C}$.

We will now argue that $\mu$ is a David coefficient on $\widehat{\C}$; i.e., it satisfies condition~\eqref{definition:david2}. We say that an iterated preimage component $U'$ of $U_i$ ($i\in\{1,\cdots, k\}$) has \emph{rank} $r=r(U')$ if $r$ is the smallest non-negative integer $j$ such that $R^{\circ j}(U')=U_i$. By subhyperbolicity of $R$, the closures of at most finitely many iterated preimage components of $U_i$ may intersect the critical orbits of $R$. Let $m$ be the maximum of the ranks of the iterated preimage components of $U_i$ ($i\in\{1,\cdots, k\}$) whose closures intersect the critical orbits of $R$. We denote the iterated preimage components of $U_i$ ($i\in\{1,\cdots, k\}$) of rank at most $(m+1)$ by $V_1,\cdots, V_l$.

Since $R$ is subhyperbolic and $\mathcal{J}(R)$ is connected, each $U_i$ is a John domain \cite[\S 7, Theorem~3.1]{CG93}. By part (iv) of Proposition~\ref{prop:david_qc_invariance}, the map $H_{i}\circ\phi_{i}^{-1}:U_i\to\D$ is a David homeomorphism, and hence, $\mu$ is a David coefficient on $U_i$, for $i\in\{1,\cdots, k\}$. Moreover, since $\mu$ is defined on a preimage component $U'$ of $U_i$ by pulling back $\mu\vert_{U_i}$ by $R^{\circ r(U')}$, it follows that
$$
\mu\vert_{U'}=\left(H_{i}\circ\phi_{i}^{-1}\circ R^{\circ r(U')}\right)^\ast(\mu_0\vert_{\D}),
$$
where $\mu_0$ is the trivial Beltrami coefficient. By part (iii) of Proposition~\ref{prop:david_qc_invariance}, $\mu$ satisfies condition~\eqref{definition:david2} on $U'$. It now follows that $\mu$ satisfies condition~\eqref{definition:david2} on $\bigcup_{j=1}^l V_j$ with some constants $M,\alpha, \varepsilon_0>0$.

It remains to check the David condition on the union of the rest of the preimage components of $U_i$. If $U'$ is an iterated preimage component of $U_i$ ($i=1,2,\dots, k$) of rank larger than $(m+1)$, then it lands on a rank $(m+1)$ component in time $n(U')=(r(U')-m-1)$ univalently. In other words, $n(U')$ is the \emph{first landing time} of $U'$ in $\bigcup_{j=1}^l V_j$. By our choice of $m$, there is a neighborhood of the closures of the rank $(m+1)$ components that is disjoint from the postcritical set of $R$. Hence, by the Koebe distortion theorem, $R^{\circ n(U')}\circ \lambda_{U'}$ is an $L$-bi-Lipschitz map between $\frac{1}{\diam{U'}}U'$ and $V_j$, for some absolute constant $L\geq1$ and a rank $(m+1)$ component $V_j$, where $\lambda_{U'}(z)=\diam{U'}\cdot z$ is a scaling map. This implies that, given any $0<\varepsilon\leq\varepsilon_0$,  
$$
\sigma\left(\{z\in U'\colon|\mu(z)|\geq 1-\varepsilon\}\right)\leq L^2(\diam{U'})^2\sigma\left(\{z\in V_j\colon|\mu(z)|\geq 1-\varepsilon\}\right).
$$  
Moreover, since all the Fatou components $U'$ are uniform John domains \cite[Proposition~10]{Mih11}, it follows from \cite[p. 444]{Nta18} that there exists a constant $C>0$ such that
$$
(\diam{U'})^2\leq C\sigma\left(U'\right),
$$
for all preimage components $U'$ of $V_j$.

Therefore,
\begin{equation}\notag
\begin{split}
\sigma&\left(\{z\in\widehat{\mathbb C}\colon|\mu(z)|\geq 1-\varepsilon\}\right)\\&=\sum_{r(U')>m+1}\sigma\left(\{z\in{U'}\colon|\mu(z)|\geq 1-\varepsilon\}\right)+\sum_{j=1}^l \sigma\left(\{z\in{V_j}\colon|\mu(z)|\geq 1-\varepsilon\}\right)\\
&\leq \left(L^2 C\left(\sum_{U'}\sigma\left(U'\right)\right)+1\right)\cdot\left(\sum_{j=1}^l\sigma\left(\{z\in V_j\colon|\mu(z)|\geq 1-\varepsilon\}\right)\right)\\
&\leq \left(L^2Cl\sigma\left(\widehat{\mathbb C}\right)+l\right)Me^{-\alpha/\varepsilon},\quad \varepsilon\leq\varepsilon_0.
\end{split}
\end{equation}

Theorem~\ref{theorem:integrability_david} then gives us an orientation preserving homeomorphism $\Psi$ of $\widehat{\mathbb C}$ such that the pullback of the standard complex structure under $\Psi$ is equal to $\mu$.

We now proceed to show that the map $F:=\Psi\circ \widetilde{R}\circ \Psi^{-1}$ is anti-analytic. This is the desired map that replaces the dynamics of $R$ on each $U_i,\ i=1,2,\dots,k$, with the dynamics of the Nielsen map $\pmb{\rho}_{d_i}$. 

We will show that if $V$ is an open set in $\widehat{\C}\setminus\bigcup_{i=1}^k\inter{T_i}$ on which $\widetilde{R}$ restricts as a homeomorphism, then $F$ is anti-analytic on $\Psi(V)$. This will imply that $F$ is anti-analytic away from its critical points. Hence, by the Riemann removability theorem and the continuity of $F$, we can conclude that $F$ is anti-analytic on the interior of its domain of definition.

To this end, let $V$ be an open set in $\widehat{\C}\setminus\bigcup_{i=1}^k\inter{T_i}$ such that $\widetilde{R}\vert_V$ is a homeomorphism. Since $F\circ \Psi=\Psi\circ \widetilde{R}$, it is now enough to show that $\Psi\circ \widetilde{R}$ is an orientation-reversing David homeomorphism on $V$. Indeed, if both $\Psi$ and $\Psi\circ \widetilde{R}$ are David homeomorphisms on $V$ with opposite orientation, since both of them integrate $\mu$, we can apply Theorem~\ref{theorem:stoilow} to obtain that $F$ is anti-analytic on $\Psi(V)$. 

Since each $U_i$ is a John domain, Theorem~\ref{theorem:john_union_removable} tells us that $\bigcup_{i=1}^k\partial U_i$ is $W^{1,1}$--removable. Therefore, by Lemma~\ref{lemma:david_removable}, it now suffices to show that $\Psi\circ\widetilde{R}$ is an orientation-reversing David homeomorphism on $V\setminus\bigcup_{i=1}^k\partial U_i$.

Note that by construction, $\widetilde{R}$ is anti-conformal on $V\setminus\bigcup_{i=1}^k\overline{U_i}$. Hence, by part (ii) of Proposition~\ref{prop:david_qc_invariance}, $\Psi\circ\widetilde{R}$ is an orientation-reversing David homeomorphism on $V\setminus\bigcup_{i=1}^k\overline{U_i}$.

Now let us look at $V\cap\left(\bigcup_{i=1}^k U_i\right)$. Let us set $V_i:=V\cap U_i$, for $i\in\{1,\cdots, k\}$. We can factorize $\Psi\circ\widetilde{R}$ on $V_i$ as follows: 
\begin{equation}
\Psi\circ \widetilde{R}=(\Psi\circ \phi_i\circ H_i^{-1})\circ(\pmb{\rho}_{d_i}\circ H_i\circ \phi_i^{-1}).
\label{functional_factorization}
\end{equation}
Since $\phi_i^{-1}$ and $\pmb{\rho}_{d_i}$ are conformal and anti-conformal (respectively), the composition $\pmb{\rho}_{d_i}\circ H_i\circ \phi_i^{-1}$ is an orientation-reversing David homeomorphism by parts (i) and (ii) of Proposition~\ref{prop:david_qc_invariance}. By the same proposition, the map $H_i\circ\phi_i^{-1}$ is also a David homeomorphism. Moreover, both the maps $\Psi$ and $H_i\circ\phi_i^{-1}$ integrate $\mu$. Therefore, by Theorem~\ref{theorem:stoilow}, we conclude that $\Psi\circ \phi_i \circ H_i^{-1}$ is anti-conformal. By part (i) of Proposition~\ref{prop:david_qc_invariance} and Relation~\eqref{functional_factorization}, we now have that $\Psi\circ\widetilde{R}$ an orientation-reversing David homeomorphism on each $V_i$, and hence on $V\cap\left(\bigcup_{i=1}^k U_i\right)$. This completes the proof of the fact that $\Psi\circ\widetilde{R}$ is an orientation-reversing David homeomorphism on $V\setminus\bigcup_{i=1}^k\partial U_i$.
\end{proof}

\bigskip

\section{David surgery from anti-rational maps to kissing reflection groups}\label{anti_rat_group_surgery_sec}

An anti-rational map is called \emph{critically fixed} if all of its critical points are fixed. Each fixed critical point $c$ of a critically fixed anti-rational map $R$ lies in an invariant Fatou component $U_c$. If the local degree of $R$ at $c$ is $k$, then $R\vert_{U_c}$ is conformally conjugate to $\overline{z}^k\vert_{\D}$. In particular, there are $k+1$ fixed internal rays in $U_c$. The \emph{Tischler graph} of $R$ is defined as the union of all fixed internal rays in the invariant Fatou components. Clearly, this graph contains the postcritical set of $R$. It can be seen as a natural generalization of Hubbard trees of postcritically finite polynomials to the context of critically fixed maps. According to \cite[Lemma~4.9]{LLM20}, the planar dual of the Tischler graph of a critically fixed anti-rational map is simple and $2$-connected.

The goal of this section is to show that there is a global David surgery that turns a degree $d$ critically fixed anti-rational map $R$ into a kissing reflection group $\Gamma$ of rank $d+1$. 

\begin{theorem}\label{bijection_via_david_thm}
Suppose that the Tischler graph of a critically fixed anti-rational map $R$ is dual to the contact graph of the circle packing defining a kissing reflection group $\Gamma$. Then there exists a David homeomorphism of $\widehat{\C}$ that conjugates $R \vert_{\mathcal{J}(R)}$ to $\rho_\Gamma\vert_{\Lambda(\Gamma)}$.
\end{theorem}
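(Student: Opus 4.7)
The plan is to apply Lemma~\ref{david_surgery_lemma} to all of the invariant Fatou components of $R$ simultaneously, and then to identify the resulting surgered dynamical system with $\rho_\Gamma$ using the combinatorial hypothesis together with the rigidity of kissing reflection groups established in~\cite{LLM20}. Since $R$ is critically fixed, it is postcritically finite (hence subhyperbolic) and has connected Julia set. Each invariant Fatou component $U_i$ of $R$, $i\in\{1,\dots,k\}$, is the immediate basin of a fixed critical point $c_i$ of some local degree $d_i$, so that $R|_{\partial U_i}$ has degree $d_i$, and the hypotheses of the surgery lemma are satisfied. Applying Lemma~\ref{david_surgery_lemma} to the data $(R,U_1,\dots,U_k)$ produces a David homeomorphism $\Psi\colon \widehat{\C}\to\widehat{\C}$ and an anti-analytic map $F$ such that $F|_{\Psi(U_i)}$ is conformally conjugate to $\pmb{\rho}_{d_i}|_{\D}$ for each $i$, and such that $F$ is conformally conjugate to $R$ outside the grand orbit of $\bigcup_i \Psi(U_i)$, which is precisely $\Psi(\mathcal{J}(R))$.

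Next I would upgrade $F$ to the Nielsen map of a kissing reflection group $\Gamma'$. For each $i$ and each generating circle $\mathbf{C}_j$ of $\pmb{\Gamma}_{d_i+1}$, the sector piece of $\Psi(U_i)$ corresponding to $\overline{\Int{\mathbf{C}_j}}$ carries an anti-holomorphic involution, namely the restriction of $F$ via the ambient conformal conjugacy on $\Psi(U_i)$. An anti-holomorphic involution of a subdomain of $\widehat{\C}$ whose fixed set is a real-analytic Jordan arc extends, via Schwarz reflection along this arc and iteration of the group action, to an anti-M\"obius reflection in a Euclidean circle on the whole sphere; hence the group generated by these local involutions is a discrete Kleinian reflection group $\Gamma'$ whose generating packing has tangency structure inherited from the landing pattern of fixed internal rays of $R$ on $\bigcup_i \partial U_i$. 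By construction $\Psi(\mathcal{J}(R))=\Lambda(\Gamma')$, and the contact graph of the packing generating $\Gamma'$ is the planar dual of the Tischler graph of $R$.

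Finally, the combinatorial hypothesis closes the argument. By the bijective correspondence established in~\cite{LLM20} (via Thurston's topological characterization theorem), a kissing reflection group with connected limit set is determined up to M\"obius conjugation by its contact graph. The contact graphs of $\Gamma$ and $\Gamma'$ both equal the planar dual of the Tischler graph of $R$, so there exists $M\in\PSL(2,\C)$ with $\Gamma = M\Gamma' M^{-1}$. Composing $\Psi$ with $M$ yields a global David homeomorphism of $\widehat{\C}$ that sends $\mathcal{J}(R)$ to $\Lambda(\Gamma)$ and conjugates $R|_{\mathcal{J}(R)}$ to $\rho_\Gamma|_{\Lambda(\Gamma)}$.

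The principal obstacle is the rigidity upgrade in the second step: the David surgery produces an anti-holomorphic map $F$ whose sector pieces have \emph{a priori} only real-analytic Jordan fixed arcs (arising as images of the circles $\mathbf{C}_j$ under an anti-conformal, but not anti-M\"obius, deformation), and one must argue that the group of involutions they generate acts by genuine anti-M\"obius reflections in \emph{Euclidean} circles. The crucial ingredient is that an anti-holomorphic involution of $\widehat{\C}$ is necessarily anti-M\"obius and that its fixed set is then a circle or line; combining this with the iterated Schwarz-reflection extension of each local piece across tangency points (which are built into the combinatorics via the Tischler graph) and standard discreteness arguments for finitely generated Kleinian groups should yield the identification with a genuine kissing reflection group.
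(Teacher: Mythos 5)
Your overall strategy matches the paper's: apply Lemma~\ref{david_surgery_lemma} to all invariant Fatou components, identify the surgered map as a reflection group Nielsen map, and then compare combinatorially. However, there are two genuine gaps in the execution.

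The first and more serious gap is in your step upgrading $F$ to $\rho_{\Gamma'}$. You assert that the anti-holomorphic involution on each sector piece ``extends, via Schwarz reflection along this arc and iteration of the group action, to an anti-M\"obius reflection in a Euclidean circle on the whole sphere,'' invoking the fact that anti-holomorphic involutions of $\widehat{\C}$ are anti-M\"obius. But the involution here is only defined on a piece $\overline{\Omega_j}$, a quadrature domain, and the Schwarz reflection map of a general simply connected quadrature domain does \emph{not} extend to an involution of $\widehat{\C}$: it is $(\deg\phi_j)$-to-$1$ onto $\inter{\Omega_j^c}$, where $\phi_j$ is the uniformizing rational map (Proposition~\ref{simp_conn_quad}). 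Concluding that the map is a circle reflection is equivalent to showing $\deg\phi_j=1$, which is precisely the nontrivial point. The paper establishes this via a dynamical degree count: by \cite[Corollary~4.7]{LLM20}, $R$ sends each face of its Tischler graph homeomorphically onto the closure of its complement, so $\sigma$ maps $\Psi(\mathcal{J}(R))\cap\Omega_j$ univalently onto $\Psi(\mathcal{J}(R))\setminus\overline{\Omega_j}$; since $\Psi(\mathcal{J}(R))$ is completely invariant, this forces $\sigma:\sigma^{-1}(\Omega_j)\to\Omega_j$ to have degree zero, hence $\deg\phi_j=1$. That argument is absent from your proposal and cannot be replaced by the involution heuristic.

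The second gap is your final step: you claim that two kissing reflection groups with isomorphic contact graphs are M\"obius conjugate, citing \cite{LLM20}. This is false; the bijection in \cite{LLM20} classifies kissing reflection groups with connected limit set only up to \emph{quasiconformal} conjugacy (indeed, a single contact graph corresponds to a positive-dimensional deformation space). The paper therefore devotes a separate lemma to producing a quasiconformal homeomorphism $\Phi$ conjugating $\rho_{\Gamma}|_{\Lambda(\Gamma)}$ to $\rho_{\Gamma'}|_{\Lambda(\Gamma')}$, and then observes via Proposition~\ref{prop:david_qc_invariance}(i) that $\Phi^{-1}\circ\Psi$ is still a David homeomorphism. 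Your argument can be repaired the same way, but as written the M\"obius claim is incorrect.
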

\begin{proof}
Let $\Gamma$ be generated by reflections in the circles $C_1,\cdots, C_{d+1}$. We denote the connected components of the fundamental domain $\mathcal{F}_\Gamma$ by $\mathcal{P}_1,\cdots,\mathcal{P}_k$. Note that each $\mathcal{P}_i$ is a topological $(d_i+1)$-gon with vertices removed, for some $d_i\geq 2$, $i\in\{1,\cdots, k\}$. Thus, the faces of the contact graph of the underlying circle packing are topological $(d_i+1)$-gons, $i\in\{1,\cdots, k\}$. Since the Tischler graph of $R$ is dual to this contact graph, it follows that $R$ has $k$ invariant Fatou components $U_1,\cdots, U_k$ such that $R\vert_{\partial U_i}$ has degree $d_i$.

Applying Lemma~\ref{david_surgery_lemma} to the Fatou components $U_1,\cdots, U_k$, we obtain a global David homeomorphism $\Psi$ and an anti-meromorphic map $\sigma$ on a subset of $\widehat{\C}$ such that $\sigma$ is conformally conjugate to $\pmb{\rho}_{d_i}\vert_{\D}$ on $\Psi(U_i)$, $i\in\{1,\cdots, k\}$, and topologically conjugate to $R\vert_{\mathcal{J}(R)}$ on $\Psi(\mathcal{J}(R))$ (the conjugacy $\Psi$ maps the dynamical plane of $R$ to that of $\sigma$). It follows from the proof of Lemma~\ref{david_surgery_lemma} that the domain of definition of $\sigma$ is $\Psi\left(\widehat{\C}\setminus\bigcup_{i=1}^k\inter T_i\right)$, where each $T_i\subset U_i$ is a vertices-removed topological $(d_i+1)$-gon with vertices at the landing points of the $(d_i+1)$ fixed internal rays of $R$ in $U_i$. It follows from the construction that the domain of definition of $\sigma$ is naturally homeomorphic to $\displaystyle\bigcup_{i=1}^{d+1}\overline{\Int{C_i}}$. We denote these $(d+1)$ Jordan domains by $\Omega_1,\cdots, \Omega_{d+1}$ such that $\Omega_j$ corresponds to $\Int{C_j}$ under the canonical homeomorphism between the domain of definition of $\sigma$ and $\displaystyle\bigcup_{i=1}^{d+1}\overline{\Int{C_i}}$. In particular, the domain of definition of $\sigma$ is connected, and its interior is the union of $(d+1)$ disjoint Jordan domains. Moreover, $\sigma$ fixes the boundary of each of these domains pointwise. Thus, by definition, each of these Jordan domains is a quadrature domain, and $\sigma$ restricts to each such domain as its Schwarz reflection map.

By \cite[Corollary~4.7]{LLM20}, the map $R$ carries each face of its Tischler graph onto the closure of
its complement as an orientation reversing homeomorphism. Since $\Psi$ conjugates $R\vert_{\mathcal{J}(R)}$ to $\sigma\vert_{\Psi(\mathcal{J}(R))}$, it follows that $\sigma$ carries $\Psi(\mathcal{J}(R))\cap\Omega_j$ univalently to $\Psi(\mathcal{J}(R))\setminus\overline{\Omega_j}$.

By Proposition~\ref{simp_conn_quad}, there exist rational maps $\phi_j,\ j\in\{1,\cdots, d+1\}$, such that $\phi_j:\D\to\Omega_j$ is a conformal isomorphism. Furthermore, by the same proposition, the map $\sigma:\sigma^{-1}(\Omega_j)\to\Omega_j$ is a (branched) cover of degree $(\deg{\phi_j}-1)$. Since $\Psi(\mathcal{J}(R))$ is completely invariant under $\sigma$, we conclude by the previous paragraph that $\deg{\phi_j}=1,\ j\in\{1,\cdots, d+1\}$. Thus, each $\phi_j$ is a M{\"o}bius map. It follows that each $\Omega_j$ is a round disk, and $\sigma\vert_{\overline{\Omega_j}}$ is simply the reflection map in the boundary circle $C_j':=\partial\Omega_j$. Our labeling of the domains $\Omega_j$ also guarantees that the contact graph of the circle packing $\{C_1',\cdots, C_{d+1}'\}$ is planar isomorphic to that of the circle packing $\{C_1,\cdots, C_{d+1}\}$. 

Let $\Gamma'$ be the group generated by reflections in the circles $C_i'$. The above discussion implies that $\sigma\equiv\rho_{\Gamma'}$, and the David homeomorphism $\Psi$ conjugates $R\vert_{\mathcal{J}(R)}$ to $\rho_{\Gamma'}\vert_{\Lambda(\Gamma')}$.

By general rigidity results for geometrically finite Kleinian groups \cite[Theorem~4.2]{Tuk85} (cf. \cite{Mos68, Pra73}), there exists a quasiconformal homeomorphism $\Phi$ of the sphere that conjugates $\Gamma$ to $\Gamma'$ by conjugating the reflection in $C_j$ to the reflection in $C_j'$, $j\in\{1,\cdots,d+1\}$. In particular, such a $\Phi$ conjugates $\rho_{\Gamma}\vert_{\Lambda(\Gamma)}$ to $\rho_{\Gamma'}\vert_{\Lambda(\Gamma')}$. In light of Proposition~\ref{prop:david_qc_invariance} (i), we conclude that $\Phi^{-1}\circ\Psi$ is the desired global David homeomorphism conjugating $R\vert_{\mathcal{J}(R)}$ to $\rho_\Gamma\vert_{\Lambda(\Gamma)}$.
\end{proof}

Note that the Tischler graph of a critically fixed anti-polynomial has a vertex at $\infty$, and this vertex lies on the boundary of each face of the Tischler graph. Hence, the planar dual of the Tischler graph of a critically fixed anti-polynomial has a face containing all the vertices on its boundary; i.e., the planar dual is outerplanar. By \cite[Theorem~1.2]{LLM20}, the contact graph of the circle packing defining a kissing reflection group $\Gamma$ is $2$-connected and outerplanar if and only if $\Gamma$ is a necklace group. Thus, if the Tischler graph of a critically fixed anti-polynomial $P$ is dual to the contact graph of the circle packing defining a kissing reflection group $\Gamma$, then $\Gamma$ is a necklace group. With these observations on mind, we now record a useful corollary of Theorem~\ref{bijection_via_david_thm}.

\begin{corollary}\label{david_necklace_cor}
Suppose that the Tischler graph of a critically fixed anti-polynomial $P$ is dual to the contact graph of the circle packing defining a necklace reflection group $\Gamma$. Then there exists a David homeomorphism of $\widehat{\C}$ that conjugates $P\vert_{\mathcal{J}(R)}$ to $\rho_\Gamma\vert_{\Lambda(\Gamma)}$.
\end{corollary}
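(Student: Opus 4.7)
The plan is to deduce this corollary as a direct specialization of Theorem~\ref{bijection_via_david_thm}. Since every anti-polynomial $P$ is in particular an anti-rational map (one that fixes $\infty$ totally), and since the hypothesis on Tischler graphs in the corollary is verbatim the hypothesis of Theorem~\ref{bijection_via_david_thm} with $R$ replaced by $P$, I would simply invoke that theorem applied to $R=P$. The theorem produces the desired global David homeomorphism of $\widehat{\C}$ conjugating $P|_{\mathcal{J}(P)}$ to $\rho_\Gamma|_{\Lambda(\Gamma)}$.

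The only remaining content is a consistency remark: in the polynomial case the kissing group $\Gamma$ that appears is automatically a necklace group, so the statement is well-posed. This was observed in the paragraph preceding the corollary, via the outerplanarity of the dual of the Tischler graph of a critically fixed anti-polynomial (the vertex at $\infty$ lies on the boundary of every face) combined with \cite[Theorem~1.2]{LLM20}, which characterizes necklace groups as the kissing reflection groups whose defining circle packings have $2$-connected outerplanar contact graph. Since there is no additional work to do, I would not expect any obstacles; the proof is a one-line invocation of Theorem~\ref{bijection_via_david_thm}, prefaced by this well-posedness remark.
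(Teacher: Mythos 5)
Your proposal matches the paper exactly: the corollary is recorded as an immediate specialization of Theorem~\ref{bijection_via_david_thm} to $R=P$, and the only added observation is the one you make — that outerplanarity of the dual Tischler graph (because the vertex at $\infty$ lies on every face boundary) forces $\Gamma$ to be a necklace group via \cite[Theorem~1.2]{LLM20}. Nothing further is needed.
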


\bigskip

\subsection{Connections with known results}\label{kissing_group_crit_fixed_subsec}

Let us now mention connections of Theorem~\ref{bijection_via_david_thm} with some recent works on critically fixed anti-rational maps and kissing reflection groups.

In \cite[\S 10]{LLMM19}, the existence of the David homeomorphism of Theorem~\ref{bijection_via_david_thm} was proved
in the special case where $\Gamma$ is the classical Apollonian gasket reflection group
and $R$ is a cubic critically fixed anti-rational map (with four distinct critical points).
(More generally, it was done for a class of Apollonian-like gaskets.)

Critically fixed anti-rational maps were classified in terms of their Tischler graphs in \cite{Gey20} and \cite{LLM20}. Furthermore, in \cite[Theorem~1.1]{LLM20}, a dynamically natural bijection between the following classes of objects was established:
\bigskip

\begin{itemize}
\item $\{$Critically fixed anti-rational maps of degree $d$ up to M{\"o}bius conjugacy$\}$,\\

\item $\{$Kissing reflection groups of rank $d+1$ with connected limit set up to QC conjugacy$\}$.
\end{itemize}

The bijection is such that the contact graph of the circle packing defining a kissing reflection group $\Gamma$ is dual to the Tischler graph of the corresponding anti-rational map $R$. Moreover, if $R$ and $\Gamma$ correspond to each other under this bijection, then the dynamics of $R$ on the Julia set $\mathcal{J}(R)$ is topologically conjugate to the action of the Nielsen map $\rho_\Gamma$ on the limit set $\Lambda(\Gamma)$.
Particular occasions of this bijection
had been earlier exhibited in the aforementioned case of Apollonian-like gaskets
(corresponding to triangulations) \cite{LLMM19}
and in the case of the correspondence between
critically fixed anti-polynomials and necklace groups \cite[Theorem~B]{LMM20} (also compare \cite[Theorem~1.2]{LLM20}).

Note that all periodic points of a hyperbolic anti-rational map on its Julia set are repelling, while the Nielsen map of a kissing reflection group has parabolic fixed points on the limit set. Hence, the aforementioned conjugacies cannot be quasisymmetric; i.e., they cannot admit a quasiconformal extensions to the Riemann sphere. Theorem~\ref{bijection_via_david_thm} shows that these conjugacies extend as David homeomorphisms of the sphere.

Combining Theorem~\ref{bijection_via_david_thm} with the bijection results mentioned above, we have the following result.

\begin{corollary}\label{bijection_via_david_cor}
Let $\Gamma$ be a kissing reflection group of rank $d+1$. Then, there exists a degree $d$ critically fixed anti-rational map $R$ such that $R\vert_{\mathcal{J}(R)}$ is conjugate to $\rho_\Gamma\vert_{\Lambda(\Gamma)}$ by a David homeomorphism of $\widehat{\C}$.

Moreover, if $\Gamma$ is a necklace group, then $R$ can be chosen as a critically fixed anti-polynomial.
\end{corollary}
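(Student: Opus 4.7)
The plan is to combine the realization direction of the correspondence of \cite[Theorem~1.1]{LLM20} with Theorem~\ref{bijection_via_david_thm}, which does the analytic heavy lifting. Given a kissing reflection group $\Gamma$ of rank $d+1$ (with connected limit set, which is the regime where the correspondence applies), I would first invoke \cite[Theorem~1.1]{LLM20} to produce, uniquely up to M\"obius conjugacy, a critically fixed anti-rational map $R$ of degree $d$ whose Tischler graph is the planar dual of the contact graph of the circle packing defining $\Gamma$. This is precisely the hypothesis of Theorem~\ref{bijection_via_david_thm}, so applying that theorem to the pair $(R,\Gamma)$ directly produces a David homeomorphism of $\widehat{\C}$ conjugating $R|_{\mathcal{J}(R)}$ to $\rho_\Gamma|_{\Lambda(\Gamma)}$.

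For the ``moreover'' clause, suppose $\Gamma$ is a necklace group. By Remark~\ref{outerplanar_2_conn_rem} the contact graph of the defining circle packing is $2$-connected and outerplanar, so it has a distinguished outer face whose boundary contains every vertex. Under planar duality this outer face corresponds to a distinguished vertex in the Tischler graph of the associated $R$ that lies on the boundary of every face, mirroring the way $\infty$ sits in the Tischler graph of a critically fixed anti-polynomial. Normalizing $R$ by a M\"obius conjugacy sending this distinguished vertex to $\infty$ then turns $\infty$ into a fixed critical point of $R$, which forces $R$ to be (conjugate to) a critically fixed anti-polynomial. This promotion of the anti-rational realization to an anti-polynomial in the outerplanar/necklace case is exactly \cite[Theorem~B]{LMM20} (equivalently \cite[Theorem~1.2]{LLM20}), so it may be invoked rather than reproved. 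Applying Corollary~\ref{david_necklace_cor} (which is the anti-polynomial specialization of Theorem~\ref{bijection_via_david_thm}) to this $R$ and $\Gamma$ then yields the desired David conjugacy with $R$ an anti-polynomial.

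I do not expect a serious obstacle to remain. All of the genuinely analytic input, namely the David extension of the boundary conjugacy supplied by Theorem~\ref{theorem:reflections}, the David surgery of Lemma~\ref{david_surgery_lemma}, and the recovery of the round circle packing from the post-surgery quadrature-domain configuration carried out inside the proof of Theorem~\ref{bijection_via_david_thm}, has already been done. What is left is purely combinatorial bookkeeping: one must use the realization direction of the bijection (group $\to$ anti-rational map) rather than the direction stated in Theorem~\ref{bijection_via_david__intro_version}, and, in the necklace case, one must check that the outerplanar combinatorics force the realization to be an anti-polynomial after a M\"obius normalization sending the distinguished dual vertex to $\infty$. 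Both facts are explicit in the cited references, so the corollary follows by assembly.
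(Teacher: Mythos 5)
Your proof is correct and matches the paper's intended argument: one feeds the realization direction of the bijection of \cite[Theorem~1.1]{LLM20} (group $\to$ critically fixed anti-rational map with dual Tischler graph) into Theorem~\ref{bijection_via_david_thm}, and in the necklace case uses \cite[Theorem~B]{LMM20} (equivalently \cite[Theorem~1.2]{LLM20}) to upgrade the realization to an anti-polynomial before applying Corollary~\ref{david_necklace_cor}. Your parenthetical that $\Gamma$ must have connected limit set is an implicit hypothesis of the corollary (since critically fixed anti-rational maps have connected Julia sets), and is worth making explicit.
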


\bigskip

\section{Conformal removability of Julia and limit sets}\label{conf_rem_julia_limit_sec}

\subsection{Conformal removability of limit sets of necklace reflection groups}\label{conf_rem_limit_subsec}

An application of Theorem~\ref{theorem:john_removable} and Theorem~\ref{bijection_via_david_thm} is the following conformal removability result for limit sets of necklace reflection groups (see Figures~\ref{necklace_group_fig} and~\ref{kleinian_cusp} for examples of such limit sets).

\begin{theorem}\label{limit_removable_thm}
Let $\Gamma$ be a necklace group. Then, the limit set $\Lambda(\Gamma)$ is conformally removable.
\end{theorem}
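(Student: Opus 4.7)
The plan is to realize $\Lambda(\Gamma)$ as the image, under a David homeomorphism of $\widehat{\C}$, of the boundary of a John domain, and then invoke Theorem~\ref{theorem:john_removable}. The key input will be the dynamical correspondence between necklace groups and critically fixed anti-polynomials established in the previous section.

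First, I would apply Corollary~\ref{bijection_via_david_cor} (in its second, ``necklace'' version) to produce a critically fixed anti-polynomial $P$ of degree $d$ and a David homeomorphism $\Psi\colon\widehat{\C}\to\widehat{\C}$ that conjugates $P|_{\mathcal{J}(P)}$ to $\rho_\Gamma|_{\Lambda(\Gamma)}$. In particular $\Psi(\mathcal{J}(P))=\Lambda(\Gamma)$. Since $P$ is critically fixed, every critical orbit is a fixed point, so $P$ is postcritically finite and therefore subhyperbolic. Because $\Psi$ is a homeomorphism and $\Lambda(\Gamma)$ is connected (necklace groups have connected limit sets by Remark~\ref{outerplanar_2_conn_rem}), the Julia set $\mathcal{J}(P)$ is connected as well, so the basin $A(\infty)$ of $\infty$ under $P$ is a simply connected, completely invariant Fatou component whose boundary equals $\mathcal{J}(P)$.

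Next, I would invoke the theorem of Carleson--Jones--Yoccoz (\cite[\S 7, Thm.~3.1]{CG93}, already used in the proof of Theorem~\ref{bijection_via_david_thm}) to conclude that $A(\infty)$ is a John domain. This is the step that uses subhyperbolicity in an essential way; it is standard in the polynomial setting, and the version needed for critically fixed anti-polynomials follows by applying it to $P\circ P$, which is a postcritically finite polynomial with the same basin of infinity. Thus $\mathcal{J}(P)=\partial A(\infty)$ is the boundary of a John domain.

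Finally, Theorem~\ref{theorem:john_removable} applied to the John domain $A(\infty)$ and the David homeomorphism $\Psi$ yields conformal removability of
\[
\Psi\bigl(\partial A(\infty)\bigr)=\Psi(\mathcal{J}(P))=\Lambda(\Gamma),
\]
as required. I do not expect any serious obstacle: all heavy lifting has already been carried out, namely the David surgery (Section~\ref{anti_rat_group_surgery_sec}) which produces $\Psi$, the Jones--Smirnov $W^{1,1}$-removability of John boundaries (Theorem~\ref{theorem:john_union_removable}), and its combination with the David Integrability Theorem (Theorem~\ref{theorem:w11_removable}). The only point requiring a brief verification is that the basin of infinity of the critically fixed anti-polynomial provided by Corollary~\ref{bijection_via_david_cor} is indeed a John domain, which reduces to the classical polynomial case.
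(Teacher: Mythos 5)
Your proposal is correct and follows essentially the same route as the paper: Corollary~\ref{bijection_via_david_cor} supplies a critically fixed anti-polynomial $P$ and a global David homeomorphism $\Psi$ with $\Psi(\mathcal{J}(P))=\Lambda(\Gamma)$, the basin of infinity of $P$ is a John domain by the Carleson--Jones--Yoccoz theorem for subhyperbolic polynomials, and Theorem~\ref{theorem:john_removable} then gives conformal removability. Your extra remark about passing to $P\circ P$ to reduce the anti-holomorphic case to the classical polynomial statement is a small but sensible piece of bookkeeping that the paper glosses over; it does not change the argument in any essential way.
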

\begin{proof}
By Corollary~\ref{bijection_via_david_cor}, there exist a degree $d$ critically fixed anti-polynomial $P$ and a David homeomorphism $\Psi$ of $\widehat{\C}$ such that $\Psi(\mathcal{J}(P))=\Lambda(\Gamma)$. Since the basin of infinity of a hyperbolic polynomial is a John domain, it follows by Theorem~\ref{theorem:john_removable} that $\Lambda(\Gamma)$ is conformally removable.
\end{proof}

\bigskip

\subsection{Conformal removability of geometrically finite polynomial Julia sets}\label{conf_rem_julia_subsec}

We recall that a rational map $R$ is said to be \textit{geometrically finite} if its postcritical set intersects the Julia set $\mathcal J(R)$ of $R$ in a finite set, or equivalently, if every critical point of $R$ is either preperiodic, or attracted to an attracting or parabolic cycle. The main result of this section is the following.

\begin{theorem}\label{T:ConfRemove}
Let $P$ be a geometrically finite polynomial of degree $d\geq2$ with connected Julia set $\mathcal J(P)$. Then $\mathcal J(P)$ is conformally removable.
\end{theorem}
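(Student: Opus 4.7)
The plan is to realize $\mathcal{J}(P)$ as the image under a global David homeomorphism $\Psi\colon\widehat{\C}\to\widehat{\C}$ of the Julia set of a suitable subhyperbolic polynomial $P_0$, and then to invoke $W^{1,1}$-removability of $\mathcal{J}(P_0)$ together with Theorem~\ref{theorem:w11_removable} to obtain conformal removability of $\mathcal{J}(P)$. This parallels the strategy implemented in Corollary~\ref{david_necklace_cor} and Theorem~\ref{limit_removable_thm} for necklace groups, transplanted to the polynomial setting.

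First, I would select a subhyperbolic polynomial $P_0$ of degree $d$ combinatorially equivalent to $P$, obtained by \emph{unparabolizing} each parabolic cycle: a parabolic cycle of $P$ with $\nu$ attracting petals is replaced by a single attracting cycle of $P_0$, while the rational lamination and Hubbard tree data are preserved. Concretely, one chooses $P_0$ inside the hyperbolic component of the connectedness locus on whose boundary $P$ sits. Second, I would upgrade the topological conjugacy between $P_0|_{\mathcal{J}(P_0)}$ and $P|_{\mathcal{J}(P)}$ to a global David conjugacy $\Psi$ by mimicking the hyperbolic-to-parabolic surgery of Lemma~\ref{david_surgery_lemma}. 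For each invariant (super)attracting Fatou component $U_0$ of $P_0$ that corresponds to a parabolic Fatou component $U$ of $P$, conjugating $P_0|_{\partial U_0}$ and $P|_{\partial U}$ via Riemann coordinates yields a hyperbolic Blaschke product $B_0$ and a parabolic Blaschke product $B$ on $\mathbb{S}^1$. Both are expansive piecewise analytic covers satisfying conditions~\eqref{condition:uv} and~\eqref{condition:holomorphic}, whose periodic points on $\mathbb{S}^1$ are symmetrically hyperbolic, respectively symmetrically parabolic, matched in the correct combinatorial way by the boundary conjugacy. Theorem~\ref{theorem:extension_generalization} in the alternatives~\ref{HH} and~\ref{HP} then supplies a David extension of this boundary conjugacy to $\D$. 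Transporting it back via the Riemann coordinates, defining it as the topological conjugacy on $\mathcal{J}(P_0)$, and spreading the construction to all iterated preimages of $U_0$ via the dynamics produces a $P_0$-invariant Beltrami coefficient $\mu$ on $\widehat{\C}$. Exponential integrability of $\mu$ follows, exactly as in the concluding estimate in the proof of Lemma~\ref{david_surgery_lemma}, from the John property of Fatou components of $P_0$ (Carleson--Jones--Yoccoz, since $P_0$ is subhyperbolic with connected Julia set) combined with uniform Koebe distortion on the finitely many preimages touching the postcritical set. Integrating $\mu$ via Theorem~\ref{theorem:integrability_david} yields $\Psi$, and a standard Stoilow argument based on Theorem~\ref{theorem:stoilow} and the $W^{1,1}$-removability of the Jordan curves $\partial U_0$ (Theorem~\ref{theorem:john_union_removable}) shows that $\Psi\circ P_0\circ\Psi^{-1}$ extends to a degree $d$ polynomial realizing the same parabolic combinatorics as $P$; rigidity of geometrically finite polynomials identifies this polynomial with $P$ up to affine conjugacy, and in particular $\Psi(\mathcal{J}(P_0))=\mathcal{J}(P)$.

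Finally, since $P_0$ is subhyperbolic with connected Julia set, its basin of infinity $A_\infty(P_0)$ is a John domain, so by Theorem~\ref{theorem:john_union_removable} the set $\mathcal{J}(P_0)=\partial A_\infty(P_0)$ is removable for $W^{1,1}$ functions. Theorem~\ref{theorem:w11_removable} applied to the David homeomorphism $\Psi$ and the $W^{1,1}$-removable compact set $\mathcal{J}(P_0)$ then yields conformal removability of $\mathcal{J}(P)=\Psi(\mathcal{J}(P_0))$. The main obstacle lies in Step~2: the global exponential integrability of the Beltrami coefficient $\mu$ must survive the fact that the hyperbolic-to-parabolic modification is performed simultaneously on every iterated preimage of every parabolic basin of $P$, and only the combination of the sharp David distortion bounds underlying Theorem~\ref{theorem:extension_generalization} with uniform John distortion control for the subhyperbolic model $P_0$ allows one to sum the local exponentially small measures $\sigma(\{|\mu|\geq 1-\varepsilon\})$ into a globally finite bound of the same form.
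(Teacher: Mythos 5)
Your overall strategy---transport $\mathcal{J}(P)$ onto the Julia set of a subhyperbolic model via a global David homeomorphism, then invoke Theorem~\ref{theorem:john_union_removable} and Theorem~\ref{theorem:w11_removable}---is indeed the one the paper uses. But there are two substantive gaps that the paper addresses and your outline does not.

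The first and most serious omission is the reduction to the \emph{rigid model}. You perform the David surgery on the subhyperbolic $P_0$ with the aim of landing exactly on $P$ (``identifies this polynomial with $P$ up to affine conjugacy''). This cannot work in general: Lemma~\ref{david_surgery_lemma} always produces, on the modified invariant Fatou components, dynamics conformally conjugate to a \emph{specific} degree-$k$ model (in the paper's setting $\pmb\rho_k$, in yours the standard parabolic Blaschke product $B_k$), while on the unmodified ones the dynamics is inherited unchanged from $P_0$. Neither of these need match $P$ conformally: $P$ may have non-superattracting attracting cycles, parabolic basins with several critical points, or strictly preperiodic critical points living in Fatou components, so the conformal type of its Fatou dynamics is generally different from what the surgery output carries. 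The paper handles this by first passing from $P$ to McMullen's rigid model $\widehat{P}$ (a qc-conjugate geometrically finite polynomial whose fixed Fatou components carry exactly $z^k$ or $B_k$ dynamics and whose other Fatou components each contain a unique critical orbit element), performing the David surgery so as to hit $\widehat{P}$, and only at the very end noting that removability passes from $\mathcal{J}(\widehat P)$ to $\mathcal{J}(P)$ by qc invariance. Without this reduction your construction yields removability of the Julia set of a \emph{different} geometrically finite polynomial, and you have not supplied the bridge back to $\mathcal{J}(P)$. Relatedly, applying Theorem~\ref{theorem:extension_generalization} to the parabolic Blaschke product arising directly from a general basin of $P$ (as opposed to the explicit $B_k$ of Example~\ref{example:blaschke}) requires verifying conditions~\eqref{condition:uv} and~\eqref{condition:holomorphic} for that Blaschke product, which the paper avoids having to do.

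The second gap is the logical order of the identification argument. You invoke ``rigidity of geometrically finite polynomials'' to conclude $\Psi(\mathcal{J}(P_0))=\mathcal{J}(P)$ and \emph{then} deduce removability. But the rigidity statement you need---that a geometrically finite polynomial is determined up to M\"obius conjugacy by a topological conjugacy on the Julia set that is conformal on the Fatou set---is established in the paper \emph{via} conformal removability of the surgery output's Julia set, using Theorem~\ref{theorem:stoilow}. The paper therefore first proves $\mathcal{J}(\widetilde Q)=H(\mathcal{J}(Q))$ removable (which needs no identification and follows directly from Theorem~\ref{theorem:john_removable}), and only then uses that removability to conclude $\widetilde Q$ is M\"obius conjugate to $\widehat P$. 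As phrased, your argument runs these two steps in the wrong order, making the rigidity step circular unless you supply an independent reference for it.

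A minor point: your choice of $P_0$ ``inside the hyperbolic component on whose boundary $P$ sits'' is heuristic, and you'd still need to prove that $P_0\vert_{\mathcal{J}(P_0)}$ is topologically conjugate to $P\vert_{\mathcal{J}(P)}$. The paper does this carefully via Kiwi's $\R$eal lamination theory (Proposition~\ref{geom_finite_laminations_prop}, Theorem~\ref{realizing_laminations_prop}, Corollary~\ref{geom_finite_to_postcrit_finite}), yielding a postcritically finite $Q$ with $\lambda(Q)=\lambda(\widehat P)$; your unparabolization picture points at the same conclusion but the bookkeeping (especially about critical points on the Julia set) is nontrivial.
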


We remark that special cases of Theorem~\ref{T:ConfRemove} were already known in the 90s. Indeed, conformal removability of connected Julia sets of subhyperbolic polynomials was proved by Jones \cite{Jon91}. In fact, it was shown there that in the subhyperbolic case, the basin of infinity is a John domain, which implies conformal removability of the Julia set. On the other hand, when a polynomial has parabolic cycles, the basin of infinity is no longer a John domain. However, for the polynomial $P(z)=z^2+\frac14$, the whole Julia set is the boundary of the immediate basin of attraction of a parabolic fixed point. In this case, it was shown in \cite{CJY94} that the parabolic basin is John, and hence the corresponding Julia set is conformally removable. 

The proof of Theorem~\ref{T:ConfRemove} is indirect. We first realize the connected Julia set of a geometrically finite polynomial $P$ as that of a subhyperbolic polynomial $Q$, whose Julia set is known to be $W^{1,1}$-removable. We then use our David surgery technique to construct a global David homeomorphism that carries the Julia set of $Q$ onto the Julia set of $P$. The proof is completed by invoking Theorem~\ref{theorem:john_removable}.

We note that given a geometrically finite polynomial $P$ with connected Julia set, the existence of a subhyperbolic polynomial $Q$ with topologically conjugate Julia dynamics and a David conjugacy between the Julia sets of $Q$ and $P$ was proved earlier in \cite[Corollary~3.10]{HT04}. However, to illustrate the scope of the machinery that we have developed, we include independent and self-contained proofs of these facts. We also point out that we use arguments due to Kiwi (using the Thurston Realization Theorem) to demonstrate the existence of such a polynomial $Q$, while the proof of this fact given in \cite{Hai00} involves perturbing $P$ to a polynomial-like map and then applying the straightening theorem (for polynomial-like maps) to upgrade it to a polynomial $Q$. Further, we construct a David conjugacy between the Julia dynamics of $Q$ and $P$ using our David Extension Theorem and a surgery technique using Riemann maps of Fatou components (Lemma~\ref{david_surgery_lemma}), whereas the same result was proved in \cite{Hai98} using more local arguments.
 
For the first step, we need to recall some basic facts from polynomial dynamics. Let $\mathcal{P}_d$ be the space of all monic, centered, holomorphic polynomials of degree $d$; i.e.,
$$
\mathcal{P}_d:=\{P(z)=z^d+a_{d-2}z^{d-2}+\cdots+a_1z+a_0:\ a_0,\cdots, a_{d-2}\in\C\}.
$$ 
Two distinct members of $\mathcal{P}_d$ are affinely conjugate if and only if they are conjugate via rotation by a $(d-1)$-st root of unity. The filled Julia set and the basin of infinity of a polynomial $P$ are denoted by $\mathcal{K}(P)$ and $\mathcal{B}_\infty(P)$ respectively. 

If $P$ is a monic, centered, polynomial of degree $d$ such that $\mathcal{J}(P)$ is connected, then there exists a conformal map $\phi_P: \mathbb{D}^*\rightarrow \mathcal{B}_\infty(P)$ that conjugates $z^d\vert_{\mathbb{D}^*}$ to $P\vert_{\mathcal{B}_\infty(P)}$, and satisfies $\phi_P'(\infty)=1$. We will call $\phi_P$ the \emph{B\"ottcher coordinate} for $P$. Furthermore, if $\partial \mathcal{K}(P)=\mathcal{J}(P)$ is locally connected, then $\phi_P$ extends to a semiconjugacy between $z^{d}\vert_{\mathbb{S}^1}$ and $P\vert_{\mathcal{J}(P)}$. The external dynamical ray of $P$ at angle $\theta$ (the image of the radial line at angle $\theta$ under $\phi_P$) is denoted by $R_P(\theta)$. 

Our immediate goal is to associate a postcritically finite polynomial (in $\mathcal{P}_d$) to each geometrically finite member of $\mathcal{P}_d$ (with connected Julia set). To this end, we first recall the notion of \emph{polynomial laminations}.

\begin{definition}\label{lamination_def} 
Let $p\in\mathcal{P}_d$ be such that $\mathcal{J}(P)$ is connected and locally connected. We define $\lambda(P)$ to be he smallest equivalence relation on $\R/\Z$ that identifies $s,t\in\R/\Z$ whenever the external dynamical rays $R_P(s)$ and $R_P(t)$ land at a common point on $\mathcal{J}(P)$. We call $\lambda(P)$ the \emph{lamination} of $P$.
\end{definition}

\begin{prop}\label{geom_finite_laminations_prop}
Let $P\in\mathcal{P}_d$ be geometrically finite with connected $\mathcal{J}(P)$. Then, $\lambda(P)$ satisfies the following properties, where the map $m_d:\R/\Z\to\R/\Z$ is given by $\theta\mapsto d\theta$.
\begin{enumerate}\upshape
\item $\lambda(P)$ is closed in $\R/\Z\times\R/\Z$.

\item Each equivalence class $A$ of $\lambda(P)$ is a finite subset of $\R/\Z$.

\item If $A$ is a $\lambda(P)$-equivalence class, then $m_d(A)$ is also a $\lambda(P)$-equivalence class.

\item If $A$ is a $\lambda(P)$-equivalence class, then $A\mapsto m_d(A)$ is \emph{consecutive preserving}\footnote{This notion, which is slightly more general than that of being cyclic order preserving, is required to handle the case when $A$ is formed by the arguments of the external rays landing at a critical point of $P$.}; i.e., for every connected component $(s,t)$ of $\R/\Z\setminus A$, we have that $(ds,dt)$ is a connected component of $\R/\Z\setminus m_d(A)$.

\item $\lambda(P)$-equivalence classes are pairwise \emph{unlinked}; i.e., if $A$ and $B$ are two distinct equivalence classes of $\lambda(P)$, then there exist disjoint intervals $I_A, I_B\subset\R/\Z$ such that $A\subset I_A$ and $B\subset I_B$.

\item $\faktor{(\R/\Z)}{\lambda(P)}\cong\mathcal{J}(P)$, and $\phi_P$ descends to a topological conjugacy between $$m_d:\faktor{(\R/\Z)}{\lambda(P)}\to\faktor{(\R/\Z)}{\lambda(P)}\ \textrm{and}\ P: \mathcal{J}(P)\to\mathcal{J}(P).$$

\item If $\gamma\subset\faktor{(\R/\Z)}{\lambda(P)}$ is a periodic simple closed curve, then the corresponding return map is not a homeomorphism.

\item If $c$ is a critical point of $P$ on the Julia set $\mathcal{J}(P)$, then $\phi_P^{-1}(c)\subset\Q/\Z$, and $\phi_P^{-1}(c)\mapsto m_d(\phi_P^{-1}(c))$ has some degree $\delta>1$. Every other equivalence class of $\lambda(P)$ maps injectively onto its image equivalence class under $m_d$.
\end{enumerate}
\end{prop}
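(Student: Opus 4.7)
The plan is to reduce all eight claims to the continuous extension of the B\"ottcher coordinate $\phi_P$ to $\overline{\D^*}$, which exists because $\mathcal{J}(P)$ is locally connected; this local connectivity is classical for geometrically finite polynomials with connected Julia set (Tan Lei--Yin Yongcheng, building on Douady--Hubbard). Carath\'eodory's theorem then gives a continuous surjection $\phi_P\colon\overline{\D^*}\to\widehat{\C}\setminus\inter{\mathcal{K}(P)}$ that semiconjugates $m_d|_{\mathbb{S}^1}$ to $P|_{\mathcal{J}(P)}$, and $\lambda(P)$ is exactly the fiber relation of $\phi_P|_{\mathbb{S}^1}$.

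From this, properties (1), (3), (5), and (6) are immediate. Continuity of $\phi_P|_{\mathbb{S}^1}$ on the compact $\mathbb{S}^1$ makes the fiber relation closed, giving (1); the semiconjugacy $\phi_P\circ m_d=P\circ\phi_P$ sends fibers to fibers, giving (3); planarity of $\C$ together with disjointness of external rays in $\mathcal{B}_\infty(P)$ forces distinct classes to be unlinked, giving (5); and $\phi_P|_{\mathbb{S}^1}$ factors through $\mathbb{S}^1/\lambda(P)$ as a continuous bijection between compact Hausdorff spaces and hence a homeomorphism, giving (6). Property (4) reflects the local orientation-preserving action of $P$ at a landing point: $P$ permutes the rays at $z=\phi_P(A)$ preserving cyclic order, and each sector between consecutive rays in $A$ maps to the sector between the corresponding consecutive rays in $m_d(A)$.

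For property (2), finiteness of equivalence classes is handled case by case. At repelling periodic points, the Douady--Hubbard landing theorem gives finitely many rays; at parabolic periodic points, the Leau--Fatou flower theorem combined with expansion away from the parabolic cycle gives the same conclusion; preperiodic classes are finite by pulling back. The remaining points with $\geq 3$ rays landing are branch points of $\mathcal{J}(P)$, and for geometrically finite polynomials all such branch points are (pre)periodic, which is a standard consequence of the finiteness of the postcritical set on the Julia set. Property (8) follows similarly: a critical point $c\in\mathcal{J}(P)$ is strictly preperiodic, since it cannot escape to an attracting or parabolic basin, so $\phi_P^{-1}(c)\subset\Q/\Z$; the local degree of $P$ at $c$ equals the branching factor $\delta>1$, and for non-critical classes $P$ is a local homeomorphism, hence injective on the class.

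The subtlest item is property (7), which I expect to be the main obstacle. Suppose $\gamma\subset\mathcal{J}(P)\cong\mathbb{S}^1/\lambda(P)$ is a periodic simple closed curve and the first return map $P^{\circ n}$ restricts to $\gamma$ as a homeomorphism. Then $\gamma$ is a Jordan curve in $\widehat\C$ bounding two disks, and $P^{\circ n}|_\gamma$ is a degree-one circle map, forcing some rotation-type dynamics--a Siegel disk, a Herman ring, or an irrationally indifferent invariant curve inside $\mathcal{J}(P)$. None of these is compatible with geometric finiteness: Herman rings do not exist for polynomials, Siegel disks and Cremer points are ruled out because all periodic Fatou components must be attracting or parabolic, and all periodic points on $\mathcal{J}(P)$ must be repelling or parabolic. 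Ruling out an invariant $\gamma$ living entirely inside $\mathcal{J}(P)$ (as opposed to bounding a Fatou component) requires combining the classification of periodic Fatou components with the observation that the return map on the boundary of a parabolic basin is not a homeomorphism (due to pinching at parabolic cusps), which is the main technical subtlety of the argument.
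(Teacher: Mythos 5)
Your overall architecture — extend the B\"ottcher map via local connectivity, then read everything off the fiber relation of $\phi_P|_{\mathbb{S}^1}$ — is sound, and it is a genuinely different route from the paper's. The paper, after invoking local connectivity, simply cites Kiwi's Theorem 1 (\emph{$\R$eal laminations and the topological dynamics of complex polynomials}) for properties (1)--(5) and then gives one-line arguments for (6)--(8). You instead try to reprove (1)--(5) from scratch. Items (1), (3), (5), (6), (8) work essentially as you state, and your sketch of (4) is correct once one observes that the sector bounded by $R_P(s)$ and $R_P(t)$ at $\phi_P(A)$ maps onto the sector bounded by $R_P(ds)$ and $R_P(dt)$ at $P(\phi_P(A))$, so no further element of $m_d(A)$ can lie in $(ds,dt)$.

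However, there are two real gaps. For property (2), you reduce to showing that every point with at least three rays landing is preperiodic, and then assert this is ``a standard consequence of the finiteness of the postcritical set on the Julia set.'' It is not: the absence of wandering branch points for polynomials of arbitrary degree is precisely (a part of) Kiwi's theorem, generalizing Thurston's no-wandering-triangle theorem; the finiteness of the postcritical set gives preperiodicity of \emph{precritical} branch points, but rules out neither non-precritical wandering branch points nor infinite classes a priori. This is exactly the hard content that the paper delegates to Kiwi. For property (7), you name the correct obstruction and the correct tools (Riemann–Hurwitz/degree, classification of rotation domains, pinching at parabolic cusps), but you explicitly stop short of closing the argument, calling it ``the main technical subtlety.'' To finish it one needs: the Jordan domain $D$ bounded by $\gamma\subset\mathcal{J}(P)$ is invariant under $P^{\circ n}$ with $\deg(P^{\circ n}|_{\partial D})=1$, hence $P^{\circ n}|_D$ is a conformal automorphism of $D$; then $D$ contains no critical point, so $D$ cannot be an attracting or parabolic basin (which would require a critical point by Fatou's theorem), and it cannot be a Siegel disk (none exist for geometrically finite maps); this forces $D\cap\mathcal{J}(P)\neq\emptyset$, and one must still rule this out, e.g.\ via Denjoy–Wolff (an automorphism of a Jordan domain is elliptic, parabolic, or hyperbolic, and each alternative contradicts either normality in $D$ or the absence of irrationally indifferent/attracting behavior). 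The paper sidesteps all of this by working in Kiwi's framework, in which periodic simple closed curves in the quotient correspond to bounded Fatou gaps, so that (7) collapses to ``no Siegel disks.'' Filling these two gaps — either by citing Kiwi as the paper does, or by supplying the full arguments outlined above — would make your proof complete.
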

\begin{proof}
First note that by \cite[Expos{\'e}~X, \S 1, Theorem~1]{DH07} (also see \cite[Theorem~A]{TY96}), the Julia set $\mathcal{J}(P)$ is locally connected, and hence $\lambda(P)$ can be defined following Definition~\ref{lamination_def}. The first five properties now follow from \cite[Theorem~1]{Kiw04}. Moreover, local connectivity of $\mathcal{J}(P)$ implies that the B{\"o}ttcher coordinate of $P$ extends continuously to a surjection $\phi_P:\R/\Z\to\mathcal{J}(P)$ that semiconjugates $m_d$ to $P$. By definition, the equivalence classes of $\lambda(P)$ are precisely the fibers of this map $\phi_P$, from which property (6) follows. Property (7) follows from the fact that $P$ has no Siegel disk. Finally, the last property is a consequence of geometric finiteness of $P$; more precisely, of the fact that each critical point of $P$ on $\mathcal{J}(P)$ is strictly pre-periodic.
\end{proof}

Following \cite{Kiw04}, we say that an equivalence relation $\lambda$ on $\R/\Z$ is a \emph{$\R$eal lamination} (not to be confused with rational laminations; see \cite[\S 1]{Kiw01}) if it satisfies conditions (1)--(5). Moreover, if a $\R$eal lamination $\lambda$ satisfies condition (7), it is called a \emph{$\R$eal lamination with no rotation curves}. An equivalence class $A$ of $\lambda$ is called a \emph{Julia critical element} if the degree of the map $m_d:A\to m_d(A)$ is greater than one. Finally, a $\R$eal lamination $\lambda$ is called \emph{postcritically finite} if every Julia critical element of $\lambda$ is contained in $\Q/\Z$. Using this terminology, Proposition~\ref{geom_finite_laminations_prop} can be restated as follows.

\begin{corollary}\label{geom_finite_laminations_prop_1}
Let $P\in\mathcal{P}_d$ be geometrically finite with connected $\mathcal{J}(P)$. Then, $\lambda(P)$ is a postcritically finite $\R$eal lamination (with no rotation curves).
\end{corollary}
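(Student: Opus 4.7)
The plan is to verify directly from the definitions that each clause required of a ``postcritically finite $\R$eal lamination with no rotation curves'' has already been established as one of the numbered properties in Proposition~\ref{geom_finite_laminations_prop}. In other words, this corollary is a translation of the content of the proposition into the terminology introduced in the paragraph immediately preceding it, and no genuinely new argument is required beyond unpacking definitions.

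First I would check the $\R$eal lamination conditions: by the terminology recalled after Proposition~\ref{geom_finite_laminations_prop}, $\lambda(P)$ is a $\R$eal lamination provided it satisfies conditions (1)--(5) of the proposition, and these are precisely that $\lambda(P)$ is closed, has finite classes, is forward invariant, is consecutive preserving, and has pairwise unlinked classes. All five were proved in the proposition (via the reference to \cite[Theorem~1]{Kiw04}), so this step is automatic. The ``no rotation curves'' clause translates to condition (7) of the proposition, which asserts that the return map of any periodic simple closed curve in $\faktor{(\R/\Z)}{\lambda(P)}$ fails to be a homeomorphism; again this is exactly what the proposition provides, using that $P$ has no Siegel disk (a consequence of geometric finiteness).

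Finally, for postcritical finiteness of $\lambda(P)$, I would invoke clause (8) of the proposition: the Julia critical elements of $\lambda(P)$---those classes $A$ for which $m_d\colon A\to m_d(A)$ has degree greater than one---are precisely the fibers $\phi_P^{-1}(c)$ over critical points $c$ of $P$ on $\mathcal{J}(P)$, and clause (8) asserts these lie in $\Q/\Z$. This is the one place where the geometric finiteness hypothesis is used essentially, since it guarantees that each critical point on $\mathcal{J}(P)$ is strictly preperiodic under $P$ and hence, via the semiconjugacy $\phi_P$ and the well-known correspondence between rational angles and (pre)periodic dynamical rays, has rational external arguments. Thus the only potential obstacle is confirming that clause (8) does supply the postcritical finiteness of the lamination in the exact sense defined; this is a direct check and not a serious difficulty. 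With all three parts verified, the corollary follows.
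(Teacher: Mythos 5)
Your proposal is correct and matches the paper's approach exactly: the paper presents this corollary as a restatement of Proposition~\ref{geom_finite_laminations_prop} in the terminology of $\R$eal laminations introduced just before it, and you correctly match conditions (1)--(5) to the $\R$eal lamination axioms, condition (7) to the absence of rotation curves, and condition (8) to postcritical finiteness.
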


The following folklore result can be easily derived from Kiwi's theory of polynomial laminations, though it was not explicitly stated in \cite{Kiw04}.

\begin{theorem}\label{realizing_laminations_thm}
For a postcritically finite $\R$eal lamination $\lambda$ (with no rotation curves), there exists a monic, centered, postcritically finite polynomial $Q$ with $\lambda(Q)=\lambda$. In particular, $\mathcal{J}(Q)\cong\faktor{(\R/\Z)}{\lambda}$.
\end{theorem}
\begin{proof}
To prove this result, one needs to work with a more general definition of laminations (than the one given in Definition~\ref{lamination_def}) involving prime end impressions. For some $Q\in\mathcal{P}_d$ with connected Julia set, the lamination $\lambda(Q)$ is defined as the smallest equivalence relation on $\R/\Z$ that identifies $s,t\in\R/\Z$ whenever\qquad $\textrm{Imp}(s)\cap\textrm{Imp}(t)\neq\emptyset$ (where $\textrm{Imp}(t)$ stands for the impression of an angle $t\in\R/\Z$ on the Julia set $\mathcal{J}(Q)$, see \cite[Definition~2.3]{Kiw04}). Clearly, if $\mathcal{J}(Q)$ is locally connected, then every impression is a singleton, and this definition agrees with the one given in Definition~\ref{lamination_def}.

Since $\lambda$ is a $\R$eal lamination with no rotation curves, \cite[Lemma~6.34, Theorem~1]{Kiw04} guarantees the existence of a monic, centered polynomial $Q$ with connected Julia set such that each cycle of $Q$ is either repelling or superattracting, each bounded Fatou component of $Q$ contains a unique element in a critical grand orbit, and $\lambda(Q)=\lambda$.  In particular, every critical point of $Q$ in its Fatou set is eventually periodic. (At this point, it is unclear whether $\mathcal{J}(Q)$ is locally connected, which is why the more general definition of $\lambda(Q)$ is necessary here).

Now, it follows from postcritical finiteness of $\lambda$ that each critical point of $Q$ on its Julia set $\mathcal{J}(Q)$ lies in the impression of some rational (i.e., preperiodic under $m_d$) angle. By \cite[Corollary~1.2]{Kiw04}, the impression of a preperiodic angle is a singleton, and hence, every critical point of $Q$ on $\mathcal{J}(Q)$  is strictly preperiodic. Therefore, $Q$ is postcritically finite. In particular, $\mathcal{J}(Q)$ is locally connected, and the B{\"o}ttcher coordinate of $Q$ extends continuously to $\R/\Z$ to yield a topological conjugacy between $$m_d:\faktor{(\R/\Z)}{\lambda(Q)}\to\faktor{(\R/\Z)}{\lambda(Q)}\ \textrm{and}\ Q: \mathcal{J}(Q)\to\mathcal{J}(Q).$$ This completes the proof.
\end{proof}

Finally, combining Property (6) of Proposition~\ref{geom_finite_laminations_prop} with Corollary~\ref{geom_finite_laminations_prop_1} and Theorem~\ref{realizing_laminations_thm}, we obtain the following result that allows us to associate a postcritically finite polynomial (in $\mathcal{P}_d$) to each geometrically finite member of $\mathcal{P}_d$ (with connected Julia set).

\begin{corollary}\label{geom_finite_to_postcrit_finite}
Let $P\in\mathcal{P}_d$ be geometrically finite with connected $\mathcal{J}(P)$. Then, there exists a postcritically finite polynomial $Q\in\mathcal{P}_d$ such that $\lambda(Q)=\lambda(P)$. In particular, $Q\vert_{\mathcal{J}(Q)}$ is topologically conjugate to $P\vert_{\mathcal{J}(P)}$.
\end{corollary}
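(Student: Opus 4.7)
The proof of Corollary~\ref{geom_finite_to_postcrit_finite} is essentially a book-keeping exercise that splices together three results already established in the excerpt, so my plan is simply to chain them in the right order. First I would invoke Corollary~\ref{geom_finite_laminations_prop_1} to recognize that $\lambda(P)$ is a postcritically finite $\R$eal lamination with no rotation curves. This is the whole content of properties (1)--(5), (7), (8) of Proposition~\ref{geom_finite_laminations_prop}: closedness, finiteness and consecutive-preserving of classes, unlinkedness, absence of rotation curves, and the fact that critical classes lie in $\Q/\Z$ (since for a geometrically finite polynomial each critical point on $\mathcal{J}(P)$ is strictly pre-periodic).

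Next I would feed $\lambda:=\lambda(P)$ into Theorem~\ref{realizing_laminations_prop} to produce a monic, centered, postcritically finite polynomial $Q\in\mathcal{P}_d$ with $\lambda(Q)=\lambda(P)$. No extra work is needed here; the theorem is exactly the realization statement we need.

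Finally, to deduce the topological conjugacy, I would apply Property (6) of Proposition~\ref{geom_finite_laminations_prop} to each of $P$ and $Q$ separately (the property is stated for any polynomial whose Julia set is connected and locally connected, and both $\mathcal{J}(P)$ and $\mathcal{J}(Q)$ satisfy this; the latter because postcritically finite polynomials have locally connected Julia sets). This gives two topological conjugacies
\[
\overline{\phi_P}\colon \faktor{(\R/\Z)}{\lambda(P)}\xrightarrow{\ \cong\ } \mathcal{J}(P), \qquad \overline{\phi_Q}\colon \faktor{(\R/\Z)}{\lambda(Q)}\xrightarrow{\ \cong\ } \mathcal{J}(Q),
\]
each semiconjugating $m_d$ to the respective polynomial dynamics. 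Since $\lambda(P)=\lambda(Q)$, the composition $\overline{\phi_P}\circ\overline{\phi_Q}^{\,-1}\colon \mathcal{J}(Q)\to \mathcal{J}(P)$ is the desired topological conjugacy between $Q|_{\mathcal{J}(Q)}$ and $P|_{\mathcal{J}(P)}$.

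There is no real obstacle: all the genuinely nontrivial work (Kiwi's theory of laminations, local connectivity of $\mathcal{J}(P)$ for geometrically finite $P$ via Tan Lei--Yin/DH, and the realization of $\R$eal laminations) is already packaged in the cited statements. The only small point to be careful about is to note explicitly that $\mathcal{J}(Q)$ is locally connected, so that Property (6) really applies to $Q$; this is immediate from the postcritical finiteness of $Q$.
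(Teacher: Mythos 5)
Your proposal matches the paper's own argument exactly: the corollary is stated immediately after the sentence ``combining Property (6) of Proposition~\ref{geom_finite_laminations_prop} with Corollary~\ref{geom_finite_laminations_prop_1} and Theorem~\ref{realizing_laminations_prop}, we obtain the following result,'' and you chain precisely those three ingredients in the same order, with the same small observation about local connectivity of $\mathcal{J}(Q)$. Correct, and essentially identical to the intended proof.
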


\begin{remark}
1) Invariant laminations of complex polynomials were first defined in a slightly different language by Thurston (see \cite[\S II.4]{Thu09}). 

2) Theorem~\ref{realizing_laminations_thm} is an instance of realization results in complex dynamics. Realization of postcritically finite maps with prescribed combinatorics was first proved by Thurston (see \cite{DH93}), which was used by Hubbard, Bielefeld, and Fisher to classify critically pre-periodic polynomials using co-landing structure of suitable external dynamical rays \cite{BFH}, and by Poirier to classify postcritically finite polynomials in terms of their Hubbard trees \cite{Poi10}. An alternative way of constructing critically pre-periodic polynomials with prescribed laminations is to study the connectedness locus of $\mathcal{P}_d$ from `outside'; i.e., to approximate such polynomials from the \emph{shift locus} (see \cite{Kiw05}).
\end{remark}

We are now prepared to prove the main theorem of this subsection.

\begin{proof}[Proof of Theorem~\ref{T:ConfRemove}]
By conjugating $P$ with an appropriate conformal linear map, we may assume that $P\in\mathcal{P}_d$. Furthermore, replacing $P$ by a suitable iterate, we can assume that each periodic Fatou component of $P$ is fixed.  

The first step of the proof is to reduce the map $P$ to its \emph{rigid model}. Roughly speaking, the rigid model of $P$ is a geometrically finite (but not necessarily postcritically finite) polynomial of the same degree, whose Julia dynamics is topologically conjugate to that of $P$, and whose Fatou critical points have the simplest possible dynamics in a suitable sense. To make this precise, let us first note that the Blaschke product
$$
B_k(z)=\frac{(k+1)z^k+(k-1)}{(k-1)z^k+(k+1)},\ k\geq 2,
$$
has a parabolic fixed point at $1$, and $k-1$ distinct repelling fixed points on $\mathbb{S}^1$. Moreover, $B_k$ has a unique critical point in $\D$; namely, at the origin.

By \cite[Proposition~6.9]{McM}, one can perform quasiconformal surgeries on the Fatou set of $P$ to produce a degree $d$ geometrically finite polynomial $\widehat{P}$, called the rigid model of $P$, such that the following properties hold true.

\begin{enumerate}
\item There exists a global quasiconformal map $\psi$ conjugating $P\vert_{\mathcal{J}(P)}$ to $\widehat{P}\vert_{\mathcal{J}(\widehat{P})}$; in particular, each periodic Fatou component of $\widehat{P}$ is fixed.

\item\label{basin_rigid} The restriction of $\widehat{P}$ to each fixed Fatou component is conformally conjugate to $z^k\vert_{\D}$ (if the corresponding Fatou component of $P$ is the immediate basin of attraction of an attracting fixed point) or $B_k\vert_{\D}$ (if the corresponding Fatou component of $P$ is an immediate basin of attraction of a parabolic fixed point), where $k\geq 2$ is the degree of the restriction of $P$ to the corresponding Fatou component.

\item\label{whole_fatou_rigid} If $c$ is a critical point of $\widehat{P}$ contained in a Fatou component, then there is a least $n\geq 0$ such that $\widehat{P}^{\circ n}(c)$ lies in a fixed Fatou component $U$. Moreover, $\widehat{P}^{\circ n}(c)$ is the unique critical point of $\widehat{P}$ in $U$. 
\end{enumerate}
In particular, each Fatou component of $\widehat{P}$, except for the immediate basins of the parabolic fixed points, contains a unique element in a critical grand orbit.

Since the property of being conformally removable is preserved under global quasiconformal maps, it now suffices to prove that $\mathcal{J}(\widehat{P})$ is conformally removable.

By Corollary~\ref{geom_finite_to_postcrit_finite}, there exists a postcritically finite polynomial $Q\in\mathcal{P}_d$ such that $Q\vert_{\mathcal{J}(Q)}$ is topologically conjugate to $\widehat{P}\vert_{\mathcal{J}(\widehat{P})}$. By construction, each periodic Fatou component of $Q$ is fixed, and the restriction of $Q$ to each fixed Fatou components is conformally conjugate to $z^k\vert_{\D}$, for some $k\geq 2$. Also note that the topological conjugacy between the Julia sets of $Q$ and $\widehat{P}$ induces a bijection between their fixed Fatou components.

Let $V_1,\cdots, V_l$ be the fixed Fatou components of $Q$ such that the corresponding Fatou components of $\widehat{P}$ are parabolic. Recall from Example~\ref{example:blaschke} that for any $k\geq 2$, the expansive covering map $B_k\vert_{\mathbb{S}^1}$ admits a Markov partition satisfying conditions~\eqref{condition:uv} and~\eqref{condition:holomorphic}. Hence, by Theorem~\ref{theorem:extension_special_case}, there exists a topological conjugacy between $z^k\vert_{\mathbb{S}^1}$ and $B_k\vert_{\mathbb{S}^1}$ that extends as a David homeomorphism of $\D$. One can now repeat the arguments of Lemma~\ref{david_surgery_lemma} to replace the power map dynamics on the Fatou component $V_i$ by the dynamics of $B_{k_i}$, where $k_i$ is the degree of $Q$ restricted to the component $V_i$, for all $i\in\{1,\cdots, l\}$. This yields a geometrically finite polynomial $\widetilde{Q}$ and a global David homeomorphism $H$ such that $H$ carries $\mathcal{J}(Q)$ onto $\mathcal{J}(\widetilde{Q})$, and conformally conjugates $Q$ to $\widetilde{Q}$ outside the grand orbit of $\cup_{i=1}^l V_i$. On the other hand, the restriction of $\widetilde{Q}$ on $H(V_i)$ is conformally conjugate to $B_{k_i}\vert_{\D}$. By construction, the map $\widetilde{Q}$ satisfies condition~\eqref{basin_rigid}. Thanks to \cite[Proposition~6.9]{McM}, possibly after performing quasiconformal surgeries on the Fatou set of $\widetilde{Q}$, we can further assume that $\widetilde{Q}$ satisfies condition~\eqref{whole_fatou_rigid}. By Proposition~\ref{prop:david_qc_invariance} (i), we still have that  $H(\mathcal{J}(Q))=\mathcal{J}(\widetilde{Q})$, where $H$ is a global David homeomorphism.

It is now easy to see from the above construction of $\widetilde{Q}$ that $\widetilde{Q}\vert_{\mathcal{J}(\widetilde{Q})}$ is topologically conjugate to $\widehat{P}\vert_{\mathcal{J}(\widehat{P})}$, and this conjugacy extends to a global topological conjugacy between $\widetilde{Q}$ and $\widehat{P}$ such that the conjugacy is conformal on the Fatou set of $\widetilde{Q}$ (compare \cite[Proposition~6.10]{McM}). According to~\cite{Jon91}, the basin of infinity $\mathcal{B}_\infty(Q)$ of the postcritically finite (hence, subhyperbolic) polynomial $Q$ is a John domain. Hence, Theorem~\ref{theorem:john_removable} combined with the fact that $\partial\mathcal{B}_\infty(Q)=\mathcal{J}(Q)$ yield that $H(\partial \mathcal{B}_\infty(Q))=\mathcal{J}(\widetilde{Q})$ is conformally removable. 
It follows that $\widetilde{Q}$ is M{\"o}bius conjugate to $\widehat{P}$, and $\mathcal{J}(\widehat{P})$ is a M{\"o}bius image of $\mathcal{J}(\widetilde{Q})$. Hence, $\mathcal{J}(\widehat{P})$ is also conformally removable.
\end{proof}

\bigskip

\section{Mating reflection groups with anti-polynomials: existence theorem}\label{mating_sec}

The goal of this section is to apply our result on David extensions of circle homeomorphisms to the theory of mating in conformal dynamics.

\subsection{Necklace groups and Bers slice}\label{necklace_sec}

For the purposes of mating necklace groups with anti-polynomials, it will be important to work with groups equipped with labeled generators (compare Remark~\ref{why_representation}). Moreover, the conformal conjugacy class of the action of a necklace group on the unbounded component of its domain of discontinuity will play no special role in the mating theory. Hence, we may freeze the `external class' of the necklace groups under consideration. We now formalize this by defining a space of representations of the necklace group $\pmb{\Gamma}_{d+1}$ (see Definition~\ref{reflection_map}).

\begin{definition} Let $\Gamma$ be a discrete subgroup of $\textrm{Aut}^\pm(\widehat{\C})$. An isomorphism 
$$
\xi:\pmb{\Gamma}_{d+1}\rightarrow \Gamma
$$
is said to be \emph{weakly type-preserving}, or \emph{w.t.p.}, if 
\begin{enumerate}\upshape
\item $\xi(g)$ is orientation-preserving if and only if $g$ is orientation-preserving, and 

\item $\xi(g)\in\Gamma$ is a parabolic M{\"o}bius map for each parabolic M{\"o}bius map $g\in \pmb{\Gamma}_{d+1}$.
\end{enumerate}
\end{definition}

\begin{definition}\label{D_def} We define 

\begin{align*}
\mathcal{D}(\pmb{\Gamma}_{d+1}):= \lbrace   \xi :  \pmb{\Gamma}_{d+1} \to \Gamma\vert\ \Gamma \textrm{ is a discrete subgroup of } \textrm{Aut}^\pm(\widehat{\C}),\\\textrm{ and } \xi \textrm{ is a w.t.p. isomorphism} \rbrace.
\end{align*}

\noindent We endow $\mathcal{D}(\pmb{\Gamma}_{d+1})$ with the topology of \emph{algebraic convergence}: we say that a sequence $(\xi_n)_{n=1}^{\infty}\subset\mathcal{D}(\pmb{\Gamma}_{d+1})$ converges to $\xi\in\mathcal{D}(\pmb{\Gamma}_{d+1})$ if $\xi_n(\rho_i)\to\xi(\rho_i)$ coefficient-wise (as $n\rightarrow\infty$) for $i\in\{1,\cdots,d+1\}$.
\end{definition}

\begin{remark}\label{rep_var_def_rem} Let $\xi\in\mathcal{D}(\pmb{\Gamma}_{d+1})$. Since for each $i\in \Z/(d+1)\Z$, the M{\"o}bius map $\rho_i\circ\rho_{i+1}$ is parabolic (this follows from the fact that each $\mathbf{C}_i$ is tangent to $\mathbf{C}_{i+1}$), the w.t.p. condition implies that $\xi(\rho_i)\circ\xi(\rho_{i+1})$ is also parabolic. As each $\xi(\rho_i)$ is an anti-conformal involution, it follows that $\xi(\rho_i)$ is M{\"o}bius conjugate to the circular reflection $z\mapsto 1/\overline{z}$ or the antipodal map $z\mapsto -1/\overline{z}$. A straightforward computation shows that the composition of $-1/\overline{z}$ with either the reflection or the antipodal map with respect to any circle has two distinct fixed points in $\widehat{\C}$, and hence not parabolic. Therefore, it follows that no $\xi(\rho_i)$ is M{\"o}bius conjugate to the antipodal map $-1/\overline{z}$. Hence, each $\xi(\rho_i)$ must be the reflection in some Euclidean circle $C_i$. Thus, $\Gamma=\xi(\pmb{\Gamma}_{d+1})$ is generated by reflections in the circles $C_1, \cdots, C_{d+1}$. The fact that $\xi(\rho_i)\circ\xi(\rho_{i+1})$ is parabolic now translates to the condition that each $C_i$ is tangent to $C_{i+1}$ (for $i\in \Z/(d+1)\Z$). However, new tangencies among the circles $C_i$ may arise. Moreover, that $\xi$ is an isomorphism rules out non-tangential intersection between circles $C_i$, $C_j$ (indeed, a non-tangential intersection between $C_i$ and $C_j$ would introduce a new relation between $\xi(\rho_i)$ and $\xi(\rho_j)$, compare \cite[Part~II, Chapter~5, \S 1.1]{VS93}). Therefore, $\Gamma=\xi(\pmb{\Gamma}_{d+1})$ is a Kleinian reflection group satisfying properties (1) and (2) of necklace groups.
\end{remark}

Recall that $\D^*=\widehat{\C}\setminus\overline{\D}$.

\begin{definition}\label{Bers_slice}
Let $\textrm{Bel}_{\pmb{\Gamma}_{d+1}}$ denote those Beltrami coefficients $\mu$ invariant under $\pmb{\Gamma}_{d+1}$, satisfying $\mu=0$ a.e. on $\D^*$. For a Beltrami coefficient $\mu$, let $\tau_\mu:\mathbb{C}\rightarrow\mathbb{C}$ denote the quasiconformal integrating map of $\mu$ normalized so that $\tau_\mu(z)=z+O(1/z)$ as $z\rightarrow\infty$. The \emph{Bers slice} of $\Gamma$ is defined as \[ \beta(\pmb{\Gamma}_{d+1}) := \{\xi\in\mathcal{D}(\pmb{\Gamma}_{d+1}):\ \exists\ \mu \in  \textrm{Bel}_{\pmb{\Gamma}_{d+1}}\ \textrm{such that}\ \xi(g)= \tau_\mu\circ g\circ \tau_\mu^{-1}\ \forall\ g\in\pmb{\Gamma}_{d+1} \}. \]
\end{definition}

\begin{remark} There is a natural free $\textrm{PSL}_2(\mathbb{C})$-action on $\mathcal{D}(\pmb{\Gamma}_{d+1})$ given by conjugation, and so it is natural to consider the space $\textrm{AH}(\pmb{\Gamma}_{d+1}):=\mathcal{D}(\pmb{\Gamma}_{d+1})/\textrm{PSL}_2(\mathbb{C})$. The following definition of the Bers slice, where no normalization for $\tau_\mu$ is specified, is more aligned with the classical Kleinian group literature: 
\begin{align}\label{alternative_bers} 
\{\xi\in\textrm{AH}(\pmb{\Gamma}_{d+1}):\ \exists\ \mu \in  \textrm{Bel}_{\pmb{\Gamma}_{d+1}}\ \textrm{such that}\ \xi(g)= \tau_\mu\circ g\circ \tau_\mu^{-1}\ \forall\ g\in\pmb{\Gamma}_{d+1} \} \tag{$\star$}.
\end{align}
Our Definition~\ref{Bers_slice} of $\beta(\pmb{\Gamma}_{d+1})$ is simply a canonical choice of representative from each equivalence class of (\ref{alternative_bers}), and will be more appropriate for the present work. 
\end{remark}

\begin{prop}\cite[Proposition~11]{LMM20}\label{compactify_prop}
The Bers slice $\beta(\pmb{\Gamma}_{d+1})$ is pre-compact in $\mathcal{D}(\pmb{\Gamma}_{d+1})$, and for each $\xi\in\overline{\beta(\pmb{\Gamma}_{d+1})}$, the group $\xi(\pmb{\Gamma}_{d+1})$ is a necklace group.
\end{prop}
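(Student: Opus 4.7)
My plan is to reduce precompactness to a normal-family argument on $\D^*$, extract a candidate algebraic limit from the resulting tangent-circle configurations, and then use the w.t.p.\ constraint to rule out all possible degenerations.

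By Remark~\ref{rep_var_def_rem}, an element $\xi \in \beta(\pmb{\Gamma}_{d+1})$ is completely determined by the $(d+1)$ Euclidean circles $C_i^\xi = \tau_\mu(\mathbf{C}_i)$ (each of which is a genuine circle, being the fixed set of the anti-Möbius involution $\xi(\rho_i)$), and algebraic convergence $\xi_n \to \xi$ is equivalent to convergence of the tuples $(C_i^{\xi_n})_{i=1}^{d+1}$ on the Riemann sphere. For a sequence $\xi_n \in \beta(\pmb{\Gamma}_{d+1})$ with Beltrami coefficients $\mu_n$, the hypothesis $\mu_n \equiv 0$ a.e.\ on $\D^*$ combined with the normalization $\tau_{\mu_n}(z) = z + O(1/z)$ at $\infty$ places every restriction $\tau_{\mu_n}|_{\D^*}$ in the classical compact class $\Sigma$ of normalized conformal maps. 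The area theorem for $\Sigma$ gives a uniform bound on the complements $\mathbb{C} \setminus \tau_{\mu_n}(\D^*)$, and (passing to a subsequence) $\tau_{\mu_n}|_{\D^*}$ converges locally uniformly to a univalent $\tau_\infty \in \Sigma$.

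The cusp points $p_i^{(n)} = \tau_{\mu_n}(\mathbf{a}_i)$, being the unique fixed points of the parabolic elements $\xi_n(\rho_{i-1}\rho_i)$, lie in the uniformly bounded set $\mathbb{C}\setminus \tau_{\mu_n}(\D^*)$, so a further extraction yields limits $p_i^\infty \in \mathbb{C}$. Each circle $C_i^{(n)}$ passes through $p_i^{(n)}$ and $p_{i+1}^{(n)}$ and bounds a disk disjoint from $\tau_{\mu_n}(\D^*)$, so along a subsequence the $C_i^{(n)}$ converge on $\widehat{\C}$ to generalized circles $C_i^\infty$. The coefficients of $\xi_n(\rho_i)$, viewed as anti-Möbius involutions fixing $C_i^{(n)}$, converge to those of an involutive anti-Möbius map $\xi_\infty(\rho_i)$ fixing $C_i^\infty$, yielding a candidate limit representation $\xi_\infty : \pmb{\Gamma}_{d+1} \to \textup{Aut}^\pm(\widehat{\C})$.

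The main obstacle is to verify that $\xi_\infty \in \mathcal{D}(\pmb{\Gamma}_{d+1})$, i.e., that each $C_i^\infty$ is a genuine Euclidean circle and that $\xi_\infty$ is a w.t.p.\ isomorphism onto a discrete group. The key step is to show that consecutive cusps do not collapse, i.e., $p_{i-1}^\infty \neq p_i^\infty$. If they coincided, the two distinct parabolics $\xi_n(\rho_{i-2}\rho_{i-1})$ and $\xi_n(\rho_{i-1}\rho_i)$ would converge to parabolics sharing a fixed point; the existence of a third non-commuting generator $\xi_n(\rho_i)$ whose coefficients remain bounded would then force Jørgensen's inequality to be violated along the sequence, contradicting the discreteness of each $\xi_n(\pmb{\Gamma}_{d+1})$. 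With all $p_i^\infty$ finite and distinct from their neighbors, each $C_i^\infty$ is an honest Euclidean circle tangent to $C_{i\pm 1}^\infty$ at $p_i^\infty$ and $p_{i+1}^\infty$; the tangency relations $\xi_\infty(\rho_i\rho_{i+1})$ are parabolic by continuity, yielding the w.t.p.\ property, and the limit group is discrete by the standard algebraic-limit theorem for Kleinian groups (alternatively, by Poincaré's polyhedron theorem applied to the cycle of tangent reflection half-spaces). Finally, $\xi_\infty(\pmb{\Gamma}_{d+1})$ is necklace: condition (1) of Definition~\ref{necklace_group} follows from the cyclic tangency pattern of the $C_i^\infty$, and condition (2) follows from the inclusion of $\tau_\infty(\D^*)$ in the unbounded component of $\mathbb{C}\setminus\bigcup_i C_i^\infty$, whose boundary runs through every $C_i^\infty$ along the arc $\tau_\infty(\arc{(\mathbf{a}_i,\mathbf{a}_{i+1})})$.
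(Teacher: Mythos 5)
The paper itself defers entirely to \cite[Proposition~2.23]{LMM20} and supplies no proof, so I can only assess your argument on its own terms. Your reduction to compactness of $\Sigma$ and the concluding appeal to Chuckrow--J{\o}rgensen together with Remark~\ref{rep_var_def_rem} form a reasonable skeleton, but the pivotal step --- ruling out degeneration of the circles $C_i^{(n)}$ --- is attributed to the wrong mechanism, and as written is both incorrect and circular. For two parabolics $A,B$ fixing $a,b$, a direct matrix computation gives $\operatorname{tr}[A,B]-2=(\alpha\beta)^2$, where $\alpha,\beta$ are the translation parameters of $A$ and $B$ at their fixed points (in the charts $1/(z-a)$, $1/(z-b)$), and this quantity is \emph{independent of the positions $a$ and $b$}. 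Thus letting $p_{i-1}^{(n)}\to p_i^{(n)}$ does \emph{not} force $\operatorname{tr}[A_n,B_n]\to 2$, and J{\o}rgensen's inequality gives no contradiction from cusp collision by itself. Worse, to even speak of $A_n=\xi_n(\rho_{i-2}\rho_{i-1})$ converging you already need the reflections in $C_{i-2}^{(n)},C_{i-1}^{(n)}$ to converge to genuine reflections; if instead $C_{i-1}^{(n)}$ shrinks to a point, $\xi_n(\rho_{i-1})$ and hence $A_n$ collapse to constants and have no limit in $\textrm{Aut}^\pm(\widehat{\C})$. So the J{\o}rgensen step presupposes exactly what it is supposed to establish.

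What actually rules out degeneration here is Carath\'eodory, not J{\o}rgensen. Since $\tau_{\mu_n}|_{\D^*}\to\tau_\infty$ locally uniformly with $\tau_\infty$ univalent, the Carath\'eodory kernel theorem gives $\tau_{\mu_n}^{-1}\to\tau_\infty^{-1}$ locally uniformly on $\tau_\infty(\D^*)$; since $\D^*$ is $\pmb{\Gamma}_{d+1}$-invariant, one gets $\xi_n(g)=\tau_{\mu_n}\circ g\circ\tau_{\mu_n}^{-1}\to\tau_\infty\circ g\circ\tau_\infty^{-1}$ locally uniformly on the open set $\tau_\infty(\D^*)$, and the limit is \emph{injective} there. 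A sequence of (anti-)M\"obius maps converging on an open set to an injective map cannot degenerate, so each $\xi_n(g)$ converges in $\textrm{Aut}^\pm(\widehat{\C})$, and in particular each $C_i^{(n)}$ limits to a genuine Euclidean circle. With $\xi_\infty$ now an honest representation, the Chuckrow--J{\o}rgensen theorem (applied to the index-two orientation-preserving subgroup and then lifted) gives faithfulness and discreteness; faithfulness forbids $\xi_\infty(\rho_i\rho_{i+1})$ from being the identity (so these images stay parabolic, giving w.t.p.) and forbids commuting images of the non-commuting parabolics $\rho_{i-2}\rho_{i-1},\rho_{i-1}\rho_i$ (so the cusps are separated), and the necklace structure then follows from Remark~\ref{rep_var_def_rem}. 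Your final sentence is also geometrically off: $\tau_\infty(\D^*)$ is \emph{not} contained in the unbounded component of $\C\setminus\bigcup_i C_i^\infty$, because each $\mathbf{C}_i$ already meets $\D^*$ and so $\tau_\infty(\D^*)$ meets $C_i^\infty$.
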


\begin{definition}\label{compatify_def} 
We refer to $\overline{\beta(\pmb{\Gamma}_{d+1})} \subset \mathcal{D}(\pmb{\Gamma}_{d+1})$ as the \emph{Bers compactification} of the Bers slice $\beta(\pmb{\Gamma}_{d+1})$. 
\end{definition}

\begin{remark}\label{rep_group_identify_rem} 
We will often identify $\xi\in\overline{\beta(\pmb{\Gamma}_{d+1})}$ with the group $\Gamma:=\xi(\pmb{\Gamma}_{d+1})$, and simply write $\Gamma\in\overline{\beta(\pmb{\Gamma}_{d+1})}$, but always with the understanding of an associated representation $\xi:\pmb{\Gamma}_{d+1} \rightarrow \Gamma$. Since $\xi$ is completely determined by its action on the generators $\rho_1,\cdots, \rho_{d+1}$ of $\pmb{\Gamma}_{d+1}$, this is equivalent to remembering the `labeled' circle packing $C_1,\cdots,C_{d+1}$, where $\xi(\rho_i)$ is the reflection in the circle $C_i$, for $i\in\{1,\cdots,d+1\}$. 
\end{remark}

For $\Gamma\in\overline{\beta(\pmb{\Gamma}_{d+1})}$, the unbounded component of the domain of discontinuity $\Omega(\Gamma)$ is denoted by $\Omega_\infty(\Gamma)$. We set $\mathcal{K}(\Gamma):=\mathbb{C}\setminus\Omega_\infty(\Gamma)$. We further denote the union of all bounded components of the fundamental domain $\mathcal{F}_\Gamma$ by $T^0(\Gamma)$, and the unique unbounded component of $\mathcal{F}_\Gamma$ by $\Pi^0(\Gamma)$. Finally, we set $\Pi(\Gamma):=\overline{\Pi^0(\Gamma)}$.

Note that a Kleinian group is said to be \emph{geometrically finite} if its action on $\mathbb{H}^3$ admits a fundamental polyhedron with finitely many faces (see \cite[\S 3.6]{Mar16} for many equivalent definitions).

\begin{prop}\cite[Proposition~15]{LMM20}\label{inv_comp_limit_boundary}
Let $\Gamma\in\overline{\beta(\pmb{\Gamma}_{d+1})}$. Then the following hold true.
\begin{enumerate}
\upshape
\item $\Omega_\infty(\Gamma)$ is simply connected, and $\Gamma$-invariant.

\item $\partial\Omega_\infty(\Gamma)=\Lambda(\Gamma)$; in particular, $\inter{\mathcal{K}(\Gamma)}=\Omega(\Gamma)\setminus\Omega_\infty(\Gamma)$.
 
\item $\Lambda(\Gamma)$ is connected.

\item All bounded components of $\Omega(\Gamma)$ are Jordan domains.
\end{enumerate}
\end{prop}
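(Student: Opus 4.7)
The plan is to exhibit for each $\Gamma\in\overline{\beta(\pmb{\Gamma}_{d+1})}$ a normalized univalent map $\phi\colon\D^*\to\widehat{\C}$ that conjugates the action of $\pmb{\Gamma}_{d+1}$ on $\D^*$ to the action of $\Gamma$ on $\phi(\D^*)$, and then to identify $\phi(\D^*)$ with $\Omega_\infty(\Gamma)$. For $\Gamma\in\beta(\pmb{\Gamma}_{d+1})$ itself, the map $\phi=\tau_\mu|_{\D^*}$ from Definition~\ref{Bers_slice} does the job: since $\mu\equiv 0$ on $\D^*$, this $\phi$ is conformal, and its image $\tau_\mu(\D^*)$ is a quasidisk bounded by $\tau_\mu(\mathbb{S}^1)=\Lambda(\Gamma)$. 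All four statements then hold immediately: $\Omega_\infty(\Gamma)=\tau_\mu(\D^*)$ is simply connected and $\Gamma$-invariant (since $\D^*$ is $\pmb{\Gamma}_{d+1}$-invariant and $\Gamma=\tau_\mu\pmb{\Gamma}_{d+1}\tau_\mu^{-1}$); its boundary is the limit set; the bounded components of $\Omega(\Gamma)$ are $\Gamma$-translates of $\tau_\mu(T^0(\pmb{\Gamma}_{d+1}))$, which are quasidisks, hence Jordan.

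For $\Gamma$ on the boundary of the Bers slice, pick a sequence $\Gamma_n=\tau_{\mu_n}\pmb{\Gamma}_{d+1}\tau_{\mu_n}^{-1}\in\beta(\pmb{\Gamma}_{d+1})$ converging algebraically to $\Gamma$. Each $\tau_{\mu_n}|_{\D^*}$ is a normalized univalent function on $\D^*$ fixing $\infty$ with expansion $z+O(1/z)$, so the family $\{\tau_{\mu_n}|_{\D^*}\}$ is normal. Passing to a subsequence, $\tau_{\mu_n}|_{\D^*}\to\phi$ locally uniformly on $\D^*$ for some univalent $\phi\colon\D^*\to\widehat{\C}$ (by Hurwitz, the normalization ruling out a constant limit). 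Combined with algebraic convergence $\xi_n(g)\to\xi(g)$ for every $g\in\pmb{\Gamma}_{d+1}$, passing to the limit in $\tau_{\mu_n}\circ g=\xi_n(g)\circ\tau_{\mu_n}$ yields $\phi\circ g=\xi(g)\circ\phi$ on $\D^*$. Consequently $\phi(\D^*)$ is open, simply connected, contains $\infty$, and is $\Gamma$-invariant, and the action of $\Gamma$ on $\phi(\D^*)$ is properly discontinuous (being conjugate via the homeomorphism $\phi$ to the properly discontinuous action of $\pmb{\Gamma}_{d+1}$ on $\D^*$).

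The core step is to identify $\phi(\D^*)$ with $\Omega_\infty(\Gamma)$, which yields both (1) and (2). One inclusion is immediate: proper discontinuity gives $\phi(\D^*)\subset\Omega(\Gamma)$, and by connectedness together with $\infty\in\phi(\D^*)$ we obtain $\phi(\D^*)\subset\Omega_\infty(\Gamma)$. For the reverse, invoke Proposition~\ref{compactify_prop}: $\Gamma$ is a necklace group with the circle chain $C_1,\dots,C_{d+1}$ described in Remark~\ref{rep_var_def_rem}, so by Proposition~\ref{fund_dom_prop} the fundamental domain $\mathcal{F}_\Gamma$ has a unique unbounded piece $\Pi^0(\Gamma)$. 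Matching boundary combinatorics through $\phi$ shows $\Pi^0(\Gamma)\subset\phi(\D^*)$, and $\Gamma$-invariance extends this to $\Gamma\cdot\Pi^0(\Gamma)$. Any component of $\Omega(\Gamma)$ intersecting $\phi(\D^*)$ is contained in it, so $\phi(\D^*)$ is a union of components of $\Omega(\Gamma)$; since it is connected and contains $\infty$, it is exactly $\Omega_\infty(\Gamma)$. Hence $\partial\Omega_\infty(\Gamma)\subset\Lambda(\Gamma)$, and the opposite inclusion follows from the general fact that the limit set is contained in the closure of every orbit.

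Part (3) is then automatic: $\Lambda(\Gamma)=\partial\phi(\D^*)$ is the boundary of a simply connected domain containing $\infty$, hence a connected continuum. For (4), a bounded component $V$ of $\Omega(\Gamma)$ is a $\Gamma$-translate of one of the bounded components of $\mathcal{F}_\Gamma$; by Definition~\ref{necklace_group} each such piece is a topological polygon cut out by finitely many circular arcs meeting tangentially at cusps, hence a Jordan domain, and the M\"obius or anti-M\"obius element of $\Gamma$ carrying the fundamental piece to $V$ preserves this property. The main obstacle in the argument is ruling out the possibility that the limit $\phi(\D^*)$ is a proper subset of $\Omega_\infty(\Gamma)$; this is handled by the explicit necklace combinatorics of $\Gamma$ that forces $\phi(\D^*)$ to contain a fundamental piece of $\Gamma$ on $\Omega_\infty(\Gamma)$, so that a single $\Gamma$-orbit exhausts the component.
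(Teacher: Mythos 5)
Your argument for parts (1)--(3) follows the general strategy used to prove \cite[Proposition~2.36]{LMM20} (quoted in the paper as Proposition~\ref{group_lamination_prop}): take a sequence of quasi-Fuchsian representations converging algebraically to $\Gamma$, pass to a normal limit of the normalized conformal maps $\tau_{\mu_n}\vert_{\D^*}$, and conjugate. The paper itself just cites \cite[Proposition~2.28]{LMM20} for these three parts, so you are essentially re-deriving that reference. The step ``matching boundary combinatorics through $\phi$ shows $\Pi^0(\Gamma)\subset\phi(\D^*)$'' needs real work to make precise (one must rule out the Carath\'eodory kernel of the limit being a proper subset of $\Omega_\infty(\Gamma)$), but the strategy is sound.

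Part (4), however, is wrong, and the error is a fundamental confusion between components of $\Omega(\Gamma)$ and $\Gamma$-translates of pieces of the fundamental domain. You assert that a bounded component $V$ of $\Omega(\Gamma)$ is a single $\Gamma$-translate of a bounded component of $\mathcal{F}_\Gamma$. This is false: $\mathcal{F}_\Gamma$ is a fundamental domain for the $\Gamma$-action, so $\Omega(\Gamma)$ is tiled by the $\Gamma$-translates, and a bounded component $V$ of $\Omega(\Gamma)$ is a \emph{union} of infinitely many such translates (namely all translates $\gamma\cdot T$ with $\gamma\cdot T\subset V$). Concretely, for $\Gamma=\pmb{\Gamma}_{d+1}$ the unique bounded component of $\Omega(\Gamma)$ is the entire disk $\D$, whereas $T^0(\pmb{\Gamma}_{d+1})$ is only a regular ideal $(d+1)$-gon inside $\D$; the disk is tiled by infinitely many reflected copies of that polygon. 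Since a single component $V$ is built by infinitely many reflections of a cusped polygon, there is no a priori reason for $V$ to be Jordan --- the iterated reflections accumulate on $\partial V\subset\Lambda(\Gamma)$, and establishing that this accumulation produces a Jordan curve is precisely the point. This is why the paper's proof goes through local connectivity: the index-two Kleinian subgroup of $\Gamma$ is geometrically finite, hence $\Lambda(\Gamma)$ is locally connected by \cite{AM96}; then a bounded component $V$ of $\Omega(\Gamma)$ is simply connected with locally connected boundary contained in $\Lambda(\Gamma)=\partial\Omega_\infty(\Gamma)$, and a standard argument (no double points for the boundary of the complementary simply connected domain $\Omega_\infty(\Gamma)$) shows the Riemann map of $V$ extends to a homeomorphism on $\br\D$, so $V$ is Jordan. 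Your proposed argument for (4) cannot be repaired without introducing some input of this kind.
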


\begin{remark}
We note that since the groups on the Bers boundary $\partial\beta(\pmb{\Gamma}_{d+1})$ are reflection groups, they are all geometrically finite. Thus, the boundary of the Bers slice of an ideal polygon reflection group is considerably simpler than Bers slices of Fuchsian groups. Indeed, the boundary of the Bers slice of a Fuchsian group (except for the thrice punctured sphere group) necessarily contains degenerate groups; i.e., groups that are not geometrically finite (see \cite{Ber70}).
\end{remark}

\begin{figure}[ht!]
\begin{tikzpicture}
 \node[anchor=south west,inner sep=0] at (0,0) {\includegraphics[width=0.48\textwidth]{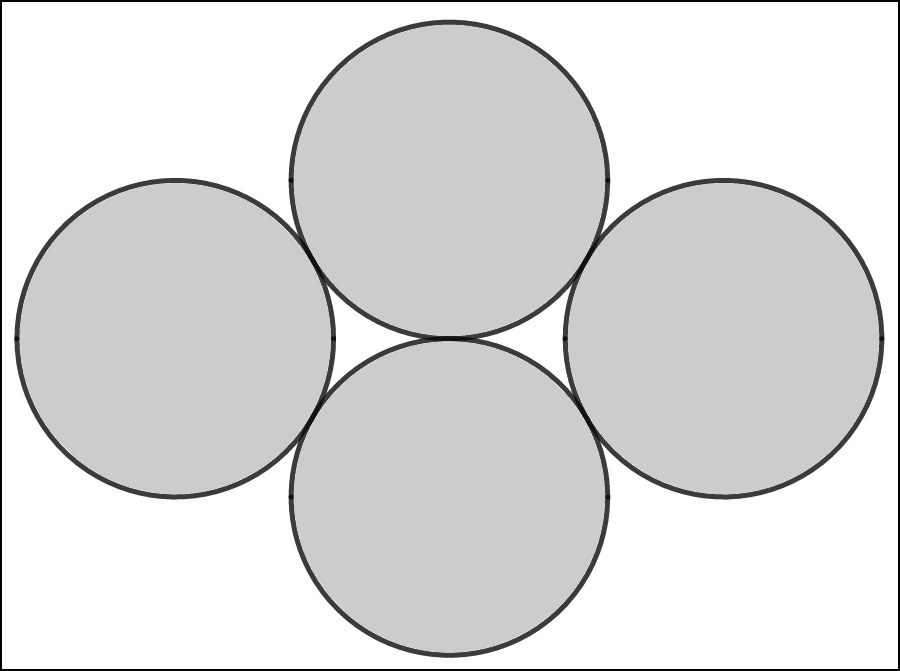}};
\node[anchor=south west,inner sep=0] at (6.4,0) {\includegraphics[width=0.48\textwidth]{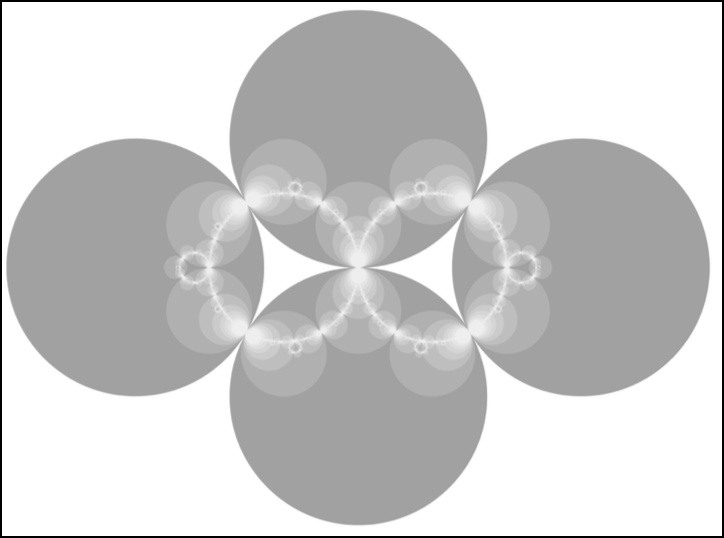}};
\node at (5,2.2) {$\Int{C_4}$};
\node at (3,3.5) {$\Int{C_1}$};
\node at (1,2.2) {$\Int{C_2}$};
\node at (3,1) {$\Int{C_3}$};
\node at (1.2,4) {$\mathcal{F}$};
\node at (3.64,2.25) {\begin{tiny}$\mathcal{F}$\end{tiny}};
\node at (2.45,2.25) {\tiny$\mathcal{F}$};
\node at (7.8,4) {$\Omega_\infty(\Gamma)$};
\end{tikzpicture}
\caption{Left: The circles $C_{i}$ generate a necklace group $\Gamma$. The Nielsen map $\rho_\Gamma$ is defined piecewise on the union of the closed disks $\overline{\Int{C_i}}$. The fundamental domain $\mathcal{F}=\mathcal{F}_\Gamma$ (for the action of $\Gamma$ on $\Omega(\Gamma)$) is the complement of these open disks with the singular boundary points removed. The connected components of $\mathcal{F}$ are marked. Right: The unbounded component of the domain of discontinuity $\Omega(\Gamma)$ is $\Omega_\infty(\Gamma)$. The boundary of $\Omega_\infty(\Gamma)$ is the limit set $\Lambda(\Gamma)$. Every point in $\Omega(\Gamma)$ escapes to $\mathcal{F}$ under iterates of $\rho_\Gamma$. The dynamics of $\rho_\Gamma$ on the limit set $\Lambda(\Gamma)$ is topologically conjugate to the Julia dynamics of the cubic anti-polynomial depicted in Figure~\ref{fat_ant}.}
\label{kleinian_cusp}
\end{figure}

\begin{prop}\cite[Proposition~16]{LMM20}\label{orbit_equiv_prop}
Let $\Gamma\in\overline{\beta(\pmb{\Gamma}_{d+1})}$. The map $\rho_\Gamma$ is orbit equivalent to $\Gamma$ on $\widehat{\C}$; i.e., for any two points $z,w\in\widehat{\mathbb{C}}$, there exists $g\in\Gamma$ with $g(z)=w$ if and only if there exist non-negative integers $n_1, n_2$ such that $\rho_\Gamma^{\circ n_1}(z)=\rho_\Gamma^{\circ n_2}(w)$.
\end{prop}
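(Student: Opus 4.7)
The ``if'' direction is immediate: each application of $\rho_\Gamma$ coincides with one of the generating reflections $\xi(\rho_i)$ of $\Gamma$, so equality $\rho_\Gamma^{\circ n_1}(z)=\rho_\Gamma^{\circ n_2}(w)$ forces $z$ and $w$ into a common $\Gamma$-orbit.

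For the converse, my plan is to introduce the binary relation $z\sim w$ on $\widehat{\C}$ defined by the existence of non-negative integers $n_1,n_2$ with $\rho_\Gamma^{\circ n_1}(z)=\rho_\Gamma^{\circ n_2}(w)$, and to show that it is an equivalence relation. Reflexivity and symmetry are obvious; transitivity follows by applying additional iterates of $\rho_\Gamma$ to the given equalities so as to match the exponent on the intermediate point, with the chain of iterates automatically remaining in the domain of $\rho_\Gamma$ because the original orbits were defined up to the required length. Granted that $\sim$ is an equivalence relation, the orbit-equivalence reduces to checking $z\sim\xi(\rho_i)(z)$ for every $z\in\widehat{\C}$ and every generator $\xi(\rho_i)$; transitivity then propagates the relation from generators to arbitrary words in $\Gamma$.

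The key geometric input for this final verification is the necklace structure of $\Gamma$ provided by Proposition~\ref{compactify_prop} and Remark~\ref{rep_var_def_rem}: each generator $\xi(\rho_i)$ is the reflection in a Euclidean circle $C_i$, and for $j\ne i$ the closed disk $\overline{\Int{C_j}}$ is disjoint from the open disk $\Int{C_i}$, meeting $C_i$ in at most a single tangency point. This makes $\overline{\Int{C_j}}\subset \overline{\Ext{C_i}}$ for every $j\ne i$, and a three-case split based on the position of $z$ completes the reduction. If $z\in\Int{C_i}$, then by definition of the Nielsen map $\rho_\Gamma(z)=\xi(\rho_i)(z)$, giving the identity with $(n_1,n_2)=(1,0)$. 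If $z\in C_i$, then $\xi(\rho_i)(z)=z$ and $(n_1,n_2)=(0,0)$ works. If $z\in\Ext{C_i}$, then $\xi(\rho_i)(z)\in\Int{C_i}$ by the packing geometry, so $\rho_\Gamma$ is defined at $\xi(\rho_i)(z)$ and $\rho_\Gamma(\xi(\rho_i)(z))=\xi(\rho_i)(\xi(\rho_i)(z))=z$, yielding $(n_1,n_2)=(0,1)$. These three cases exhaust $\widehat{\C}$.

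I do not expect any serious obstacle: the entire argument is driven by the fact that on each piece $\overline{\Int{C_i}}$ of its domain the Nielsen map agrees with the single generator $\xi(\rho_i)$, together with the non-overlapping nature of the packing. The only mild technicality is verifying that all iterates appearing in the transitivity step lie in the domain of $\rho_\Gamma$; this is automatic from the given data and the fact that $\rho_\Gamma$ is defined on all of $\bigcup_i\overline{\Int{C_i}}$.
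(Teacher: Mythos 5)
The paper cites \cite[Proposition~2.31]{LMM20} for this result and does not reproduce a proof, so there is no in-paper argument to compare against directly. That said, your proof is correct and essentially complete. The ``if'' direction is as you say. For the ``only if'' direction, the reduction to generators via the equivalence relation $\sim$ is the right move, and your three-case analysis for $z\sim\xi(\rho_i)(z)$ is clean: the $z\in\Int{C_i}$ case uses the very definition of $\rho_\Gamma$, the $z\in C_i$ case uses that reflections fix their circle pointwise, and the $z\in\Ext{C_i}$ case uses that reflection in $C_i$ maps $\Ext{C_i}$ into $\Int{C_i}$ (this last fact is elementary geometry of inversions rather than specifically ``packing geometry,'' but the observation that the disks are pairwise non-overlapping is indeed what makes $\rho_\Gamma$ well-defined in the first place, so the attribution is harmless). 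One point you flag as a ``mild technicality'' deserves to be stated precisely, since $\rho_\Gamma$ is only defined on $\bigcup_i\overline{\Int{C_i}}$ and orbits terminate once they reach $\inter{\mathcal{F}_\Gamma}$: in the transitivity step, given $\rho_\Gamma^{\circ a}(z)=\rho_\Gamma^{\circ b}(w)$ and $\rho_\Gamma^{\circ c}(w)=\rho_\Gamma^{\circ d}(v)$, one should take, say, $b\le c$ without loss of generality; then $\rho_\Gamma^{\circ (c-b)}$ is already known to be defined at $\rho_\Gamma^{\circ b}(w)=\rho_\Gamma^{\circ a}(z)$ (because the $w$-orbit runs to length $c$), giving $\rho_\Gamma^{\circ (a+c-b)}(z)=\rho_\Gamma^{\circ d}(v)$ with all iterates legitimately defined. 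With that made explicit, the argument is airtight.
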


\begin{remark}
Note that although $\rho_\Gamma$ is not defined on $\inter{\mathcal{F}_\Gamma}$, the expression $\rho_\Gamma^{\circ n}(z)$ makes sense for $z\in\inter{\mathcal{F}_\Gamma}$ when $n=0$ (with the convention that $\rho_\Gamma^{\circ 0}=\textrm{id}$).
\end{remark}

\begin{prop}\label{regular_fund_pre_image_prop} 
Let $\Gamma\in\overline{\beta(\pmb{\Gamma}_{d+1})}$. Then, the following hold true.
\begin{enumerate}\upshape
\item $\overline{T^0(\Gamma)}$ is connected.

\item $\displaystyle\mathcal{K}(\Gamma)=\overline{\bigcup_{n\geq0}\rho_\Gamma^{-n}(T^0(\Gamma))}$.

\item $\Lambda(\Gamma)$ is completely invariant under $\rho_\Gamma$.

\item If $\mathcal{U}$ is a connected component of $\inter{\mathcal{K}(\Gamma)}$ containing a component of $T^0(\Gamma)$, then $\rho_\Gamma\vert_{\partial\mathcal{U}}$ is topologically conjugate to $\pmb{\rho}_{d'}\vert_{\mathbb{S}^1}$, for some $d'\geq 2$. 
\end{enumerate}
\end{prop}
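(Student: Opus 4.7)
The plan is to handle the four statements in the listed order, exploiting the combinatorics of the necklace packing together with the orbit equivalence of $\rho_\Gamma$ with $\Gamma$ established in Proposition~\ref{orbit_equiv_prop}.

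For (1), I would observe that two distinct components of $T^0(\Gamma)$ can only be adjacent across a tangency point of two generating circles $C_j$, $C_k$, and such a tangency lies in the closure of each of the regions meeting at it. Starting from the base case where the only tangencies are the consecutive ones $C_i\cap C_{i+1}$, in which $T^0(\Gamma)$ is a single topological ideal $(d{+}1)$-gon bounded by one arc of each $C_i$, I would then add the remaining tangencies one at a time. Each new tangency either joins two components of $T^0(\Gamma)$ through a freshly shared cusp or splits one component into two pieces whose closures share the new cusp. An induction on the number of added tangencies shows that $\overline{T^0(\Gamma)}$ stays connected throughout.

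For (2), any point of $\inter{\mathcal{K}(\Gamma)}=\Omega(\Gamma)\setminus\Omega_\infty(\Gamma)$ has a $\Gamma$-orbit representative in the fundamental domain $\mathcal{F}_\Gamma$, which cannot lie in $\Pi^0(\Gamma)$ because $\Omega_\infty(\Gamma)$ is the $\Gamma$-saturation of $\Pi^0(\Gamma)$, so it lies in $T^0(\Gamma)$. Orbit equivalence then replaces the group orbit by an iterated $\rho_\Gamma$-preimage, yielding $\inter{\mathcal{K}(\Gamma)}\subset\bigcup_{n\geq 0}\rho_\Gamma^{-n}(T^0(\Gamma))\subset\mathcal{K}(\Gamma)$. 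Taking closures and using $\partial\Omega_\infty(\Gamma)=\Lambda(\Gamma)$ from Proposition~\ref{inv_comp_limit_boundary}(2) gives the desired equality. For (3), the complete $\Gamma$-invariance of $\Lambda(\Gamma)$ transfers, via orbit equivalence together with the $\Gamma$-equivariance of $\rho_\Gamma$ on the disks $\overline{\Int C_i}$, to complete invariance under $\rho_\Gamma$ on its domain of definition.

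The substantive step is (4). Let $\mathcal{U}$ be a component of $\inter{\mathcal{K}(\Gamma)}$ containing a component $T$ of $T^0(\Gamma)$, and let $C_{i_1},\dots,C_{i_{d'+1}}$ be the circles whose arcs bound $T$. I would first identify the stabilizer $\Gamma_\mathcal{U}:=\{g\in\Gamma : g(\mathcal{U})=\mathcal{U}\}$ with the subgroup generated by the reflections in $C_{i_1},\dots,C_{i_{d'+1}}$, using that any element moving a point of $\mathcal{U}$ back to $\mathcal{U}$ can be written, via orbit equivalence, as a word in the Nielsen reflections along circles incident to $T$, while reflections in non-incident circles push $T$ out of $\mathcal{U}$. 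The triple $(\Gamma_\mathcal{U},\mathcal{U},T)$ then parallels $(\pmb{\Gamma}_{d'+1},\D,T^0(\pmb{\Gamma}_{d'+1}))$: a necklace-type reflection group acting on a Jordan domain with an ideal $(d'{+}1)$-gon fundamental piece and parabolic products of consecutive generators. The main obstacle is producing an equivariant topological conjugacy between the two triples that restricts to a conjugacy of the Nielsen actions on the boundary. I would uniformize $\mathcal{U}$ by a Riemann map and extend it to the Jordan boundary $\partial\mathcal{U}$ (available by Proposition~\ref{inv_comp_limit_boundary}(4)) to obtain a conjugate group acting on $\overline{\D}$; adapting the quasiconformal rigidity argument used in the proof of Theorem~\ref{bijection_via_david_thm}, I would then deform this conjugate group to $\pmb{\Gamma}_{d'+1}$ while matching labels of the generating circles. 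Restricting the resulting homeomorphism to $\partial\mathcal{U}$ and using that $\rho_\Gamma\vert_{\partial\mathcal{U}}$ and $\pmb{\rho}_{d'}\vert_{\mathbb{S}^1}$ are the respective piecewise reflection maps yields the claimed topological conjugacy.
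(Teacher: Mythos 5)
Your treatments of (1), (2), and (3) are essentially the same as the paper's: (1) is attributed to the combinatorics of the $2$-connected, outerplanar contact graph (the paper says exactly this, you just spell out an inductive tangency-adding version; note that adding a tangency can only \emph{split} a component of $T^0(\Gamma)$, never join two, so one of your two cases is vacuous, but this does not hurt the argument), and (2)--(3) use precisely the orbit equivalence of $\rho_\Gamma$ with $\Gamma$ and the known properties of $\Omega_\infty(\Gamma)$, as does the paper.

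For (4), however, you and the paper take genuinely different routes. The paper's argument is a direct reflection construction: take a quasiconformal homeomorphism $\kappa\colon T^0(\pmb{\Gamma}_{d'+1})\to\mathcal{U}\cap T^0(\Gamma)$ matching the cusps, propagate it by iterated Schwarz reflection (using quasiconformal removability of analytic arcs) to a quasiconformal homeomorphism $\kappa\colon\D\to\mathcal{U}$ conjugating $\pmb{\rho}_{d'}$ to $\rho_\Gamma$, and then invoke the Jordan property of $\mathcal{U}$ from Proposition~\ref{inv_comp_limit_boundary}(4) to extend $\kappa$ continuously to a boundary conjugacy. Your route first identifies the stabilizer $\Gamma_\mathcal{U}$ with the subgroup generated by reflections in the bounding circles of $T$, then passes through a Riemann map $\D\to\mathcal{U}$ to obtain a Fuchsian reflection group, and finally conjugates that to $\pmb{\Gamma}_{d'+1}$ via a quasiconformal deformation à la the lemma inside Theorem~\ref{bijection_via_david_thm}. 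This can be made to work, but it carries extra burdens the paper avoids: you must justify that the conjugated anti-conformal involutions are reflections along hyperbolic geodesics so that $\psi^{-1}(T)$ is an \emph{ideal} $(d'+1)$-gon, and the stabilizer identification itself needs a careful argument (the orbit-equivalence claim that every returning element is a word in reflections along circles incident to $T$ is plausible but not automatic). The paper's direct reflection construction sidesteps both of these and reaches the boundary conjugacy in one pass; your approach buys a more group-theoretic picture of $\mathcal{U}$ (via $\Gamma_\mathcal{U}$) at the cost of more verification.
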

\begin{proof}
1) This follows from the fact that $\Gamma$ is generated by reflections in the circles of a finite circle packing with $2$-connected and outerplanar contact graph.

2) Recall that $\mathcal{F}_\Gamma=T^0(\Gamma)\sqcup\Pi^0(\Gamma)$. Since $\Omega_\infty(\Gamma)\supset \Pi^0(\Gamma)$ is an invariant component of $\Omega(\Gamma)$, it follows from Proposition~\ref{fund_dom_prop} that
$$
\Omega(\Gamma)\setminus\Omega_\infty(\Gamma)=\Gamma\left( T^0(\Gamma)\right).
$$
Proposition~\ref{orbit_equiv_prop} now implies that
$$
\Omega(\Gamma)\setminus\Omega_\infty(\Gamma)=\bigcup_{n\geq0}\rho_\Gamma^{-n}(T^0(\Gamma)).
$$
As $\Omega(\Gamma)\setminus\Omega_\infty(\Gamma)$ is also $\Gamma$-invariant, we have that 
$$
\partial\left(\Omega(\Gamma)\setminus\Omega_\infty(\Gamma)\right)=\Lambda(\Gamma).
$$
Hence, 
$$
\mathcal{K}(\Gamma)=\Lambda(\Gamma)\sqcup\left(\Omega(\Gamma)\setminus\Omega_\infty(\Gamma)\right)=\overline{\bigcup_{n\geq0}\rho_\Gamma^{-n}(T^0(\Gamma))}.
$$

3) Complete invariance of $\Lambda(\Gamma)$ under $\rho_\Gamma$ follows from $\Gamma$-invariance of the limit set.

4) If $\mathcal{U}$ is a connected component of $\inter{\mathcal{K}(\Gamma)}$ containing a component of $T^0(\Gamma)$, then $\mathcal{U}\cap T^0(\Gamma)$ is a topological $(d'+1)$-gon (with vertices removed), for some $d'\geq 2$. Choose a quasiconformal homeomorphism $\kappa:T^0(\pmb{\Gamma}_{d'+1})\to\mathcal{U}\cap T^0(\Gamma)$ preserving the vertices. By iterated Schwarz reflections (and quasiconformal removability of analytic arcs), we obtain a quasiconformal homeomorphism $\kappa:\D\to\mathcal{U}$ conjugating $\pmb{\rho}_{d'}$ to $\rho_\Gamma$. Since $\mathcal{U}$ is a Jordan domain (by Proposition~\ref{inv_comp_limit_boundary}), the quasiconformal homeomorphism $\kappa$ extends continuously to a topological conjugacy between $\pmb{\rho}_{d'}\vert_{\mathbb{S}^1}$ and $\rho_\Gamma\vert_{\partial\mathcal{U}}$.
\end{proof}

\begin{prop}\cite[Proposition~22]{LMM20}\label{group_lamination_prop}
Let $\Gamma \in \overline{\beta(\pmb{\Gamma}_{d+1})}$. There exists a conformal map $\phi_\Gamma: \mathbb{D}^* \rightarrow \Omega_{\infty}(\Gamma)$ such that
\begin{align}\label{group_conjugacy}  
\pmb{\rho}_d(z) = \phi_\Gamma^{-1} \circ \rho_{\Gamma} \circ \phi_\Gamma(z) \textrm{, for } z\in  \mathbb{D}^*\setminus \inter{\Pi(\pmb{\Gamma}_{d+1})}.   
\end{align}  The map $\phi_\Gamma$ extends continuously to a semiconjugacy $\phi_\Gamma: \mathbb{S}^1 \rightarrow \Lambda(\Gamma)$ between $\pmb{\rho}_d\vert_{\mathbb{S}^1}$ and $\rho_{\Gamma}\vert_{\Lambda(\Gamma)}$, and for each $i$, sends the cusp of $\partial \Pi(\pmb{\Gamma}_{d+1})$ at $\mathbf{C}_i\cap\mathbf{C}_{i+1}$ to the cusp of $\partial \Pi(\Gamma)$ at $C_i\cap C_{i+1}$. 
\end{prop}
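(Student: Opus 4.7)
My plan is to build $\phi_\Gamma$ first for $\xi$ in the open Bers slice $\beta(\pmb{\Gamma}_{d+1})$, where the map is supplied tautologically by the defining quasiconformal deformation, and then to pass to the Bers boundary by a normal families argument on $\D^*$.

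For $\xi\in\beta(\pmb{\Gamma}_{d+1})$, write $\xi(g)=\tau_\mu\circ g\circ\tau_\mu^{-1}$ for some $\mu\in\textrm{Bel}_{\pmb{\Gamma}_{d+1}}$ with $\mu\equiv 0$ on $\D^*$. Then $\tau_\mu$ is conformal on $\D^*$; since $\Omega_\infty(\pmb{\Gamma}_{d+1})=\D^*$ and $\tau_\mu$ conjugates $\pmb{\Gamma}_{d+1}$ to $\Gamma$, the image $\tau_\mu(\D^*)$ is the unbounded $\Gamma$-invariant component of $\Omega(\Gamma)$, hence equals $\Omega_\infty(\Gamma)$. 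I would take $\phi_\Gamma:=\tau_\mu\vert_{\D^*}$. The conjugation $\tau_\mu\circ\rho_i=\xi(\rho_i)\circ\tau_\mu$ forces $\tau_\mu$ to carry the fixed-point circle $\mathbf{C}_i$ of $\rho_i$ onto the fixed-point circle $C_i$ of the reflection $\xi(\rho_i)$; preservation of the bounded complementary component (as $\tau_\mu$ fixes $\infty$) gives $\tau_\mu(\overline{\Int{\mathbf{C}_i}})=\overline{\Int{C_i}}$. Unpacking the piecewise definitions of $\pmb{\rho}_d$ and $\rho_\Gamma$ then immediately yields \eqref{group_conjugacy} on $\D^*\setminus\inter{\Pi(\pmb{\Gamma}_{d+1})}=\D^*\cap\bigcup_i\overline{\Int{\mathbf{C}_i}}$.

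For $\xi\in\partial\beta(\pmb{\Gamma}_{d+1})$, I would approximate by a sequence $\xi_n\in\beta(\pmb{\Gamma}_{d+1})$ with $\xi_n\to\xi$, guaranteed by Proposition~\ref{compactify_prop}. The normalized conformal maps $\phi_{\Gamma_n}=\tau_{\mu_n}\vert_{\D^*}$ satisfy $\phi_{\Gamma_n}(z)=z+O(1/z)$ near $\infty$ (Definition~\ref{Bers_slice}), so they form a normal family of univalent maps on $\D^*$. After passing to a subsequence, $\phi_{\Gamma_n}\to\phi_\Gamma$ locally uniformly, with $\phi_\Gamma$ univalent on $\D^*$. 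Algebraic convergence of $\xi_n$ implies that the circles $C_i^{(n)}$ converge to $C_i$ in Hausdorff sense, so by the Carath\'eodory kernel theorem the image $\phi_\Gamma(\D^*)$ is the unbounded component of $\widehat{\C}\setminus\bigcup_i C_i$, which by Proposition~\ref{inv_comp_limit_boundary} is $\Omega_\infty(\Gamma)$. The identity \eqref{group_conjugacy} passes to the limit on compact subsets by continuity. Next, the index-two Kleinian subgroup of $\Gamma$ is geometrically finite with connected limit set, hence $\Lambda(\Gamma)=\partial\Omega_\infty(\Gamma)$ is locally connected (as used in Proposition~\ref{inv_comp_limit_boundary}); by Carath\'eodory's theorem, $\phi_\Gamma$ extends continuously to $\overline{\D^*}\to\overline{\Omega_\infty(\Gamma)}$ with $\phi_\Gamma(\mathbb{S}^1)=\Lambda(\Gamma)$, and \eqref{group_conjugacy} extends by continuity to the desired semiconjugacy on $\mathbb{S}^1$. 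The cusp $\mathbf{C}_i\cap\mathbf{C}_{i+1}$ is the unique fixed point of the parabolic M\"obius element $\rho_i\rho_{i+1}$; by the weakly type-preserving condition, $\xi(\rho_i\rho_{i+1})$ is a parabolic M\"obius element of $\Gamma$ whose fixed point is $C_i\cap C_{i+1}$ (cf.\ Remark~\ref{rep_var_def_rem}), and since $\phi_\Gamma$ intertwines these parabolics, it sends cusp to cusp.

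The principal obstacle I anticipate lies in the limit step on $\partial\beta(\pmb{\Gamma}_{d+1})$: ruling out degeneration so that the limit $\phi_\Gamma$ is univalent on all of $\D^*$, with image precisely $\Omega_\infty(\Gamma)$ rather than a proper subdomain. The key inputs are the normalization at $\infty$ (preventing a constant limit), the compactness of $\overline{\beta(\pmb{\Gamma}_{d+1})}$, Hausdorff convergence of the generating circles under algebraic convergence of the representations, and the Carath\'eodory kernel theorem to identify the image domain. Once these are in hand, the boundary extension, semiconjugacy and cusp preservation follow by standard continuity arguments together with the w.t.p.\ condition.
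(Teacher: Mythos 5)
Since the paper merely cites this proposition from \cite{LMM20} rather than proving it, I can only assess your argument on its own terms. The open-Bers-slice case and the cusp/boundary-extension steps are fine. But there is a genuine error in the limiting step --- exactly where you flag the principal obstacle. You write that ``by the Carath\'eodory kernel theorem the image $\phi_\Gamma(\D^*)$ is the unbounded component of $\widehat{\C}\setminus\bigcup_i C_i$, which by Proposition~\ref{inv_comp_limit_boundary} is $\Omega_\infty(\Gamma)$.'' The last identification is false: the unbounded complementary component of $\bigcup_i C_i$ is, up to tangency points, the fundamental piece $\Pi^0(\Gamma)$, a \emph{proper} subdomain of $\Omega_\infty(\Gamma)$. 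Already for the base group, the unbounded complement of $\bigcup_i\mathbf{C}_i$ is $\D^*\setminus\bigcup_i\overline{\Int{\mathbf{C}_i}}$, not $\D^*=\Omega_\infty(\pmb{\Gamma}_{d+1})$. What Proposition~\ref{inv_comp_limit_boundary} actually gives is $\partial\Omega_\infty(\Gamma)=\Lambda(\Gamma)$, a fractal set built from the full $\Gamma$-orbit of the circles, not $\bigcup_i C_i$. The Carath\'eodory kernel theorem does identify $\phi_\Gamma(\D^*)$ with the kernel at $\infty$ of $\{\Omega_\infty(\Gamma_n)\}$, but proving that this kernel equals $\Omega_\infty(\Gamma)$ is exactly what is needed, and Hausdorff convergence of the $d+1$ generating circles by itself does not suffice, because $\Omega_\infty(\Gamma_n)$ is governed by the whole group orbit of the circles, not the circles alone.

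One way to close the gap: first pass \eqref{group_conjugacy} to the limit --- for each fixed word $g\in\pmb{\Gamma}_{d+1}$ one has $\xi_n(g)\to\xi(g)$ uniformly on compacts, so $\phi_\Gamma$ still intertwines the two group actions on $\D^*$. Then $\Gamma$ acts properly discontinuously on $\phi_\Gamma(\D^*)$ (transport from $\D^*$), giving $\phi_\Gamma(\D^*)\subset\Omega(\Gamma)$; as $\phi_\Gamma(\D^*)$ is connected and contains $\infty$, it lies in $\Omega_\infty(\Gamma)$. For the reverse inclusion, use $\phi_{\Gamma_n}(\Pi^0(\pmb{\Gamma}_{d+1}))=\Pi^0(\Gamma_n)$ together with $\Pi^0(\Gamma_n)\to\Pi^0(\Gamma)$ in the kernel sense at $\infty$ (this \emph{does} follow from Hausdorff convergence of the circles) to get $\Pi^0(\Gamma)\subset\phi_\Gamma(\D^*)$; then $\Gamma$-invariance of $\phi_\Gamma(\D^*)$ and the tiling of $\Omega_\infty(\Gamma)$ by the $\Gamma$-translates of $\Pi(\Gamma)\cap\Omega_\infty(\Gamma)$ (Proposition~\ref{fund_dom_prop}) yield $\Omega_\infty(\Gamma)\subset\phi_\Gamma(\D^*)$. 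With this replacement, the remainder of your proof --- local connectivity, Carath\'eodory extension, continuity of the semiconjugacy on $\mathbb{S}^1$, and cusp preservation via the w.t.p.\ condition --- goes through.
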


The Nielsen map $\pmb{\rho}_d$ of the group $\pmb{\Gamma}_{d+1}$, restricted to the limit set $\mathbb{S}^1$, admits the Markov partition 
$$
\mathcal{P}(\pmb{\rho}_d;\{1,e^{\frac{2\pi i}{d+1}}\cdots,e^{\frac{2\pi id}{d+1}}\}). 
$$
Note that the expanding map 
$$
z\mapsto\overline{z}^{d}:\mathbb{S}^1\to\mathbb{S}^1,
$$ 
or equivalently, 
$$
m_{-d}:\R/\Z\to\R/\Z,\ \theta\mapsto-d\theta
$$ 
also admits the same Markov partition with the same transition matrix (identifying $\mathbb{S}^1$ with $\R/\Z$). By Lemma~\ref{lemma:expansive_conjugate_fg}, there exists a homeomorphism $$\mathcal{E}_{d}:\mathbb{S}^1\to\mathbb{S}^1$$ conjugating $\pmb{\rho}_d$ to $\overline{z}^{d}$ (or $m_{-d}$) with $\mathcal{E}_{d}(1)=1$.

\bigskip

\subsection{Conformal mating}\label{conformal_mating_sec} We will now define the notion of conformal mating of the Nielsen map of a necklace group and an anti-polynomial. Our definitions follow the classical definition of conformal matings of two (anti-)polynomials, which we recall below (we refer the readers to \cite{PM12} for a more extensive discussion on conformal mating of polynomials).

Recall that the Julia set of an anti-rational map $R$ is denoted by $\mathcal{J}(R)$, and the filled Julia set and the basin of infinity of an anti-polynomial $P$ are denoted by $\mathcal{K}(P)$ and $\mathcal{B}_\infty(P)$ respectively.

Let $P$ be a monic, centered, anti-polynomial of degree $d$ such that $\mathcal{J}(P)$ is connected and locally connected. Denote by $\phi_P: \mathbb{D}^*\rightarrow \mathcal{B}_\infty(P)$ the B\"ottcher coordinate for $P$ such that $\phi_P'(\infty)=1$. We note that since $\partial \mathcal{K}(P)=\mathcal{J}(P)$ is locally connected by assumption, it follows that $\phi_P$ extends to a semiconjugacy between $z\mapsto\overline{z}^{d}\vert_{\mathbb{S}^1}$ and $P\vert_{\mathcal{J}(P)}$. 

\bigskip 

\subsubsection{Conformal mating of anti-polynomials}\label{poly_mating_subsec}

An anti-rational map $R:\widehat{\C}\to\widehat{\C}$ of degree $d\geq 2$ is said to be the \emph{conformal mating} of two degree $d$ monic, centered, anti-polynomials $P_1$ and $P_2$ with connected and locally connected filled Julia sets if and only if there exist continuous maps
\[\widetilde{\psi}_{1}: \mathcal{K}(P_1) \rightarrow \widehat{\mathbb{C}} \textrm{ and } \widetilde{\psi}_{2}: \mathcal{K}(P_2) \rightarrow \widehat{\C},  \] conformal on $\inter{\mathcal{K}(P_1)}$, $\inter{\mathcal{K}(P_2)}$, respectively, such that 

\begin{enumerate}\upshape
\item $\widetilde{\psi}_1(\mathcal{K}(P_1))\bigcup\widetilde{\psi}_2(\mathcal{K}(P_2))=\widehat{\C}$,

\item $\widetilde{\psi}_i\circ P_i=R\circ\widetilde{\psi}_i$,\ \textrm{for}\ $i\in\{1,2\}$, and

\item $\widetilde{\psi}_1(a)=\widetilde{\psi}_2(b)$ if and only if $a\sim_1 b$, where the equivalence relation $\sim_1$ on $\mathcal{K}(P_1) \sqcup \mathcal{K}(P_2)$ is generated by $\phi_{P_1}(s)\sim_1\phi_{P_2}(\overline{s})$ for all $s\in\mathbb{S}^1$. 
\end{enumerate}

\bigskip

\subsubsection{Conformal mating of a necklace group and an anti-polynomial}
Let $\Gamma \in \overline{\beta(\pmb{\Gamma}_{d+1})}$ be a necklace group generated by reflections in circles $C_1,\cdots,C_{d+1}$. By Proposition~\ref{group_lamination_prop}, there is a natural semiconjugacy $\phi_\Gamma: \mathbb{S}^1 \rightarrow \Lambda(\Gamma)$ between $\pmb{\rho}_d\vert_{\mathbb{S}^1}$ and $\rho_\Gamma\vert_{\Lambda(\Gamma)}$ such that $\phi_\Gamma(1)$ is the point of tangential intersection of $C_1$ and $C_{d+1}$. Recall also that $\mathcal{E}_d: \mathbb{S}^1 \rightarrow \mathbb{S}^1$ is a topological conjugacy between $\pmb{\rho}_d\vert_{\mathbb{S}^1}$ and $z\mapsto\overline{z}^{d}|_{\mathbb{S}^1}$.

\begin{definition}\label{conf_mating_equiv_reltn} 
We define the equivalence relation $\sim$ on $\mathcal{K}(\Gamma) \sqcup \mathcal{K}(P)$ generated by $\phi_\Gamma(t)\sim\phi_P(\overline{\mathcal{E}_d(t)})$ for all $t\in\mathbb{S}^1$.
\end{definition}

\begin{definition}\label{mating_def} 
Let $\Gamma \in \overline{\beta(\pmb{\Gamma}_{d+1})}$, and $P$ be a monic, centered anti-poly\-nomial such that $\mathcal{J}(P)$ is connected and locally connected. Further, let $\Omega\subsetneq\widehat{\C}$ be an open set, and $F:\overline{\Omega}\to\widehat{\C}$ be a continuous map that is anti-meromorphic on $\Omega$. We say that $F$ is a \emph{conformal mating} of $\Gamma$ with $P$ if there exist continuous maps \[ \psi_P: \mathcal{K}(P) \rightarrow \widehat{\mathbb{C}} \textrm{ and } \psi_\Gamma: \mathcal{K}(\Gamma) \rightarrow \widehat{\C},  \] conformal on $\inter{\mathcal{K}(P)}$, $\inter{\mathcal{K}(\Gamma)}$, respectively, such that 

\begin{enumerate}\upshape
\item\label{whole_sphere} $\psi_P(\mathcal{K}(P))\bigcup\psi_\Gamma(\mathcal{K}(\Gamma))=\widehat{\C}$,

\item\label{domain_omega} $\Omega=\widehat{\C}\setminus\psi_\Gamma(\overline{T^0(\Gamma)})$,

\item\label{poly_semiconj} $\psi_P\circ P=F\circ\psi_P$ on $\mathcal{K}(P)$, 

\item\label{nielsen_semiconj} $\psi_\Gamma\circ \rho_\Gamma=F\circ\psi_\Gamma$ on $\mathcal{K}(\Gamma)\setminus \inter{T^0(\Gamma)}$, and

\item\label{identifications} $\psi_\Gamma(z)=\psi_P(w)$ if and only if $z\sim w$ where $\sim$ is as in Definition~\ref{conf_mating_equiv_reltn}. 
\end{enumerate}
\end{definition} 

The following lemma connects the study of conformal matings of reflection groups and anti-polynomials to the theory of quadrature domains. 

\begin{lemma}\label{mating_is_schwarz}
If $F:\overline{\Omega}\to\widehat{\C}$ is a conformal mating of $\Gamma$ and $P$, then each component of $\Omega$ is a simply connected quadrature domain, and $F$ is the piecewise defined Schwarz reflection map associated with these quadrature domains.
\end{lemma}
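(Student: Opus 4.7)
The strategy is to recognize $F$ as the Schwarz reflection of each component of $\Omega$, by first showing that $F$ fixes $\partial\Omega$ pointwise. First I would verify that the Nielsen map $\rho_\Gamma$ fixes $\partial T^0(\Gamma)$ pointwise: every boundary point of $T^0(\Gamma)$ lies on some generating circle $C_i$ (or at a cusp $C_i \cap C_j$), where $\rho_\Gamma$ acts as the reflection $r_i$ that fixes $C_i$ pointwise (at a cusp both $r_i$ and $r_j$ fix the point, so $\rho_\Gamma$ is well-defined and fixes it). Applying the semiconjugacy $F \circ \psi_\Gamma = \psi_\Gamma \circ \rho_\Gamma$ from Definition~\ref{mating_def}~\eqref{nielsen_semiconj}, this yields $F \equiv \mathrm{id}$ on $\psi_\Gamma(\partial T^0(\Gamma))$, and since $\partial \Omega \subset \psi_\Gamma(\partial T^0(\Gamma))$, we get $F|_{\partial \Omega} = \mathrm{id}$.

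Next, for each connected component $V$ of $\Omega$, I define $S_V := \overline{F}$ on $\overline{V}$. Since $F$ is anti-meromorphic on $V$ and continuous on $\overline{V}$, the function $S_V$ is meromorphic on $V$, continuous on $\overline{V}$, and satisfies $S_V(z) = \overline{z}$ on $\partial V$. Thus $S_V$ is a Schwarz function of $V$, certifying $V$ as a quadrature domain with Schwarz reflection $\overline{S_V} = F|_{\overline{V}}$, exactly as required.

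Finally, to establish simple connectivity of components of $\Omega$: each component of $T^0(\Gamma)$ is an ideal polygon (with cusps removed), hence simply connected, and is contained in a bounded Jordan component $\mathcal{U}$ of $\inter{\mathcal{K}(\Gamma)}$ (Proposition~\ref{inv_comp_limit_boundary}). Because $\psi_\Gamma|_{\mathcal{U}}$ is conformal and $\partial \mathcal{U} \subset \Lambda(\Gamma)$ is locally connected (by geometric finiteness of $\Gamma$), $\psi_\Gamma$ extends to a homeomorphism from $\overline{\mathcal{U}}$ onto its image, so each component of $\psi_\Gamma(\overline{T^0(\Gamma)})$ is a closed topological disk. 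By connectivity of $\overline{T^0(\Gamma)}$ (Proposition~\ref{regular_fund_pre_image_prop}) and the outerplanar $2$-connected contact structure characterizing necklace groups, these closed disks are glued in a tree-like pattern along common cusp points, and a standard topological argument then gives that each component of the complement in $\widehat{\C}$ is simply connected. The principal obstacle I foresee lies in this last step: making rigorous the description of $\psi_\Gamma(\overline{T^0(\Gamma)})$ as a pinched union of closed topological disks, in particular controlling any additional cusp identifications introduced by the topological mating relation $\phi_P(\overline{\mathcal{E}_d(t)}) \sim \phi_\Gamma(t)$, since without this the components of $\Omega$ could a priori fail to be simply connected.
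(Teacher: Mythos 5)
Your verification that $\rho_\Gamma$ fixes $\partial T^0(\Gamma)$ pointwise and hence (via the semiconjugacy from Definition~\ref{mating_def}~\eqref{nielsen_semiconj}) that $F$ fixes $\partial\Omega$ pointwise is correct, and your identification of $\overline{F}$ as a Schwarz function on each component of $\Omega$ matches the paper. You also need to check, per the definition of a quadrature domain, that $\infty \notin \partial\Omega_i$ (arranged by a Möbius conjugation) and that $\inter{\overline{\Omega_i}} = \Omega_i$ (which follows because $\psi_\Gamma$, being injective on $\inter{\mathcal K(\Gamma)}$, can identify only finitely many singular boundary points of $T^0(\Gamma)$, so $\psi_\Gamma(\overline{T^0(\Gamma)})$ has no interior beyond $\psi_\Gamma(\inter{T^0(\Gamma)})$); these are small omissions.

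The real issue is your treatment of simple connectivity, where you honestly flag a gap and the gap is genuine: trying to exhibit $\psi_\Gamma(\overline{T^0(\Gamma)})$ as a pinched tree of closed disks requires exactly the control of the extra cusp identifications that you say you cannot yet supply, and this route is also unnecessary. The paper's argument is far simpler and sidesteps the whole structural analysis: by Proposition~\ref{regular_fund_pre_image_prop}(1) the set $\overline{T^0(\Gamma)}$ is connected, so its continuous image $\psi_\Gamma(\overline{T^0(\Gamma)})$ is a connected compact subset of $\widehat{\C}$, and the standard fact that every complementary component of a connected compact set in $\mathbb{S}^2$ is simply connected immediately gives that each $\Omega_i$ is simply connected. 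No information about cusp identifications, local connectivity of $\Lambda(\Gamma)$, or the outerplanar contact graph is needed at this point. You should replace your final paragraph with this one-line argument.
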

\begin{proof}
As all but finitely many points of $\overline{T^0(\Gamma)}$ are contained in $\inter{\mathcal{K}(\Gamma)}$, injectivity of $\psi_\Gamma\vert_{\inter{\mathcal{K}(\Gamma)}}$ implies that $\psi_\Gamma$ can introduce at most finitely many identifications on $\overline{T^0(\Gamma)}$. In fact, these identifications may happen only at the singular points of $\partial T^0(\Gamma)$. It follows that $\Omega=\widehat{\C}\setminus\psi_\Gamma(\overline{T^0(\Gamma)})$ has at most finitely many connected components, say $\Omega_1,\cdots,\Omega_l$, and they satisfy the conditions
\begin{enumerate}\upshape
\item $\inter{\overline{\Omega_i}}=\Omega_i$, for $i\in\{1,\cdots, l\}$, and

\item $\displaystyle\overline{\Omega}=\bigcup_{i=1}^l\overline{\Omega_i}=\widehat{\C}\setminus\psi_\Gamma(\inter{T^0(\Gamma)}).$
\end{enumerate}

Since $\Gamma\in\overline{\beta(\pmb{\Gamma}_{d+1})}$, the set $\overline{T^0(\Gamma)}$ is connected by Proposition~\ref{regular_fund_pre_image_prop}. Hence, each $\Omega_i$ is simply connected. Possibly after conjugating $F$ by a M{\"o}bius map, we can also assume that $\infty\notin\partial\Omega_i$, for $i\in\{1,\cdots,l\}$. Finally, as $\rho_\Gamma$ fixes $\partial T^0(\Gamma)$ pointwise, it follows that the anti-meromorphic map $F\vert_{\Omega_i}$ continuously extends to the identity map on $\partial\Omega_i$, for each $i$. We conclude that each component $\Omega_i$ is a simply connected quadrature domain, and $F\vert_{\overline{\Omega_i}}$ is the Schwarz reflection map associated with $\Omega_i$.
\end{proof}

\bigskip

\subsection{A general mateability theorem}\label{mating_thm_subsec}

Let $\Gamma \in \overline{\beta(\pmb{\Gamma}_{d+1})}$. By \cite[Theorem~B, Remark~68]{LMM20}, there exists a unique monic, centered, critically fixed anti-polynomial $P_\Gamma$ of degree $d$ such that 
$$
\rho_\Gamma:\Lambda(\Gamma)\to\Lambda(\Gamma)\quad \textrm{and}\quad P_\Gamma:\mathcal{J}(P_\Gamma)\to\mathcal{J}(P_\Gamma)
$$ 
are topologically conjugate (see Figures~\ref{kleinian_cusp} and \ref{fat_ant}); and 
\begin{equation}
(\mathcal{E}_d\times\mathcal{E}_d)(\lambda(\Gamma))=\lambda(P_\Gamma),
\label{lami_related_eqution}
\end{equation}
where $\lambda(\Gamma)$ (respectively, $\lambda(P_\Gamma)$) is the equivalence relation on $\R/\Z$ determined by the fibers of $\phi_\Gamma:\R/\Z\to\Lambda(\Gamma)$ (respectively, of $\phi_{P_\Gamma}:\R/\Z\to\mathcal{J}(P_\Gamma)$). Moreover, the topological conjugacy $\mathfrak{H}:\Lambda(\Gamma)\to\mathcal{J}(P_\Gamma)$ satisfies 
\begin{equation}
\mathfrak{H}(\phi_\Gamma(t))=\phi_{P_\Gamma}(\mathcal{E}_d(t)),\ t\in\mathbb{S}^1.
\label{question_mark_induced_homeo}
\end{equation}

Thanks to Lemma~\ref{david_surgery_lemma}, the question of mateability of an anti-polynomial $P$ and the group $\Gamma$ can be reduced to the question of mateability of the pair of anti-polynomials $P$ and $P_\Gamma$. The main idea is to pass from the conformal mating of two anti-polynomials to that of an anti-polynomial and a necklace group by  gluing Nielsen maps of ideal polygon groups in suitable invariant Fatou components of anti-rational maps. 

\begin{prop}\label{mating_prop}
Let $P$ be a monic, postcritically finite anti-polynomial of degree $d$, and $\Gamma \in \overline{\beta(\pmb{\Gamma}_{d+1})}$; i.e., $\Gamma$ is a necklace group. Then, $\Gamma$ and $P$ are conformally mateable if $P_\Gamma$ and $P$ are conformally mateable.
\end{prop}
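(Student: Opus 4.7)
The plan is to bootstrap from the given conformal mating of the anti-polynomials $P_\Gamma$ and $P$ to a conformal mating of the group $\Gamma$ with $P$ by replacing, via the David surgery of Lemma~\ref{david_surgery_lemma}, the superattracting basin dynamics of the mating over the finite fixed critical points of $P_\Gamma$ with the Nielsen dynamics of the appropriate ideal polygon reflection groups. The critically fixed combinatorics of $P_\Gamma$ are dual to the circle packing combinatorics of $\Gamma$, so the surgery is set up precisely to interpolate between the two conformal dynamical systems.

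Let $R$ denote the given conformal mating of $P_\Gamma$ and $P$, equipped with continuous semiconjugacies $\widetilde{\psi}_{P_\Gamma}\colon \mathcal{K}(P_\Gamma) \to \widehat{\C}$ and $\widetilde{\psi}_P\colon \mathcal{K}(P) \to \widehat{\C}$. Since $P_\Gamma$ is critically fixed and $P$ is postcritically finite, $R$ is subhyperbolic with connected Julia set. The $\widetilde{\psi}_{P_\Gamma}$-images of the immediate basins of the finite fixed critical points of $P_\Gamma$ yield invariant Fatou components $U_1,\dots,U_k$ of $R$ with $d_i := \deg(R|_{\partial U_i})$; the correspondence between critically fixed anti-polynomials and necklace groups from \cite{LMM20} pairs these $U_i$'s with the bounded components of $\mathcal{F}_\Gamma$ so that each $U_i$ matches a $(d_i+1)$-gon component. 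Applying Lemma~\ref{david_surgery_lemma} to $R$ with this choice of $U_i$'s yields a global David homeomorphism $\Psi$ of $\widehat{\C}$ and an anti-meromorphic map $F$, defined on the complement of $k$ open topological polygons $\Psi(\inter{T_i}) \subset \Psi(U_i)$, such that $F$ is conformally conjugate to $\pmb{\rho}_{d_i}$ on $\Psi(U_i) \setminus \inter{T_i}$ and conformally conjugate to $R$ off the grand orbit of $\bigcup_i \Psi(U_i)$. By Lemma~\ref{mating_is_schwarz}, $F$ is then the piecewise Schwarz reflection map associated with $k$ disjoint simply connected quadrature domains, and one takes $\Omega := \widehat{\C} \setminus \bigcup_i \Psi(\inter{T_i})$.

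Next, build the semiconjugacies required by Definition~\ref{mating_def}. Set $\psi_P := \Psi \circ \widetilde{\psi}_P$; since $\widetilde{\psi}_P(\inter{\mathcal{K}(P)})$ is disjoint from the grand orbit of $\bigcup_i U_i$, the map $\Psi$ is conformal there, so $\psi_P$ is continuous on $\mathcal{K}(P)$, conformal on its interior, and satisfies $F \circ \psi_P = \psi_P \circ P$. For $\psi_\Gamma\colon \mathcal{K}(\Gamma) \to \widehat{\C}$, put $\psi_\Gamma|_{\Lambda(\Gamma)} := \Psi \circ \widetilde{\psi}_{P_\Gamma} \circ \mathfrak{H}$, where $\mathfrak{H}\colon \Lambda(\Gamma) \to \mathcal{J}(P_\Gamma)$ is the topological conjugacy from \cite{LMM20} satisfying $\mathfrak{H}(\phi_\Gamma(t)) = \phi_{P_\Gamma}(\mathcal{E}_d(t))$. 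On each bounded component $\mathcal{U}$ of $\inter{\mathcal{K}(\Gamma)}$ that already contains a component of $T^0(\Gamma)$, use the conformal conjugacy from Lemma~\ref{david_surgery_lemma} between $\pmb{\rho}_{d_i}|_{\overline{\D}}$ and $F|_{\Psi(\overline{U_i})}$, composed with the Nielsen-dynamics identification of $\mathcal{U}$ with $\overline{\D}$ from Proposition~\ref{regular_fund_pre_image_prop}(4), to transport $\mathcal{U}$ homeomorphically onto $\Psi(\overline{U_i})$; propagate the definition to all remaining bounded components of $\inter{\mathcal{K}(\Gamma)}$ by pulling back along matched $\rho_\Gamma$- and $F$-preimages.

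Finally, verify continuity of $\psi_\Gamma$ and the required identifications. Continuity across $\Lambda(\Gamma)$ follows from the fact that the conjugacies from Theorem~\ref{theorem:reflections} (on which the Lemma~\ref{david_surgery_lemma} conjugacies rest) extend continuously to the boundary circles, in a manner compatible with $\phi_\Gamma$ via Proposition~\ref{group_lamination_prop}. The identification condition reduces to showing $\psi_\Gamma(\phi_\Gamma(t)) = \psi_P(\phi_P(\overline{\mathcal{E}_d(t)}))$ for every $t \in \mathbb{S}^1$, which follows from
\[
\psi_\Gamma(\phi_\Gamma(t)) = \Psi\bigl(\widetilde{\psi}_{P_\Gamma}(\phi_{P_\Gamma}(\mathcal{E}_d(t)))\bigr) = \Psi\bigl(\widetilde{\psi}_P(\phi_P(\overline{\mathcal{E}_d(t)}))\bigr) = \psi_P(\phi_P(\overline{\mathcal{E}_d(t)})),
\]
where the first equality is (\ref{question_mark_induced_homeo}) and the middle equality is the $R$-mating identification $\widetilde{\psi}_P \circ \phi_P(s) = \widetilde{\psi}_{P_\Gamma} \circ \phi_{P_\Gamma}(\overline{s})$ applied at $s = \overline{\mathcal{E}_d(t)}$. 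The main obstacle will be the coherent construction of $\psi_\Gamma$ on all bounded components of $\inter{\mathcal{K}(\Gamma)}$ and the continuity check at $\Lambda(\Gamma)$: one must align the local conformal conjugacies from Lemma~\ref{david_surgery_lemma} with $\mathfrak{H}$ on the limit set and then propagate them consistently along the $\rho_\Gamma$-preimage tree, using orbit equivalence (Proposition~\ref{orbit_equiv_prop}) to keep the preimages of $\Psi(T_i)$ under $F$ in bijection with the preimages of $T^0(\Gamma)$ under $\rho_\Gamma$.
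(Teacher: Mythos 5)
Your proposal captures the correct outline — apply Lemma~\ref{david_surgery_lemma} to the mating $R$ of $P_\Gamma$ and $P$ to replace the superattracting dynamics on the $U_i$ by Nielsen dynamics, then build $\psi_P$ and $\psi_\Gamma$ and verify the defining properties of a conformal mating. The verification of the identification condition via relation~\eqref{question_mark_induced_homeo} is exactly the argument used in the paper.

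However, there is a genuine gap at the definition of $\psi_\Gamma$ on the interior components, and it concerns the central requirement that $\psi_\Gamma$ be \emph{conformal} on $\inter{\mathcal{K}(\Gamma)}$. After applying Lemma~\ref{david_surgery_lemma}, the resulting map $F$ is conformally conjugate to $\pmb{\rho}_{d_i}\vert_{\D}$ on each $\Psi(U_i)$ — that is, to the Nielsen map of the \emph{regular} ideal $(d_i+1)$-gon group. But $\rho_\Gamma\vert_{\mathcal{U}_i}$ is in general not conformally conjugate to $\pmb{\rho}_{d_i}\vert_{\D}$: the proof of Proposition~\ref{regular_fund_pre_image_prop}(4) only produces a \emph{quasiconformal} conjugacy $\kappa\colon\D\to\mathcal{U}_i$, and for $d_i\geq 3$ the moduli space of ideal $(d_i+1)$-gons is positive-dimensional, so there is a genuine quasiconformal deformation separating $\rho_\Gamma\vert_{\mathcal{U}_i}$ from the regular model. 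Consequently, the map you propose on $\mathcal{U}_i$ — the conformal conjugacy $\D\to\Psi(U_i)$ composed with $\kappa^{-1}\colon\mathcal{U}_i\to\D$ — is only quasiconformal, and Definition~\ref{mating_def} is not satisfied. Likewise condition~\eqref{nielsen_semiconj} would fail, since $F$ would be quasiconformally but not conformally conjugate to $\rho_\Gamma$ on $\psi_\Gamma(\mathcal{U}_i)$.

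The missing idea is a second surgery step after the David surgery. One must pull back the standard complex structure on $\mathcal{U}_i\cap T^0(\Gamma)$ via a quasiconformal homeomorphism onto the ideal polygon $\Psi_1(U_i)\setminus\Omega_1$, spread it by the dynamics of $F_1$ over $\Psi_1(\widetilde{\psi}_{P_\Gamma}(\mathcal{K}(P_\Gamma)))$, and extend by the trivial structure elsewhere; since the non-standard part of the structure is supported where the dynamics is a piecewise-Möbius iterated function system, the resulting Beltrami coefficient has bounded essential supremum, so the Measurable Riemann Mapping Theorem gives a global quasiconformal $\Psi_2$, conformal on $\Psi_1(\widetilde{\psi}_P(\inter{\mathcal{K}(P)}))$. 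Setting $\Psi:=\Psi_2\circ\Psi_1$ and $F:=\Psi_2\circ F_1\circ\Psi_2^{-1}$ yields $F$ conformally conjugate to $\rho_\Gamma\vert_{\mathcal{U}_i}$ — not merely to $\pmb{\rho}_{d_i}$ — on $\Psi(U_i)$, after which the rest of your argument goes through. (You should also note that Definition~\ref{mating_def}(5) is an ``if and only if,'' so the converse identification must be checked; it follows from the injectivity of $\Psi$ together with the mating identifications for $R$ and relation~\eqref{question_mark_induced_homeo}.)
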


\begin{proof}
We suppose that $R$ is a conformal mating of $P_\Gamma$ and $P$. Then, by definition (see Subsection~\ref{poly_mating_subsec}), there exist continuous maps 
\[\widetilde{\psi}_{P_\Gamma}: \mathcal{K}(P_\Gamma) \rightarrow \widehat{\mathbb{C}} \textrm{ and } \widetilde{\psi}_P: \mathcal{K}(P) \rightarrow \widehat{\C},  \] conformal on $\inter{\mathcal{K}(P_\Gamma)}$, $\inter{\mathcal{K}(P)}$, respectively, such that 

\begin{enumerate}\upshape
\item $\widetilde{\psi}_{P_\Gamma}(\mathcal{K}(P_\Gamma))\bigcup\widetilde{\psi}_P(\mathcal{K}(P))=\widehat{\C}$,

\item $\widetilde{\psi}_{P_\Gamma}\circ P_\Gamma=R\circ\widetilde{\psi}_{P_\Gamma}$ on $\mathcal{K}(P_\Gamma)$, 

\item$\widetilde{\psi}_P\circ P=R\circ\widetilde{\psi}_P$ on $\mathcal{K}(P)$, and

\item $\widetilde{\psi}_{P_\Gamma}(a)=\widetilde{\psi}_P(b)$ if and only if $a\sim_1 b$, where the equivalence relation $\sim_1$ on $\mathcal{K}(P_\Gamma) \sqcup \mathcal{K}(P)$ is generated by $\phi_{P_\Gamma}(s)\sim_1\phi_P(\overline{s})$ for all $s\in\mathbb{S}^1$.
\end{enumerate}
In particular, $R$ is a postcritically finite (hence, subhyperbolic) anti-rational map.

Let $\mathcal{U}_i$, $i\in\{1,\cdots, k\}$, be the components of $\inter{\mathcal{K}(\Gamma)}$ containing the components of $T^0(\Gamma)$. By Proposition~\ref{regular_fund_pre_image_prop}(part 4), $\rho_\Gamma\vert_{\partial\mathcal{U}_i}$ is topologically conjugate to $\pmb{\rho}_{d_i}\vert_{\mathbb{S}^1}$ (for some $d_i\geq 2$), and hence is a degree $d_i$ expansive orientation-reversing covering of $\partial\mathcal{U}_i$. Under $\mathfrak{H}$, the boundaries of these components $\mathcal{U}_i$ are mapped to the boundaries of invariant (bounded) Fatou components $\widetilde{U}_1,\cdots, \widetilde{U}_k$ of $P_\Gamma$ such that $P_\Gamma\vert_{\partial \widetilde{U}_i}$ is topologically conjugate to $\overline{z}^{d_i}\vert_{\mathbb{S}^1}$. Thus, $$U_i:=\widetilde{\psi}_{P_\Gamma}(\widetilde{U}_i)\subset\widetilde{\psi}_{P_\Gamma}(\mathcal{K}(P_\Gamma))$$ is an invariant Fatou component of the anti-rational map $R$. Moreover, the map $R\vert_{\partial U_i}$ is topologically semiconjugate to $\overline{z}^{d_i}\vert_{\mathbb{S}^1}$. 

We will now glue the dynamics of $\rho_\Gamma\vert_{\mathcal{U}_i}$ in each $U_i$. This will be done in two steps. We first glue the Nielsen map of the regular ideal $(d_i+1)$-gon group in $U_i$, and then quasiconformally deform it to the Nielsen map $\rho_\Gamma\vert_{\mathcal{U}_i}$. To achieve the first goal, note that Lemma~\ref{david_surgery_lemma}, applied to the Fatou components $U_1,\cdots, U_k$ of $R$, provides us with a global David homeomorphism $\Psi_1$ and an anti-analytic map $F_1$ (defined on a subset of $\widehat{\C}$) such that $F_1$ is conformally conjugate to $\pmb{\rho}_{d_i}\vert_{\D}$ on $\Psi_1(U_i)$ ($i\in\{1,\cdots k\}$), and topologically conjugate to $R\vert_{\mathcal{J}(R)}$ on $\Psi_1(\mathcal{J}(R))$. Let us set
$$
\Omega_1:=\widehat{\C}\setminus\bigcup_{i=1}^k\overline{\Psi_1(T_i)},
$$
where $T_i$ is as in the proof of Lemma~\ref{david_surgery_lemma}. Note that since at most finitely many points on various $\partial \Psi_1(T_i)$ may touch, the domain of definition of $F_1$ is $\overline{\Omega_1}$. We now choose quasiconformal homeomorphisms from the topological $(d_i+1)$-gon $\Psi_1(U_i)\setminus\Omega_1$ onto the topological $(d_i+1)$-gon $\mathcal{U}_i\cap T^0(\Gamma)$ that preserve the vertices. We then pull back the standard complex structure on $\mathcal{U}_i\cap T^0(\Gamma)$ by this quasiconformal homeomorphism to $\Psi_1(U_i)\setminus\Omega_1$ ($i\in\{1,\cdots, k\}$), spread it by the dynamics to all of $\inter{\Psi_1(\widetilde{\psi}_{P_\Gamma}(\mathcal{K}(P_\Gamma)))}$, and put the standard complex structure on the rest of the sphere. The Measurable Riemann Mapping Theorem now gives us a global quasiconformal homeomorphism $\Psi_2$ that is conformal on $\inter{\Psi_1(\widetilde{\psi}_P(\mathcal{K}(P)))}$, and conjugates $F_1$ to a continuous map $F:\overline{\Psi_2(\Omega_1)}\to\widehat{\C}$ that is anti-analytic on $\Psi_2(\Omega_1)$. By construction, setting $\Psi:=\Psi_2\circ\Psi_1$ and $\Omega:=\Psi_2(\Omega_1)$, we have that $F:\overline{\Omega}\to\widehat{\C}$ is conformally conjugate to $\rho_\Gamma\vert_{\mathcal{U}_i}$ on $\Psi(U_i)$ ($i\in\{1,\cdots, k\}$), and topologically conjugate to $R\vert_{\mathcal{J}(R)}$ on $\Psi(\mathcal{J}(R))$.

We will now argue that $F$ is a conformal mating of $\Gamma$ and $P$. To this end, let us set 
$$
\psi_P:=\Psi\circ\widetilde{\psi}_P:\mathcal{K}(P)\to\widehat{\C}.
$$ 
Note that since $R\equiv\widetilde{R}$ on $\widetilde{\psi}_P(\mathcal{K}(P))$ and $\Psi$ is conformal on $\widetilde{\psi}_P(\inter{\mathcal{K}(P)})$, it follows that $\psi_P$ is conformal on $\inter{\mathcal{K}(P)}$ and $\psi_P\circ P=F\circ\psi_P$ on $\mathcal{K}(P)$.

We now set 
$$
\psi_\Gamma:=\Psi\circ\widetilde{\psi}_{P_\Gamma}\circ\mathfrak{H}:\Lambda(\Gamma)\to \Psi(\mathcal{J}(R)).
$$ 
Then, $\psi_\Gamma\circ \rho_\Gamma=F\circ\psi_\Gamma$ on $\Lambda(\Gamma)$. By our construction of $F$, we can extend $\psi_\Gamma\vert_{\Lambda(\Gamma)}$ to a conformal map 
$$
\psi_\Gamma:\inter{\mathcal{K}(\Gamma)}\to\Psi\left(\widetilde{\psi}_{P_\Gamma}(\inter{\mathcal{K}(P_\Gamma)})\right)
$$ 
such that $\psi_\Gamma(\overline{T^0(\Gamma)})=\bigcup_{i=1}^k\overline{\Psi(T_i)}$, and $\psi_\Gamma\circ \rho_\Gamma=F\circ\psi_\Gamma$ on $\mathcal{K}(\Gamma)\setminus \inter{T^0(\Gamma)}$. It also follows that 
$$
\Omega=\Psi_2(\Omega_1)=\widehat{\C}\setminus\bigcup_{i=1}^k\overline{\Psi(T_i)}=\widehat{\C}\setminus\psi_\Gamma\left(\overline{T^0(\Gamma)}\right),
$$
and
$$
\psi_P(\mathcal{K}(P))\bigcup\psi_\Gamma(\mathcal{K}(\Gamma))=\Psi\left(\widetilde{\psi}_P(\mathcal{K}(P))\bigcup\widetilde{\psi}_{P_\Gamma}(\mathcal{K}(P_\Gamma))\right)=\widehat{\C}.
$$

Thus, $F$ satisfies conditions \eqref{whole_sphere}--\eqref{nielsen_semiconj} of of Definition~\ref{mating_def}. It now remains to verify condition~\eqref{identifications}. 

To this end, let us first choose $z=\phi_\Gamma(t)\in\Lambda(\Gamma)$ and $w=\phi_P(\overline{\mathcal{E}_d(t)})\in\mathcal{J}(P)$, for some $t\in\mathbb{S}^1$. Then, 
$$
\psi_\Gamma(z) =(\Psi\circ\widetilde{\psi}_{P_\Gamma}\circ\mathfrak{H})(\phi_\Gamma(t))=\Psi(\widetilde{\psi}_{P_\Gamma}(\phi_{P_\Gamma}(\mathcal{E}_d(t))))=\Psi(\widetilde{\psi}_P(\phi_P(\overline{\mathcal{E}_d(t)})))=\psi_P(w).
$$ Thus, $\psi_\Gamma(z)=\psi_P(w)$ whenever $z\sim w$.

Conversely, let $$\psi_\Gamma(z)=\psi_P(w)$$ for some $z\in\Lambda(\Gamma)$ and $w\in\mathcal{J}(P)$. By definition of $\psi_\Gamma$ and $\psi_P$, and the fact that $\Psi$ is a homeomorphism, the above implies that $\widetilde{\psi}_{P_\Gamma}(\mathfrak{H}(z))=\widetilde{\psi}_P(w)$. Hence, there exists $s\in\mathbb{S}^1$ such that $$\mathfrak{H}(z)=\phi_{P_\Gamma}(s),\ w=\phi_P(\overline{s}).$$ Now set $t=\mathcal{E}_d^{-1}(s)$. Then, $$\mathfrak{H}(z)=\phi_{P_\Gamma}(\mathcal{E}_d(t))=\mathfrak{H}(\phi_\Gamma(t)) \implies z=\phi_\Gamma(t).$$ Therefore, $z=\phi_\Gamma(t)$ and $w=\phi_P(\overline{\mathcal{E}_d(t)})$; and hence, $z\sim w$. The proof is now complete.
\end{proof}

In order to apply the above proposition, we quote a mateability result for anti-polynomials from \cite{LLM20}.

\begin{prop}\cite[Proposition~4.23]{LLM20}\label{prop:mig}
Let $P$ and $Q$ be marked anti-polynomials of equal degree $d\geq 2$, where $P$ is postcritically finite, hyperbolic, and $Q$ is critically fixed. There is an anti-rational map that is the conformal mating of $P$ and $Q$ if and only if $\mathcal{K}(P)\sqcup\mathcal{K}(Q)/\sim_1$ is homeomorphic to $\mathbb{S}^2$, where the equivalence relation $\sim_1$ on $\mathcal{K}(P) \sqcup \mathcal{K}(Q)$ is generated by $\phi_{P}(s)\sim_1\phi_Q(\overline{s})$ for all $s\in\mathbb{S}^1$.
\end{prop}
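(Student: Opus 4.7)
The forward direction is immediate: given a conformal mating $R$ with semiconjugacies $\widetilde{\psi}_P,\widetilde{\psi}_Q$, condition~(3) of Subsection~\ref{poly_mating_subsec} says that the induced map $\mathcal{K}(P)\sqcup\mathcal{K}(Q)/{\sim_1}\to\widehat{\C}$ is a continuous bijection, hence a homeomorphism by compactness. For the converse, I would proceed in three steps, following the Douady--Hubbard--Tan Lei--Shishikura paradigm adapted to the anti-holomorphic postcritically finite setting.

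\emph{Topological mating.} Since $P$ and $Q$ are monic anti-polynomials of degree $d$, the B\"ottcher relations $P\circ\phi_P(z)=\phi_P(\bar z^d)$ and $Q\circ\phi_Q(z)=\phi_Q(\bar z^d)$ show that if $\phi_P(s)\sim_1\phi_Q(\bar s)$ then $P(\phi_P(s))=\phi_P(\bar s^d)$ and $Q(\phi_Q(\bar s))=\phi_Q(s^d)=\phi_Q(\overline{\bar s^d})$ are again identified by $\sim_1$. Thus $P\sqcup Q$ descends to a continuous map $F_{\mathrm{top}}\colon X\to X$, where $X:=\mathcal{K}(P)\sqcup\mathcal{K}(Q)/{\sim_1}$. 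By hypothesis $X\cong\mathbb{S}^2$, and using local connectedness of $\mathcal{J}(P)$ and $\mathcal{J}(Q)$ (which follows from their postcritical finiteness together with hyperbolicity/critical-fixedness), the map $F_{\mathrm{top}}$ is an orientation-reversing postcritically finite branched self-covering of the $2$-sphere.

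\emph{Realization.} I would apply the anti-holomorphic version of Thurston's characterization theorem to $F_{\mathrm{top}}$ (either directly, or by passing to the orientation-preserving iterate $F_{\mathrm{top}}^{\circ 2}$): either $F_{\mathrm{top}}$ is Thurston equivalent to an anti-rational map $R$, or it admits a Thurston obstruction, i.e., an invariant multicurve whose transition matrix has leading eigenvalue $\ge 1$. Ruling out such an obstruction is the technical heart of the argument, and it is where the specific hypotheses enter: the postcritical set of $Q$ consists only of superattracting fixed points lying in pairwise disjoint invariant Fatou components, and $P$ is hyperbolic postcritically finite, so neither side alone supports a Thurston obstruction (by the Levy/Berstein--Levy theorem for polynomials). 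A combinatorial analysis along the lines of the Tan Lei--Pilgrim classification of obstructions for matings then shows that any obstruction of $F_{\mathrm{top}}$ must be of pure mating type, and such an obstruction is equivalent to an additional identification collapsing a nontrivial region of $X$, contradicting the standing hypothesis $X\cong\mathbb{S}^2$.

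\emph{Conclusion.} Once an anti-rational $R$ realizing $F_{\mathrm{top}}$ is produced, I would upgrade the Thurston equivalence to continuous semiconjugacies $\widetilde{\psi}_P,\widetilde{\psi}_Q$ satisfying conditions~(1)--(4) of Subsection~\ref{poly_mating_subsec}. On the interiors $\inter{\mathcal{K}(P)}$ and $\inter{\mathcal{K}(Q)}$ the conformal extension is automatic from the dynamics on Fatou components (using the standard B\"ottcher/linearizing coordinates), while the gluing along the Julia sets is a Moore-type quotient argument combined with the conformal removability of $\mathcal{J}(P)$ and $\mathcal{J}(Q)$ supplied by Theorem~\ref{T:ConfRemove}. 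The main obstacle is the obstruction analysis in the second step, which is delicate precisely because a Thurston obstruction can in principle be \emph{created} by the ray identification $\sim_1$ even when neither polynomial alone has one, and it must be excluded using the hypothesis that $X$ is already a topological sphere.
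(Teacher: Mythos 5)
The paper imports this statement verbatim from \cite{LLM20} without reproducing a proof, so there is no in-paper argument to compare against; I can only assess your outline on its own merits. The forward direction, the descent of $P\sqcup Q$ to a continuous self-map $F_{\mathrm{top}}$ of $X$, and your equivariance computation for $\sim_1$ are all correct, and the Thurston-theoretic paradigm is indeed the right framework.

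Two steps in your converse direction are more a statement of intent than a proof. First, the obstruction analysis: your claim that an obstruction of $F_{\mathrm{top}}$ "is equivalent to an additional identification collapsing a nontrivial region of $X$, contradicting $X\cong\mathbb{S}^2$" conflates two different spheres. In the Shishikura--Tan Lei dictionary, the equivalence between Thurston obstructions and pathological ray classes lives on the \emph{formal} mating sphere (two closed discs glued along their boundary circles, with $P\sqcup Q$ acting as a branched cover); there, a Levy cycle corresponds to a ray class that is infinite or contains a loop, and this is precisely what prevents the Moore quotient $X$ from being a sphere. If instead you run Thurston theory directly on $F_{\mathrm{top}}\colon X\to X$, as you propose, then an obstruction of $F_{\mathrm{top}}$ is a multicurve in $X$, not a further decomposition of $X$, and its nonexistence does not formally follow from $X\cong\mathbb{S}^2$. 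You would have to either prove that $F_{\mathrm{top}}$ is Thurston equivalent to the formal mating and transfer the classification, or redo the Levy-cycle analysis directly on $X$; the step as written is a claim rather than an argument. (Verifying that $F_{\mathrm{top}}$ is even a branched covering of $X$, rather than merely continuous, also deserves a sentence via Sto\"{\i}low.) Second, your concluding step mislabels the tool: Thurston's theorem produces a Thurston \emph{equivalence}, not a conjugacy, and converting it into the semiconjugacies $\widetilde\psi_P,\widetilde\psi_Q$ demanded by Subsection~\ref{poly_mating_subsec} is the Rees--Shishikura pullback argument, which uses expansion near the Julia set to shrink the isotopy and propagates conformality outward from the B\"ottcher coordinates. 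Conformal removability of $\mathcal{J}(P)$, $\mathcal{J}(Q)$ via Theorem~\ref{T:ConfRemove} is not what drives this existence step; removability is the correct tool for \emph{uniqueness} of the conformal mating (two anti-rational maps realizing the same mating data are M\"obius conjugate), and it is invoked here at the wrong place in the proof.
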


\begin{remark}
A simple condition to check whether $\mathcal{K}(P)\sqcup\mathcal{K}(Q)/\sim_1$ is homeomorphic to $\mathbb{S}^2$  is given in \cite[Corollary~24]{LLM20} in terms of the angles of the dynamical rays (of the critically fixed anti-polynomial $Q$) landing at the repelling fixed points of $Q$.
\end{remark}

We are now ready to prove a precise version of Theorem~\ref{mating_intro_version}. The equivalence relation $\sim$ appearing in the statement below is the one introduced in Definition~\ref{conf_mating_equiv_reltn}.

\begin{theorem}[Criterion for Mateability]\label{poly_group_mating_thm}
Let $P$ be a monic, postcritically finite, hyperbolic anti-polynomial of degree $d$, and $\Gamma \in \overline{\beta(\pmb{\Gamma}_{d+1})}$; i.e., $\Gamma$ is a necklace group. Then, $P$ and $\Gamma$ are conformally mateable if and only if $\mathcal{K}(P)\sqcup\mathcal{K}(\Gamma)/\sim$ is homeomorphic to $\mathbb{S}^2$.

Moreover, if a conformal mating $F:\overline{\Omega}\to\widehat{\C}$ of $P$ and $\Gamma$ exists, then each component of $\Omega$ is a simply connected quadrature domain and $F$ is the piecewise defined Schwarz reflection map associated with these quadrature domains.
\end{theorem}
\begin{proof}
It is obvious from Definition~\ref{mating_def} that if $P$ and $\Gamma$ are mateable, then $\mathcal{K}(P)\sqcup\mathcal{K}(\Gamma)/\sim$ must be homeomorphic to $\mathbb{S}^2$. 

For the converse, let us assume that $\mathcal{K}(P)\sqcup\mathcal{K}(\Gamma)/\sim$ is homeomorphic to $\mathbb{S}^2$. It is easy to check using the definitions of $\sim$ and $\sim_1$, and Relation~\eqref{question_mark_induced_homeo} that the topological spaces $\mathcal{K}(P)\sqcup\mathcal{K}(\Gamma)/\sim$ and $\mathcal{K}(P)\sqcup\mathcal{K}(P_\Gamma)/\sim_1$ are homeomorphic. In particular, $\mathcal{K}(P)\sqcup\mathcal{K}(P_\Gamma)/\sim_1$ is homeomorphic to $\mathbb{S}^2$. Hence, Proposition~\ref{prop:mig} implies that the anti-polynomials $P$ and $P_\Gamma$ are conformally mateable. The desired conclusion that $P$ and $\Gamma$ are conformally mateable now follows from Proposition~\ref{mating_prop}.

The last statement is the content of Lemma~\ref{mating_is_schwarz}.
\end{proof}

\begin{remark}
If $F:\overline{\Omega}\to\widehat{\C}$ is a conformal mating of $P$ and $\Gamma$, then $\widehat{\C}\setminus\Omega$ is a `graph of polygons'; see Figures~\ref{mating_1_fig},~\ref{mating_2_fig}.
\end{remark}

\begin{remark}\label{why_representation}
In the statements of Proposition~\ref{mating_prop} and Theorem~\ref{poly_group_mating_thm}, being mateable depends on the representation $\xi: \pmb{\Gamma}_{d+1}\to\Gamma$ (or equivalently, the labeling of the circles $C_1,\cdots, C_{d+1}$, where $\xi(\rho_i)$ is the reflection in the circle $C_i$, for $i\in\{1,\cdots,d+1\}$). Indeed, the lamination $\lambda(P_\Gamma)$ of the anti-polynomial $P_\Gamma$ is determined by the lamination $\lambda(\Gamma)$ of the group $\Gamma$ (via relation~\eqref{lami_related_eqution}), and the lamination $\lambda(\Gamma)$ depends on the choice of the conformal map $\phi_\Gamma$, which, in turn, depends on the labeling of the circles $C_1,\cdots, C_{d+1}$ (see Proposition~\ref{group_lamination_prop}). Roughly speaking, different representations give rise to different ways of gluing the limit set of $\Gamma$ with the Julia set of an anti-polynomial, and the choice of gluing determines whether a conformal mating exists.

For example, in light of Proposition~\ref{mating_prop} and \cite[Corollary~4.24]{LLM20}, it is easy to see that the anti-polynomial $P_1(z)=\overline{z}^3-\frac{3i}{\sqrt{2}}\overline{z}$ (see Figure~\ref{double_basilica}) is conformally mateable with the group $\Gamma$ from Figure~\ref{kleinian_cusp} with the labeling of the underlying circle packing shown in Figure~\ref{kleinian_cusp}(left). However, if we consider the same group $\Gamma$ with a different labeling of the underlying circle packing such that the circles $C_2$ and $C_4$ touch at a point (note that this amounts to looking at a different element of $\overline{\beta(\pmb{\Gamma}_{d+1})}$), then it is no longer conformally mateable with the anti-polynomial $P_1$ since with this new marking of the generators of $\Gamma$, the quotient space $\mathcal{K}(P_1)\sqcup\mathcal{K}(\Gamma)/\sim$ is not homeomorphic to $\mathbb{S}^2$.
\end{remark}

\bigskip

\section{Mating reflection groups with anti-polynomials: examples}\label{mating_examples_sec}

The goal of this section is to illustrate Proposition~\ref{mating_prop} and Theorem~\ref{poly_group_mating_thm} by producing various examples of matings of anti-polynomials and necklace reflection groups. While these results guarantee the existence of conformal matings of suitable anti-polynomials and necklace reflection groups, in general, it may be hard to find explicit Schwarz reflection maps realizing the conformal matings.

However, there are two ways to achieve this in low degrees. The first one, implemented in \cite{LLMM18a,LLMM18b,LMM20}, is to study the dynamics and parameter spaces of specific families of Schwarz reflection maps, and recognize such maps as matings of anti-polynomials and reflection groups. In Subsection~\ref{known_examples_subsec}, we will indicate how the examples studied in these papers fit in our general mating framework.

In the opposite direction, to explicitly characterize the conformal mating of a given necklace group and an anti-polynomial, let us first recall that by Lemma~\ref{mating_is_schwarz}, the conformal mating is a piecewise defined Schwarz reflection map associated with a finite collection of simply connected quadrature domains. This allows one to uniformize the domains by rational maps of suitable degrees (compare Proposition~\ref{simp_conn_quad}), and use the desired dynamical properties to explicitly find these rational maps. We note that this approach requires care when both the anti-polynomial and the necklace group have non-trivial laminations; indeed, one needs to read off the contact pattern of the finite collection of simply connected quadrature domains (whose Schwarz reflection maps define the conformal mating) and the degrees of the corresponding uniformizing rational maps from the laminations of the anti-polynomial and the necklace group (equipped with a labeling of the underlying circle packing). This will be illustrated with a couple of worked out examples in Subsections~\ref{example_1_subsec} and~\ref{example_2_subsec}. The final Subsection~\ref{cauli_triangle_mating_subsec} underscores the additional analytic steps required to characterize conformal matings of parabolic anti-polynomials and necklace groups.

\bigskip 

\subsection{Some known examples}\label{known_examples_subsec} 1. For $\Gamma:=\pmb{\Gamma}_{d+1}$, the associated anti-polynomial $P_\Gamma$ is given by $P_\Gamma(z)=\overline{z}^d$. Since $\overline{z}^d$ is conformally mateable with every anti-polynomial $P$ of degree $d$, we conclude from Proposition~\ref{mating_prop} that $\pmb{\Gamma}_{d+1}$ is mateable with every postcritically finite anti-polynomial $P$ of degree $d$. In the particular case of $d=2$, the mating of $\pmb{\Gamma}_3$ and $\overline{z}^2$ is realized as the Schwarz reflection map with respect to a deltoid \cite[\S 4]{LLMM18a}, and the matings of $\pmb{\Gamma}_3$ and all other postcritically finite quadratic anti-polynomials are realized as the Schwarz reflection maps associated with a fixed cardioid and a family of circumscribed circles \cite{LLMM18b}.

2. Since $\overline{z}^d$ can be conformally mated with every critically fixed degree $d$ anti-polynomial, it follows once again from Proposition~\ref{mating_prop} and the fact that the limit set of each group in the Bers slice closure is homeomorphic to the Julia set of some critically fixed anti-polynomial (see the comments before Proposition~\ref{mating_prop}) that all necklace reflection groups can be mated with the anti-polynomial $\overline{z}^d$. By \cite{LMM20}, these conformal matings are realized as Schwarz reflection maps associated with the quadrature domains $f(\D^*)$, where $$f\in\Sigma_d^*:= \left\{ g(z)= z+\frac{a_1}{z} + \cdots +\frac{a_d}{z^d} : a_d=-\frac{1}{d}\textrm{ and } g\vert_{\mathbb{D}^*} \textrm{ is conformal}\right\}.$$

\bigskip

\subsection{Schwarz reflections in an ellipse and two inscribed disks}\label{example_1_subsec}
Consider the anti-polynomial $P_1(z)=\overline{z}^3-\frac{3i}{\sqrt{2}}\overline{z}$. Each finite critical point of $P_1$ forms a $2$-cycle (see Figure~\ref{double_basilica}).

\begin{figure}[ht!]
\begin{center}
\includegraphics[scale=0.4]{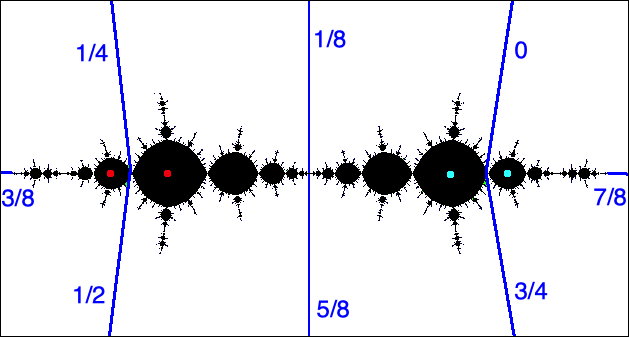}
\end{center}
\caption{The dynamical plane of $P_1(z)=\overline{z}^3-\frac{3i}{\sqrt{2}}\overline{z}$; each critical point of $P_1$ forms a $2$-cycle (the figure displayed is a $\frac{\pi}{4}$-rotate of the actual dynamical plane). The external dynamical rays of period $1$ and $2$ are marked.}
\label{double_basilica}
\end{figure}

Note that the $1/4$ and $1/2$ rays (respectively, the $0$ and $3/4$ rays) of $P_1$ land at a common fixed point on $\mathcal{J}(P_1)$. They cut $\mathcal{K}(P_1)$ into three components. We will denote the component containing the two critical points by $\mathcal{K}_1(P_1)$, the component containing the critical value in the left half-plane (respectively, the critical value in the right half-plane) by $\mathcal{K}_2(P_1)$ (respectively, $\mathcal{K}_3(P_1)$). We have the following mapping properties of the action of $P_1$ on its filled Julia set.
\begin{enumerate}\upshape
\item $P_1:\mathcal{K}_i(P_1)\to \mathcal{K}(P_1)\setminus\overline{\mathcal{K}_i(P_1)}$ has degree $1$, for $i\in\{2,3\}$, and 

\item $P_1: P_1^{-1}\left(\mathcal{K}_1(P_1)\right)\cap\mathcal{K}_1(P_1)\to\mathcal{K}_1(P_1)$ has degree $1$.
\end{enumerate}

\begin{figure}[ht!]
\includegraphics[scale=0.4]{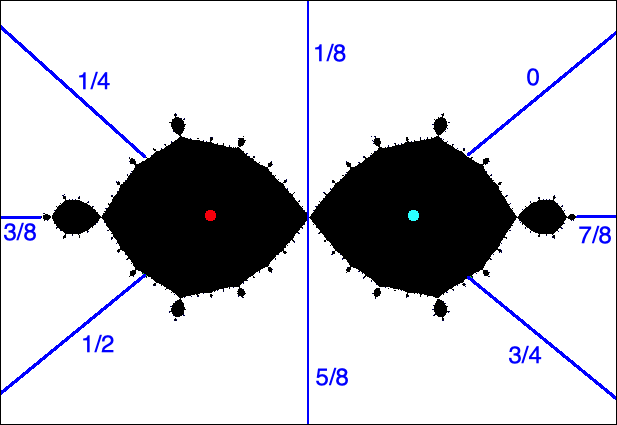}
\caption{The dynamical plane of the critically fixed cubic anti-polynomial $P_\Gamma(z)=\overline{z}^3-\frac{3i}{2}\overline{z}$ (the figure displayed is a $\frac{\pi}{4}$-rotate of the actual dynamical plane). The external dynamical rays of period $1$ and $2$ are marked.}
\label{fat_ant}
\end{figure}

Also consider the cusp reflection group $\Gamma$ shown in Figure~\ref{kleinian_cusp}, and the associated reflection map $\rho_\Gamma$. The monic, centered, critically fixed anti-polynomial $P_\Gamma$ associated to $\Gamma$ in Subsection~\ref{mating_thm_subsec} is given by $P_\Gamma(z)=\overline{z}^3-\frac{3i}{2}\overline{z}$ (see Figure~\ref{fat_ant}). 

We will now argue that $P_\Gamma$ and $P_1$ are conformally mateable. Note that the only rays landing at the separating repelling fixed point (i.e., the repelling fixed point that is a cut-point of the filled Julia set) of the critically fixed anti-polynomial $P_\Gamma$ have angles $1/8$ and $5/8$, while for the other anti-polynomial $P_1$, the rays at angles $-1/8=7/8$ and $-5/8=3/8$ land at non-cut points of $\mathcal{J}(P_1)$. Therefore, the principal ray equivalence class for $P_\Gamma$ and $P_1$ contains no cycle, and hence by \cite[Corollary~4.24, Figure~4.2]{LLM20}, the maps $P_\Gamma$ and $P_1$ are conformally mateable.

By Proposition~\ref{mating_prop}, there exists a conformal mating $F:\overline{\Omega}\to\widehat{\C}$ of $P_1$ and $\Gamma$. We set $T^0(F):=\psi_\Gamma(T^0(\Gamma))$. Since $\psi_\Gamma$ is conformal on $\inter{\mathcal{K}(\Gamma)}$, each of the two components of $T^0(F)$ is a topological triangle with its vertices removed. Moreover, by Lemma~\ref{mating_is_schwarz}, each component of $\Omega$ is a simply connected quadrature domain, and $F$ is the piecewise defined Schwarz reflection map of these quadrature domains.

According to \cite[Lemma~52 (part i)]{LMM20}, the four cusp points of $\partial T^0(\Gamma)$ (that are not cut-points of $\partial T^0(\Gamma)$) have external angles $0,1/4,1/2$, and $3/4$ (as points on $\Lambda(\Gamma)$). Since the $1/4$ and $1/2$ rays (respectively, the $0$ and $3/4$ rays) of $P_1$ land at a common fixed point, the landing points of the $1/2$ and $3/4$ rays (respectively, the landing points of the $0$ and $1/4$ rays) of $P_\Gamma$ are identified in the Julia set of the conformal mating of $P_\Gamma$ and $P_1$. There is no other identification involving the landing points of the fixed rays of $P_\Gamma$. It follows from Relation~\eqref{question_mark_induced_homeo} that each vertex of a component of $T^0(F)$ is identified with a vertex of the other component of $T^0(F)$. Hence, $\Omega$ has three connected components $\Omega_1, \Omega_2,$ and $\Omega_3$ such that 
\begin{enumerate}\upshape
\item each $\Omega_i$ is a Jordan quadrature domain,

\item $\psi_{P_1}(\mathcal{K}_i(P_1))\subset\overline{\Omega_i},\ i=1,2,3$,

\item $\partial\Omega_i\cap\partial\Omega_j$ is a singleton, for each $i\neq j\in\{1,2,3\}$.
\end{enumerate}
(See Figure~\ref{ellipse_disk_how}.)

\begin{figure}[ht!]
\includegraphics[width=0.96\textwidth]{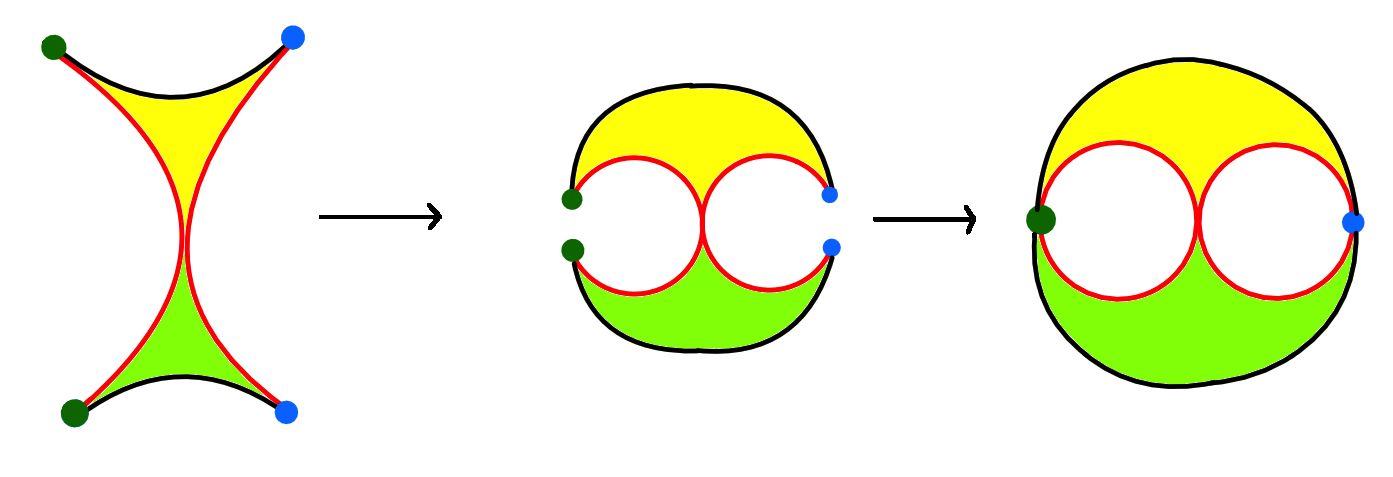}
\caption{The figure depicts the identifications on the boundary of the fundamental domain $T^0(\Gamma)$ that give rise to three pairwise touching Jordan domains.}
\label{ellipse_disk_how}
\end{figure}

Denoting the Schwarz reflection maps of $\Omega_i$ by $\sigma_i$, we have that  $$F(w)= \sigma_i(w),\ \textrm{if}\ w\in\overline{\Omega_i}.$$

It follows from the mapping degrees of $P_1$ on $\mathcal{K}_1(P_1)$, $\mathcal{K}_2(P_1)$, and $\mathcal{K}_3(P_1)$ that $\sigma_1:\sigma_1^{-1}(\inter{\Omega_1})\to\inter{\Omega_1}$ is a branched cover of degree $2$, and $\sigma_i:\sigma_i^{-1}(\inter{\Omega_i^c})\to\inter{\Omega_i^c}$ has degree $1$, for $i\in\{2,3\}$. By Proposition~\ref{simp_conn_quad}, there exist rational maps $f_1, f_2,$ and $f_3$ such that 
\begin{enumerate}\upshape
\item $f_i:\D^*\to\Omega_i$ are univalent, for $i\in\{1,2,3\}$,

\item $F\vert_{\Omega_i}\equiv f_i\circ (1/\overline{\zeta})\circ (f_i\vert_{\D^*})^{-1}$, and

\item $\Deg{f_1}=2$, $\Deg{f_i}=1$, $i\in\{2,3\}$.
\end{enumerate}

\begin{figure}[ht!]
\begin{tikzpicture}
 \node[anchor=south west,inner sep=0] at (0,0) {\includegraphics[width=0.96\textwidth]{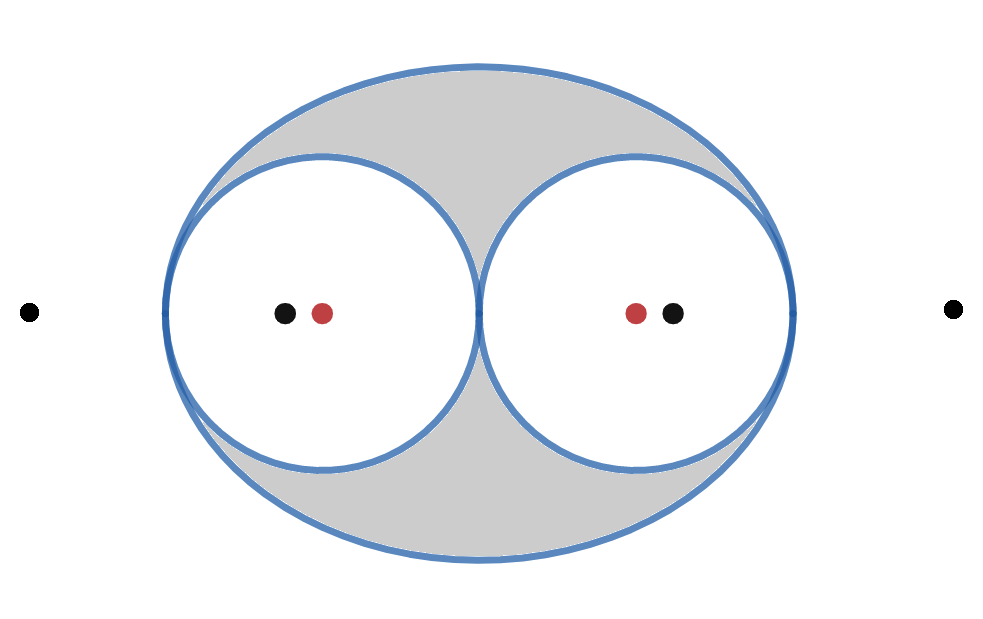}};
\node at (3.8,5) {\begin{huge}$\Omega_3$\end{huge}};
\node at (8,5) {\begin{huge}$\Omega_2$\end{huge}};
\node at (6,6) {\begin{Large}$T^0(F)$\end{Large}};
\node at (6,1.5) {\begin{Large}$T^0(F)$\end{Large}};
\node at (10,7) {\begin{huge}$\Omega_1$\end{huge}};
\node at  (3.5,2.8) {\begin{large}$-2\alpha$\end{large}};
\node at (8.3,2.8) {\begin{large}$2\alpha$\end{large}};
\draw[->,line width=0.5pt] (3.58,3)-|(3.58,3.6);
\draw[->,line width=0.5pt] (8.27,3)-|(8.27,3.6);
\draw [->] (8.48,4) to [out=35,in=145] (11.6,4);
\draw [->] (11.55,3.7) to [out=215,in=325] (8.48,3.66);
\draw [->] (0.5,4) to [out=35,in=145] (3.5,4);
\draw [->] (3.42,3.6) to [out=215,in=325] (0.6,3.7);
\node at (7.15,3.8) {\begin{small}$\frac{(1+\alpha^2)}{2}$\end{small}};
\node at (4.8,3.8) {\begin{small}$\frac{-(1+\alpha^2)}{2}$\end{small}};
\node at  (11.25,2) {$(\frac{1}{\alpha}+\alpha^3)$};
\node at (0.9,2) {$-(\frac{1}{\alpha}+\alpha^3)$};
\draw[->,line width=0.5pt] (11.25,2.2)->(11.75,3.6);
\draw[->,line width=0.5pt] (0.9,2.2)->(0.4,3.6);
\draw[-,line width=0.5pt] (0,0)->(12.2,0);
\draw[-,line width=0.5pt] (12.2,0)->(12.2,7.64);
\draw[-,line width=0.5pt] (12.2,7.64)->(0,7.64);
\draw[-,line width=0.5pt] (0,7.64)->(0,0);
\end{tikzpicture}
\caption{The conformal mating of $P_1(z)=\overline{z}^3-\frac{3i}{\sqrt{2}}\overline{z}$ and the necklace group from Figure~\ref{kleinian_cusp} is given by the piecewise Schwarz reflection map $F$ associated with the quadrature domains $\Omega_i$, $i\in\{1,2,3\}$; where $\Omega_1$ is the exterior of the ellipse $\frac{x^2}{(1+\alpha^2)^2}+\frac{y^2}{(1-\alpha^2)^2}=1$, and $\Omega_2, \Omega_3$ are the round disks $\vert z\pm \frac{1+\alpha^2}{2}\vert= \frac{1+\alpha^2}{2}$ (where $\alpha=\frac12\left((1+\sqrt{5})-\sqrt{2+2\sqrt{5}}\right)$). The two components of $T^0(F)$ are topological triangles with vertices removed. Each vertex of a component of $T^0(F)$ is identified with a vertex of the other component of $T^0(F)$. Each of the two critical points of $F$ forms a $2$-cycle.}
\label{mating_1_fig}
\end{figure}

Note that $0$ is a repelling fixed point of $P_1$. Conjugating $F$ by a M{\"o}bius map, we can assume that $\psi_{P_1}(0)=\infty\in\Omega_1$, and hence, $F(\infty)=\infty$. We can choose $f_1$ so that $f_1(\infty)=\infty$ and $f'(\infty)>0$; i.e., $$f_1(z)=\frac{az^2+bz+c}{z+d},$$ where $a>0$, $b,c,d\in\C$. Since $\infty$ is a fixed point of $F$, it follows from the formula 
$$
F\vert_{\Omega_1}\equiv f_1\circ (1/\overline{\zeta})\circ (f_1\vert_{\D^*})^{-1}
$$ 
that $f_1(0)=\infty$; i.e., $d=0$. Thus, $f_1$ reduces to the form $$f_1(z)=az+b+c/z.$$ We are now only allowed to conjugate $F$ by affine maps as conjugating by non-affine maps will, in general, destroy the normalization $f_1(0)=\infty$. Conjugating $F$ by a translation, we can now assume that $f_1(z)=az+c/z$. Finally, conjugating $F$ by a dilation and rotation, we can choose $f_1$ to be $$f_1(z)=z+\alpha^2/z,$$ for some $\alpha>0$.

\begin{figure}[ht!]
\centering
\includegraphics[width=0.48\linewidth]{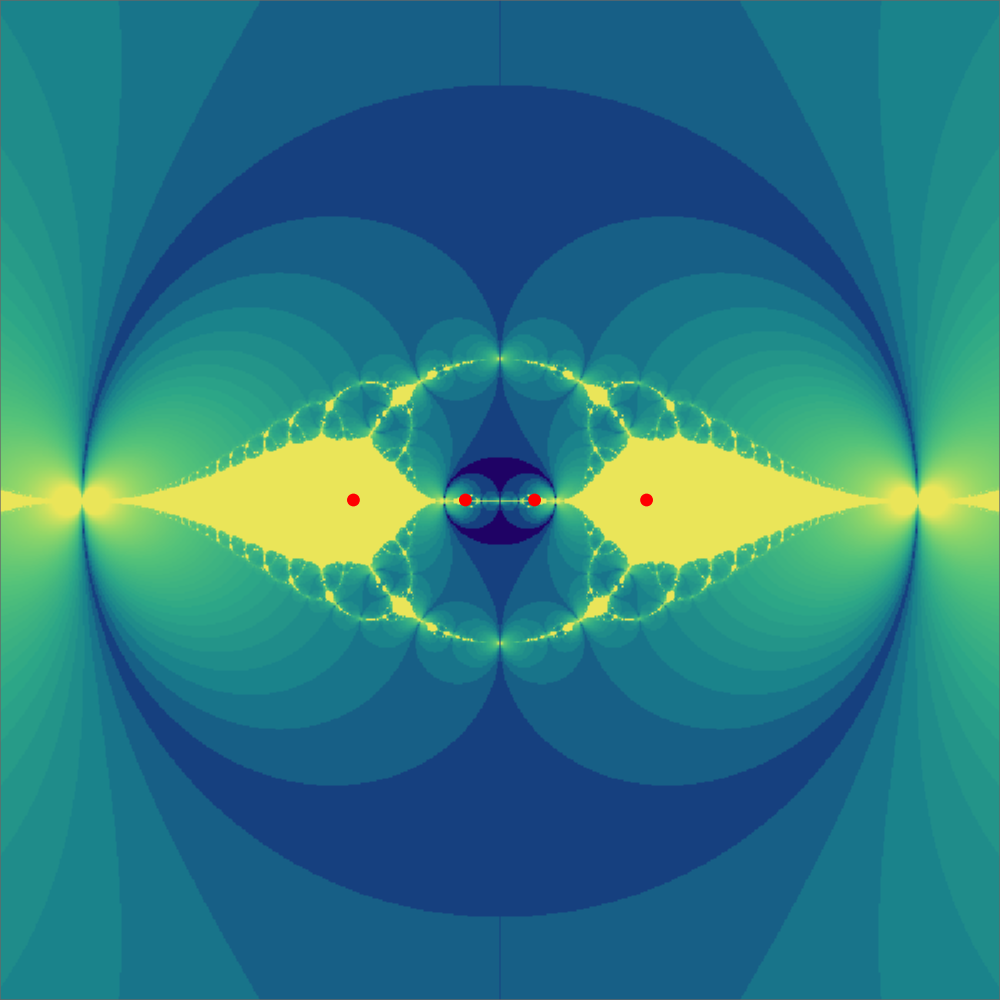}\ \includegraphics[width=0.48\linewidth]{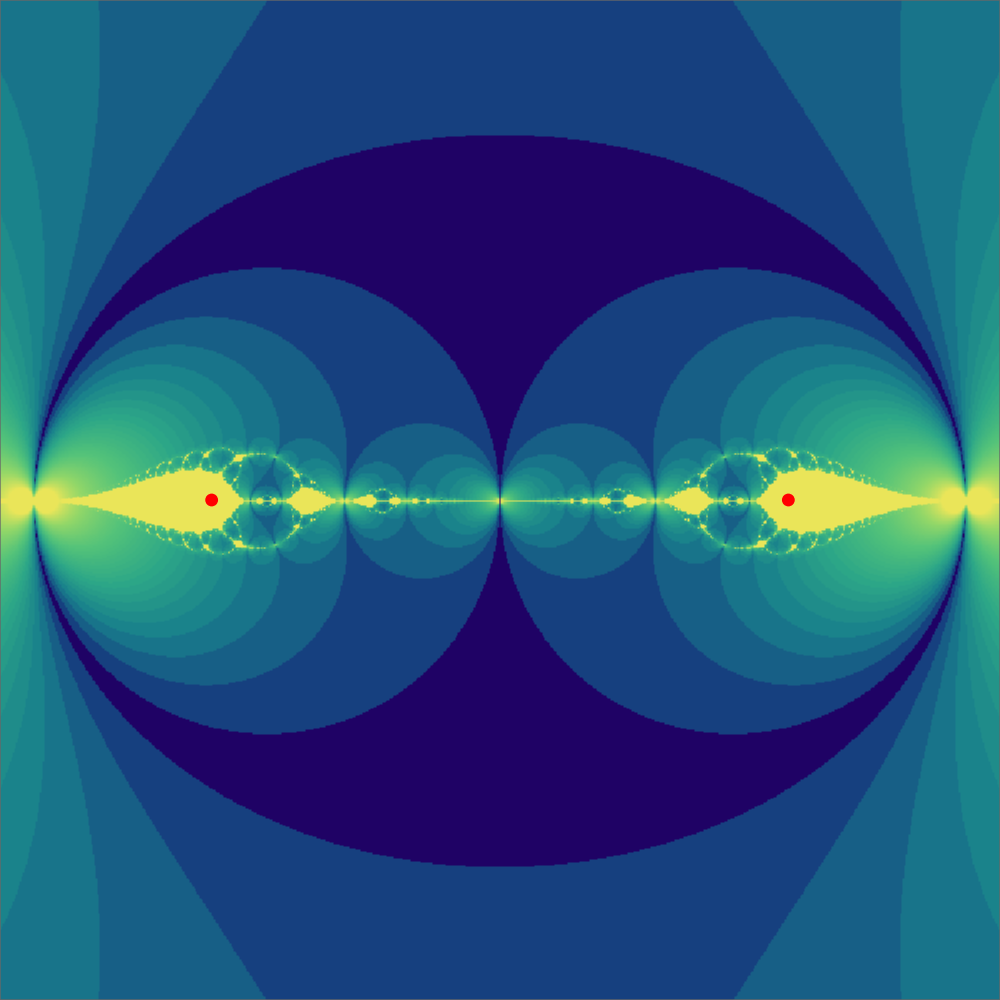}
\caption{Left: The dynamical plane of $F$ is shown. The blue/green region is the tiling set and the yellow region is the non-escaping set. The large yellow components contain the two $2$-periodic critical points (in red) of $F$. The corresponding critical values lie inside the ellipse. Right: A blow-up of the interior of the ellipse is displayed with the two critical values marked in red. The two dark blue topological triangles are the components of $T^0(F)$.}
\label{ellipse_disk_dyn_fig}
\end{figure}

A direct computation now shows that $\Omega_1$ is the exterior of the ellipse $$\frac{x^2}{(1+\alpha^2)^2}+\frac{y^2}{(1-\alpha^2)^2}=1.$$ The major and minor axes of the ellipse are along the real and imaginary axes. Moreover, the quadrature domains $\Omega_2$ and $\Omega_3$ are interiors of round circles (contained in the interior of the ellipse) with $\partial\Omega_i\cap\partial\Omega_j$ a singleton, for each $i\neq j\in\{1,2,3\}$. 

We note that since the critical points of $f_1$ are $\pm \alpha$, it is easy to see by direct computation that the critical points of $\sigma_1$ are $\pm(1/\alpha+\alpha^3)$. The corresponding critical values of $\sigma_1$ are $\pm 2\alpha$. Since the critical points of $P_1$ are $2$-periodic, the same is true for the conformal mating $F$ of $\Gamma$ and $P_1$. Therefore, the circle reflection $\sigma_2$ (respectively, $\sigma_3$) maps $2\alpha\in\R$ (respectively, $-2\alpha\in\R$) to $(1/\alpha+\alpha^3)\in\R$ (respectively, $-(1/\alpha+\alpha^3)\in\R$). Hence, the centers of the circles $\partial\Omega_i$ ($i=2,3$) must lie on the real axis. Finally, by symmetry, the radii of the circles $\partial\Omega_i$ ($i=2,3$) must be equal; i.e., the centers of the circles are at $\pm\frac{1+\alpha^2}{2}$, and the common radius is $\frac{1+\alpha^2}{2}$. A direct computation using the fact $\sigma_1(2\alpha)=1/\alpha+\alpha^3$ now yields that $$\alpha=\frac12\left((1+\sqrt{5})-\sqrt{2+2\sqrt{5}}\right).$$

\begin{prop}\label{mating_1_example_prop}
Let $P_1(z)=\overline{z}^3-\frac{3i}{\sqrt{2}}\overline{z}$, $\Gamma$ be the cusp reflection group from Figure~\ref{kleinian_cusp}, and $\alpha=\frac12\left((1+\sqrt{5})-\sqrt{2+2\sqrt{5}}\right)$. Then, the piecewise defined Schwarz reflection map $F$ in the exterior of the ellipse $$\frac{x^2}{(1+\alpha^2)^2}+\frac{y^2}{(1-\alpha^2)^2}=1,$$ and in the interiors of the circles $$\big| z\pm \frac{1+\alpha^2}{2}\big|= \frac{1+\alpha^2}{2}$$ is a conformal mating of $P_1$ and $\Gamma$.
\end{prop}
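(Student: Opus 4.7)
The plan is to extract the explicit quadrature domains from the general existence theorem and then use dynamical normalizations to pin down the conformal moduli. First, I would invoke \cite[Corollary~4.22]{LLM20} (as already indicated in the discussion preceding the proposition) to conclude that $P_\Gamma(z)=\overline{z}^3-\tfrac{3i}{2}\overline{z}$ and $P_1$ are conformally mateable: the landing pattern of the fixed and $2$-periodic external rays shows that the principal ray-equivalence class carries no cycle. Theorem~\ref{poly_group_mating_thm} (or equivalently Proposition~\ref{mating_prop}) then yields a conformal mating $F\colon\overline{\Omega}\to\widehat{\C}$ of $\Gamma$ and $P_1$, and Lemma~\ref{mating_is_schwarz} shows that each component of $\Omega$ is a simply connected quadrature domain on which $F$ restricts to the associated Schwarz reflection map.

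Next, I would determine the combinatorial/topological shape of $\Omega$. Using \cite[Lemma~4.22(i)]{LMM20} the four non-cut cusps of $\partial T^0(\Gamma)$ carry external angles $0,\tfrac14,\tfrac12,\tfrac34$ under $\phi_\Gamma$; together with the landing pattern of the corresponding fixed rays of $P_1$ and Relation~\eqref{question_mark_induced_homeo}, this forces every vertex of one triangular piece of $\psi_\Gamma(T^0(\Gamma))$ to be identified with a vertex of the other. Hence $\Omega=\Omega_1\sqcup\Omega_2\sqcup\Omega_3$ where each $\Omega_i$ is a Jordan quadrature domain and $\partial\Omega_i\cap\partial\Omega_j$ is a single point for $i\neq j$. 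The mapping degrees of $P_1$ on its three filled-Julia pieces translate, via Proposition~\ref{simp_conn_quad}, into $\deg f_1=2$ and $\deg f_2=\deg f_3=1$, where $f_i\colon\D^*\to\Omega_i$ uniformizes $\Omega_i$.

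Now I would pin down the geometry. After a M\"obius conjugation sending $\psi_{P_1}(0)$ to $\infty$, the fixed point $\infty$ of $F$ forces $f_1(0)=\infty$, so $f_1(z)=az+b+c/z$; then an affine conjugation (the only freedom left) normalizes $f_1$ to $f_1(z)=z+\alpha^2/z$ for some $\alpha>0$. A direct computation identifies $\Omega_1$ as the exterior of the ellipse $\tfrac{x^2}{(1+\alpha^2)^2}+\tfrac{y^2}{(1-\alpha^2)^2}=1$, whose critical points $\pm\alpha$ of $f_1$ map to the Schwarz critical points $\pm(\tfrac{1}{\alpha}+\alpha^3)$ with critical values $\pm 2\alpha$. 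Because both critical points of $P_1$ lie in a $2$-cycle, the same holds for $F$, so $\sigma_2$ (resp.\ $\sigma_3$) must send $2\alpha$ (resp.\ $-2\alpha$) to $\tfrac{1}{\alpha}+\alpha^3$ (resp.\ $-(\tfrac{1}{\alpha}+\alpha^3)$). Reflection symmetry $z\mapsto-\overline z$ of the configuration forces $\Omega_2,\Omega_3$ to be reflections of each other, so their centers lie on $\R$ with a common radius; the tangency $\partial\Omega_1\cap\partial\Omega_{2,3}$ occurring on the real axis forces centers $\pm\tfrac{1+\alpha^2}{2}$ and radius $\tfrac{1+\alpha^2}{2}$. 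Finally, the equation $\sigma_2(2\alpha)=\tfrac{1}{\alpha}+\alpha^3$ reduces to a polynomial identity in $\alpha$ whose unique positive root compatible with the packing condition is $\alpha=\tfrac12\bigl((1+\sqrt5)-\sqrt{2+2\sqrt 5}\bigr)$.

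The main obstacle I anticipate is the \emph{uniqueness} of the configuration, i.e., ruling out the possibility that some other Schwarz reflection map realizes the same mating. This should be handled by the conformal removability of the limit/Julia set of $F$: since $F$ is produced from the David surgery of Lemma~\ref{david_surgery_lemma} and the non-escaping locus is the image of $\mathcal{J}(P_\Gamma)\cup\Lambda(\Gamma)$ under a David homeomorphism, Theorems~\ref{removability_intro_version} and \ref{theorem:w11_removable} imply conformal removability, so the normalized $(f_1,f_2,f_3)$ are uniquely determined by the combinatorial data, and the arithmetic identification of $\alpha$ above completes the proof.
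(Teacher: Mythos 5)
Your proposal reproduces the paper's argument step by step: mateability of $P_\Gamma$ with $P_1$ via \cite[Corollary~4.22]{LLM20}, existence of the conformal mating $F$ via Proposition~\ref{mating_prop} and Lemma~\ref{mating_is_schwarz}, the combinatorial identification of the three pairwise-tangent quadrature domains with $\deg f_1=2$, $\deg f_2=\deg f_3=1$, the affine normalization $f_1(z)=z+\alpha^2/z$ and identification of the ellipse exterior, and the arithmetic determination of $\alpha$ from the $2$-periodicity of the critical orbit via the relation $\sigma_2(2\alpha)=1/\alpha+\alpha^3$. Your closing paragraph on uniqueness via conformal removability is not needed for this proposition --- the paper simply normalizes the one mating produced by Proposition~\ref{mating_prop}, so there is nothing to rule out --- although that kind of removability argument is indeed what the paper deploys later, in Subsection~\ref{cauli_triangle_mating_subsec}, to establish uniqueness of the cauliflower--ideal-triangle mating.
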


\bigskip

\subsection{Schwarz reflections in an extremal quadrature domain and a circumscribed disk}\label{example_2_subsec}

Consider the group $\Gamma$ as in Subsection~\ref{example_1_subsec} and the unicritical cubic anti-polynomial $P_2(z)=\overline{z}^3+\frac{(1+i)}{\sqrt{2}}$, which has a superattracting $2$-cycle (see Figure~\ref{cubic_unicrit}).

Note that the $0$ and $1/4$ rays of $P_2$ land at a common fixed point $\zeta$, and cut $\mathcal{K}(P_2)$ into two components. We will denote the component containing the critical value (respectively, the critical point) by $\mathcal{K}_1(P_2)$ (respectively, $\mathcal{K}_2(P_2)$). Note that the set $\mathcal{K}_1(P_2)\bigcup\{\zeta\}$ is mapped injectively onto $\mathcal{K}_2(P_2)\bigcup\{\zeta\}$ under $P_2$. On the other hand, under $P_2$, the set $\mathcal{K}_2(P_2)\bigcup\{\zeta\}$ covers itself twice, and $\mathcal{K}_1(P_2)\bigcup\{\zeta\}$ thrice. 

As in the Subsection~\ref{example_1_subsec}, one can show that $P_\Gamma$ (defined in Subsection~\ref{example_1_subsec}) and $P_2$ are conformally mateable. Indeed, the only rays landing at the separating repelling fixed point (i.e., the repelling fixed point that is a cut-point of the filled Julia set) of the critically fixed anti-polynomial $P_\Gamma$ have angles $1/8$ and $5/8$, while for the other anti-polynomial $P_2$, the rays at angles $-1/8=7/8$ and $-5/8=3/8$ land at non-cut points of $\mathcal{J}(P_2)$. Therefore, the principal ray equivalence class for $P_\Gamma$ and $P_2$ contains no cycle, and hence by \cite[Corollary~4.24]{LLM20}, the maps $P_\Gamma$ and $P_2$ are conformally mateable.

\begin{figure}[ht!]
\begin{center}
\includegraphics[scale=0.354]{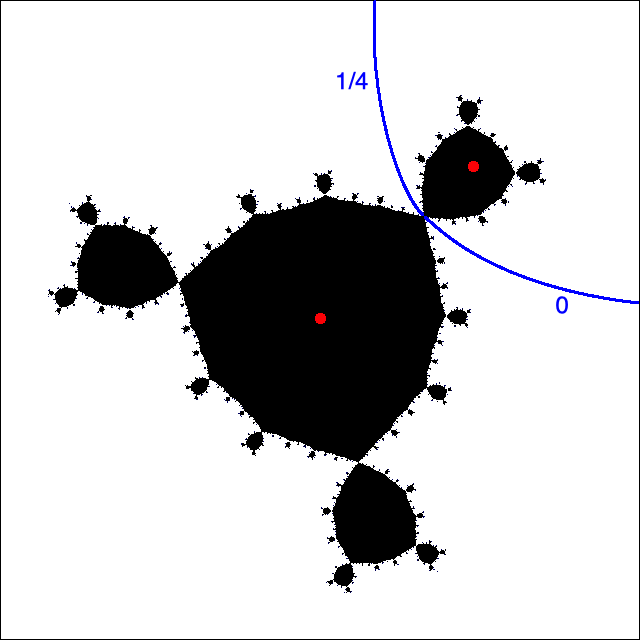}
\end{center}
\caption{The dynamical plane of the unicritical cubic anti-polynomial $P_2(z)=\overline{z}^3+\frac{(1+i)}{\sqrt{2}}$. The critical point of $P_2$ is $2$-periodic.}
\label{cubic_unicrit}
\end{figure}

By Proposition~\ref{mating_prop}, there exists a conformal mating $F:\overline{\Omega}\to\widehat{\C}$ of $P_1$ and $\Gamma$. We set $T^0(F):=\psi_\Gamma(T^0(\Gamma))$. Since $\psi_\Gamma$ is conformal on $\inter{\mathcal{K}(\Gamma)}$, each of the two components of $T^0(F)$ is a topological triangle with its vertices removed. Moreover, by Lemma~\ref{mating_is_schwarz}, each component of $\Omega$ is a simply connected quadrature domain, and $F$ is the piecewise defined Schwarz reflection map of these quadrature domains.

Note that the four cusp points of $\partial T^0(\Gamma)$ (that are not cut-points of $\partial T^0(\Gamma)$) have external angles $0,1/4,1/2$, and $3/4$ (as points on $\Lambda(\Gamma)$). Since the $0$ and $1/4$ rays of $P_2$ land at a common fixed point, the landing points of the $0$ and $3/4$ rays of $P_\Gamma$ are identified in the Julia set of the conformal mating of $P_\Gamma$ and $P_2$. There is no other identification involving the landing points of the fixed rays of $P_\Gamma$. It follows that two vertices of a component of $T^0(F)$ are identified. Hence, $\Omega$ has two connected components $\Omega_1$ and $\Omega_2$ such that 
\begin{enumerate}\upshape
\item $\Omega_1$ and $\Omega_2$ are simply connected quadrature domains,

\item $\psi_{P_2}(\mathcal{K}_1(P_2))\subset\overline{\Omega_1}$, and $\psi_{P_2}(\mathcal{K}_2(P_2))\subset\overline{\Omega_2}$,

\item $\Omega_1$ is a Jordan domain, and $\partial\Omega_2$ has a unique cut-point, 

\item $\partial\Omega_1\cap\partial\Omega_2$ is a singleton.
\end{enumerate}

Denoting the Schwarz reflection maps of $\Omega_i$ by $\sigma_i$, we have that 
$$
F(w)= \left\{\begin{array}{ll}
                    \sigma_1(w) & \mbox{if}\ w\in\overline{\Omega_1}, \\
                    \sigma_2(w) & \mbox{if}\ w\in\overline{\Omega_2}.
                                          \end{array}\right. 
$$

Conjugating $F$ by a M{\"o}bius map, we can assume that unique critical point of $F$ is $0\in\Omega_2$, the unique critical value is $\infty\in\Omega_1$, and the conformal radius of $\Omega_2$ (with conformal center at $0$) is $1$.

By mapping properties of $P_2$, we have that $\sigma_1:\sigma_1^{-1}(\inter{\Omega_1^c})\to\inter{\Omega_1^c}$ has degree $1$, and $\sigma_2:\sigma_2^{-1}(\inter{\Omega_2^c})\to\inter{\Omega_2^c}$ is a branched cover of degree $3$. By Proposition~\ref{simp_conn_quad}, there exist rational maps $f_1$ and $f_2$ such that 
\begin{enumerate}\upshape
\item $f_i:\D\to\Omega_i$ is univalent, 

\item $F\vert_{\Omega_i}\equiv f_i\circ (1/\overline{\zeta})\circ (f_i\vert_{\D})^{-1}$, and

\item $\Deg{f_1}=1$, $\Deg{f_2}=3$.
\end{enumerate}

\begin{figure}[ht!]
\begin{tikzpicture}
 \node[anchor=south west,inner sep=0] at (0,0) {\includegraphics[width=0.96\textwidth]{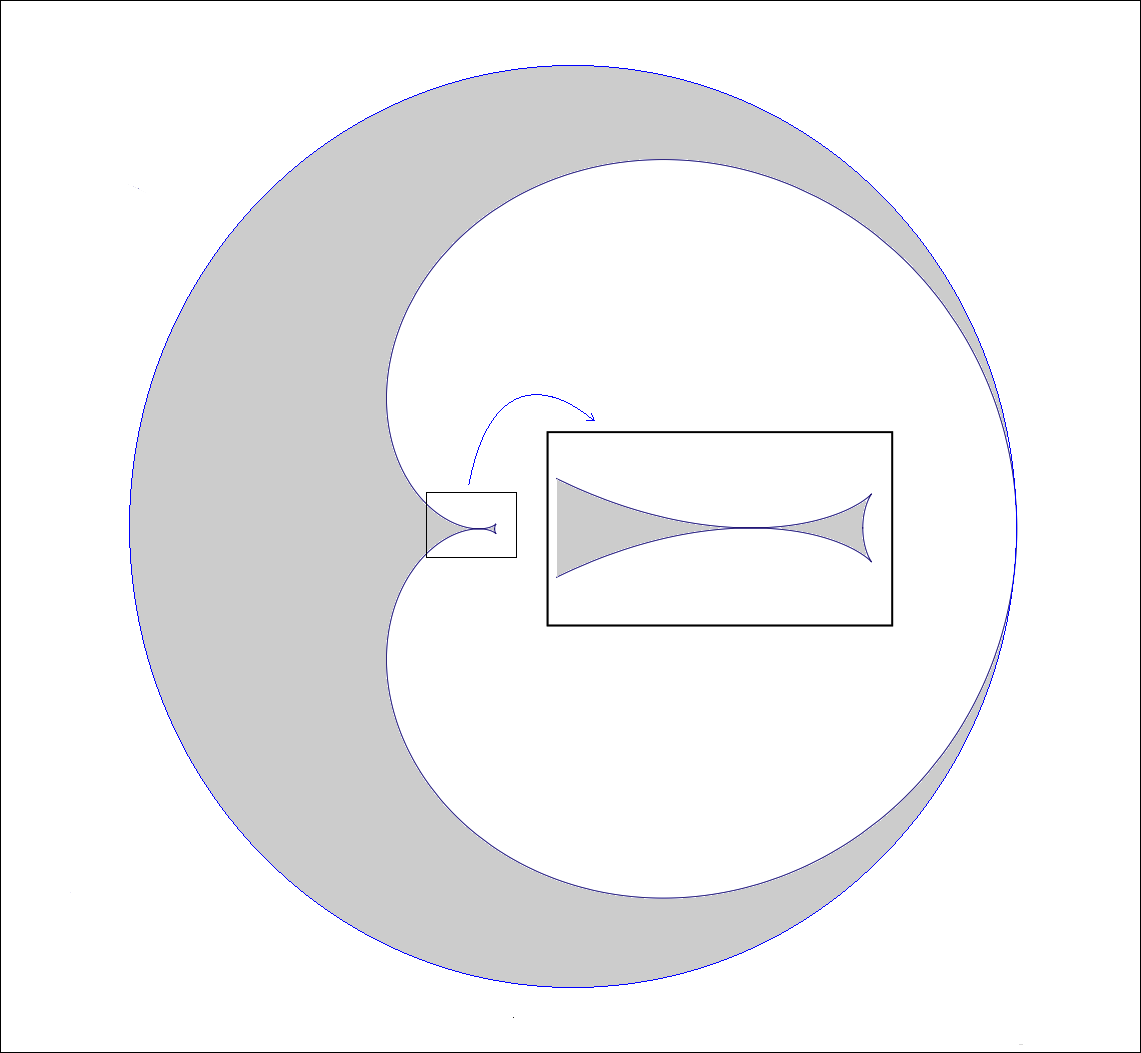}};
\node at (11,10) {\begin{huge}$\Omega_1$\end{huge}};
\node at (8,8.4) {\begin{huge}$\Omega_2$\end{huge}};
\node at (3.6,8.4) {\begin{Large}$T^0(F)$\end{Large}};
\node at (6.75,4.9) {\begin{scriptsize}$T^0(F)$\end{scriptsize}};
\draw[->,line width=0.5pt] (6.75,5.15)-|(6.75,5.6);
\node at (9,4.9) {\begin{scriptsize}$T^0(F)$\end{scriptsize}};
\draw[->,line width=0.5pt] (9,5.15)-|(9,5.6);
\end{tikzpicture}
\caption{The conformal mating of $P_2(z)=\overline{z}^3+\frac{1+i}{\sqrt{2}}$ and the necklace group from Figure~\ref{kleinian_cusp} is given by the piecewise Schwarz reflection map $F$ associated with the quadrature domains $\Omega_i$, $i\in\{1,2\}$; where $\Omega_1=\{\vert z \vert>\frac{4}{3}+\frac{2\sqrt{2}}{3}\}$, and $\Omega_2=f(\D)$, $f(z)=z+\frac{2\sqrt{2}}{3}z^2+\frac{z^3}{3}$. The two components of $T^0(F)$ are topological triangles with vertices removed. The closure of one of the components of $T^0(F)$ is a topological triangle, and two vertices of the other component of $T^0(F)$ are identified. The unique critical point $0$ of $F$ forms a $2$-cycle $0\leftrightarrow\infty$.}
\label{mating_2_fig}
\end{figure}

The assumption that the conformal radius of $\Omega_2$ (with conformal center at $0$) is $1$ allows us to normalize $f_2$ so that $f_2(0)=0$ and $f_2'(0)=1$. By the dynamics of $F$, it now follows that $f_2$ has a triple pole at $\infty$; so $f_2$ is a cubic polynomial. 

Let $f_2(z)=z+az^2+bz^3$. Since $F$ (in particular, $\sigma_2$) has a unique critical point, the two finite critical points of $f_2$ must lie on $\mathbb{S}^1$. A simple calculation now implies that $\vert b\vert=1/3$. Conjugating $f_2$ by a rotation, we can assume that $$f_2(z)=z+az^2+z^3/3,$$ for some $a\in\C$. We will now use the condition that $\partial\Omega_2$ has a unique cut-point (equivalently, a double point) to determine $a$. In fact, a simple numerical computation shows that the only map $f_2$ of the above form with a double point on $f_2(\mathbb{S}^1)$ is $f_2(z)=z+\frac{2\sqrt{2}}{3}z^2+\frac{z^3}{3}$ (up to conjugation by $z\mapsto -z$). Below we give a rigorous proof of this fact.

Consider the space of cubic polynomials 
$$
S_3^*:=\{f(z)=z+az^2+z^3/3: a\in\C, f\vert_{\D}\ \textrm{is\ univalent}\}.
$$ 
By \cite[Theorem~2]{Bra67}, the maps $z\pm\frac{2\sqrt{2}}{3}z^2+\frac{z^3}{3}$ maximize the absolute value of the coefficient of $z^2$ in the space $S_3^*$. Moreover, for any $f\in S_3^*$, we have $a\in\R$ (the fact that the coefficient of $z^3$ is $1/3$ implies that both critical points of $f$ lie on $\mathbb{S}^1$). Hence, the maps $z\pm\frac{2\sqrt{2}}{3}z^2+\frac{z^3}{3}$ are extremal points of $S_3^*$; i.e., they cannot be written as proper convex combinations of two distinct maps in $S_3^*$. It now follows from \cite[Theorem~3.1]{LM14} that the image of $\mathbb{S}^1$ under each of the maps $z\pm\frac{2\sqrt{2}}{3}z^2+\frac{z^3}{3}$ has a double point. Finally, by \cite[Theorem~B]{LMM19}, up to conjugation by $z\mapsto -z$, there is a unique member $f$ in $S_3^*$ with the property that $f(\mathbb{S}^1)$ has a double point. Therefore, we can choose $f_2(z)=z+\frac{2\sqrt{2}}{3}z^2+\frac{z^3}{3}$.

Since $F(\infty)=0$, it follows that $\Omega_1$ is the exterior of a round circle centered at the origin. It is easy to see that $\textrm{dist}(0,f_2(\mathbb{S}^1))=d(0,f_2(1))=\frac{4}{3}+\frac{2\sqrt{2}}{3}$. As $\partial\Omega_1$ touches $\partial\Omega_2=f_2(\mathbb{S}^1)$, it follows that $\Omega_1$ is the exterior of the circle $\{\vert z\vert=\frac{4}{3}+\frac{2\sqrt{2}}{3}\}$.

\begin{prop}\label{mating_2_example_prop}
Let $P_2(z)=\overline{z}^3+\frac{(1+i)}{\sqrt{2}}\overline{z}$, $\Gamma$ be the cusp reflection group from Figure~\ref{kleinian_cusp}, and $f(z)=z+\frac{2\sqrt{2}}{3}z^2+\frac{z^3}{3}$. Then, the piecewise defined Schwarz reflection map $F$ in the quadrature domains $f(\D)$ and $\{\vert z \vert>\frac{4}{3}+\frac{2\sqrt{2}}{3}\}$ is a conformal mating of $P_2$ and $\Gamma$.
\end{prop}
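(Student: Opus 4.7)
The plan is to recognize $F$ as the conformal mating of $P_2$ and $\Gamma$ whose existence was already guaranteed by Proposition~\ref{mating_prop}, and then show that the normalizations determine uniquely the two quadrature domains and their uniformizations. First I would verify mateability of $P_\Gamma$ and $P_2$: the only rays of $P_\Gamma$ landing at a separating repelling fixed point have angles $1/8$ and $5/8$, while the rays of $P_2$ at angles $7/8$ and $3/8$ land at non-cut points of $\mathcal{J}(P_2)$. Hence the principal ray equivalence class contains no cycle, so \cite[Corollary~4.22]{LLM20} yields conformal mateability of the two anti-polynomials, and Proposition~\ref{mating_prop} then produces a conformal mating $F\colon \overline{\Omega}\to\widehat{\C}$ of $P_2$ and $\Gamma$.

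Next, I would analyze the combinatorics of $\Omega$. The four non-cut cusps of $\partial T^0(\Gamma)$ carry external angles $0,1/4,1/2,3/4$. Since the $0$ and $1/4$ rays of $P_2$ co-land, exactly one pair of vertices in $T^0(F)=\psi_\Gamma(T^0(\Gamma))$ gets identified, so $\Omega$ has precisely two components $\Omega_1,\Omega_2$; moreover $\Omega_1$ is a Jordan domain, $\partial \Omega_2$ has one double point, and $\partial \Omega_1\cap\partial \Omega_2$ is a single cusp. By Lemma~\ref{mating_is_schwarz}, both are simply connected quadrature domains and $F$ is their piecewise Schwarz reflection. Using the mapping degrees of $P_2$ on $\mathcal{K}_1(P_2)$ and $\mathcal{K}_2(P_2)$ together with Proposition~\ref{simp_conn_quad}, the uniformizers $f_1,f_2$ are rational of degrees $1$ and $3$ respectively. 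After conjugating $F$ by a M\"obius transformation, I may assume the unique critical point of $F$ is $0\in\Omega_2$ with critical value $\infty\in\Omega_1$, and that the conformal radius of $\Omega_2$ at $0$ is $1$. Then $f_2$ is a cubic polynomial normalized as $f_2(z)=z+az^2+z^3/3$, and $f_1$ is forced to be of the form $z\mapsto c/z$ for some $c>0$, so $\Omega_1$ is the exterior of a round disk centered at $0$.

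The decisive step is identifying the coefficient $a$. Because $F$ has only one critical point, both critical points of $f_2$ lie on $\mathbb{S}^1$, which gives $a\in\R$; the constraint then becomes the requirement that $f_2(\mathbb{S}^1)$ carries a double point. By the Suffridge extremality results in \cite{Bra67} and \cite[Theorem~3.1]{LM14}, the maps $z\mapsto z\pm \tfrac{2\sqrt{2}}{3}z^2+\tfrac{z^3}{3}$ are the extremal points of the compact family $S_3^*$ and produce a double point on the image of $\mathbb{S}^1$; by the rigidity statement \cite[Theorem~B]{LMM19}, these are, up to the symmetry $z\mapsto -z$, the only members of $S_3^*$ with this property. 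Choosing the $+$ sign I obtain $f_2(z)=z+\tfrac{2\sqrt{2}}{3}z^2+\tfrac{z^3}{3}$, so $\Omega_2=f(\D)$ as claimed. Finally, $\partial\Omega_1$ is a circle centered at $0$ touching $\partial\Omega_2$; since $\mathrm{dist}(0,f(\mathbb{S}^1))=|f(1)|=\tfrac{4}{3}+\tfrac{2\sqrt{2}}{3}$, we conclude $\Omega_1=\{|z|>\tfrac{4}{3}+\tfrac{2\sqrt{2}}{3}\}$.

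The main obstacle is the last step, the unique determination of the cubic $f_2$: one needs $f_2(\mathbb{S}^1)$ to have a prescribed singular pattern (exactly one double point, no cusps), and ruling out other cubics in $S_3^*$ with this property requires the extremality/rigidity machinery for the class $\Sigma_d^*$ developed in \cite{LMM19}. Everything else is a careful bookkeeping of the identifications on $\partial T^0(\Gamma)$ together with the standard correspondence between Schwarz reflection maps and their uniformizing rational functions.
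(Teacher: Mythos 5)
Your proposal is correct and follows essentially the same route as the paper's own argument: establish mateability of $P_\Gamma$ and $P_2$ via the principal ray equivalence class criterion, read off the two-component structure of $\Omega$ from the identifications on $\partial T^0(\Gamma)$, normalize $F$ and the uniformizing rational maps, and then pin down $f_2$ via the extremality/rigidity results of \cite{Bra67}, \cite{LM14}, and \cite[Theorem~B]{LMM19}. The only cosmetic difference is in how $\Omega_1$ is identified — you claim $f_1$ must have the form $z\mapsto c/z$ (which needs an extra normalization of the disk automorphism to be literally true), whereas the paper deduces directly from $F(\infty)=0$ and the fact that the Schwarz reflection of a disk exterior sends $\infty$ to the center that $\Omega_1$ is centered at the origin; both reach the same conclusion.
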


\bigskip

\subsection{Mating the cauliflower anti-polynomial with the ideal triangle group}\label{cauli_triangle_mating_subsec}

In this subsection, we will prove the existence of the conformal mating of the `cauliflower' anti-polynomial $P(z)=\overline{z}^2+1/4$ and the ideal triangle reflection group, and give an explicit description of this conformal mating. 

Note that the parabolic basin of attraction of $P$ is equal to $\inter{\mathcal{K}(P)}$, and it is a Jordan domain. Mapping $\mathcal{K}(P)$ to $\overline{\D^*}=\widehat{\C}\setminus\D$ by a Riemann map $\phi$ that sends the critical point $0$ (of $P$) to $\infty$ and the parabolic fixed point $1/2$ to $1$, we see that $\phi\circ P\circ\phi^{-1}:\overline{\D^*}\to\overline{\D^*}$ is equal to the anti-Blaschke product 
$$
B(z)=\frac{3\overline{z}^2+1}{3+\overline{z}^2}
$$ 
(compare \cite[Expos{\'e}~IX, \S II, Corollary~1]{DH07}). In other words, $P\vert_{\mathcal{K}(P)}$ is conformally conjugate to $B\vert_{\overline{\D^*}}$.

\begin{lemma}\label{parabolic_blaschke_nielsen_david_extension}
There is a homeomorphism $H:\mathbb{S}^1\to\mathbb{S}^1$ with $H(1)=1$ that conjugates $B$ to $\pmb{\rho}_2$, and extends as a David homeomorphism of $\D$.
\end{lemma}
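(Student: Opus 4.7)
The strategy is to apply the extension Theorem~\ref{theorem:extension_generalization} with $f=B|_{\mathbb{S}^1}$ and $g=\pmb{\rho}_2$, equipped with three-point Markov partitions at their fixed points. The technical heart lies in a direct classification of the three fixed points of $B|_{\mathbb{S}^1}$ together with a verification that the two partitions share the same transition matrix.

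Since $\overline{z}=1/z$ on $\mathbb{S}^1$, the map reads $B|_{\mathbb{S}^1}(z)=(3+z^2)/(3z^2+1)$, and $B(z)=z$ becomes $(z-1)(3z^2+2z+3)=0$. The non-real roots $z_{\pm}=(-1\pm 2i\sqrt{2})/3$ satisfy $|z_{\pm}|^2=1/9+8/9=1$, so $B$ has exactly three fixed points $1,z_+,z_-$ on $\mathbb{S}^1$. A short Taylor expansion gives $B^{\circ 2}(z)=z-\tfrac{1}{2}(z-1)^3+O((z-1)^4)$, so $1$ is parabolic with multiplicity $N(1)+1=3$. Using $3z_{\pm}^2+1=-2(z_{\pm}+1)$ (from the minimal polynomial) in $B'(z)=-16z/(3z^2+1)^2$ gives $B'(z_{\pm})=-3$, whence $z_{\pm}$ are hyperbolic with orientation-preserving multiplier $\lambda(z_{\pm})=9$; symmetry of the one-sided data at each of the three points follows from Remark~\ref{remark:parabolic} since the period is odd. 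The holomorphic extension $(3+z^2)/(3z^2+1)$ is a degree-$2$ rational map whose Julia set is $\mathbb{S}^1$ and which has no critical point on $\mathbb{S}^1$, so $B|_{\mathbb{S}^1}$ is expansive by the Denker--Urbanski result quoted in Example~\ref{example:expansive_blaschke}.

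Next I set up the Markov partitions $\mathcal{P}(B;\{1,z_+,z_-\})$ and $\mathcal{P}(\pmb{\rho}_2;\{1,\omega,\overline{\omega}\})$ with $\omega=e^{2\pi i/3}$. By property~\ref{theorem:expansive_conjugate}, the expansive orientation-reversing degree-$2$ covering $B|_{\mathbb{S}^1}$ is topologically conjugate to $\overline{z}^2$; this conjugacy must send the cube roots of unity to $\{1,z_+,z_-\}$ and transport the natural Markov partition of $\overline{z}^2$ (whose transition matrix has all off-diagonal entries equal to $1$) to $\mathcal{P}(B;\{1,z_+,z_-\})$. The same matrix clearly governs $\mathcal{P}(\pmb{\rho}_2;\{1,\omega,\overline{\omega}\})$. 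Conditions \eqref{condition:uv} and \eqref{condition:holomorphic} hold for the first partition via the rational extension (compare Example~\ref{example:power_map}) and for the second by Example~\ref{example:reflection_uv}; Example~\ref{example:reflection_multipliers} further yields that each fixed point of $\pmb{\rho}_2$ is symmetrically parabolic, and $N=1$ at each of them because the first-return map is the composition of two reflections in circles tangent at the fixed point, hence a parabolic M\"obius transformation.

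Finally, define $h\colon\{1,z_+,z_-\}\to\{1,\omega,\overline{\omega}\}$ by $h(1)=1$, $h(z_+)=\omega$, $h(z_-)=\overline{\omega}$; this plainly conjugates $B$ to $\pmb{\rho}_2$ on the partition sets. At $1$, alternative \ref{HH} of Theorem~\ref{theorem:extension_generalization} applies with $\mu=2$, since $\mu^{-1}N(1_B)=2/2=1=N(1_{\pmb{\rho}_2})$; at each of $z_{\pm}$, alternative \ref{HP} applies, as these are symmetrically hyperbolic for $B$ and matched with symmetrically parabolic points of $\pmb{\rho}_2$. The theorem then produces a homeomorphism of $\overline{\D}$ extending $h$, whose circle restriction is the desired $H$ (with $H(1)=1$ built in by construction) and whose disk restriction is David. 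The main novelty exploited here is the \ref{HP} alternative: it permits matching the hyperbolic $z_{\pm}$ with the parabolic $\omega,\overline{\omega}$, a mismatch that rules out any quasiconformal conjugacy and forces the passage to the David class.
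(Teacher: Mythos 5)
Your approach matches the paper's in overall structure: set up fixed-point Markov partitions for $B$ and $\pmb{\rho}_2$, classify the fixed points, and invoke Theorem~\ref{theorem:extension_generalization}. The local analysis is correct --- $1$ is parabolic for $B$ with $N(1)=2$, the repelling fixed points $z_{\pm}=(-1\pm 2\sqrt{2}i)/3$ are hyperbolic with multiplier $9$, all three fixed points of $\pmb{\rho}_2$ are parabolic with $N=1$, and the alternatives \ref{HH} (at $1$, with $\mu=2$) and \ref{HP} (at $z_{\pm}$) are the right ones. The transition-matrix observation and the appeal to Remark~\ref{remark:parabolic} for one-sided symmetry are also fine.

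The gap is the sentence asserting that conditions \eqref{condition:uv} and \eqref{condition:holomorphic} hold for $\mathcal{P}\bigl(B;\{1,z_+,z_-\}\bigr)$ ``via the rational extension (compare Example~\ref{example:power_map}).'' Example~\ref{example:power_map} is specific to $z\mapsto z^d$ and $z\mapsto \bar z^d$, where the sets $U_k$ can be taken as sectors at the origin; that construction does not transfer to a Blaschke product. Example~\ref{example:blaschke} does treat the Blaschke product $B$, but with the Markov partition at the $d$-th roots of $\pm 1$, not at the fixed points; for the fixed-point partition the arcs have unequal lengths and the sector picture breaks down. Condition \eqref{condition:uv} demands open neighborhoods $U_k\supset\inter{A_k}$ and $V_k=f_k(U_k)$ with $B$ conformal from $U_k$ onto $V_k$ and $\bigcup_{j\neq k} U_j\subset V_k$ for each $k$, and this nesting is not automatic from $B$ being rational of degree $2$. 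Verifying it for the fixed-point partition is precisely the technical content of the paper's proof: one conjugates by the M\"obius involution $M(z)=(z+1)/(z-1)$, reducing $1/\overline{B}$ to $\check B(w)=-w-1/w$, and then takes the explicit domains $\check U_1=\{\im w>1/\sqrt{2}\}$, $\check U_2=\{\im w<-1/\sqrt{2}\}$, $\check U_3=\{|w|<1/\sqrt{2}\}$, computes their $\check B$-images $\check V_i$, and checks the required inclusions. Without this construction, or some equivalent verification, the hypotheses of Theorem~\ref{theorem:extension_generalization} for the map $B$ are not established and the invocation of that theorem is not justified.
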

\begin{proof}
Recall from Examples~\ref{example:expansive_blaschke} and~\ref{example:expansive_reflection} that both $B\vert_{\mathbb{S}^1}$ and $\pmb{\rho}_2\vert_{\mathbb{S}^1}$ are expansive. 

Note that $B$ has three fixed points on $\mathbb{S}^1$; namely, at $1$ and $\left(\frac{-1\pm 2\sqrt{2}i}{3}\right)$. Moreover, $1$ is a parabolic fixed point of $B$, while the other two are repelling. On the other hand, all three fixed points $1,\omega,\omega^2$ of the Nielsen map $\pmb{\rho}_2$ of the ideal triangle reflection group $\pmb{\Gamma}_2$ are parabolic. We consider the Markov partitions $\mathcal{P}\left(B, \left\{1,-\frac13+\frac{2\sqrt{2}i}{3},-\frac13-\frac{2\sqrt{2}i}{3}\right\}\right)$ and $\mathcal{P}\left(\pmb{\rho}_2, \{1,\omega,\omega^2\}\right)$. 

It was shown in Example~\ref{example:reflection_uv} that $\pmb{\rho}_2$ admits piecewise conformal extensions satisfying conditions~\eqref{condition:uv} and~\eqref{condition:holomorphic} (with respect to $\mathcal{P}\left(\pmb{\rho}_2, \{1, \omega, \omega^2\}\right)$). Thanks to Theorem~\ref{theorem:extension_generalization}, it now suffices to prove that the map $B\vert_{\mathbb{S}^1}$ also admits piecewise conformal extensions satisfying conditions~\eqref{condition:uv} and~\eqref{condition:holomorphic} (with respect to the partition $\mathcal{P}\left(B, \left\{1,-\frac13+\frac{2\sqrt{2}i}{3},-\frac13-\frac{2\sqrt{2}i}{3}\right\}\right)$). In fact, the desired extension of $B\vert_{\mathbb{S}^1}$ near the fixed points satisfying condition~\eqref{condition:holomorphic} is given by $z\mapsto 1/\overline{B(z)}$. We now proceed to find open neighborhoods $U_i, V_i$, $i\in\{1,2,3\}$, such that $U_i$s (respectively, $V_i$s) contain the interiors of the above Markov partition pieces (respectively, the $B$-images of the Markov partition pieces), the map $z\mapsto 1/\overline{B(z)}$ carries $U_i$ onto $V_i$ conformally, and the sets satisfy condition~\eqref{condition:uv}.

\begin{figure}[ht!]
\centering
\includegraphics[width=0.98\linewidth]{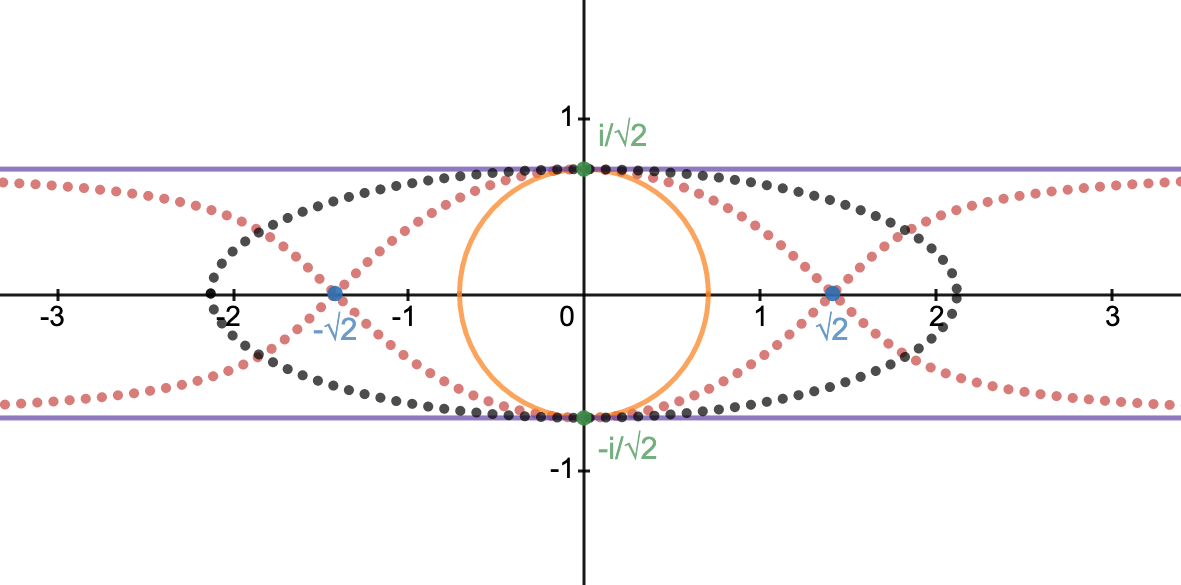}
\caption{The set $\widecheck{U}_1$ (respectively, $\widecheck{U}_2$) is the region above (respectively, below) the purple horizontal line $y=\frac{1}{\sqrt{2}}$ (respectively, $y=-\frac{1}{\sqrt{2}}$, and the set $\widecheck{U}_3$ is the disk bounded by the orange circle $\left\{x^2+y^2=\frac12\right\}$. The $\widecheck{B}$-image of $\widecheck{U}_1$ (respectively, of $\widecheck{U}_2$) is the region below (respectively, above) the dotted red curve passing through $\frac{i}{\sqrt{2}}$ and $\pm\sqrt{2}$ (respectively, through $-\frac{i}{\sqrt{2}}$ and $\pm\sqrt{2}$). Finally, the $\widecheck{B}$-image of $\widecheck{U}_3$ is the exterior of the dotted black ellipse $\frac{2x^{2}}{9}+2y^{2}=1$.}
\label{conformal_extensions_blaschke_fig}
\end{figure}

To this end, we set
$$
\widetilde{B}(z):=\frac{1}{\overline{B(z)}}=\frac{3+z^2}{3z^2+1}.
$$
It will be convenient to conjugate $\widetilde{B}$ by a M{\"o}bius map
$M(z)=\frac{z+1}{z-1},$
that preserves the real line, sends $1$ to $\infty$, the other two fixed points to $\frac{-i}{\sqrt{2}}$, and the unit circle to the imaginary axis. The conjugated map is given by
$$
\widecheck{B}(w):=M\circ\widetilde{B}\circ M^{-1}(w)=-w-\frac{1}{w}.
$$
By construction, $M$ carries the interiors of the Markov partition pieces (in the $B$-plane) to $\left(-i\infty, \frac{-i}{\sqrt{2}}\right), \left(\frac{-i}{\sqrt{2}}, \frac{i}{\sqrt{2}}\right)$, and $\left(\frac{i}{\sqrt{2}}, i\infty\right)$ (in the $\widecheck{B}$-plane). We now set 
$$
\widecheck{U}_1=\left\{{\rm Im}(w)>\frac{1}{\sqrt{2}}\right\},\quad \textrm{and}\quad \widecheck{U}_2=\left\{{\rm Im}(w)<\frac{-1}{\sqrt{2}}\right\}.
$$
The image $\widecheck{V}_1$ of $\widecheck{U}_1$ under $\widecheck{B}$ is the region below the curve 
$$
\left\{\left(1+\frac{2x^{2}}{\left(2+\sqrt{2}y\right)^{2}}\right)\left(1+\sqrt{2}y\right)=2\right\},
$$
that passes through $\frac{i}{\sqrt{2}}$, intersects the real line at $\pm\sqrt{2}$, and is contained in the horizontal strip $\left\{\frac{-1}{\sqrt{2}}\leq{\rm Im}(w)\leq\frac{1}{\sqrt{2}}\right\}$. Likewise, the image $\widecheck{V}_2$ of $\widecheck{U}_2$ under $\widecheck{B}$ is the region above the curve
$$
\left\{\left(1+\frac{2x^{2}}{\left(2-\sqrt{2}y\right)^{2}}\right)\left(1-\sqrt{2}y\right)=2\right\},
$$
that passes through $\frac{-i}{\sqrt{2}}$, intersects the real line at $\pm\sqrt{2}$, and is contained in the horizontal strip $\left\{\frac{-1}{\sqrt{2}}\leq{\rm Im}(w)\leq\frac{1}{\sqrt{2}}\right\}$. Moreover, $\widecheck{B}$ maps $\widecheck{U}_1, \widecheck{U}_2$ conformally onto $\widecheck{V}_1, \widecheck{V}_2$. Finally, we define 
$$
\widecheck{U}_3:=\{\vert z\vert<1/\sqrt{2}\}\subset\widecheck{B}(\widecheck{U}_1)\cap\widecheck{B}(\widecheck{U}_2).
$$
Then, $\widecheck{U}_3$ does not contain the critical points $\pm 1$ of $\beta$. Furthermore, 
$$
\widecheck{B}:\widecheck{U}_3\to \widecheck{V}_3
$$ 
is a conformal isomorphism, where $\widecheck{V}_3$ is the exterior of an ellipse with $\widecheck{V}_3\supset \widecheck{U}_1\cup \widecheck{U}_2$ (see Figure~\ref{conformal_extensions_blaschke_fig}). Transporting the sets $\widecheck{U}_i, \widecheck{V}_i$, $i\in\{1,2,3\}$, back to the $B$-plane via the change of coordinate $M$, we obtain our desired open sets $U_i, V_i$, $i\in\{1,2,3\}$ satisfying condition~\eqref{condition:uv}.
\end{proof}

We define an orientation-reversing continuous map on a subset of $\mathbb{S}^2$:

$$
\widetilde{B}=
\begin{cases}
H^{-1}\circ \pmb{\rho}_2\circ H,\quad {\rm in}\ \overline{\D}\setminus \inter{H^{-1}(\Pi)},\\
B,\quad {\rm in\ } \D^*,
\end{cases}
$$
where $\Pi$ is a regular ideal triangle in $\D$. We define $\mu\vert_{\D}$ to be the pullback of the standard complex structure on $\mathbb D$ by the map $H$, and set $\mu$ equal to zero everywhere else. Since $H$ is a David homeomorphism of $\D$, it follows that $\mu$ is a David coefficient on $\widehat{\C}$; i.e., it satisfies condition~\eqref{definition:david1}. 

Theorem~\ref{theorem:integrability_david} then gives us an orientation-preserving homeomorphism $\Psi$ of $\widehat{\mathbb C}$ such that the pullback of the standard complex structure under $\Psi$ is equal to $\mu$. Setting $\Omega:=\widehat{\C}\setminus\overline{\Psi(H^{-1}(\Pi))}$, we see as in the proof of Lemma~\ref{david_surgery_lemma} that the map 
$$
\sigma:=\Psi\circ \widetilde{B}\circ \Psi^{-1}:\overline{\Omega}\to\widehat{\C}
$$
is continuous and anti-analytic on $\Omega$. Moreover, $B\vert_{\overline{\D^*}}$ is conformally conjugate to $\sigma\vert_{\Psi(\overline{\D^*})}$ via $\Psi$, and $\pmb{\rho}_2\vert_{\overline{\D}}$ is conformally conjugate to $\sigma\vert_{\Psi(\overline{\D})}$ via $\Psi\circ H^{-1}$ (conformality of $\Psi\circ H^{-1}$ follows from Theorem~\ref{theorem:stoilow}).  

Since $P\vert_{\mathcal{K}(P)}$ is conformally conjugate to $B\vert_{\overline{\D^*}}$, it follows from the previous paragraph and Definition~\ref{mating_def} that $\sigma$ is a conformal mating of $P$ and the ideal triangle group $\pmb{\Gamma}_2$. Moreover, by Theorem~\ref{theorem:w11_removable}, $\Psi(\mathbb{S}^1)$ is conformally removable. We claim that $\sigma$ is the unique (up to M{\"o}bius conjugacy) conformal mating of $P$ and $\pmb{\Gamma}_2$. Indeed, if $\widetilde{\sigma}$ is another conformal mating of $P$ and $\pmb{\Gamma}_2$, then there exists a homeomorphism of $\widehat{\C}$, conformal on $\widehat{\C}\setminus\Psi(\mathbb{S}^1)$, conjugating $\sigma$ to $\widetilde{\sigma}$. But the conformal removability of $\Psi(\mathbb{S}^1)$ implies that this homeomorphism is a M{\"o}bius map; i.e., $\sigma$ and $\widetilde{\sigma}$ are M{\"o}bius conjugate. 

Note that $P$ and $\pmb{\rho}_2$ commute with $\iota:z\mapsto\overline{z}$. It follows that $\iota\circ\sigma\circ\iota$ is another conformal mating of $P$ and $\pmb{\rho}_2$. If we normalize $\Psi$ so that the unique critical point, critical value, and `parabolic' fixed point of $\sigma$ are real, then by uniqueness of the conformal mating of $P$ and $\pmb{\rho}_2$, the maps $\sigma$ and $\iota\circ\sigma\circ\iota$ must be conjugate via a M{\"o}bius map $M$ fixing these three dynamically marked points. Hence, $M=\textrm{id}$, and $\sigma=\iota\circ\sigma\circ\iota$. Here is an alternative way of seeing the real-symmetry of $\sigma$. By Remark~\ref{remark:symmetric_extension}, the David homeomorphism $H$ is real-symmetric. It follows from the construction that the map $\widetilde{B}$ and the David coefficient $\mu$ are also real-symmetric. By the uniqueness part of Theorem~\ref{theorem:integrability_david}, we conclude that the David homeomorphism $\Psi$ is real-symmetric, from which real-symmetry of $\sigma$ follows.

We now proceed to an explicit characterization of $\sigma$. By construction, $\Omega$ is a Jordan domain. Also note that by Lemma~\ref{mating_is_schwarz}, $\Omega$ is a quadrature domain, and $\sigma$ is its Schwarz reflection map. Since $\sigma$ commutes with the complex conjugation map, the domain $\Omega$ is real-symmetric. Since $\sigma:\sigma^{-1}(\inter{\Omega^c})\to\inter{\Omega^c}$ has degree $3$, Proposition~\ref{simp_conn_quad} now provides us with a rational map $R$ of degree $3$ such that $R:\overline{\D^*}\to\overline{\Omega}$ is univalent. Since $\Omega$ is real-symmetric, we can assume that $R$ has real coefficients. 

For each critical point $\xi$ of $R$ in $\D$, the point $R(1/\overline{\xi})\in\Omega$ is a critical point of $\sigma$. Since $\sigma$ has exactly one (simple) critical point, it follows that $R$ has exactly one (simple) critical point in $\D$. By the real-symmetry of $R$, this critical point must be real. By the univalence of $R\vert_{\overline{\D^*}}$, the other three critical points of $R$ lie on $\mathbb{S}^1$ and are simple. Once again, the real-symmetry of $R$ implies that one of these two critical points is real, and the other two are complex conjugates of each other.

Note that postcomposing $R$ with M{\"o}bius transformations and precomposing $R$ with M{\"o}bius maps that preserve the unit disk do not change the M{\"o}bius conjugacy class of $\sigma$. Thus, pre and postcomposing $R$ with real-symmetric M{\"o}bius maps, we can assume the following:
\begin{enumerate}\upshape
\item $R\vert_{\overline{\D^*}}$ is univalent,

\item $\overline{R(z)}=R(\overline{z})$,

\item $\textrm{Crit}(R)=\{0,1,\alpha,\overline{\alpha}\}$ for some $\alpha\in\mathbb{S}^1\setminus\{\pm 1\}$, 

\item $\sigma^{\circ n}\to R(1)$ locally uniformly on $\Psi(\overline{\D^*})$,

\item $R(\infty)=\infty,\ R'(\infty)=1$, and

\item $R(0)=2$.
\end{enumerate}

The above conditions on $R$ (combined with Vieta's formulas) can be used to obtain the following form of $R$:
\begin{equation}
R(z)=\frac{z^3+\frac{4c-1}{2c}z^2+\frac{12c^2}{1-4c}z+2c}{z^2+\frac{6c^2}{1-4c}z+c},
\label{rat_formula}
\end{equation}
for some $c\in\R\setminus\{0,1/4\}$. Moreover, elementary arguments involving the locations of the critical points of $R$ and the local geometry of the real-symmetric Jordan curve $R(\mathbb{S}^1)$ near the cusp $R(1)$ show that $c\leq\frac{1}{10}$.\footnote{We refer the reader to [\S 11.4, arXiv:2010.11256v2] for detailed proofs of the above statements.}

\begin{figure}[ht!]
\centering
\includegraphics[width=0.8\linewidth]{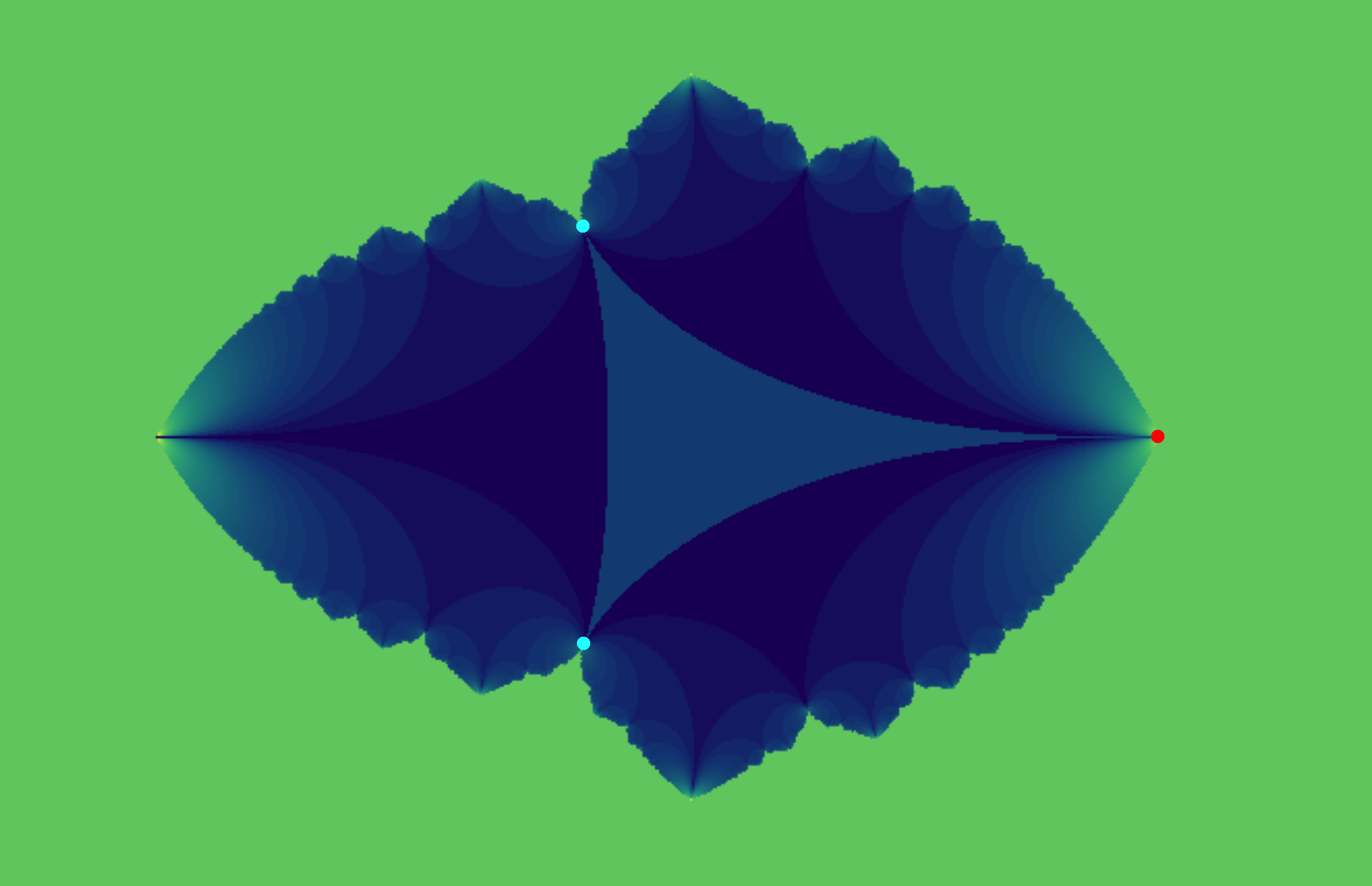}
\caption{The dynamical plane of the Schwarz reflection map $\sigma$, which is the conformal mating of $\overline{z}^2+\frac14$ and the ideal triangle reflection group $\pmb{\Gamma}_2$, is shown. The forward orbit of every point of the non-escaping set (shaded in green) converges to the point $R(1)$ on $R(\mathbb{S}^1)$ (marked in red). The non-escaping set contains an attracting petal subtending an angle $4\pi/3$ at this point. On the other hand, the two real-symmetric cusps on $R(\mathbb{S}^1)$ (marked in light blue) repel nearby points in the non-escaping set, and the non-escaping set subtends zero angles at these two cusps. (Picture courtesy: Seung-Yeop Lee.)}
\label{mating_3_fig}
\end{figure}

To find the exact value of $c$, let us first note that since $R(1)>R(-1)$ and $\displaystyle\lim_{x\to\pm\infty} R(x)=\pm\infty$, it follows from univalence of $R\vert_{\overline{\D^*}}$ that $(R(1),+\infty)\subset\Omega$. Now, the real-symmetry of $R$ and the local uniform convergence of $\sigma^{\circ n}$ to $R(1)$ on $\Psi(\overline{\D^*})$ imply that $(R(1),+\infty)$ is an attracting direction for the `parabolic' fixed point $R(1)$ of $\sigma$. This implies, in particular, that 
\begin{equation*}
R\left(\frac{1}{1+\varepsilon}\right)=\sigma(R(1+\varepsilon))<R(1+\varepsilon) \iff \frac{(4c-1)(10c-1)}{(c-1)^2(2c-1)}\varepsilon^3+O(\varepsilon^4)>0,
\end{equation*}
for $\varepsilon>0$ sufficiently small. If $c\neq \frac{1}{10}$; i.e., if $c<\frac{1}{10}$, then the above inequality implies that
$$
\frac{(4c-1)(10c-1)}{(c-1)^2(2c-1)}>0,
$$
which is impossible. Therefore, we must have $c=\frac{1}{10}$. Plugging $c=\frac{1}{10}$ in Formula~\eqref{rat_formula}, we finally have
\begin{equation}
R(z)=\frac{10z^3-30z^2+2z+2}{10z^2+z+1}.
\label{rat_formula_final}
\end{equation}
(See Figure~\ref{mating_3_fig} for the dynamical plane of the associated Schwarz reflection map $\sigma$.) 

\begin{prop}\label{mating_3_example_prop}
Let $P(z)=\overline{z}^2+\frac14$, and 
$$
R(z):=\frac{10z^3-30z^2+2z+2}{10z^2+z+1}.
$$ 
Then, $R$ is univalent on $\overline{\D^*}$, and the Schwarz reflection map $\sigma$ of the quadrature domain $R(\D^*)$ is the unique conformal mating of $P$ and the Nielsen map $\pmb{\rho}_2$.
\end{prop}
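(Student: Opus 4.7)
\textbf{Proof proposal for Proposition~\ref{mating_3_example_prop}.} The plan is to first construct the conformal mating abstractly using the David extension machinery, then establish its uniqueness via conformal removability, and finally identify the explicit uniformizing rational map $R$ by working through the normalizations and geometric constraints. For existence, I would begin by recalling that $P|_{\mathcal{K}(P)}$ is conformally conjugate to $B|_{\D^*}$ where $B(z) = (3\overline{z}^2+1)/(3+\overline{z}^2)$, and apply Lemma~\ref{parabolic_blaschke_nielsen_david_extension} to obtain a David homeomorphism $H:\overline{\D}\to\overline{\D}$ conjugating $B$ to $\pmb{\rho}_2$ on $\mathbb{S}^1$. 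Then I would define a piecewise map $\widetilde{B}$ on $\widehat{\C}\setminus\inter{H^{-1}(\Pi)}$ (with $\Pi$ a regular ideal triangle in $\D$) that equals $B$ on $\D^*$ and $H^{-1}\circ\pmb{\rho}_2\circ H$ on $\overline{\D}\setminus\inter{H^{-1}(\Pi)}$, pull back the standard complex structure by $H$ on $\D$ and extend by zero on $\D^*$ to obtain a David coefficient $\mu$ on $\widehat{\C}$, and apply the David Integrability Theorem~\ref{theorem:integrability_david} to produce a normalized David homeomorphism $\Psi$ integrating $\mu$. Theorem~\ref{theorem:stoilow} then guarantees that $\sigma := \Psi\circ\widetilde{B}\circ\Psi^{-1}$ is anti-analytic on $\Omega := \widehat{\C}\setminus\overline{\Psi(H^{-1}(\Pi))}$ and satisfies the conditions of Definition~\ref{mating_def}.

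For uniqueness (and real symmetry), I would observe that $\Psi(\mathbb{S}^1)$ is the image of a $W^{1,1}$-removable curve under a David homeomorphism, hence conformally removable by Theorem~\ref{theorem:w11_removable}. Any two conformal matings of $P$ and $\pmb{\Gamma}_2$ are conjugate by a sphere homeomorphism that is conformal off this removable curve, so by removability this conjugacy is M\"obius. Normalizing the critical point, critical value and parabolic fixed point of $\sigma$ to lie on $\R$, uniqueness then forces $\sigma = \iota\circ\sigma\circ\iota$ where $\iota(z)=\overline{z}$ (alternatively, this follows from Remark~\ref{remark:symmetric_extension} applied to $H$).

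To extract the explicit formula, by Lemma~\ref{mating_is_schwarz}, $\Omega$ is a real-symmetric simply connected Jordan quadrature domain, and Proposition~\ref{simp_conn_quad} provides a degree $3$ rational uniformization $R:\D^*\to\overline{\Omega}$ with real coefficients, univalent on $\D^c$. Since $\sigma$ has exactly one critical point (coming from $P$) and this point is simple, $R$ has exactly one simple critical point in $\D$, forcing the remaining three critical points onto $\mathbb{S}^1$ (two complex-conjugate and one real by real-symmetry). Normalizing $R(\infty)=\infty$, $R'(\infty)=1$, $R(0)=2$, and arranging $\mathrm{Crit}(R)\cap\D=\{0\}$, $1\in\mathrm{Crit}(R)$, a direct computation using Vieta's formulas on the critical polynomial reduces $R$ to a one-parameter family $R_c(z) = (z^3+\frac{4c-1}{2c}z^2+\frac{12c^2}{1-4c}z+2c)/(z^2+\frac{6c^2}{1-4c}z+c)$, with $c\in\R\setminus\{0,1/4\}$.

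The main obstacle, and the most delicate step, is pinning down $c=1/10$. I would first rule out $c\geq 1/6$ and $c\in(1/10,1/6)$ exactly as in the two claims in the text: the former by forcing $\alpha+\overline{\alpha}\in(-2,2)$ to give contradictory inequalities on $c$, and the latter by expanding $R(e^{it})$ at $t=0$ and observing that the sign of the $t^3$-coefficient in $\mathrm{Im}\,R(e^{it})$ would make $R|_{\mathbb{S}^1}$ orientation-reversing. To pin down $c=1/10$ from below, the key point is that $(R(1),+\infty)\subset\Omega$ must be a real-symmetric attracting direction of the parabolic fixed point $R(1)$ of $\sigma$, forcing $R(1/(1+\varepsilon))<R(1+\varepsilon)$ for $\varepsilon>0$ small. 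A Taylor expansion reduces this inequality to $(4c-1)(10c-1)/((c-1)^2(2c-1))\,\varepsilon^3 + O(\varepsilon^4)>0$, and since the leading sign is strictly negative for $c<1/10$, one must have $c=1/10$. Substituting gives the claimed formula for $R$, and a final expansion $R(1+\varepsilon)-R(1/(1+\varepsilon)) = \tfrac{5}{36}\varepsilon^5 + O(\varepsilon^6)$ confirms that this choice really does produce an attracting petal of opening angle $4\pi/3$ at $R(1)=-4/3$, verifying the construction.
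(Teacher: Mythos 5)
Your proposal follows essentially the same route as the paper's derivation in Subsection~9.4: constructing $\sigma$ by David surgery (conjugate $P|_{\mathcal{K}(P)}$ to $B|_{\D^c}$, apply Lemma~\ref{parabolic_blaschke_nielsen_david_extension}, integrate the resulting David coefficient via Theorem~\ref{theorem:integrability_david}), deducing uniqueness and real-symmetry from conformal removability of $\Psi(\mathbb{S}^1)$, reducing $R$ to the one-parameter family $R_c$ by the same normalizations, ruling out $c\geq\frac16$ and $c\in(\frac{1}{10},\frac16)$ by the same two case analyses, and pinning $c=\frac{1}{10}$ via the attracting-direction inequality. The argument is correct and matches the paper's in both structure and detail; the only minor difference is that you phrase the decisive step as "the leading sign is strictly negative for $c<1/10$" rather than the paper's contrapositive phrasing, which is an equivalent reformulation.
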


To conclude, we will show that the limit set $\Psi(\mathbb{S}^1)$ of the Schwarz reflection map $\sigma$ does not have a cusp at $-4/3$. On the other hand, the two repelling fixed points of $P$ (which are symmetrically hyperbolic) are mated with two symmetrically parabolic fixed points of the Nielsen map $\pmb{\rho}_2$. As we shall see below, these matings produce cusps; see the light blue points in Figure \ref{mating_3_fig}.

\begin{prop}\label{para_hyp_schwarz_cusp_prop}
Let $f:=\pmb{\rho}_d$, $g$ be a degree $d$ anti-Blaschke product with an attracting fixed point in $\D$ or a parabolic fixed point on $\mathbb{S}^1$, and $h$ be a topological conjugacy between $f$ and $g$. Further suppose that $\mathfrak{J}$ is the welding curve corresponding to $h$ and the point $x_0\in\mathfrak{J}$ corresponds to the fixed point $1$ of $f$.

\begin{enumerate}[\upshape(1)]
\item If $h(1)$ is a hyperbolic fixed point of $g$, then the welding curve $\mathfrak{J}$ has an untwisted cusp at $x_0$.
\item If $h(1)$ is a parabolic fixed point of $g$ with an attracting direction in $\D$, then the welding curve $\mathfrak{J}$ does not have an untwisted cusp at $x_0$.
\end{enumerate}
\end{prop}
\begin{proof}
The assumptions that $f=\pmb{\rho}_d$ and $g$ is an anti-Blaschke product imply that the mating is a Schwarz reflection map $\sigma:\overline{\Omega}\to\widehat{\C}$ in a simply connected quadrature domain $\Omega$. Moreover, the uniformizing rational map of $\Omega$ has a simple critical point on $\mathbb{S}^1$ with corresponding critical value at $x_0$. The asymptotics of $\sigma^{\circ 2}$ near $x_0$ is given by
\begin{equation}
\sigma^{\circ 2}(z)  = z + c\cdot (z-x_0)^{n/2} + O((z-x_0)^{(n+1)/2})
\end{equation}
where it is defined, for some $c\in\C^*$ and some odd integer $n\geq 3$ (see \cite[\S A.3]{LMM23}). 

(1) In this case, $\sigma^{\circ 2}$ has no attracting direction in $\Omega$ at the point $x_0$. Hence by \cite[Corollary~A.6]{LMM23}, we have that $n=3$. By \cite[\S A.4]{LMM23}, a change of coordinates of the form $z\mapsto \frac{\lambda}{(z-x_0)^{1/2}}$ brings the asymptotics of $\sigma^{\circ 2}$ near $x_0$ to the asymptotics $\zeta\mapsto \zeta+1+O(1/\zeta)$ near $\infty$. Using this fact, one can prove as in the case of standard parabolic germs that the union of iterated preimages of $\Pi$ (where $\Pi$ is the ideal polygon with vertices at the $(d+1)-$st roots of unity) under $\sigma$ can be well-approximated at $x_0$ by a sector of angle $2\pi$. Since this region lies on one side of $\mathfrak{J}$, it follows that $\mathfrak{J}$ has an untwisted cusp at $x_0$.

(2) In this case, the map $\sigma^{\circ 2}$ has a unique attracting direction in $\Omega$, and hence $n~=~5$ (see \cite[Propositions~A.4, A.5]{LMM23}). A change of coordinates of the form $z~\mapsto~\frac{\lambda}{(z-x_0)^{3/2}}$ can be used to deduce that the corresponding attracting petal can be well-approximated at $x_0$ by a sector of angle $4\pi/3$. Moreover, the union of iterated preimages of $\Pi$ under $\sigma$ can be well-approximated at $x_0$ by a sector of angle $2\pi/3$. Therefore, there are sectors of positive angle on both sides of $\mathfrak{J}$ at $x_0$. It follows that the welding curve $\mathfrak{J}$ does not have an untwisted cusp at $x_0$.
\end{proof}

\begin{remark}
It is tempting to construct the conformal mating of $P(z)=\overline{z}^2+\frac14$ and $\pmb{\rho}_2$ by starting with the dynamical plane of $P$, and replacing the dynamics of $P$ on its basin of infinity by Nielsen map $\pmb{\rho}_2$. However, our David surgery methods do not permit us to carry out this construction since the basin of infinity of $P$ is not a John domain, and hence, we cannot apply Proposition~\ref{prop:david_qc_invariance} to obtain an invariant David coefficient for the modified map (compare the proof of Lemma~\ref{david_surgery_lemma}). It would be interesting to know if one can directly pass from the dynamical plane of $P$ to the conformal mating of $P$ and $\pmb{\rho}_2$ using a more general surgery technique. 
\end{remark}

\bigskip

\section{Extremal points in spaces of schlicht functions}

The well-known De Brange's theorem (earlier known as the Bieberbach conjecture) asserts that each member $f$ of the class of \emph{schlicht} functions $$\mathcal{S} := \left\{ f(z)= z+a_2{z^2} + \cdots +a_nz^n+\cdots :\ f\vert_{\mathbb{D}} \textrm{ is univalent}\right\}$$ satisfies 
$\vert a_n\vert\leq n$, for all $n\in\N$. Moreover, the bound is sharp; the Koebe function $\sum_{n\geq 1} nz^n$ simultaneously maximizes all the coefficients.

The analogous coefficient problem for the class of \emph{external univalent} maps $$\Sigma := \left\{ f(z)= z+\frac{a_1}{z} + \cdots +\frac{a_d}{z^d}+\cdots :\ f\vert_{\D^*} \textrm{ is univalent}\right\},$$ where $\D^*=\widehat{\C}\setminus\overline{\D}$, is still open. Note that the area theorem implies the upper bound $$\vert a_n\vert\leq n^{-1/2},$$ (see \cite[Theorem~2.1]{Dur83}). But this bound is far from sharp; we refer the readers to \cite[\S 4.7]{Dur83} for a survey of known results (cf. \cite{HS05}).

The related question of establishing coefficient bounds for the truncated families
\[ \Sigma_d^* := \left\{ f(z)= z+\frac{a_1}{z} + \cdots+\frac{a_{d-1}}{z^{d-1}} -\frac{1}{d\cdot z^d} :\ f\vert_{\D^*} \textrm{ is conformal}\right\} \]
is motivated by the work of Suffridge on coefficient bounds for polynomials in class $\mathcal{S}$ \cite{Suf69,Suf72}. In fact, one can adapt the proof of \cite[Theorem~10]{Suf72} for the space $\Sigma$ to show that $$\Sigma=\overline{\bigcup_{d\geq 1}\Sigma_d^*}.$$ It is worth mentioning that the choice of $-1/d$ as the last coefficient stems from the fact that if $f(z)=z+\frac{a_1}{z} + \cdots +\frac{a_d}{z^d}\in\Sigma$, then the absolute value of the product of the non-zero critical points of $f$ is $d\vert a_d\vert$, and the univalence of $f\vert_{\D^*}$ implies that $d\vert a_d\vert\leq 1$; i.e., $\vert a_d\vert\leq \frac1d$. 

Viewing $\Sigma_d^*$ as a compact subset of a finite dimensional Euclidean space (given by the coefficients of the members of $\Sigma_d^*$), the problem of maximizing the coefficients of the members of $\Sigma_d^*$ boils down to finding extremal points of $\Sigma_d^*$. Here, a map $f\in\Sigma_d^*$ is said to be \emph{extremal} if it has no representation of the form 
$$
f=tf_1+(1-t)f_2,\ 0<t<1,
$$
as a proper convex combination of two distinct maps $f_1,f_2\in\Sigma_d^*$.

Before we proceed to study the extremal points of $\Sigma_d^*$, we need to recall some general facts on singularities on the boundary of $f(\D^*)$, for $f\in\Sigma_d^*$, $d\geq 2$. A boundary point $p\in\partial f(\D^*)$ is called \emph{regular} if there is a disc $B=B(p,\varepsilon)$ such that $f(\D^*)\cap B$ is a Jordan domain, and $\partial f(\D^*)\cap B$ is a simple non-singular real-analytic arc; otherwise $p$ is called a \emph{singular} point. Singular points on $\partial f(\D^*)$ come in two varieties. A \emph{cusp} singularity $\zeta_0\in\partial f(\D^*)$ is a critical value of $f$; it has the property that for sufficiently small $\varepsilon>0$, the intersection $B(\zeta_0,\varepsilon)\cap f(\D^*)$ is a Jordan domain. Moreover, by conformality of $f|_{\D^*}$, each cusp on $\partial f(\D^*)$ points in the inward direction towards $f(\D^*)$. On the other hand, a singular point $\zeta_0\in\partial f(\D^*)$ is said to be a \emph{double point} if for all sufficiently small $\varepsilon>0$, the intersection $B(\zeta_0,\varepsilon)\cap f(\D^*)$ is a union of two Jordan domains, and $\zeta_0$ is a non-singular boundary point of each of them. In particular, two distinct non-singular (real-analytic) local branches of $\partial f(\D^*)$ intersect tangentially at a double point $\zeta_0$.

In the rest of this section, we will assume that $d\geq 2$.

\begin{lemma}\label{no_of_cusp}
For each $f\in\Sigma_d^*$, the boundary of $f(\D^*)$ is a piecewise analytic curve with precisely $(d+1)$ cusps.
\end{lemma}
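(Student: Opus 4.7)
The plan is to count the critical points of $f$ on the unit circle and translate that count into cusps on $\partial f(\mathbb{D}^*)$. First, I would observe that
\[
z^{d+1}f'(z)=z^{d+1}-a_1z^{d-1}-2a_2 z^{d-2}-\cdots-(d-1)a_{d-1}z+1
\]
is a polynomial of degree $d+1$ with leading coefficient $1$ and constant term $1$ (this constant term is the reason the normalization $a_d=-1/d$ was imposed in the definition of $\Sigma_d^*$). Hence $f$ has exactly $d+1$ critical points in $\mathbb{C}\setminus\{0\}$, counted with multiplicity, and the product of their absolute values is equal to $1$.

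Next, I would invoke the univalence of $f$ on $\mathbb{D}^*$. Since any critical point of $f$ in $\mathbb{D}^*$ would contradict conformality there, all $d+1$ critical points must lie in $\overline{\mathbb{D}}\setminus\{0\}$. Combined with the fact that the product of their moduli equals $1$, this forces every critical point of $f$ to lie on $\mathbb{S}^1$.

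Then I would show that each critical point on $\mathbb{S}^1$ is simple. Suppose, for contradiction, that $c\in\mathbb{S}^1$ is a critical point of order $k\geq 2$; so $f(z)-f(c)=A(z-c)^{k+1}+O((z-c)^{k+2})$ with $A\neq 0$. A small half-disc $B(c,\varepsilon)\cap\mathbb{D}^*$ is a sector at $c$ of opening angle $\pi$, so its image under $f$ traces out an angle of $(k+1)\pi\geq 3\pi$ at $f(c)$. This contradicts the injectivity of $f\vert_{\mathbb{D}^*}$ near $c$. Thus $f$ has exactly $d+1$ simple critical points $c_1,\dots,c_{d+1}$ on $\mathbb{S}^1$.

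Finally, I would translate the critical-point analysis into the cusp count on $\partial f(\mathbb{D}^*)=f(\mathbb{S}^1)$. Away from the $c_j$'s the map $f\vert_{\mathbb{S}^1}$ is an analytic immersion, so $\partial f(\mathbb{D}^*)$ is locally a non-singular analytic arc. At each $c_j$, the local expansion $f(z)-f(c_j)\sim A_j(z-c_j)^2$ together with the fact that $f$ is injective on $\overline{\mathbb{D}^*}\cap B(c_j,\varepsilon)$ shows that $B(f(c_j),\delta)\cap f(\mathbb{D}^*)$ is a cusp domain, yielding an inward-pointing cusp at $f(c_j)$. To conclude there are precisely $d+1$ cusps, I would argue that the critical values $f(c_1),\dots,f(c_{d+1})$ are pairwise distinct: if $f(c_j)=f(c_k)$ with $j\neq k$, then a neighborhood of this common value in $f(\mathbb{D}^*)$ would contain two disjoint cusp petals coming from disjoint half-discs about $c_j$ and $c_k$, contradicting the Jordan character of $B(f(c_j),\delta)\cap f(\mathbb{D}^*)$ near a boundary cusp obtained from the simple critical point analysis. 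The main obstacle I expect is this last distinctness step: ruling out coincidences of critical values requires a careful local argument combining the conformality of $f$ on $\mathbb{D}^*$ with the simple-critical-point normal form, rather than a purely algebraic count.
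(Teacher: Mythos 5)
Your proposal follows the same route as the paper's proof: count the $(d+1)$ non-zero critical points via the normalized coefficient $a_d=-1/d$, use the product-of-roots constraint together with univalence on $\D^*$ to force them onto $\mathbb S^1$, and deduce simplicity from injectivity near the boundary. The extra detail you provide — the explicit formula $z^{d+1}f'(z)=z^{d+1}-a_1z^{d-1}-\cdots-(d-1)a_{d-1}z+1$ and the angle-doubling argument for simplicity — is what the paper's "clearly" and "conformality... implies" are hiding, so that part is a faithful (and more explicit) reconstruction.

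Where you go beyond the paper is the distinctness of the critical values $f(c_1),\dots,f(c_{d+1})$, which the paper leaves under "The result follows." You are right that this needs an argument: the lemma asserts precisely $(d+1)$ cusps as distinct boundary points, and the paper's definition of cusp (that $B(\zeta_0,\varepsilon)\cap f(\D^*)$ be a Jordan domain) would fail at a coincident critical value. However, the argument you sketch is circular. You invoke "the Jordan character of $B(f(c_j),\delta)\cap f(\D^*)$ ... obtained from the simple critical point analysis," but the local analysis at $c_j$ alone only shows that $f(\D^*\cap B(c_j,\varepsilon))$ is a Jordan domain; it does not control the full intersection $B(f(c_j),\delta)\cap f(\D^*)$, which could a priori receive contributions from other parts of $\D^*$. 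Indeed, the Jordan character of that set is exactly what is at stake, so it cannot serve as the source of the contradiction. What actually closes the gap is an area (or covering) argument: because $f$ doubles angles at a simple boundary critical point, the petal $f(\D^*\cap B(c_j,\varepsilon))$ exhausts all of $B(f(c_j),\delta)$ except a cusp-shaped sliver of area $o(\delta^2)$; in particular it covers more than half of $B(f(c_j),\delta)$ for $\delta$ small. If $f(c_j)=f(c_k)$ with $j\neq k$, injectivity of $f|_{\D^*}$ forces the two petals to be disjoint, yet each covers more than half of $B(f(c_j),\delta)$ — impossible. With that replacement, your proof is complete and slightly more careful than the paper's.
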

\begin{proof}
Clearly, each $f\in\Sigma_d^*$ has a critical point of multiplicity $(d-1)$ at the origin and $(d+1)$ non-zero critical points (counting multiplicity). If $f\in\Sigma_d^*$, then the absolute value of the product of all the non-zero critical points of $f$ is $1$. As all these critical points must lie in $\overline{\D}$, it follows that each non-zero critical point of $f$ must have absolute value $1$ and hence lies on $\mathbb{S}^1$. Moreover, conformality of $f\vert_{\D^*}$ implies that each of these critical points of $f$ must be simple. Thus, $f$ has $(d+1)$ distinct simple critical points on $\mathbb{S}^1$. The result follows.
\end{proof}

By Lemma~\ref{no_of_cusp} and \cite[Lemma~2.4]{LM14}, for each $f\in\Sigma_d^*$, the boundary of $f(\D^*)$ has exactly $(d+1)$ cusps and at most $(d-2)$ double points. 

\begin{definition}\label{Suffridge_polynomials} 
$f \in \Sigma_d^*$ is called a \emph{Suffridge polynomial} if $f(\mathbb{S}^1)=\partial f(\D^*)$ has $(d+1)$ cusps and $(d-2)$ double points. The curve $f(\mathbb{S}^1)$ is called a \emph{Suffridge curve}.
\end{definition}

The following result yields a connection between extremal points of $\Sigma_d^*$ and Suffridge polynomials.

\begin{theorem}\cite[Theorem~2.5]{LM14} 
Extremal points of $\Sigma_d^*$ are Suffridge polynomials.
\end{theorem}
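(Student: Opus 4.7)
My plan is to prove the contrapositive: if $f \in \Sigma_d^*$ is not a Suffridge polynomial, then $f$ can be written as a proper convex combination of two distinct maps in $\Sigma_d^*$. First, I would view $\Sigma_d^*$ as a subset of the affine space $\mathcal{A} = \left\{z + \tfrac{a_1}{z}+\cdots+\tfrac{a_{d-1}}{z^{d-1}}-\tfrac{1}{dz^d} : a_j \in \C\right\} \cong \C^{d-1}$ of real dimension $2(d-1)$. To demonstrate non-extremality it suffices to produce a non-zero tangent direction $v \in \mathcal{A}_0 := \left\{\tfrac{b_1}{z}+\cdots+\tfrac{b_{d-1}}{z^{d-1}} : b_j \in \C\right\}$ and some $\varepsilon > 0$ such that both $f + \varepsilon v$ and $f - \varepsilon v$ lie in $\Sigma_d^*$; then $f = \tfrac{1}{2}\bigl((f+\varepsilon v) + (f-\varepsilon v)\bigr)$ does the job.

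The key observation, built on Lemma~\ref{no_of_cusp} and \cite[Lemma~2.4]{LM14}, is that the $(d+1)$ cusps on $\partial f(\D^*)$ are automatic for every $f \in \Sigma_d^*$ (they come from the $(d+1)$ critical points of $f$ on $\mathbb{S}^1$) and impose no additional constraints beyond what is already built into $\Sigma_d^*$. The only genuine boundary-type constraints are the double points: each double point corresponds to a pair of distinct points $\zeta_1, \zeta_2 \in \mathbb{S}^1$ with $f(\zeta_1) = f(\zeta_2)$, which is one complex (two real) equation on the coefficients. If $f$ is not Suffridge, then $\partial f(\D^*)$ has at most $k \leq d-3$ double points. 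Consider the set $\mathcal{N}_k$ of $g \in \mathcal{A}$ whose image boundary $\partial g(\D^*)$ has precisely the same combinatorial type (the same $k$ marked double-point identifications). This is a real analytic subset of $\mathcal{A}$ cut out near $f$ by $2k \leq 2d-6$ equations, and hence contains a neighborhood of $f$ within a real submanifold of dimension at least $2(d-1) - 2(d-3) = 4 > 0$. Selecting a smooth curve $t \mapsto f_t \in \mathcal{N}_k$ with $f_0 = f$ and taking $v = \tfrac{d}{dt}\big|_{t=0} f_t$ yields the desired perturbation.

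The main obstacle is to confirm that $\mathcal{N}_k$ is genuinely a real submanifold near $f$ of the predicted dimension, which in turn requires showing that points of $\mathcal{N}_k$ close to $f$ remain univalent on $\D^*$ (i.e., no \emph{new} double points or degeneracies appear under small perturbation). For this I would analyze the boundary $\partial f(\D^*)$ locally: away from cusps and double points it is an analytic Jordan arc along which univalence is stable; at each cusp, a conformal change of coordinate straightens the local picture to a standard $3/2$-power cusp whose univalence is preserved under small perturbations of the coefficients; at each tangential double point, the transversality of the two analytic branches (viewed in a local chart) persists under perturbation, so the double point can be tracked smoothly without merging or disappearing. Combined with the fact that $f(\D^*)$ is preserved as an open image of $\D^*$ under small perturbations in $\mathcal{A}$, this yields an open neighborhood of $f$ in $\mathcal{N}_k$ contained in $\Sigma_d^*$; any non-zero tangent vector $v$ to this neighborhood finishes the proof.
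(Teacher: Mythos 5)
The paper does not prove this statement: it is quoted from \cite[Theorem~2.5]{LM14} and used as a black box, so there is no in-paper argument to compare your proposal against. Judged on its own, the proposal has a genuine gap in its final inference.

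The main issue is that exhibiting a positive-dimensional real-analytic set $\mathcal{N}_k\ni f$ inside $\Sigma_d^*$ does \emph{not} by itself show that $f$ is non-extremal, because $\mathcal{N}_k$ is in general curved while extremality is an affine notion. You need a non-zero $v$ with the \emph{straight segment} $f+[-\varepsilon,\varepsilon]v$ contained in $\Sigma_d^*$. If $v=\dot f_0$ is the tangent to a curve $t\mapsto f_t\in\mathcal{N}_k$, then $f\pm\varepsilon v$ only agrees with $f_{\pm\varepsilon}$ to first order, and the $O(\varepsilon^2)$ discrepancy can push $f\pm\varepsilon v$ outside $\Sigma_d^*$. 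Concretely, near each tangential double point the univalence constraint has, to leading order, the shape of a parabolic inequality $a(\delta)\geq b(\delta)^2/(4c)$ in the perturbation $\delta$, with $a,b$ linear functionals and $c>0$ the curvature of the tangency, and $f$ sits at the vertex; a chord through the vertex of such a region exits it unless the first- and second-order terms cooperate, which is exactly what must be checked and what the proposal skips. In effect the argument is circular: the existence of a two-sided safe perturbation \emph{is} the assertion of non-extremality, and it cannot be read off merely from having a positive-dimensional analytic family through $f$.

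A secondary, related issue is the codimension count. The condition that $g(\mathbb{S}^1)$ have a tangential double point near a given one of $f(\mathbb{S}^1)$ is a codimension-one real-analytic constraint on $g$, not two: the pair $(\zeta_1,\zeta_2)$ is not fixed, and solving the two real equations $g(\zeta_1)=g(\zeta_2)$ in the two real unknowns on $\mathbb{S}^1\times\mathbb{S}^1$ is generically discrete, so after elimination only the one real tangency condition survives. Thus $\dim\mathcal{N}_k=2(d-1)-k$ when the constraints are transversal, not $2(d-1)-2k$; in particular the Suffridge stratum itself has positive dimension $d$, which is consistent with the theorem being an inclusion (extremal $\Rightarrow$ Suffridge) rather than a characterization. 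The quantity $2(d-1)-2k$ does appear naturally elsewhere---as the dimension of the space of directions $v$ satisfying the first-order two-sided safety conditions at each of the $k$ double points---but even for those $v$ the $O(\varepsilon^2)$ obstruction must be controlled before one may conclude $f\pm\varepsilon v\in\Sigma_d^*$. Identifying and handling that obstruction is the substantive content missing from the proposal.
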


We now describe a procedure to assign an angled tree to each Suffridge polynomial.

\begin{definition}\label{tiles_def}
Let $f\in\Sigma_d^*$ be a Suffridge polynomial, and $\Omega:=f(\D^*)$, $T:=\widehat{\C}\setminus\Omega$. We set $T^0:=T\setminus\{$Cusps and double points on $\partial T\}$. The connected components $T_1, \cdots, T_{d-1}$ of $T^0$ are called \emph{fundamental tiles} of $f$. 
\end{definition}

Note that the cusp points on $\partial T$ are not cut-points of $T$, while the double points are. Since there are exactly $(d-2)$ double points on $\partial T$ for a Suffridge polynomial, they disconnect $T$ into $(d-1)$ components. According to \cite[Proposition~4.1]{LM14}, the closure $\overline{T_i}$ of each tile is a (filled) topological triangle whose vertices are either cusps or double points of $f(\mathbb{S}^1)$ (see Figure~\ref{bi_angled_trees_fig}). Thus, the topology of the full continuum $\widehat{\C}\setminus \Omega$ is completely determined by the touching structure of these topological triangles (roughly speaking, $\widehat{\C}\setminus \Omega$  is a `tree of triangles'). This structure can be recorded by a simple combinatorial object which we now define.

\begin{definition}\label{tree_def}
\noindent\begin{enumerate}\upshape
\item A bi-angled tree $\mathcal{T}$ is a topological tree each of whose vertices are of valence at most $3$, and that is equipped with an angle function $\angle$, defined on pairs of edges incident at a common vertex, and taking values in $\{0, 2\pi/3, 4\pi/3\}$, satisfying the following conditions:
\begin{enumerate}\upshape
\item for each pair of distinct edges $e$ and $e'$ incident at a vertex $v$, we have $\angle_v(e,e')\in\{2\pi/3,4\pi/3\}$, and $\angle_v(e,e)=0$,

\item $\angle_v(e,e')=-\angle_v(e',e)$ (mod $2\pi$), and 

\item $\angle_v(e,e')+\angle_v(e',e'')=\angle_v(e,e'')$ (mod $2\pi$), where $e, e',e''$ are edges incident at a vertex $v$.
\end{enumerate}

\item Two bi-angled trees $\mathcal{T}_1$, $\mathcal{T}_2$ are said to be \emph{isomorphic} if there exists a tree isomorphism $f: \mathcal{T}_1 \rightarrow \mathcal{T}_2$ that satisfies $\angle_{f(v)}(f(e),f(e'))=\angle_v(e,e')$, for each pair of edges $e,e'$ incident at a vertex $v$ of $\mathcal{T}_1$.
\end{enumerate}
\end{definition}

We remark that the angle data of a bi-angled tree is purely combinatorial; in other words, for a planar embedding of a bi-angled tree, we neither require the edges to be straight line segments, nor require the Euclidean angle between two edges to be $\pm 2\pi/3$. However, since the function $\angle$ induces a cyclic order on the edges incident at each vertex, there is a preferred (isotopy class of) embedding of a bi-angled tree into the complex plane. 

\begin{definition}\label{associated_tree} For a Suffridge polynomial $f\in\Sigma_d^*$, we define its bi-angled tree $\mathcal{T}(f)=(V_\Omega, E_\Omega)$ in the following manner. Denote by $T_1, \cdots, T_{d-1}$ the fundamental tiles of $f$. Associate a vertex $v_i$ to the component $T_i$, and connect the vertices $v_i$ and $v_j$ by an edge if and only if $\overline{T_i}$ and $\overline{T_j}$ intersect. We now equip the tree with an \emph{angle function} $\angle$. If the valence of a vertex $v_i$ is $3$, then for each pair of consecutive edges $e, e'$ incident at $v_i$ (in the counter-clockwise circular order around $v_i$), we set $\angle_{v_i}(e, e')=2\pi/3$. On the other hand, if the valence of $v_i$ is $2$, and $e, e'$ are the edges incident at $v_i$, then $\angle_{v_i}(e,e')=2\pi/3$ (respectively, $4\pi/3$) if the double points of $\partial T$ corresponding to the edges $e$ and $e'$ are consecutive (respectively, are not consecutive) singular points of $\partial T_i$ in the counter-clockwise orientation.
\end{definition} 

\begin{figure}[ht!]
\begin{tikzpicture}
 \node[anchor=south west,inner sep=0] at (-1,0) {\includegraphics[width=0.32\textwidth]{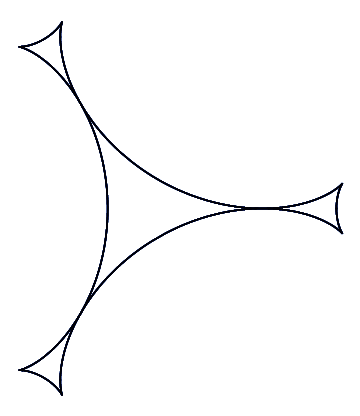}};
 \node[anchor=south west,inner sep=0] at (-1,6) {\includegraphics[width=0.32\textwidth]{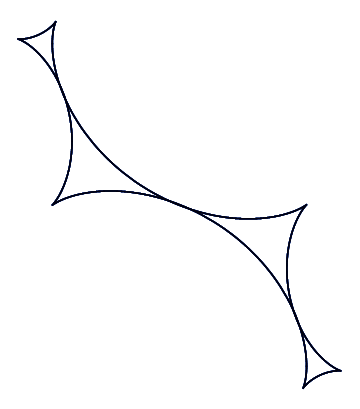}};
 \node at (6,10) [circle,fill=black] {};
\node at (7.5,8.6) [circle,fill=black] {};
\node at (9.6,8.6) [circle,fill=black] {};
\node at (11,7.2) [circle,fill=black] {};
\draw[-,line width=1pt] (6,10)->(7.5,8.6);
\draw[-,line width=1pt] (7.5,8.6)->(9.6,8.6);
\draw[-,line width=1pt] (9.6,8.6)->(11,7.2);
\node at (7.5,9.2) {\begin{Large}$\frac{2\pi}{3}$\end{Large}};
\node at (9.6,8) {\begin{Large}$\frac{2\pi}{3}$\end{Large}};
 \node at (7,4) [circle,fill=black] {};
\node at (8.8,2.6) [circle,fill=black] {};
\node at (8.8,0.8) [circle,fill=black] {};
\node at (10.6,4) [circle,fill=black] {};
\draw[-,line width=1pt] (7,4)->(8.8,2.6);
\draw[-,line width=1pt] (8.8,2.6)->(8.8,0.8);
\draw[-,line width=1pt] (8.8,2.6)->(10.6,4);
\node at (8.4,2.4) {\begin{Large}$\frac{2\pi}{3}$\end{Large}};
\node at (9.4,2.4) {\begin{Large}$\frac{2\pi}{3}$\end{Large}};
\end{tikzpicture}
\caption{The images of the unit circle under the Suffridge polynomials $f(z)\approx z-\frac{0.71i}{z}+\frac{0.71i}{3z^3}-\frac{1}{5z^5}$ and $g(z)=z+\frac{2\sqrt{2}}{5z^2}-\frac{1}{5z^5}$, along with their associated bi-angled trees, are shown.}
\label{bi_angled_trees_fig}
\end{figure}

Using David surgery techniques, we will give a new proof of the following theorem which recently appeared in \cite{LMM19}. The proof given below is essentially different from the existence proof given in \cite[Theorem~4.1]{LMM19}, which used a pinching deformation technique for Schwarz reflection maps combined with compactness of $\Sigma_d^*$.

\begin{theorem}\label{existence_suffridge_thm}
Given a bi-angled tree $\mathcal{T}$ with $(d-1)$ vertices, there exists a Suffridge polynomial $f\in\Sigma_d^*$ such that $\mathcal{T}(f)$ is isomorphic to $\mathcal{T}$. Moreover, $f$ is unique up to conjugation by multiplication by a $(d+1)$-st root of unity.
\end{theorem}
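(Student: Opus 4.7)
The plan is to realize each Suffridge polynomial as the conformal mating of the anti-polynomial $\overline{z}^d$ with a suitable necklace reflection group, and to deduce rigidity from the conformal removability of the resulting mating locus. For each $f \in \Sigma_d^*$, the quadrature domain $\Omega_f := f(\D^*)$ carries a Schwarz reflection map $\sigma_f$ (Proposition~\ref{simp_conn_quad}), and the bi-angled tree $\mathcal{T}(f)$ records both the combinatorics of the tile decomposition of $\C \setminus \Omega_f$ and of the escaping-set dynamics of $\sigma_f$. Matching this combinatorics with a $2$-connected outerplanar contact graph (with $d+1$ outer vertices and $d-2$ chords dictated by the edges of $\mathcal{T}$) converts the classification problem into a realization/rigidity question for particular Schwarz reflections arising as matings.

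For existence, given a bi-angled tree $\mathcal{T}$ with $d-1$ vertices, I would first build a necklace group $\Gamma_{\mathcal{T}} \in \overline{\beta(\pmb{\Gamma}_{d+1})}$ whose inner fundamental domain $T^0(\Gamma_{\mathcal{T}})$ has $d-1$ components glued in the pattern of $\mathcal{T}$, with cyclic order at each vertex dictated by the angle function. Such $\Gamma_{\mathcal{T}}$ exists either via pinching deformations within $\beta(\pmb{\Gamma}_{d+1})$ or by combining the Poirier-type realization of critically fixed anti-polynomials $P_{\Gamma_{\mathcal{T}}}$ \cite{LMM20,Poi10} with Corollary~\ref{bijection_via_david_cor}. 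Since $\overline{z}^d$ is conformally mateable with every critically fixed degree $d$ anti-polynomial (Subsection~\ref{known_examples_subsec}), Proposition~\ref{mating_prop} delivers the conformal mating $F$ of $\Gamma_{\mathcal{T}}$ with $\overline{z}^d$. By Lemma~\ref{mating_is_schwarz} together with the connectedness of $\overline{T^0(\Gamma_{\mathcal{T}})}$ (Proposition~\ref{regular_fund_pre_image_prop}), the domain of $F$ is a single simply connected quadrature domain $\Omega$; uniformizing $\Omega$ by a degree $d+1$ rational map from $\D^*$, normalizing the leading term to be $z$ at $\infty$, and using that all critical points are forced onto $\mathbb{S}^1$ by the mating structure (giving $|a_d| = 1/d$ and, after a rotation, $a_d = -1/d$), produces an $f \in \Sigma_d^*$ with $\Omega = f(\D^*)$. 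The $d+1$ outer tangencies $C_i \cap C_{i+1}$ become the cusps of $f(\mathbb{S}^1)$ and the $d-2$ inner tangencies become the double points, so $f$ is Suffridge and $\mathcal{T}(f) \cong \mathcal{T}$.

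For uniqueness, let $f_1, f_2 \in \Sigma_d^*$ be Suffridge polynomials with isomorphic bi-angled trees. The tree isomorphism recovers the same underlying necklace group $\Gamma_{\mathcal{T}}$ (equipped with its marking), so both $\sigma_{f_1}$ and $\sigma_{f_2}$ arise from the same mating construction applied to $\Gamma_{\mathcal{T}}$ and $\overline{z}^d$. Let $\Psi_i : \widehat{\C} \to \widehat{\C}$ denote the associated David homeomorphism produced by Lemma~\ref{david_surgery_lemma} for the critically fixed anti-rational model of this mating. Each $\Psi_i$ conjugates the same model map to $\sigma_{f_i}$, so $\Phi := \Psi_2 \circ \Psi_1^{-1}$ is a sphere homeomorphism conjugating $\sigma_{f_1}$ to $\sigma_{f_2}$ that is conformal on $\widehat{\C} \setminus \Psi_1(\mathbb{S}^1)$ by Theorem~\ref{theorem:stoilow} (both $\Psi_i$ integrate the same Beltrami coefficient off $\mathbb{S}^1$). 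Since $\mathbb{S}^1$ bounds a John domain, it is $W^{1,1}$-removable by Theorem~\ref{theorem:john_union_removable}, hence by Theorem~\ref{theorem:w11_removable} the set $\Psi_1(\mathbb{S}^1)$ is conformally removable. Therefore $\Phi$ extends conformally across $\Psi_1(\mathbb{S}^1)$ and is a M\"obius transformation. The normalizations $f_i(\infty) = \infty$, $f_i'(\infty) = 1$, and $a_d(f_i) = -1/d$ then force $\Phi(z) = \zeta z$ for some $\zeta$ with $\zeta^{d+1} = 1$, which is precisely the asserted ambiguity.

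The principal obstacle I anticipate is the uniqueness step: upgrading the purely combinatorial tree isomorphism to a coherent identification of the two mating models so that $\Psi_2 \circ \Psi_1^{-1}$ is literally a conjugacy between $\sigma_{f_1}$ and $\sigma_{f_2}$, not merely a topologically equivalent one. The point is to use the expansivity of the Nielsen dynamics of $\Gamma_{\mathcal{T}}$ together with Lemma~\ref{lemma:expansive_conjugate_fg} to promote the combinatorial equivalence to a Markov-compatible boundary conjugacy, and then apply Theorem~\ref{theorem:reflections} (or directly Theorem~\ref{theorem:extension_generalization}) to extend this to the David conjugacy $\Psi_2 \circ \Psi_1^{-1}$. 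A secondary subtlety on the existence side is verifying that the $d-2$ inner tangencies of the circle packing of $\Gamma_{\mathcal{T}}$ really do produce $d-2$ genuine double points (as opposed to higher-order contacts) in $f(\mathbb{S}^1)$; this follows from tracking the local mating picture at each inner tangency, where the David homeomorphism preserves the bi-tangential structure because it is conformal away from the mating locus.
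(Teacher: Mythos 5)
Your existence argument is essentially the same as the paper's, just routed through Proposition~\ref{mating_prop} rather than applying Lemma~\ref{david_surgery_lemma} directly to the critically fixed anti-polynomial $P$ realizing $\mathcal{T}$ as an angled Hubbard tree; at the core both are the same David surgery, and the normalization $f_1(\infty)=\infty$, $f_1'(\infty)=1$, $a_d=-1/d$ is identical. So that half is fine.

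The uniqueness argument, however, has a genuine gap that the proposed remedy does not close. You write that ``both $\sigma_{f_1}$ and $\sigma_{f_2}$ arise from the same mating construction applied to $\Gamma_{\mathcal{T}}$ and $\overline{z}^d$,'' and hence that $\Psi_1$, $\Psi_2$ integrate the same Beltrami coefficient, making $\Psi_2\circ\Psi_1^{-1}$ a conjugacy between $\sigma_{f_1}$ and $\sigma_{f_2}$. But $f_2$ is an arbitrary Suffridge polynomial with tree $\mathcal{T}$; you have no a priori reason to know that $\sigma_{f_2}$ is a mating output of your surgery, and establishing this is itself a non-trivial recognition/rigidity statement, so the argument is circular. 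Your last paragraph correctly flags the problem, but the suggested fix — promoting the tree isomorphism to a Markov-compatible boundary conjugacy via Lemma~\ref{lemma:expansive_conjugate_fg} and then invoking Theorem~\ref{theorem:reflections} or Theorem~\ref{theorem:extension_generalization} — does not apply: those extension theorems produce David extensions of \emph{circle} homeomorphisms into $\D$, whereas what you need is a global conjugacy on the sphere across the locally connected, non-Jordan limit set $\Lambda(\sigma_{f_1})$. The route the paper takes is different: starting from a homeomorphism $T(f)\to T(g)$ that preserves cusps and double points and is conformal on interiors, one extends it by iterated lifting under the Schwarz reflections to a conformal conjugacy between the tiling sets $T^\infty(\sigma_f)\to T^\infty(\sigma_g)$; separately one matches the B\"ottcher coordinates of the superattracting basins at $\infty$, using that cusps land at the $(d+1)$-st roots of unity; one then shows, via expansivity of $\sigma_f|_{\Lambda}$ and $\sigma_g|_{\Lambda}$ and local connectivity, that the two pieces glue continuously into a global homeomorphism of $\widehat{\C}$ conjugating $\sigma_f$ to $\sigma_g$ and conformal off $\Lambda(\sigma_f)$. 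Only then does conformal removability of $\Lambda(\sigma_f)$ (Theorem~\ref{theorem:john_removable}) upgrade this to a M\"obius transformation, and the normalization forces it to be a rotation by a $(d+1)$-st root of unity. Without this direct construction of a conjugacy from the combinatorics, your uniqueness step does not stand.
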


\begin{proof}[Proof of Existence]
By \cite[Proposition~59]{LMM20}, there exists a critically fixed anti-polynomial $P$ of degree $d$ whose angled Hubbard tree is isomorphic to $\mathcal{T}$ equipped with the \emph{local degree function} $\deg: V(\mathcal{T})\to\N,\ \deg(v)=2$ (see \cite{Poi13} for more general discussions on angled Hubbard trees). In particular, $P$ has $(d-1)$ distinct, simple, fixed critical points $c_1,\cdots, c_{d-1}$. These critical points correspond to the vertices $v_1,\cdots, v_{d-1}$ of $\mathcal{T}$, respectively. Let us denote the corresponding immediate basins of attraction by $U_i$, $i\in\{1,\cdots, d-1\}$. By \cite[Proposition~60]{LMM20}, we have that $$\widehat{U}:=\bigcup_{i=1}^{d-1} \overline{U_i}$$ is connected, and $\overline{U_i}$ intersects $\overline{U_j}$ if and only if there is an edge in $\mathcal{T}$ connecting $v_i$ and $v_j$. In particular, $\widehat{U}$ is a full continuum with $d-2$ cut-points.

Applying Lemma~\ref{david_surgery_lemma} on the fixed Fatou components $U_1,\cdots, U_{d-1}$ of $P$, we obtain a global David homeomorphism $\Psi$ and an anti-meromorphic map $\sigma_1$ defined on a closed, connected subset of $\widehat{\C}$ that is conformally conjugate to $\overline{z}^d\vert_{\D^*}$ on $\Psi(\mathcal{B}_\infty(P))$, and to the Nielsen map $\pmb{\rho}_2\vert_{\D}$ on each $\Psi(U_i)$. Moreover, if $\Omega_1$ is the interior of the domain of definition of $\sigma_1$, then $\sigma_1$ fixes $\partial\Omega_1$ pointwise. So $\Omega_1$ is a quadrature domain, and $\sigma_1$ is its Schwarz reflection map.

It also follows from the construction and the fullness of $\widehat{U}$ that $\Omega_1$ is simply connected. Thus, there exists a rational map $f_1$, univalent on $\D^*$, such that 
$$
f_1(\D^*)=\Omega,\ \textrm{and}\ \sigma\equiv f_1\circ(1/\overline{z})\circ \left(f_1\vert_{\D^*}\right)^{-1}\ \textrm{on}\ \Omega_1.
$$
After possibly replacing $\Omega_1$ by a M{\"o}bius image of it, we can assume that $\Omega_1$ has conformal radius $1$ with conformal center at $\infty$. Hence, we can normalize $f_1$ such that $f_1(\infty)=\infty$, $f_1'(\infty)=1$. Since $\infty$ is a critical point of multiplicity $(d-1)$ for $\sigma_1$, and $\sigma_1^{-1}(\infty)=\{\infty\}$, it follows that $\sigma_1:\sigma_1^{-1}(\Omega_1)\to\Omega_1$ is a branched covering of degree $d$. Hence, by Lemma~\ref{simp_conn_quad}, we have that $\deg{f_1}=d+1$, and $f_1$ has a pole of order $d$ at the origin. Therefore, we have $$f_1(z)=z+a_0+\frac{a_1}{z} + \cdots +\frac{a_d}{z^d}.$$
As $\sigma_1$ has no critical point other than $\infty$, it follows that $f_1$ has all of its $(d+1)$ non-zero critical points on $\mathbb{S}^1$. Moreover, the conformality of $f_1\vert_{\D^*}$ implies that these $(d+1)$ critical points of $f_1$ are simple and distinct. 

Setting $\Omega_2:=A_1(\Omega_1)$ and $f_2:=A_1\circ f_1$, where $A_1(z)=z+b$ (for some $b\in\C$), we can assume that $$f_2(z)= z+\frac{a_1}{z} + \cdots +\frac{a_d}{z^d}.$$ A simple computation (using the fact that all critical points of $f_2$ are on $\mathbb{S}^1$ and at the origin) now shows that $\vert a_d\vert=1/d$. Finally, setting $\Omega:=A_2(\Omega_2)$ and $f:=A_2\circ f_2\circ A_2^{-1}$, where $A_2(z)=\alpha z$ (for some $\alpha\in\mathbb{S}^1$), we can further assume that $f$ is of the above form with $a_d=-1/d$. Clearly, $f\in\Sigma_d^*$. Also note that the full continuum $\widehat{\C}\setminus\Omega$ is the union of $d-1$ topological triangles each of which corresponds to a fundamental domain of the ideal triangle group, and the touching points of these topological triangles correspond precisely to the $d-2$ cut-points of $\widehat{U}$. Hence, $f(\mathbb{S}^1)$ has $(d-2)$ double points; i.e. $f$ is a Suffridge polynomial in $\Sigma_d^*$. It is now easy to see from the construction of $f$ and the touching structure of the closed topological disks $\overline{U_i}$ that its bi-angled tree $\mathcal{T}(f)$ is isomorphic to $\mathcal{T}$.
\end{proof}

\begin{proof}[Proof of Uniqueness]
Assume that $g\in\Sigma_d^*$ is another Suffridge polynomial realizing the bi-angled tree $\mathcal{T}$. We set
$$
T^\infty(\sigma_f)=\bigcup_{n=0}^\infty\sigma_f^{-n}(T^0(f)),\quad T^\infty(\sigma_g)=\bigcup_{n=0}^\infty\sigma_g^{-n}(T^0(g)).
$$ 
Also note that both $\sigma_f$ and $\sigma_g$ have a superattracting fixed point at $\infty$. We denote the corresponding basins of attraction by $\mathcal{B}_\infty(\sigma_f)$ and $\mathcal{B}_\infty(\sigma_g)$. Since $\sigma_f, \sigma_g$ do not have any other critical points in $\mathcal{B}_\infty(\sigma_f)$, $\mathcal{B}_\infty(\sigma_g)$ (respectively), it follows from the proof of \cite[Theorem~9.3]{Mil06} that these basins are simply connected, and the Schwarz reflection maps restricted to these basins are conformally conjugate to $\overline{z}^d\vert_{\D}$.

Further, we denote the singular points (i.e., the cusps and double points) on $\partial T(f)$ (respectively, on $\partial T(g)$) by $S^0(f)$ (respectively, $S^0(g)$), and define
$$
S^\infty(f):=\bigcup_{n=0}^\infty\sigma_f^{-n}(S^0(f)),\quad S^\infty(g):=\bigcup_{n=0}^\infty\sigma_g^{-n}(S^0(g)).
$$

By \cite[Corollary~41]{LMM20}, we have that 
$$
\widehat{\C}=\mathcal{B}_\infty(\sigma_g)\sqcup\Lambda(\sigma_g)\sqcup T^\infty(\sigma_g),
$$
where $\Lambda(\sigma_g)$ is the common boundary of $\mathcal{B}_\infty(\sigma_g)$ and $T^\infty(\sigma_g)$,
and
$$
\widehat{\C}=\mathcal{B}_\infty(\sigma_f)\sqcup\Lambda(\sigma_f)\sqcup T^\infty(\sigma_f),
$$
where $\Lambda(\sigma_f)$ is the common boundary of $\mathcal{B}_\infty(\sigma_f)$ and $T^\infty(\sigma_f)$ (for $\sigma_f$, this can also be seen directly from the construction of $f$).

By construction, $S^\infty(g)\subset\Lambda(\sigma_g)$. As $\Lambda(\sigma_g)$ is locally connected by \cite[Proposition~33]{LMM20}, the conformal conjugacy between $\overline{z}^d\vert_{\D}$ and $\sigma_g\vert_{\mathcal{B}_\infty(\sigma_g)}$ extends as a continuous semiconjugacy between $\overline{z}^d\vert_{\mathbb{S}^1}$ and $\sigma_g\vert_{\Lambda(\sigma_g)}$. It follows that $S^\infty(g)$ is dense on $\Lambda(\sigma_g)$. The same also holds for $f$.

Since $f$ and $g$ have isomorphic bi-angled trees, there exists an orientation-preserving homeomorphism $\Phi:T(f)\to T(g)$ that carries cusps to cusps and double points to double points, and is conformal on $\inter{T(f)}$. By iterated Schwarz reflection (equivalently, by iterated lifting under the Schwarz reflection maps $\sigma_f$ and $\sigma_g$), the map $\Phi$ can be extended to a homeomorphism
$$
\Phi: T^\infty(\sigma_f)\bigcup S^\infty(f)\longrightarrow T^\infty(\sigma_g)\bigcup S^\infty(g),
$$
such that $\Phi$ is a topological conjugacy between $\sigma_f$ to $\sigma_g$, and is conformal on $T^\infty(\sigma_f)$.

\noindent\textbf{Claim.} $\Phi$ extends to a homeomorphism $\Phi:\overline{T^\infty(\sigma_f)}\longrightarrow \overline{T^\infty(\sigma_g)}$ conjugating $\sigma_f$ to $\sigma_g$.

\begin{proof}[Proof of claim]
First note that the points of $S^0(f)$ (respectively, $S^0(g)$) determine a Markov partition $\mathcal{P}(f)\equiv\mathcal{P}(\sigma_f, S^0(f))$ (respectively, $\mathcal{P}(g)\equiv\mathcal{P}(\sigma_g, S^0(g))$) for $\sigma_f\vert_{\Lambda(\sigma_f)}$ (respectively, for $\sigma_g\vert_{\Lambda(\sigma_g)}$). Since $\Lambda(\sigma_f)$ (respectively, $\Lambda(\sigma_g)$) contains no critical point of $\sigma_f$ (respectively, of $\sigma_g$), the arguments of \cite[Theorem~4]{DU91} apply to the current setting to show that $\sigma_f\vert_{\Lambda(\sigma_f)}$ and $\sigma_g\vert_{\Lambda(\sigma_g)}$ are expansive. In particular, the diameters of the iterated preimages of the above Markov partition pieces shrink to zero uniformly (alternatively, shrinking of diameters of the iterated preimages of the pieces of $\mathcal{P}(f)$ and $\mathcal{P}(g)$ can be proved using \cite[Lemma~32]{LMM20} and the parabolic dynamics of $\sigma_f, \sigma_g$ at points of $S^0(f), S^0(g)$, respectively).

We will first show that $\Phi: S^\infty(f)\to S^\infty(g)$ admits a continuous extension $\Phi:\Lambda(\sigma_f)\to\Lambda(\sigma_g)$. Since $S^\infty(f)$ (respectively, $S^\infty(g)$) is dense on $\Lambda(\sigma_f)$ (respectively, on $\Lambda(\sigma_g)$), to prove the existence of a continuous extension $\Phi:\Lambda(\sigma_f)\to\Lambda(\sigma_g)$, it suffices to show that $\Phi:S^\infty(f)\to S^\infty(g)$ is uniformly continuous. To this end, let us fix $\varepsilon >0$. Now choose $N$ so that the diameters of the $\sigma_g^{\circ N}$-preimages of the pieces of $\mathcal{P}(g)$ are less than $\varepsilon$. Next, choose $\delta>0$ so that any two non-adjacent $\sigma_f^{\circ N}$-preimages of the pieces of $\mathcal{P}(f)$ are at least $\delta$ distance away.
If $x,y\in S^\infty(f)$ are at most $\delta$ distance apart, then they lie in two adjacent $\sigma_f^{\circ N}$-preimages of the pieces of $\mathcal{P}(f)$. It follows from the construction of $\Phi$ that $\Phi(x), \Phi(y)$ lie in two adjacent $\sigma_g^{\circ N}$-preimages of the pieces of $\mathcal{P}(g)$, and hence, $d(\Phi(x),\Phi(y))<2\varepsilon$. This proves uniform continuity of $\Phi:S^\infty(f)\to S^\infty(g)$.

Applying the same argument on $\Phi^{-1}$, we get a continuous inverse $\Phi^{-1}:\Lambda(\sigma_g)\to \Lambda(\sigma_f)$. Thus, $\Phi:\Lambda(\sigma_f)\to\Lambda(\sigma_g)$ is a homeomorphism extending $\Phi: S^\infty(f)\to S^\infty(g)$.

We have now defined a bijective map $\Phi:\overline{T^\infty(\sigma_f)}\to \overline{T^\infty(\sigma_g)}$, that is continuous restricted to $T^\infty(\sigma_f)$ and $\Lambda(\sigma_f)$ separately, and conjugates $\sigma_f$ to $\sigma_g$. To complete the proof of the claim, we only need to justify that $\Phi\vert_{\Lambda(\sigma_f)}$ continuously extends $\Phi\vert_{T^\infty(\sigma_f)}$.

To this end, first observe that each component of $T^\infty(\sigma_f), T^\infty(\sigma_g)$ is a Jordan domain. Hence, $\Phi\vert_{T^\infty(\sigma_f)}$ extends homeomorphically to the boundary of each component of $T^\infty(\sigma_f)$. Moreover, this extension agrees with $\Phi\vert_{\Lambda(\sigma_f)}$ at points of $S^\infty(f)$. Since $\overline{S^\infty(f)}=\Lambda(\sigma_f)$, we conclude that the homeomorphic extension of $\Phi\vert_{T^\infty(\sigma_f)}$ to the boundary of each component of $T^\infty(\sigma_f)$ agrees with $\Phi\vert_{\Lambda(\sigma_f)}$. Finally, local connectedness of $\Lambda(\sigma_f), \Lambda(\sigma_g)$ imply that the diameters of the components of $T^\infty(\sigma_f), T^\infty(\sigma_g)$ go to zero, from which continuity of $\Phi:\overline{T^\infty(\sigma_f)}\to \overline{T^\infty(\sigma_g)}$ follows.
\end{proof}

We now note that both $\sigma_f$ and $\sigma_g$ are conformally conjugate to $\overline{z}^d\vert_\D$ on their basins of infinity via their B{\"o}ttcher coordinates $\phi_{\sigma_f}:\D\to\mathcal{B}_\infty(\sigma_f)$ and $\phi_{\sigma_g}:\D\to\mathcal{B}_\infty(\sigma_g)$. Since $\Lambda(\sigma_f)$ and $\Lambda(\sigma_g)$ are locally connected, $\phi_{\sigma_f}$ and $\phi_{\sigma_g}$ extend continuously to $\mathbb{S}^1$. Moreover, by \cite[Lemma~44]{LMM20}, the continuous extension of $\phi_{\sigma_f}$ (respectively, of $\phi_{\sigma_g}$) sends the $(d+1)$-st roots of unity to the cusp points on $\partial T(f)$ (respectively, on $\partial T(g)$). After possibly precomposing $\phi_{\sigma_g}$ with multiplication by a $(d+1)$-st root of unity, we can assume that the conformal map 
$$
\widehat{\Phi}:=\phi_{\sigma_g}\circ\phi_{\sigma_f}^{-1}:\mathcal{B}_\infty(\sigma_f)\to\mathcal{B}_\infty(\sigma_g)
$$ 
continuously extends to the cusps of $\partial T(f)$, and agrees with $\Phi$ (constructed above) at these points. Hence, $\widehat{\Phi}$ must also continuously extend to the iterated $\sigma_f$--preimages of the cusps on $\partial T(f)$, and agree with $\Phi$ at these points. As the iterated preimages of the cusps on $\partial T(f)$ are dense on $\Lambda(\sigma_f)$, it is now easy to see that $\Phi\vert_{\Lambda(\sigma_f)}$ continuously extends $\widehat{\Phi}\vert_{\mathcal{B}_\infty(\sigma_f)}$.

Thus, we have constructed a topological conjugacy between $\sigma_f$ and $\sigma_g$ that is conformal away from $\Lambda(\sigma_f)$. On the other hand, since the basin of infinity of the hyperbolic anti-polynomial $P$ (used to construct $f$ in the existence part) is a John domain, and $\Psi$ is a global David homeomorphism, Theorem~\ref{theorem:john_removable} tells us that $\Lambda(\sigma_f)$ is conformally removable. This implies that $\sigma_f$ and $\sigma_g$ are M{\"o}bius conjugate. As this conjugacy must send the superattracting fixed point of $\sigma_f$ at $\infty$ to the superattracting fixed point of $\sigma_g$ at $\infty$, it follows that the conjugacy is affine. In particular, there exists an affine map $A$ carrying $f(\D^*)$ to $g(\D^*)$. Therefore, $$M:=\left(g\vert_{\D^*}\right)^{-1}\circ A\circ f:\D^*\to\D^*$$ is a conformal map. Since $A$ is affine, it follows that $M$ fixes $\infty$, and thus is a rotation. Note that since $\left(g\vert_{\D^*}\right)^{-1}(w)$ is of the form $w+O(\frac1w)$ near $\infty$, it follows that $A(w)= \alpha w$, for some $\alpha\in\C^*$. The fact that both $f$ and $g$ have derivative $1$ at $\infty$ implies that $A\equiv M$. Finally, as the coefficient of $1/z^d$ is $-\frac1d$ for both $f$ and $g$, it follows that $\alpha^{d+1}=1$. We conclude that $g=M\circ f\circ M^{-1}$, where $M$ is rotation by a $(d+1)$-st root of unity.
\end{proof}


\begin{thebibliography}{99}

\bibitem[AIM09]{AIM09}
	K.~Astala, T.~Iwaniec, and G.~Martin,
	\emph{Elliptic partial differential equations and quasiconformal mappings in the plane},
	Princeton Mathematical Series, vol.\ 148, Princeton Univ.\ Press, Princeton, NJ, 2009.
	
\bibitem[AM96]{AM96}
        J.~W.~Anderson and B.~Maskit,
        \emph{On the local connectivity of limit sets of Kleinian groups},
        Complex Var. Theory Appl.\ \textbf{31} (1996), 177--183.	
	
	
\bibitem[AS76]{AS76}
         D.~Aharonov and H.~S.~Shapiro,
         \emph{Domains on which analytic functions satisfy quadrature identities},
         J. Analyse Math.\ \textbf{30} (1976), 39--73.
         
\bibitem[BA56]{BA56}
	A.~Beurling and L.~Ahlfors,
	\emph{The boundary correspondence under quasiconformal mappings},
	Acta Math.\ \textbf{96} (1956), 125--142.         
 
\bibitem[Ber60]{Ber60} 
         L.~Bers,
         \emph{Simultaneous uniformization},
         Bull. Amer. Math. Soc.\ \textbf{66} (1960), 94--97.    
         
\bibitem[Ber70]{Ber70} 
         L.~Bers,
         \emph{On boundaries of Teichm{\"u}ller spaces and on Kleinian groups: I},
         Ann. of Math.\ \textbf{91} (1970), 570-600.          

\bibitem[BF14]{BF14} 
         B.~Branner and N.~Fagella, 
         \emph{Quasiconformal Surgery in Holomorphic Dynamics}. With contributions by Xavier Buff, Shaun Bullett, Adam L. Epstein, Peter Ha\"issinsky, Christian Henriksen, Carsten L. Petersen, Kevin M. Pilgrim, Tan Lei and Michael Yampolsky. Cambridge Studies in Advanced Mathematics, vol.\ 141, Cambridge University Press, Cambridge, 2014. 
                
         
\bibitem[BFH92]{BFH} 
         B.~Bielefeld, Y.~Fisher, and J.~Hubbard, 
         \emph{The classification of critically preperiodic polynomials as dynamical systems},
         J.\ Amer.\ Math.\ Soc.\ \textbf{5} (1992), 721--762.         
   
\bibitem[BH00]{BH00} 
          S.~R.~Bullett and W.~J.~Harvey, 
          \emph{Mating quadratic maps with Kleinian groups via quasiconformal surgery}, 
          Electron.\ Res.\ Announc.\ Amer.\ Math.\ Soc.\ \textbf{6} (2000), 21--30.    
                   
\bibitem[Bis07]{Bis07}
		C.~J.~Bishop,
		\emph{Conformal welding and {K}oebe's theorem},
		 Ann.\ of Math.\ (2) \textbf{166} (2007), no.\ 3, 613--656.
     
\bibitem[BP94]{BP94} 
          S.~Bullett and C.~Penrose, 
          \emph{Mating quadratic maps with the modular group}, 
          Invent.\ Math.\ \textbf{115} (1994), 483--511.
                   
\bibitem[Bra67]{Bra67} 
         D.~A.~Brannan,
         \emph{Coefficient regions for univalent polynomials of small degree}, 
         Mathematika\ \textbf{14} (1967), 165--169.    

\bibitem[CCH96]{CCH96}
	J.~Chen, Z.~Chen, and C.~He,
	\emph{Boundary correspondence under $\mu(z)$-homeomorphisms},
	Michigan Math.\ J.\ \textbf{43} (1996), 211--220.
	
\bibitem[CG93]{CG93}
        L.~Carleson and T.~W.~Gamelin, 
        \emph{Complex dynamics}, 
        Springer, Berlin, 1993.	

\bibitem[CJY94]{CJY94}
        L.~Carleson, P.~W.~Jones, and J.-C.~Yoccoz, 
        \emph{Julia and John}, 
        Bol.\ Soc.\ Brasil.\ Mat.\ (N.S.)\ \textbf{25} (1994), 1--30.

\bibitem[CE80]{CE80} 
P.~Collet and J.-P.~Eckmann, 
 \emph{Iterated maps of the interval as dynamical systems. 
  Progress in Physics}, \# 1, 1980. Birkh\"auser.

\bibitem[CR80]{CR80}
	E.M.~Coven and W.L.~Reddy,
	\emph{Positively expansive maps of compact manifolds},
	In Global theory of dynamical systems (Proc.\ Internat.\ Conf., Northwestern Univ., Evanston, IL., 1979), pp.~96--110, Lecture Notes in Mathematics, vol.\ 819, Springer, Berlin, 1980. 

\bibitem[Dav88]{Dav88}
	G.~David,
	\emph{Solutions de l'{\'e}quation de Beltrami avec $\|\mu\|$ = 1},
	Ann.\ Acad.\ Sci.\ Fenn.\ Ser.\ A I Math.\ \textbf{13} (1988), 25--70.
	
\bibitem[DH93]{DH93} 
         A.~Douady and J.~H.~Hubbard, 
         \emph{A proof of Thurston's topological characterization of rational functions}, 
         Acta.\ Math.\ \textbf{171} (1993), 263--297. 
	
\bibitem[DH07]{DH07}
         A.~Douady and J.~H.~Hubbard,
         \emph{{\'E}tude dynamique des polyn{\^o}mes complexes}, 
         Soci{\'e}t{\'e} Math{\'e}matique de France, 2007.
         
\bibitem[dMvS93]{dMvS93}
	M.~de Melo and S.~van Strien,
	\emph{One-dimensional dynamics}, Ergebnisse der Mathematik und ihrer Grenzgebiete (3) [Results in
         Mathematics and Related Areas (3)], vol.\ 25, Springer-Verlag, Berlin, 1993.
   	
\bibitem[Dou83]{Dou83}
        A.~Douady, 
        \emph{Syst\`emes dynamiques holomorphes},
        In S\'eminaire Bourbaki, vol. 1982/83, pages 39--63, Ast\'erisque, 105--106, Soc. Math. France, Paris, 1983.

\bibitem[Dur83]{Dur83}
        P.~L.~Duren,
        \emph{Univalent functions}, 
        Grundlehren der Mathematischen Wissenschaften, vol.\ 259, Springer-Verlag, New York, 1983.
        
\bibitem[DU91]{DU91}
	M.~Denker and M.~Urba\'nski,
	\emph{Hausdorff and conformal measures on Julia sets with a rationally indifferent periodic point},
	J.\ London Math.\ Soc.\ (2) \textbf{43} (1991), no.\ 1, 107--118.	
	
\bibitem[Fed69]{Fed69}
	H.~Federer,
	\emph{Geometric measure theory}, 
	Die Grundlehren der mathematischen Wissenschaften, vol.\ 153, Springer-Verlag New York Inc., New York, NY, 1969.

\bibitem[Gey20]{Gey20}
         L.~Geyer, 
         \emph{Classification of critically fixed anti-rational maps} (2020), 
         preprint arXiv:2006.10788.

\bibitem[Hai98]{Hai98}
         P.~Ha{\"i}ssinsky,
         \emph{Chirurgie parabolique},
         C.\ R.\ Acad.\ Sci.\ Paris S{\'e}r.\ I Math.\ \textbf{327} (1998), 195--198.
         
\bibitem[Hai00]{Hai00}
         P.~Ha{\"i}ssinsky,
         \emph{D{\'e}formation J-{\'e}quivalente de polyn{\^o}mes g{\'e}om{\'e}triquement finis},
         Fund.\ Math.\ \textbf{163} (2000), 131--141.

\bibitem[Ham91]{Ham91}
		D.~H.~~Hamilton,
        \emph{Generalized conformal welding},
        Ann.\ Acad.\ Sci.\ Fenn.\ Ser.\ A I Math.\ \textbf{16} (1991), 333--343.
        
\bibitem[HK14]{HK14}
	S.~Hencl and P.~Koskela,
	\emph{Lectures on mappings of finite distortion},
	Lecture Notes in Mathematics, vol.\ 2096, Springer, Cham, 2014.	
	
	
\bibitem[HS05]{HS05}
	H.~Hedenmalm and S.~Shimorin,
	\emph{Weighted Bergman spaces and the integral means spectrum of conformal mappings},
	Duke\ Math.\ J.\ \textbf{127} (2005), no.\ 2, 341--393. 
	
\bibitem[HT04]{HT04}
	P.~Ha{\"i}ssinsky and L.~Tan,
	\emph{Convergence of pinching deformations and matings of geometrically finite polynomials},
	Fund.\ Math.\ \textbf{181} (2004),143--188.	
	
\bibitem[Jon91]{Jon91} 
         P.~W.~Jones, 
         \emph{On removable sets for Sobolev spaces in the plane}, 
         Essays on Fourier analysis in honor of Elias M.\ Stein (Princeton, NJ, 1991), 
         250--267, Princeton Math.\ Ser., 42, Princeton Univ.\ Press, Princeton, NJ, 1995.

\bibitem[JS00]{JS00}
	P.~Jones and S.~Smirnov,
	\emph{Removability theorems for Sobolev functions and quasiconformal maps},
	Ark.\ Mat.\ \textbf{38} (2000), no.\ 2, 263--279.
	
\bibitem[KN22]{KN22}
	C.~Karafyllia and D.~Ntalampekos,
	\emph{Extension of boundary homeomorphisms to mappings of finite distortion}, Proc.\ London Math.\ Soc.\ \textbf{125} (2022),  no.\ 3, 488--510.

\bibitem[Kiw01]{Kiw01}
	J.~Kiwi,
	\emph{Rational laminations of complex polynomials},
	In Laminations and foliations in dynamics, geometry and topology (Stony Brook, NY, 1998), 
	111--154, Contemp.\ Math.\, 269, Amer. Math. Soc., Providence, RI, 2001. 
	
\bibitem[Kiw04]{Kiw04}
	J.~Kiwi,
	\emph{$\mathbb{R}$eal laminations and the topological dynamics of complex polynomials},
	Adv.\ Math.\ \textbf{184} (2004), 207--267.
	
\bibitem[Kiw05]{Kiw05}
	J.~Kiwi,
	\emph{Combinatorial continuity in complex polynomial dynamics},
	Proc.\ London Math.\ Soc.\ (3)\ \textbf{91} (2005), 215--248.	
	
\bibitem[KRZ17]{KRZ17}
	P.~Koskela, T.~Rajala, and Y.~R.-Y.~Zhang,
	\emph{A density problem for Sobolev spaces on Gromov hyperbolic domains},
	Nonlinear Anal.\ \textbf{154} (2017), 189--209.
       	
\bibitem[LLMM23]{LLMM19}
	R.~Lodge, M.~Lyubich, S.~Merenkov, and S.~Mukherjee,
	\emph{On dynamical gaskets generated by rational maps, Kleinian groups, and Schwarz reflections},
	Conform.\ Geom.\ Dyn.\ \textbf{27} (2023), 1--54.
	
	
\bibitem[LLMM23]{LLMM18a} 
        S.-Y.~Lee, M.~Lyubich, N.~G.~Makarov, and S.~Mukherjee, 
        \emph{Dynamics of Schwarz reflections: the mating phenomena}, 
        {Ann.\ Sci.\ {\'E}c.\ Norm.\ Sup{\'e}r.\ (4)}\ {\bf 56} (2023), 1825--1881. 
 
\bibitem[LLMM25]{LLMM18b} 
        S.-Y.~Lee, M.~Lyubich, N.~G.~Makarov, and S.~Mukherjee, 
        \emph{Schwarz reflections and the Tricorn}, 
        Ann.\ Inst.\ Fourier\ (Grenoble)\ {\bf 75} (2025), 1987--2100.
 
 	
\bibitem[LLM22]{LLM20}
         R.~Lodge, Y.~Luo, and S.~Mukherjee,
		\emph{Circle packings, kissing reflection groups and critically fixed anti-rational maps},
		Forum Math. Sigma \textbf{10} (2022), paper no.\ e3, 38 pp.
 
	
\bibitem[LM14]{LM14}
        S.-Y.~Lee and N.~G.~Makarov, 
        \emph{Sharpness of connectivity bounds for quadrature domains} 
        (2014), preprint arXiv:1411.3415.
        
\bibitem[LMM21]{LMM19}
        K.~Lazebnik, N.~G. Makarov, and S.~Mukherjee,
	\emph Univalent polynomials and {H}ubbard trees,
	 Trans.\ Amer.\ Math.\ Soc.\ \textbf{374} (2021), 4839--4893.

\bibitem[LMM22]{LMM20}
	K.~Lazebnik, N.~G. Makarov, and S.~Mukherjee,
	\emph{Bers slices in families of univalent maps},
	Math.\ Z.\ \textbf{300} (2022), 2771--2808.
	
\bibitem[LMM24]{LMM23}
	M.~Lyubich, J.~Mazor, and S.~Mukherjee,
	\emph{Antiholomorphic correspondences and mating I: realization theorems},
	Comm.\ Amer.\ Math.\ Soc.\  {\bf 4} (2024), 495--547.
	

      
\bibitem[LV60]{LV60}
	O.~Lehto and K.~I.~Virtanen,
	\emph{On the existence of quasiconformal mappings with prescribed complex dilatation},
	Ann.\ Acad.\ Sci.\ Fenn.\ Ser.\ A I Math.\  \textbf{274} (1960).         
	
\bibitem[Lom15]{Lom15}
	L.~Lomonaco,
	\emph{Parabolic-like mappings},
	Ergodic Theory Dynam.\ Systems, {\bf 35} (2015), no.~7, 2171--2197.
 
\bibitem[Lyu25]{Lyu20}
        M.~Lyubich,
        \emph{Conformal geometry and dynamics of quadratic polynomials, vol. I--II},
		preprint, \url{http://www.math.stonybrook.edu/~mlyubich/book.pdf}.

\bibitem[Man85]{Man85} 
	R. Ma\~n\'e,
	\emph{Hyperbolicity, sinks and measure in one-dimensional dynamics}, 
	Comm. Math. Phys. {\bf 100} (1985), no.~4, 495--524.
        
\bibitem[Mar16]{Mar16}
         A.~Marden, 
         \emph{Hyperbolic manifolds, an introduction in 2 and 3 dimensions}, 
         Cambridge University Press, Cambridge, 2016.


\bibitem[McM]{McM}
         C.~McMullen,
         \emph{Automorphisms of rational maps},
         In Holomorphic functions and moduli, vol. I (Berkeley, CA, 1986), 31--60, 
         Math. Sci. Res. Inst. Publ., 10, Springer, New York, 1988. 

\bibitem[Mih11]{Mih11} 
         N.~Mihalache, 
         \emph{Julia and john revisited}, 
         Fund. Math.\ \textbf{215} (2011), 67--86. 
		
\bibitem[Mil06]{Mil06}
	J.~Milnor,
	\emph{Dynamics in one complex variable},
	3rd Ed., Princeton Univ.\ Press, Princeton, NJ, 2006.
	
\bibitem[Mos68]{Mos68}	
	G.~D.~Mostow,
	\emph{Quasi-conformal mappings in n-space and the rigidity of hyperbolic space forms},
	Inst.\ Hautes\ {\'E}tudes\ Sci.\ Publ.\ Math.\ \textbf{34} (1968), 53--104.

\bibitem[MV88]{MV88}
	O.~Martio and J.~V\"ais\"al\"a,
	\emph{Global $L^p$-integrability of the derivative of a quasiconformal mapping},
	 Complex Var. Theory Appl.\ \textbf{9} (1988), no.\ 4, 309--319. 

\bibitem[Nta18]{Nta18} 
         D.~Ntalampekos,
         \emph{Semi-hyperbolic rational maps and size of Fatou components},  
         Ann.\ Acad.\ Sci.\ Fenn\ Math.\ \textbf{43} (2018), 425--446.
         
\bibitem[Nta19]{Nta19}
	D.~Ntalampekos,
	\emph{Non-removability of the Sierpi{\'n}ski gasket},
	Invent.\ Math.\ \textbf{216} (2019), 519--595.	

\bibitem[Nta20a]{Nta20a}
	D.~Ntalampekos,
	\emph{A removability theorem for Sobolev functions and detour sets},
	Math.\ Z.\ \textbf{296} (2020), 41--72.

\bibitem[Nta20b]{Nta20b} 
        D.~Ntalampekos,
	\emph{Non-removability of Sierpi\'nski carpets},
	Indiana Univ.\ Math.\ J.\ \textbf{70} (2021), no.\ 3, 847--854.
         
\bibitem[Ota01]{Ota01} 
         J.-P.~Otal,
         \emph{The hyperbolization theorem for fibered 3-manifolds}, SMF/AMS Texts and Monographs, vol.\ 7, American Mathematical Society, Providence, RI, 2001.

\bibitem[Pfl60]{Pfl60}
		A.~Plfuger,
		\emph{\"Uber die {K}onstruktionen {R}iemannscher {F}l\"aschen durch {V}erheftung},
		J.\ Indian.\ Math.\ Soc.\ \textbf{24} (1960), 401--412.
		
\bibitem[Pra73]{Pra73}
	G.~Prasad,
	\emph{Strong rigidity of {Q}-rank 1 lattices},
	Invent.\ Math.\ \textbf{21} (1973), 255--286.
		
\bibitem[PM12]{PM12} 
         C.~L.~Petersen and D.~Meyer,
         \emph{On the notions of mating}, 
         Ann. Fac. Sci. Toulouse Math.\ (6) \textbf{21} (2012), 839--876. 
         
\bibitem[Poi10]{Poi10}
         A.~Poirier, 
         \emph{Hubbard trees},
         Fund. Math.\ \textbf{208} (2010), 193--248.
                  
\bibitem[Poi13]{Poi13}
         A.~Poirier, 
         \emph{Hubbard forests},
         Ergodic Theory Dynam. Systems\ \textbf{33} (2013), 303--317.
         
\bibitem[PU11]{PU11}
		F.~Przytycki and M.~Urba\'nski,
		\emph{Conformal fractals: ergodic theory methods},
		London Mathematical Society Lecture Note Series, vol.\ 371, Cambridge University Press, Cambridge, 
		2010.         
         

\bibitem[SS87]{SS87}
	W.~Smith and D.~Stegenga,
	\emph{A geometric characterization of H{\"o}lder domains},
	J.\ London\ Math.\ Soc.\ (2) \textbf{35} (1987), 471--480.


\bibitem[Suf69]{Suf69}	
	T.~J.~Suffridge, 
	\emph{On univalent polynomials}, 
	J. London Math. Soc.\ \textbf{44} (1969), 496--504.
	
\bibitem[Suf72]{Suf72}	
        T.~J.~Suffridge, 
        \emph{Extreme points in a class of polynomials having univalent sequential limits},
        Trans. Amer. Math. Soc.\ \textbf{163} (1972), 225--237.	    	
	
\bibitem[Thu98]{Thu98} 
         W.~Thurston, 
         \emph{Hyperbolic structures on 3-manifolds, II: Surface groups and 3-manifolds which fiber over the circle}
         (1998), preprint arXiv:9801045.
         
\bibitem[Thu09]{Thu09} 
        W.~P.~Thurston,
        \emph{On the geometry and dynamics of iterated rational maps} (with an appendix by D.~Schleicher),
        In Complex dynamics, edited by D.~Schleicher and N.~Selinger, pp.~3--137, A K Peters, Wellesley, MA, 2009.

\bibitem[Tuk85]{Tuk85}         
         P.~Tukia,
         \emph{On isomorphisms of geometrically finite M{\"o}bius groups},
         Publ.\ Math.\ Inst.\ Hautes {\'E}tudes Sci.\ \textbf{61} (1985), 171--214.
        
\bibitem[TY96]{TY96}
         L.~Tan and Y.~Yin, 
         \emph{Local connectivity of the Julia set for geometrically finite rational maps}, 
         Sci. China Ser. A\ \textbf{39} (1996), 39--47.      
         	
\bibitem[VS93]{VS93}
         E.~B.~Vinberg and O.~V.~Shvartsman, 
         \emph{Geometry II: Spaces of Constant Curvature}, Encyclopaedia of Mathematical Sciences, vol.\ 29, Springer-Verlag, 1993.

\bibitem[War42]{War42} 
         S.~E.~Warschawski, 
         \emph{On conformal mapping of infinite strips}, 
         Trans.\ Amer.\ Math.\ Soc.\ \textbf{51} (1942), 280--335.
	
\bibitem[Zak04]{Zak04}
	S.~Zakeri,
	\emph{David maps and {H}ausdorff dimension},
	Ann.\ Acad.\ Sci.\ Fenn.\ Math.\ \textbf{29} (2004), 121--138.
	
\bibitem[Zak08]{Zak08}
	S.~Zakeri,
	\emph{On boundary homeomorphisms of trans-quasiconformal maps of the disk},
	Ann.\ Acad.\ Sci.\ Fenn.\ Math.\ \textbf{33} (2008), 241--260.

\bibitem[Zie89]{Zie89}
	W.P.~Ziemer,
	\emph{Weakly differentiable functions. Sobolev spaces and functions of bounded variation},
	Graduate Texts in Mathematics, vol.\ 120, Springer-Verlag, New York, 1989.


\end{thebibliography}
\end{document}